\documentclass[11pt]{amsart}

\usepackage{amsmath}
\usepackage{amssymb}
\usepackage{amsthm}
\usepackage{amscd, amsfonts}
\usepackage[dvips]{epsfig}
\usepackage{verbatim}
\usepackage{epsf}

\newcommand{\al}{\alpha}

\newcommand{\ga}{\gamma}
\newcommand{\Ga}{\Gamma}
\newcommand{\de}{\delta}
\newcommand{\ep}{\epsilon}
\newcommand{\la}{\lambda}
\newcommand{\La}{\Lambda}
\newcommand{\si}{\sigma}

\newcommand{\Om}{\Omega}

\newcommand{\cR}{\mathcal{R}}
\newcommand{\RR}{\mathbb R}

\newcommand{\NN}{\mathbb N}

\newcommand{\rar}{\rightarrow}

\newcommand{\tr}{\operatorname{tr}}

\newcommand{\mip}{M_i^{\frac{p_i-1}{2}}}
\newcommand{\mim}{M_i^{-\frac{p_i-1}{2}}}
\newcommand{\ve}{\varepsilon}
\newcommand{\vol}{\operatorname{vol}}

\theoremstyle{plain}
\newtheorem{theorem}{Theorem}[section]
\newtheorem{lemma}[theorem]{Lemma}
\newtheorem{prop}[theorem]{Proposition}
\newtheorem{coro}[theorem]{Corollary}
\newtheorem{conjecture}[theorem]{Conjecture}

\theoremstyle{definition}
\newtheorem{rema}[theorem]{Remark}
\newtheorem{defi}[theorem]{Definition}

\numberwithin{equation}{section}

\setlength{\textwidth}{6.6in} \setlength{\textheight}{8.6in}
\hoffset=-0.6truein \voffset=-0.1truein

\baselineskip=7.0mm
\setlength{\baselineskip}{1.09\baselineskip}

\title[Compactness and Non-compactness for the Yamabe Problem]{Compactness and
Non-compactness for the Yamabe Problem on Manifolds With Boundary}

\author[Disconzi]{Marcelo M. Disconzi}
\address{Department of Mathematics\\
Stony Brook University\\ Stony Brook, NY 11794}
\curraddr{Department of Mathematics\\
Vanderbilt University\\ Nashville, TN 37240}
\email{marcelo.disconzi@vanderbilt.edu}

\author[Khuri]{Marcus A. Khuri}
\address{Department of Mathematics\\
Stony Brook University\\ Stony Brook, NY 11794}
\email{khuri@math.sunysb.edu}
\thanks{The second author is partially supported by
NSF Grants DMS-1007156, DMS-1308753 and a Sloan Research Fellowship.}

\begin{document}

\begin{abstract}
We study the problem of conformal deformation of Riemannian structure to
constant scalar curvature with zero mean curvature on the boundary. We
prove compactness for the full set of solutions when the boundary is
umbilic and the dimension $n \leq 24$. The Weyl Vanishing Theorem is
also established under these hypotheses, and we provide counter-examples
to compactness when $n \geq 25$. Lastly, our methods point towards a
vanishing theorem for the umbilicity tensor, which will be fundamental for
a study of the nonumbilic case.
\end{abstract}

\maketitle

\section{Introduction\label{introduction}}

The Yamabe problem consists of finding a constant scalar curvature metric
$\tilde{g}$ which is pointwise
conformal to
a given metric $g$ on an $n$-dimensional ($n\geq 3$) compact Riemannian manifold $M$
without boundary.
This is equivalent to producing a positive solution to the following semilinear
elliptic equation
\begin{equation}
 L_g u + K u^\frac{n+2}{n-2}  = 0,  \text{ on } M,
\label{Yamabe_eq}
\end{equation}
where $K$ is a constant, $L_g = \Delta_g  - c(n) R_g $ is the conformal Laplacian
for $g$ with scalar curvature $R_g$, and
$c(n) = \frac{n-2}{4(n-1)}$. If $u>0$ is a solution of (\ref{Yamabe_eq}) then the
new metric
$\tilde{g} = u^\frac{4}{n-2} g$ has scalar curvature $c(n)^{-1} K$. This problem was
solved in the
affirmative through the combined works
of Yamabe \cite{Ya}, Trudinger \cite{Tr}, Aubin \cite{Au} and Schoen \cite{S1} (see
also \cite{LP} for a complete overview).

From an analytic perspective the Yamabe problem has proven to be a rich source of
interesting ideas. The complete solution of
the problem
was the first instance of a satisfactory existence theory for equations involving a
critical exponent, where
the standard techniques of the calculus of variations fail to apply. When the first
eigenvalue of the conformal
Laplacian is positive, which is equivalent to the case of positive Yamabe invariant
$Y(M)$ (see \cite{SY1} for a definition),
solutions to (\ref{Yamabe_eq}) are not unique, and it is known that the set of
solutions can be quite large (\cite{Po,Ob}).

Therefore it becomes natural to ask what can be said about the full set of solutions
to (\ref{Yamabe_eq}) when
$Y(M) > 0$. While this set is noncompact in the $C^2$ topology when the
underlying manifold is $S^n$ with the round metric (see \cite{Ob}), when $M$ is not
conformally equivalent to the
round sphere compactness was established in various
cases, namely by Schoen \cite{S0} in the locally conformally flat case,
Schoen and Zhang \cite{SZ} in three dimensions,  Druet \cite{Dr} for $n \leq 5$,
Marques \cite{M} for
$n \leq 7$, Li and Zhang \cite{LZh1,LZh2} for $n \leq 11$. However in a surprising
turn of events, counterexamples to compactness
were found by Brendle \cite{Br} when $n \geq 52$ and subsequently by Brendle and
Marque \cite{BM} for $ 25 \leq n \leq 51$.
Finally, Khuri, Marques and Schoen \cite{KMS} proved that compactness does hold in
all remaining cases, that is,
for $n\leq 24$. See \cite{BM2} for a survey of various compactness and
non-compactness results for the Yamabe equation.

An obvious extension of such problems is to consider manifolds with boundary. In
this case one would like to conformally
deform a given metric to one which has not only constant scalar curvature but
constant mean curvature as well. This problem is equivalent
to showing the existence of a positive solution to the boundary value problem
\begin{align}
\begin{cases}
L_{g}u + K u^{\frac{n+2}{n-2}} = 0, & \text{ in } M, \displaybreak[0] \\
B_g u = \partial_{\nu_{g}}u + \frac{n-2}{2}\kappa_{g} u  = \frac{n-2}{2} c
u^\frac{n}{n-2}, & \text{ on } \partial M,
\end{cases}
\label{Yamabe_eq_bry}
\end{align}
where $\nu_g$ is the unit outer normal and $\kappa_g$ is the mean curvature. If such
a solution exists then the
metric $\tilde{g} = u^\frac{4}{n-2} g$ has scalar curvature $c(n)^{-1} K$ and the boundary has mean
curvature $c$. This Yamabe
problem on manifolds with boundary was initially investigated by Escobar
\cite{Es,Es2}, who solved the problem
 affirmatively in several cases.
With contributions from several authors (see \cite{Es3,M2,M3,HL,HL2,A,Al2,BC,C}),
most of the cases have now been solved.

Notice that if $K \neq 0$ and $c \neq 0$ then both the equation and the boundary
condition are nonlinear. In order to
simplify the problem, it is customary to assume then that one of them is linear,
that is, that either $K$
or $c$ is zero. Geometrically, this corresponds to deforming the manifold to one
with either constant nonzero scalar curvature and
zero mean curvature on the boundary ($K \neq 0$, $c=0$) or zero scalar curvature and
constant nonzero mean
curvature on the boundary ($K=0$, $c\neq 0$). In this paper we will focus on the
first of these two cases.

In analogy to the case of manifolds without boundary, where the round sphere
provides the canonical example of noncompactness, when the manifold has boundary and
is not conformally equivalent
to the round hemisphere, the question of compactness of solutions arises.
Compactness was proven
by Han and Li \cite{HL} when the scalar curvature is negative ($K<0$) and the mean
curvature is zero ($c=0$), and also when
the scalar curvature is positive ($K > 0$) with no restriction on the mean curvature
but with the extra hypotheses that
the manifold is locally conformally flat and the boundary is umbilic; by Felli and
Ahmedou \cite{FA} when the scalar curvature is
zero ($K=0$), the mean curvature positive ($c>0$), the manifold is locally
conformally flat and the boundary umbilic (see also\cite{FA2});
and by Almaraz \cite{Al} when the scalar curvature is zero $(K=0$), $n \geq 7$, and
a generic condition on the trace-free part
of the second fundamental form holds.

It is natural to consider subcritical approximations to equation
(\ref{Yamabe_eq_bry}), where a priori estimates are readily
available. Thus we define
\begin{equation*}
\Phi_p = \Big \{ u > 0 ~ \big | ~ L_gu + Ku^p = 0 \text{ in } M,~B_g u = 0 \text{ on
} \partial M \Big \},
\end{equation*}
for $p \in [1,\frac{n+2}{n-2}]$. Furthermore, as the case $K<0$ has already been
treated in \cite{HL}, we will assume from
now on that $K>0$.  Then our main result may be stated as follows.

\begin{theorem} (Compactness) Let $(M^n,g)$ be a smooth compact Riemannian manifold
of dimension $3 \leq n \leq 24$ with umbilic boundary,
and which is not conformally equivalent to the standard hemisphere $(S^n_+,g_0)$.
Then for any $\ve > 0$
there exists a constant $C>0$ depending only on $g$ and $\ve$ such that
\begin{equation}
 C^{-1} \leq u \leq C \text{   and   } \parallel u \parallel_{C^{2,\al}(M)} \leq C
\nonumber
\end{equation}
for all $u \in \cup_{1+\ve \leq p \leq \frac{n+2}{n-2}} \Phi_p$, where $0 < \al < 1$.
\label{compactness_theorem}
\end{theorem}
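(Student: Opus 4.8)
The plan is to argue by contradiction through a blow-up analysis in the spirit of Schoen, Li--Zhu, Li--Zhang, and especially Khuri--Marques--Schoen \cite{KMS}, adapted to the presence of a boundary. Suppose the conclusion fails. Since solutions $u\in\Phi_p$ with $p$ bounded away from $\frac{n+2}{n-2}$ satisfy a priori bounds by standard elliptic theory together with positivity of the Yamabe invariant (a rescaled limit would otherwise solve a subcritical equation on $\RR^n$ or $\RR^n_+$, which is impossible by Pohozaev), any contradicting sequence must consist of $p_i\to\frac{n+2}{n-2}$ and $u_i\in\Phi_{p_i}$ with $\max_M u_i\to\infty$. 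The first step is to extract the blow-up set and to classify it: using a boundary Harnack inequality and localized Pohozaev identities (now carrying boundary integrals), one shows that blow-up points are isolated, and in fact \emph{isolated simple}, with $u_i$ well approximated near each such point by a standard bubble $\U$, together with the sharp pointwise decay $u_i(x)\le C\, d(x,x_i)^{2-n}$ and a spherical Harnack inequality away from the point. Interior blow-up points are treated essentially as in the closed case. For a boundary blow-up point one first performs a conformal change, available because $\partial M$ is umbilic, making $\partial M$ totally geodesic near the point, and then reflects evenly across $\partial M$ in Fermi coordinates; the reflected metric is of class $C^{2,\al}$ precisely because the second fundamental form vanishes, and the reflected $\hat u_i$ solves the Yamabe equation, so the boundary analysis is reduced to the interior one on a manifold that is just regular enough and carries an isometric involution.

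The second and central step is the Weyl Vanishing Theorem: at every isolated simple blow-up point $x_0$ one must establish $W_g(x_0)=0$ together with the vanishing of its covariant derivatives up to order $\lfloor\frac{n-6}{2}\rfloor$. This is obtained by expanding $u_i$ about $x_0$ as the bubble plus correction terms controlled by the linearized operator, substituting this expansion into a refined Pohozaev identity, and reading off, order by order, constraints on the local geometry. It is here that the hypothesis $n\leq 24$ enters: the quadratic form in the Weyl curvature appearing in the leading remainder of the Pohozaev identity has the favorable sign exactly when $n\leq 24$ — the same arithmetic obstruction identified in \cite{KMS} — and one must check that on the reflected (doubled) model the umbilicity of $\partial M$ guarantees that no extra boundary-curvature terms appear to spoil either the vanishing argument or this sign.

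With the Weyl tensor vanishing to the required order at each blow-up point, the final step closes the contradiction via a positive mass theorem. The vanishing lets one pass from the localized Pohozaev identity to a global one and to identify a mass-type invariant attached to the blow-up limit; the sharp Pohozaev sign forces this quantity to be nonpositive, while the positive mass theorem — in the asymptotically flat version for interior blow-up, and in the version for asymptotically flat manifolds with noncompact totally geodesic boundary (as in the work of Escobar \cite{Es} and subsequent authors) for boundary blow-up — forces it to be nonnegative, with equality only in the (locally) conformally flat case. Conformal flatness together with the umbilicity of $\partial M$ then forces $(M^n,g)$ to be conformally equivalent to the hemisphere $(S^n_+,g_0)$, contrary to hypothesis. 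Hence no blow-up occurs, $\max_M u_i$ is bounded, the lower bound $u\geq C^{-1}$ follows from the Harnack inequality, and the $C^{2,\al}(M)$ bound follows from interior and boundary Schauder estimates for the Robin problem $B_g u=0$ once $\|u\|_{L^\infty(M)}$ is controlled.

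I expect the main obstacle to be the boundary blow-up analysis: establishing that boundary blow-up points are isolated simple and pushing the Weyl vanishing computation through the reflection. Since the reflected metric is only $C^{2,\al}$, one must verify that every estimate in the delicate bubble-plus-correction expansion survives this limited regularity, and one must track carefully how the umbilicity of $\partial M$ suppresses exactly those boundary terms that would otherwise obstruct the vanishing theorem and the dimensional threshold $n\leq 24$; this is also the mechanism that our methods suggest should, in the nonumbilic case, yield a vanishing theorem for the umbilicity tensor instead.
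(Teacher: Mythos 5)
Your proposal identifies the right ingredients (blow-up classification, Weyl vanishing, positive mass, and even the dimensional threshold mechanism), but the pivotal step — reflecting evenly across $\partial M$ in Fermi coordinates and then "reducing the boundary analysis to the interior one" on the doubled manifold — is precisely the approach the paper rules out. The paper's remark immediately after Theorem~\ref{Weyl_vanishing_theorem} says that while this reflection argument may \emph{appear} to reduce the problem to \cite{KMS}, "the techniques employed in \cite{KMS} require a higher degree of regularity than what is typically available from a simple reflection of the metric." The obstruction is concrete: the correction term $\tilde z_\ve$ from \cite{KMS} and the Weyl-vanishing argument are built from high-order Taylor coefficients $h_{ij,\al}$ of the metric, up to order $n-4$. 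A reflected metric is only $C^{2,\al}$ across $\partial M$, so these Taylor coefficients simply do not exist, and the sharp bubble-plus-correction expansion collapses. Flagging this at the end as something "one must verify survives" understates the problem; there is no way to make it survive without new ideas.

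The paper's actual route keeps the analysis on the half-ball. Reflection is used, but only sparingly, for \emph{rough} $C^{2,\al}$ estimates (Propositions~\ref{simple_symmetry} and \ref{simple_symmetry_C_2_al}), where the authors explicitly note that without $R_{\bar g}=O(r^2)$ they obtain a weaker bound than \cite{M} but it suffices for their purposes. The heavy lifting is instead done by (i) \emph{boundary conformal normal coordinates} (Propositions~\ref{bcnc}, \ref{bcnc_int}), a variant with zero mean curvature near the center; (ii) Theorem~\ref{big_estimate} and its higher-order extensions (Section~\ref{higher}), which control $F$, $\kappa$, $h_{nj}$, $\partial_n h_{ij}$, $\partial_n^k h_{nn}$ on $\partial M$ in terms of the umbilicity tensor, hence they vanish to high order when the boundary is umbilic; (iii) Proposition~\ref{bvp_z}, which shows that the \emph{same} $\tilde z_\ve$ from \cite{KMS} satisfies the Neumann condition $\partial_n \tilde z_\ve = 0$ on $\RR^{n-1}$, because the relevant harmonic polynomials $p_{k-2q}$ have vanishing normal derivative on $\{x^n=0\}$; (iv) Theorem~\ref{blow_up_on_boundary}, which proves that after passing to a subsequence the blow-up points $x_i$ actually lie on $\partial M$ — a step your sketch does not address, but which is essential since the construction above requires the coordinate center to be on the boundary. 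The Pohozaev identity is then applied directly on a \emph{half}-ball $B_\rho^+$ (Section~\ref{weyl_vanishing_section} and Proposition~\ref{sign_restriction}), with the boundary integrals along $\partial' B_\rho^+$ controlled by the umbilicity-tensor estimates and by $\partial_n\tilde z_\ve = 0$. Finally, the positive mass input is obtained by a doubling construction that reduces to the \emph{standard} closed-manifold PMT, not a manifolds-with-boundary PMT as you invoke. In short, the spirit of your outline is right, but the mechanism by which umbilicity is exploited is fundamentally different: not through making a reflected metric smooth, but through making the half-ball Pohozaev analysis close up.
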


This theorem is established by a fine analysis of blow-up behavior at boundary
points; such a fine analysis was
carried out for interior blow-up points in \cite{KMS}. The entire problem is reduced to showing
the positivity of a certain quadratic form on a finite dimensional vector space,
which may be analyzed in a similar
manner as is done in the appendix of \cite{KMS}. Of course this theorem also relies
on the Positive Mass Theorem of
General Relativity, in its usual form. That is, although we are concerned with
manifolds having boundary, we are still
able to use the standard Positive Mass Theorem by employing a doubling procedure.

Another key feature of our approach
is to employ a version of conformal normal coordinates adapted to the boundary,
which elucidates the dependence of
various geometric quantities on the conformally invariant umbilicity tensor and Weyl
tensor. This coordinate system can be thought of as a good compromise between traditional
conformal normal coordinates \cite{LP} and the so-called conformal Fermi coordinates \cite{M2}.
This is because
although the latter has been shown to be a powerful tool to study the Yamabe problem on manifolds
with boundary, a critical part of the compactness result in \cite{KMS} is the proof of
the positivity of the quadratic form mentioned earlier. This proof makes substantial use
of the the radial symmetry coming from normal coordinates and we would like to preserve
as much as possible of that original argument.

In general, it is expected
that wherever blow-up occurs, these conformally invariant quantities will vanish to
high order because, up to a conformal change, the geometry of the manifold resembles that
of a sphere near the blow-up. As we are assuming
that the boundary is umbilic here, we focus on the Weyl tensor. In this regard we prove

\begin{theorem} (Weyl vanishing) Let $g$ be a smooth Riemannian metric defined in
the unit half $n$-ball $B^+_1$,
$6 \leq n \leq 24$. Suppose
that there is a sequence of positive solutions $\{ u_i \}$ of
\begin{equation}
\begin{cases}
L_{g} u_i + K u_i^{p_i} = 0, &  \text{ in } B_1^+ , \\
B_{g} u_i = 0, & \text{ on } \overline{B_1^+} \cap \RR^{n-1} ,
\end{cases}
\nonumber
\end{equation}
$p_i \in (1,\frac{n+2}{n-2}]$, such that for any $\ve > 0$ there exists a constant
$C(\ve)>0$ such that
$\sup_{B_1^+\backslash B_\ve^+} u_i \leq C(\ve)$ and $\lim_{i \rar \infty}
(\sup_{B_1^+} u_i) = \infty$. Assume also that
$\overline{B_1^+} \cap \RR^{n-1}$ is umbilic. Then
the Weyl tensor $W_g$ satisfies
\begin{equation}
 |W_g|(x) \leq C|x|^\ell \nonumber
\end{equation}
for some integer $\ell > \frac{n-6}{2}$.
\label{Weyl_vanishing_theorem}
\end{theorem}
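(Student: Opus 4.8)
We argue by contradiction, combining a blow-up analysis near the boundary blow-up point with a Pohozaev identity on half-balls, in the spirit of the analysis of interior blow-up points in \cite{KMS,M,LZh1}; the new ingredients are the linear boundary condition and the umbilic face. Fix the boundary-adapted conformal normal coordinates of this paper, centered at the origin. Because the boundary is umbilic --- so the umbilicity tensor vanishes identically --- in these coordinates the mean curvature $\kappa_g$ vanishes to arbitrarily high order at $0$, $\det g = 1+O(|x|^{N})$ for every $N$, and the Taylor expansion of $g$ at the origin, hence that of the scalar curvature $R_g$, is governed entirely by the Weyl tensor. Let $d\ge 0$ be the order of vanishing of $W_g$ at the origin; we must show $d>\tfrac{n-6}{2}$, so assume toward a contradiction that $2d\le n-6$. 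Then $R_g(x)=O(|x|^{2d+2})$ near $0$, with leading $(2d+2)$-homogeneous part a universal quadratic expression in $\nabla^{d}W_g(0)$; for $d=0$ this is the Lee--Parker identity $\Delta R_g(0)=-\tfrac16|W_g(0)|^{2}$.

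Next we carry out the blow-up analysis. Set $M_i=\sup_{B_1^+}u_i=u_i(x_i)$ with $x_i\rar 0$, and $r_i=M_i^{-(p_i-1)/2}\rar 0$. The hypothesis that $u_i$ stays bounded on $B_1^+\backslash B_\ep^+$ for every $\ep>0$, together with the boundary condition $B_gu_i=0$ --- which is \emph{linear} here, since $c=0$ --- and the umbilicity of the face, forces the rescaled functions $v_i(y)=M_i^{-1}u_i(x_i+r_iy)$ to converge in $C^{2}_{\mathrm{loc}}$ to the standard bubble $U(y)=(1+|y|^{2})^{(2-n)/2}$, restricted to a half-space with the Neumann condition $\partial_{y_n}U=0$ on its flat face (which $U$ satisfies). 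One then upgrades this to: the origin is an isolated simple blow-up point in the boundary sense, $\operatorname{dist}(x_i,\partial M)=O(r_i)$, the subcritical defect $\tfrac{n+2}{n-2}-p_i\to 0$ is controlled, and one has the pointwise estimates
\[ c\,M_i^{-1}|x|^{2-n}\le u_i(x)\le C\,M_i^{-1}|x|^{2-n}\quad(r_i\le|x|\le\si),\qquad u_i(x)\le C\,M_i\big(1+M_i^{4/(n-2)}|x|^{2}\big)^{(2-n)/2}. \]
Here umbilicity is again what rules out boundary bubbling and yields the clean single-bubble profile.

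Now apply the Pohozaev identity for the equation $L_gu_i+Ku_i^{p_i}=0$ on the half-ball $B_\si^+$, $\si$ small and fixed, multiplying by $x^{k}\partial_{k}u_i+\tfrac{n-2}{2}u_i$ and integrating. The boundary integral splits into a hemispherical part $\cP(u_i,\si)$ over $\{|x|=\si,\ x_n>0\}$ and a flat-face part over $\{|x|<\si,\ x_n=0\}$; on the flat face, the condition $B_gu_i=0$ with $c=0$ removes any nonlinear term, and since $\kappa_g$ vanishes to high order the face contribution is strictly lower order --- morally, an even reflection across $\{x_n=0\}$, i.e.\ a doubling as used in the Introduction, makes this face disappear. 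One is left with an identity of the form
\[ \cP(u_i,\si)=\Big(\frac{n}{p_i+1}-\frac{n-2}{2}\Big)K\int_{B_\si^+}u_i^{p_i+1}\,dx\;+\;B_i\;+\;E_i, \]
where the first term on the right --- the subcritical defect --- is nonnegative because $K>0$ and $p_i<\tfrac{n+2}{n-2}$, the second term $B_i$ is the leading metric error, a fixed multiple of $\int_{B_\si^+}(x^{k}\partial_{k}R_g)\,u_i^{2}\,dx$ together with the trace-free metric corrections, and $E_i$ collects strictly higher-order remainders. Using the pointwise estimates, $\cP(u_i,\si)=O(M_i^{-2})$; substituting $y=M_i^{2/(n-2)}x$ and integrating the leading $(2d+2)$-homogeneous part of the integrand of $B_i$ against the radially symmetric bubble --- so that only its rotationally invariant component survives --- yields
\[ B_i=\ga\,|\nabla^{d}W_g(0)|^{2}\,\theta_i\,(1+o(1)),\qquad \ga>0, \]
with $\theta_i=M_i^{-(4d+8)/(n-2)}$ when $2d<n-6$, and $\theta_i=M_i^{-2}\log M_i$ in the borderline case $2d=n-6$ (possible only for even $n$, where $\int_{\RR^n}|y|^{2d+2}U^{2}$ is logarithmically divergent). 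In all cases $M_i^{2}\,\theta_i\rar\infty$, and $E_i=o(\theta_i)$. Since the two terms on the right of the displayed identity other than $E_i$ are both nonnegative (the second being $B_i=\ga|\nabla^{d}W_g(0)|^{2}\theta_i(1+o(1))$ with $\ga>0$) while their sum equals $\cP(u_i,\si)-E_i=o(\theta_i)$, we conclude $|\nabla^{d}W_g(0)|^{2}\rar 0$; as $\nabla^{d}W_g(0)$ is independent of $i$ this forces $\nabla^{d}W_g(0)=0$, contradicting that $d$ is the order of vanishing of $W_g$. Therefore $d>\tfrac{n-6}{2}$, proving the theorem.

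The step I expect to be the main obstacle is pinning down $B_i$. One must identify the leading $(2d+2)$-homogeneous part of the Pohozaev metric error in the boundary-adapted conformal normal coordinates as an explicit, non-identically-vanishing quadratic form in $\nabla^{d}W_g(0)$, verify that the half-space geometry contributes nothing sign-indefinite beyond the interior computation, and check that its spherical average pairs with the bubble $U$ to a \emph{positive} multiple of $|\nabla^{d}W_g(0)|^{2}$ --- an algebraic fact of the kind analyzed in the appendix of \cite{KMS} --- while also establishing the sharp bound $E_i=o(\theta_i)$, which relies on the trace-freeness of the Weyl tensor to kill the naive leading metric-error contribution. Subsidiary difficulties are the development of the isolated-simple blow-up theory and the attendant pointwise estimates at a boundary point, the verification that umbilicity together with $B_gu_i=0$ genuinely demotes the flat-face Pohozaev terms, and the handling of the logarithmic factor in the borderline even dimensions, where the intermediate annulus $r_i\le|x|\le\si$ must be tracked with the sharper estimates valid there.
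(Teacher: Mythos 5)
The high-level framework is recognizable --- a Pohozaev identity on half-balls plus a blow-up analysis at a boundary point, with umbilicity exploited to tame the flat face --- but there are genuine gaps in the details that would make the argument, as written, fall apart.

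\textbf{Mischaracterization of the scalar-curvature expansion.} You assert that, if $W_g$ vanishes to order $d$ at the origin, then in conformal normal coordinates $R_g(x)=O(|x|^{2d+2})$ with leading $(2d+2)$-homogeneous part a universal quadratic form in $\nabla^d W_g(0)$. This is not what happens. In conformal normal coordinates $R_g=\partial_i\partial_j h_{ij}+Q(h)+\text{h.o.t.}$, where $Q$ is quadratic, and the \emph{linear} part $\partial_{ij}h_{ij}$ picks up contributions at every order $k-2$ from each $h^{(k)}$ ($k\geq d+2$ up to $n-4$). These linear terms are of degree as low as $d$, far below $2d+2$, and they do not vanish merely because $W$ vanishes to order $d$. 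In the Pohozaev identity they produce terms that would swamp the quadratic contribution $|\nabla^d W(0)|^2$ you are trying to isolate. The paper removes them by (i) introducing the correction term $\tilde z_\ve$, which is constructed precisely to cancel the linear $\partial_{ij}H^{(k)}_{ij}$ contributions (definition \ref{defi_tilde_z_i} and the symmetry estimates of section \ref{sec_symmetry_estimates}), and (ii) an orthogonality argument on harmonic polynomials, now on the half-ball, which kills the cross terms (end of section \ref{weyl_vanishing_section}). None of this appears in your proposal. Moreover, showing that $\tilde z_\ve$ satisfies the correct Neumann-type boundary condition on $\{x^n=0\}$ is itself nontrivial --- it is what sections \ref{higher} and \ref{boundary_correction}, and ultimately theorem \ref{big_estimate}, exist to provide.

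\textbf{The doubling/reflection shortcut.} You appeal to an even reflection across $\{x^n=0\}$ to ``make the face disappear.'' Remark \ref{Weyl_vanishing_theorem}--adjacent (remark 1.4 in the introduction) explicitly cautions that the techniques of \cite{KMS} require more regularity than a naive reflection of the metric provides; the paper uses reflection only in the limited context of proposition \ref{simple_symmetry}, where just $C^{2,\alpha}$ regularity suffices. For the Pohozaev argument, the boundary integrals over $\partial^\prime B^+_\rho$ are instead absorbed directly into the error via the umbilicity estimates of theorem \ref{big_estimate} and the boundary condition $\partial_n z_\ve=0$ established in proposition \ref{bvp_z}.

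\textbf{Location of the blow-up points.} You claim $\operatorname{dist}(x_i,\partial M)=O(r_i)$ and proceed as if boundary-centered coordinates were available. The paper needs --- and proves in theorem \ref{blow_up_on_boundary}, via an iteration that bootstraps the $C^{2,\alpha}$ estimate of proposition \ref{simple_symmetry_C_2_al} --- that after passing to a subsequence $x_i\in\partial M$ \emph{exactly}. This is essential: the higher-order estimates of section \ref{higher}, which feed into the boundary condition for $\tilde z_\ve$, require the coordinate center to lie on the boundary.

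In short, your proof correctly anticipates that ``pinning down $B_i$'' is the crux, but the route you sketch for it --- pointing to trace-freeness of $W$ and a quadratic form in $\nabla^d W(0)$ --- is not viable without the $\tilde z_\ve$ correction term, its boundary condition, and the half-sphere orthogonality, none of which follow from the ingredients you list.
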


\begin{rema}
It may appear that since the boundary is umbilic, the proofs of theorems \ref{compactness_theorem} and \ref{Weyl_vanishing_theorem}
should follow directly from \cite{KMS} by applying a reflection argument. However, the techniques employed in \cite{KMS}
require a higher degree of regularity than what is typically available from a simple reflection of the metric.
\end{rema}

In analogy to the case without boundary, one wonders if theorem \ref{compactness_theorem} is
false when $n\geq 25$. We have also been able to answer this question.

\begin{theorem} Assume that $n \geq 25$. Then there
exists a smooth Riemannian metric $g$ on the hemisphere $S^n_+$
and a sequence of positive functions $u_i \in C^\infty(S^n_+)$, such that:

\hskip 0.5cm  (a) $g$ is not conformally flat (so in particular $(S^n_+,g)$ is not conformally equivalent
to $(S^n_+,g_0)$, where $g_0$ is the round metric),

\hskip 0.5cm  (b) $\partial S^n_+$ is umbilic in the metric $g$,

\hskip 0.5cm  (c) for each $i$, $u_i$ is a positive solution of the boundary value problem:
\begin{align}
\begin{cases}
L_{g}u_i + K u_i^{\frac{n+2}{n-2}} = 0, & \text{ in } S^n_+, \displaybreak[0] \\
B_g u_i  = 0, & \text{ on } \partial S^n_+,
\end{cases}
\nonumber
\end{align}
where $K$ is a positive constant,

\hskip 0.5cm  (d) $\sup_{S^n_+} u_i \rar \infty$ as $i\rar \infty$.
\label{non_compactness_theorem}
\end{theorem}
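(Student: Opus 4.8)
The plan is to adapt Brendle's and Brendle--Marques' construction of non-compactness examples for the closed Yamabe problem to the boundary setting, using a doubling/reflection strategy that keeps the boundary umbilic. First I would recall that in the closed case one constructs, on $S^n$ (or $\mathbb{R}^n$), a perturbation of the round metric whose nonlinearity comes from an ansatz built out of a family of approximate solutions concentrating at a point, together with a carefully chosen trace-free symmetric $2$-tensor supported near that point, and shows via a Lyapunov--Schmidt reduction that the error can be corrected precisely when $n\geq 25$. The key point for us is that all of this can be done in an $\mathbb{Z}_2$-equivariant fashion: place the blow-up point on the equator $\partial S^n_+ \subset S^n$, choose the perturbation tensor $h$ to be invariant under the reflection $\sigma$ across the equator, and choose the approximate bubbles to be $\sigma$-invariant as well. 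Then the whole construction descends to $S^n_+$, and the $\sigma$-invariance of the solution $u_i = u_i\circ\sigma$ forces $\partial_\nu u_i = 0$ on $\partial S^n_+$, so the Robin condition $B_g u_i = \partial_{\nu_g} u_i + \tfrac{n-2}{2}\kappa_g u_i = 0$ reduces to $\kappa_g = 0$ on $\partial S^n_+$.

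The second step is therefore to guarantee that the perturbed metric $g$ has totally geodesic (hence umbilic, in fact minimal) boundary. This is automatic if $g$ itself is $\sigma$-invariant and $\sigma$ is an isometry of $g$: the equator is then the fixed-point set of an isometric involution, so it is totally geodesic, in particular umbilic, giving (b). Concretely, I would take Brendle--Marques' local perturbation $g = \exp(h)$ where $h = h_{ij}\,dx^i dx^j$ is a trace-free tensor supported in a small ball, and replace $h$ by its symmetrization $\tfrac{1}{2}(h + \sigma^* h)$; since in Fermi-type coordinates $(x',x_n)$ around the equatorial blow-up point $\sigma(x',x_n) = (x',-x_n)$, one arranges the symmetrized $h$ to have no $dx' dx_n$ cross terms along $\{x_n=0\}$ and to be even in $x_n$, which makes $\{x_n = 0\}$ totally geodesic and keeps $h$ trace-free. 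Non-conformal-flatness (a) is then arranged exactly as in \cite{Br,BM}: the Weyl tensor of $g$ at the blow-up point is a nonzero multiple of (the appropriate symmetrized derivatives of) $h$, and one checks the symmetrization does not kill it — a generic choice of $\sigma$-invariant $h$ has nonvanishing Weyl tensor. Finally (c) and (d): running the Lyapunov--Schmidt reduction in the space of $\sigma$-invariant functions — which is legitimate because the linearized operator and the Robin boundary operator both commute with $\sigma^*$ — produces, for each large $i$, a genuine positive solution $u_i$ of the boundary value problem with a concentration parameter $\to 0$, hence $\sup u_i \to \infty$; the constant $K$ can be normalized to be positive. The obstruction that makes the construction work only for $n \ge 25$ is inherited verbatim from \cite{BM}: the sign of a certain explicitly computable finite-dimensional quadratic form (the same one whose positivity for $n\le 24$ underlies Theorem~\ref{compactness_theorem}) becomes indefinite exactly at $n=25$.

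The main obstacle I expect is bookkeeping rather than conceptual: one must verify that every analytic ingredient of the Brendle--Marques argument — the construction of approximate solutions, the invertibility of the linearized operator on the orthogonal complement of the approximate kernel, the energy expansion and the identification of the leading term — survives the imposition of the Robin boundary condition and the reflection symmetry, and that the doubled manifold $S^n = S^n_+ \cup_\sigma S^n_+$ carries a genuinely smooth (not merely $C^{2,\alpha}$) metric. Smoothness across the equator is exactly where the umbilicity (indeed total geodesy) is essential: an even-in-$x_n$ extension of a metric with totally geodesic boundary is smooth, whereas for a merely umbilic boundary one would only get limited regularity — this is precisely the regularity issue flagged in the Remark after Theorem~\ref{Weyl_vanishing_theorem}, and here it works in our favor because we are free to choose the boundary totally geodesic. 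Once smoothness of the doubled data is in hand, the closed non-compactness theorem of \cite{Br} (for $n\ge 52$) and \cite{BM} (for $25\le n\le 51$), applied equivariantly, delivers the sequence $\{u_i\}$ and descends it to $S^n_+$, completing the proof.
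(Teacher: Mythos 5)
Your proposal takes a genuinely different route from the paper, and there are real gaps that would keep it from closing.

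\textbf{What the paper actually does.} The paper uses the Brendle--Marques blow-up metric $g$ on $S^n$ as a black box, but with the concentration point placed at the south pole, \emph{interior} to the hemisphere $S^n_-$. The metric $g$ is arranged so that $g = g_0$ near and beyond the equator; consequently $\partial S^n_-$ is automatically totally geodesic and the metric is globally smooth with no reflection or doubling ever needed. One then observes (via an explicit computation with the Green's function of $L_{g_0}$ under stereographic projection) that the restrictions $u_i|_{S^n_-}$ satisfy $\partial_{\nu_g} u_i \le 0$ on the equator, so they are \emph{sub-solutions} of the Neumann problem on $S^n_-$, not solutions. Actual solutions are produced by a small modification of the sub/super-solution iteration (the super-solution $A_i\delta$ solves the equation with a different constant $\widetilde{K}_i$, so one must rebalance the monotone iteration). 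This avoids re-deriving any of the Lyapunov--Schmidt machinery. In contrast, your proposal puts the blow-up \emph{on} the equator and tries to run the whole Brendle--Marques reduction $\mathbb{Z}_2$-equivariantly.

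\textbf{Gaps in your proposal.} (i) You cannot "apply the Brendle--Marques theorem equivariantly" as a black box. Their construction is not a general criterion but an explicit, delicate choice of a trace-free tensor $h$ together with a finite-dimensional reduction whose balancing condition is a computation tied to that specific $h$. Restricting to $\sigma$-invariant data shrinks the approximate kernel (the normal translation mode $\partial_n U$ is odd under $\sigma$, so it drops out), changes the reduced functional, and — crucially — it is not automatic that the finite-dimensional quadratic form still fails to be positive for some $\sigma$-invariant $h$. That is precisely the obstruction that has to be killed, and symmetrizing a tensor can kill the obstruction rather than preserve it. This is a genuine re-derivation, not bookkeeping. (ii) Your remark that "an even-in-$x_n$ extension of a metric with totally geodesic boundary is smooth" is false: totally geodesic only kills $\partial_n g_{ij}$ at $x_n = 0$, so the even extension is generically $C^{2,\alpha}$ and not $C^3$. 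Indeed this is exactly the point the paper itself flags in the remark after the Weyl vanishing theorem. (In your setup this particular misstep is harmless only because, if one builds the metric equivariantly on all of $S^n$ from the start, no doubling is performed; but it signals a confusion about whether you are constructing on $S^n$ or on $S^n_+$, and a doubling-based variant of your argument would founder precisely here.) The paper sidesteps both issues by keeping the blow-up strictly in the interior and leaving the metric equal to $g_0$ near the boundary.
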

Together, theorems \ref{compactness_theorem} and \ref{non_compactness_theorem} give a complete
answer to the question of compactness of solutions to the Yamabe problem on manifolds with
umbilic boundary in the positive scalar curvature setting (Almaraz has proven an analogue to theorem
\ref{non_compactness_theorem} for scalar-flat manifolds \cite{Al3}).

The proof of theorem \ref{non_compactness_theorem} relies heavily on \cite{Br,BM}. In fact, with \cite{Br,BM} at hand, the idea
to proof theorem \ref{non_compactness_theorem} is not complicated.
Brendle and Marques' construction is a perturbation of the round sphere $(S^n,g_0)$.
Although their solutions are constructed on $S^n$ rather than $S^n_+$, they ``almost''
satisfy the boundary condition. We can therefore slightly modify
Brendle and Marques' solutions in order to produce a blow-up sequence for the hemisphere.

One obvious consequence of theorem \ref{compactness_theorem} is to give an
alternative proof of the solution to the Yamabe problem, allowing us to compute the total Leray-Schauder
degree of all solutions to (\ref{Yamabe_eq_bry}) (with $c=0$), and to obtain
more refined existence theorems. This is discussed at the end of the paper (see
section \ref{Leray}).

As mentioned earlier, certain conformally invariant quantities are expected to vanish to high order at a
blow-up point. In particular such behavior is expected for the umbilicity tensor when the boundary is not
umbilic. In this regard, we expect the following.

\begin{conjecture} Let $g$ be a smooth Riemannian metric defined in
the unit half $n$-ball $B^+_1$,
$4 \leq n \leq 24$. Suppose
that there is a sequence of positive solutions $\{ u_i \}$ of
\begin{equation}
\begin{cases}
L_{g} u_i + K u_i^{p_i} = 0, &  \text{ in } B_1^+ , \\
B_{g} u_i = 0, & \text{ on } \overline{B_1^+} \cap \RR^{n-1} ,
\end{cases}
\nonumber
\end{equation}
$p_i \in (1,\frac{n+2}{n-2}]$, such that for any $\ve > 0$ there exists a constant
$C(\ve)>0$ such that
$\sup_{B_1^+\backslash B_\ve^+} u_i \leq C(\ve)$ and $\lim_{i \rar \infty}
(\sup_{B_1^+} u_i) = \infty$. Then
the umbilicity tensor $T_g$ satisfies
\begin{equation}
 |T_g|(x) \leq C|x|^m,~ x\in \overline{B^{+}_{1}}\cap\mathbb{R}^{n-1}, \nonumber
\end{equation}
for some integer $m > \frac{n-4}{2}$. Moreover, if $n \geq 6$ we also have
\begin{equation}
 |W_g|(x) \leq C|x|^\ell,~x\in B^{+}_{1},   \nonumber
\end{equation}
for some integer $\ell > \frac{n-6}{2}$.
\label{conjecture_non_umbilic}
\end{conjecture}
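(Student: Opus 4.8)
The plan is to run the blow-up and Pohozaev-identity induction that underlies Theorem \ref{Weyl_vanishing_theorem}, but now carrying the umbilicity tensor $T_g$ along as an additional obstruction localized on the boundary. First I would reduce to a model blow-up at the origin: by the sharp pointwise blow-up estimates — themselves proved by an induction on the decay order, adapted to the boundary as in \cite{KMS,LZh1,LZh2} — one has $u_i(x) \leq C M_i^{-1}|x|^{2-n}$ away from the blow-up points, where $x_i \rar 0$ are the local maxima and $M_i = u_i(x_i) \rar \infty$. Working in the boundary-adapted conformal normal coordinates introduced earlier in the paper, in which $\det g = 1 + O(|x|^N)$, the mean curvature of $\{x_n = 0\}$ vanishes to high order, and the Taylor coefficients of $g$ are organized through $W_g(0), \nabla W_g(0), \dots$ in the interior and through $T_g(0), \nabla^\partial T_g(0), \dots$ on the boundary, the rescaled solutions $v_i(y) = M_i^{-1} u_i(x_i + M_i^{-(p_i-1)/2}y)$ converge in $C^2_{loc}$ to the standard bubble $\U$ on $\RR^n_+$ with zero Neumann data on $\partial \RR^n_+$.

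Next I would apply the conformally invariant Pohozaev identity on a half-ball $B_\rho^+$, which has the schematic form
\begin{equation}
\cP(\rho,u_i) = \int_{B_\rho^+}\Big(x\cdot\nabla u_i + \tfrac{n-2}{2}u_i\Big)(L_g-\Delta)u_i\,dx + \int_{\partial B_\rho^+\cap\RR^{n-1}}\Big(x\cdot\nabla u_i + \tfrac{n-2}{2}u_i\Big)(B_g-\partial_\nu)u_i\,d\si,
\nonumber
\end{equation}
with $\cP(\rho,u_i)$ a boundary integral over the spherical part $\partial B_\rho^+\cap\{x_n>0\}$. Substituting the metric expansion, the interior term yields, to leading order, a definite quadratic expression in the interior Taylor data — the $|W_g|^2$, $|\nabla W_g|^2,\dots$ contributions handled in Theorem \ref{Weyl_vanishing_theorem} — while the boundary term yields a definite quadratic expression in the boundary Taylor data, namely the $|T_g|^2$, $|\nabla^\partial T_g|^2,\dots$ contributions, together with cross terms and with remainders controlled by the pointwise estimates above. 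The induction then proceeds in the usual way: assuming $|T_g|(x) = O(|x|^{m-1})$ on $\overline{B_1^+}\cap\RR^{n-1}$ and $|W_g|(x) = O(|x|^{\ell-1})$ on $B_1^+$, one inserts these bounds, replaces $u_i$ by $M_i v_i$, and lets $i\rar\infty$ and then $\rho\rar 0$; the sign of the resulting identity forces the next covariant derivatives of $T_g$ and $W_g$ at the origin to vanish, raising both vanishing orders by one. Iterating until $m > \tfrac{n-4}{2}$ and $\ell > \tfrac{n-6}{2}$ yields the conjecture, and the restriction $n\geq 6$ for the Weyl statement is precisely the range in which the interior quadratic form first becomes available.

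The main obstacle is the positivity of the relevant quadratic forms once $T_g\not\equiv 0$. One must show that the \emph{coupled} quadratic form in the interior and boundary Taylor data — obtained after integrating powers of the bubble $\U$ against the metric expansion — has a definite sign on the finite-dimensional space of admissible coefficients, and in particular that the cross terms between the interior $W_g$ contributions and the boundary $T_g$ contributions, together with the terms arising from the interaction of the second fundamental form with the ambient curvature, do not spoil that sign. In the umbilic case the $T_g$-block is absent and the interior block was settled in the appendix of \cite{KMS}; here the boundary-adapted coordinates are designed to retain as much radial symmetry as possible precisely so that an analogous — but genuinely new — positivity computation becomes feasible, and this, together with sharpening the pointwise blow-up estimates in the non-umbilic setting so that the Pohozaev remainders are negligible, is where the difficulty concentrates. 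It is also worth noting that the $T_g$ statement is expected to persist down to $n=4$, where fewer favorable error terms are available than in the Weyl case, which is a further reason to anticipate that establishing this coupled positivity is the crux.
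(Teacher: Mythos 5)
This statement is labeled a \emph{conjecture} in the paper, and the authors do not prove it; they explicitly state that proving it ``would be a key step towards a compactness theorem for manifolds with non-umbilic boundary'' and leave it open. So there is no proof of record to compare against, and your ``proof'' should be read as a proposed strategy rather than a completed argument. You are candid about this: you identify the positivity of the coupled quadratic form in $(W_g, T_g)$ as the crux and state that it remains to be established. That means the proposal contains a genuine, named gap, and it is the gap — not a flaw in the surrounding framework — that is the reason the statement is a conjecture.

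Your strategic outline is consistent with the paper's own remarks (boundary-adapted conformal normal coordinates, Pohozaev induction on vanishing order, reduction to a finite-dimensional quadratic form as in the appendix of \cite{KMS}). However, you may be underestimating how much of the paper's scaffolding depends on umbilicity and would not survive unchanged. Concretely: (i) the boundary conformal normal coordinates of Proposition \ref{bcnc} achieve $\kappa_{\tilde g}\equiv 0$ near $x_0$ precisely because $T_{ij}\equiv 0$ forces $\kappa_g = O(|x'|^N)$ via Theorem \ref{big_estimate}; without umbilicity the boundary is no longer totally geodesic in these coordinates, so the boundary is not given by $\{x^n=0\}$, and Corollary \ref{boundary_hyper} fails. (ii) The Neumann boundary condition for the correction term $\tilde z_\ve$ in Proposition \ref{bvp_z} is derived from the vanishing of $\partial_n p_{k-2q}$ on $\{x^n=0\}$, which in turn rests on Theorem \ref{big_estimate_higher_order}; these vanishings are exactly the ones that disappear when $T_g \neq 0$, so a new correction term adapted to the boundary data would have to be constructed. (iii) The reflection argument in Proposition \ref{simple_symmetry} requires the boundary to be totally geodesic in order for the reflected metric and solution to have the needed $C^{2,\alpha}$ regularity; absent umbilicity the reflected quantities are generically only Lipschitz across the boundary, so the whole symmetry-estimate chain (Propositions \ref{simple_symmetry}, \ref{simple_symmetry_C_2_al}, \ref{symmetry_1}--\ref{symmetry_3}) would need to be redone, likely via Fermi coordinates and without reflection. (iv) Theorem \ref{blow_up_on_boundary}, which forces the blow-up sequence onto the boundary, also relies on the same regularity of the reflected problem. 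So beyond the positivity of the coupled quadratic form, there is a substantial amount of preliminary machinery that must be rebuilt from scratch in the non-umbilic setting; your proposal flags the need to ``sharpen the pointwise blow-up estimates'' but does not acknowledge how much of the present construction genuinely breaks rather than merely needs sharpening.
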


Proving this
conjecture would be a key step towards a compactness theorem for manifolds with non-umbilic boundary.
In fact, one of the main ingredients of our proofs is to estimate several relevant quantities in terms
of the umbilicity tensor and its derivatives at the origin. The vanishing of these terms should allow one, at least
in principle, to adapt the ideas presented here to the non-umbilic case.

\section{Setting, notation, and basic definitions\label{setting_and_notation}}

Let $M^n$ be a $n$-dimensional Riemannian manifold with smooth boundary,
and let $\{ g_i \}_{i=1}^\infty$ be a sequence of metrics on $M$ converging
in $C^k(M)$ to a metric $g$, where $k$ is large and depends only on $n$. Let $\{ u_i
\}$ be a sequence of
positive solutions of the boundary value problem
\begin{gather}
\begin{cases}
L_{g_i}u_i + K f_i^{-\de_i}u_i^{p_i} = 0, & \text{ in } M, \displaybreak[0] \\
B_{g_i}u_i = \partial_{\nu_{g_i}}u_i + \frac{n-2}{2}\kappa_{g_i} u_i = 0, & \text{
on } \partial M,
\end{cases}
\label{bvp}
\end{gather}
where $L_{g_i} = \Delta_{g_i} - c(n)R_{g_i}$, $c(n) = \frac{n-2}{4(n-1)}$, $R_{g_i}$
is the scalar curvature
 of the metric $g_i$,  $K = n(n-2)$, $\nu_{g_i}$ is the outer unit normal, $\kappa_{g_i}$ is the
mean curvature of the boundary, $\{ f_i \}$ is a sequence of smooth positive
functions converging in $C^2(M)$ to a smooth
positive function $f$, $1 < p_i \leq \frac{n+2}{n-2}$, $\delta_i = \frac{n+2}{n-2} -
p_i$. $L_{g}$ is referred to as the
conformal Laplacian, and the boundary value problem (\ref{bvp}) is conformally
invariant (see proposition \ref{transformation_laws}).

In conformal normal coordinates (see \cite{LP,SY1,Ca}) centered at a point $p$,
we write $g(x) = \exp (h(x))$, where $h$ is a smooth function taking values in the
space of
symmetric $n\times n$ matrices. From standard properties of conformal normal
coordinates it then follows that
$x^j h_{ij}(x) = 0$, and $\tr h_{ij}(x) = O(r^N)$,
where $r=\operatorname{dist}_g(x,p)$ and $N$ is arbitrarily large.
We also have
$\det g_{ij} = 1 + O(r^N)$.

In most of the text we will identify the center $p$ of normal coordinates with the
origin. We will write $u_i(x)$ instead
of $u_i(\exp_p(x))$ and $|x|$ instead of $\operatorname{dist}_g(x,p)$. Since $N$ in
the above expressions is as large as we want,
we will often ignore the $O(r^N)$ contribution in the volume element and write
$d\vol_g(x) = dx$.

The proofs of theorems \ref{compactness_theorem} and \ref{Weyl_vanishing_theorem}
depend crucially on finding
a good approximation to the scalar curvature in terms of polynomials. To this end we
define, in conformal normal coordinates
\begin{gather}
H_{ij}(x) = \sum_{2\leq |\al | \leq n-4} h_{ij,\al} x^\al
\label{def_H}
\end{gather}
where $h_{ij,\al}$ are the coefficients of the Taylor polynomials centered at the origin.
Notice that we will sometimes use $~_{,\al}$
to denote Taylor coefficients at the origin --- which are multiples of derivatives evaluated
at the origin rather than the derivatives themselves.

We then have $h_{ij} = H_{ij} +
O(|x|^{n-3})$, $H_{ij} = H_{ji}$, $x^j H_{ij}(x) = 0$, and $\tr H_{ij}(x) = 0$. Put also
\begin{gather}
H_{ij}^{(k)} = \sum_{|\al |=k} h_{ij,\al}x^\al ,\label{def_Hk} \displaybreak[0] \\
|H^{(k)}|^2 = \sum_{ij}\sum_{|\al |=k} |h_{ij,\al}|^2, \label{def_norm_Hk}
\end{gather}
and for $\ve > 0$, set $x=\ve y$ and define
\begin{gather}
\tilde{H}^{(k)}_{ij}(y) = H^{(k)}_{ij}(\ve y).
\label{def_tilde_H}
\end{gather}

We will make extensive use of the following standard rescaling argument.
Let $\{ \ve_i \}_{i=1}^\infty$ be a given sequence of positive numbers converging to
zero.
Define $M_i$ by $\mip = \ve_i^{-1}$ and in normal coordinates put $y=\mip x =
\ve_i^{-1}x$ and
\begin{gather}
v_i(y) = M_i^{-1}u_i(x) = M_i^{-1} u_i(\mim y) = \ve_i^{\frac{2}{p_i - 1}}u_i(\ve_i
y) \nonumber
\end{gather}
for $y \leq \si \mip = \varepsilon_i^{-1}\si$, where $|x| \leq \si$ belongs to the
domain of definition of the normal coordinates. Then $v_i$ satisfies
\begin{gather}
\begin{cases}
L_{\tilde{g}_i}v_i + K \tilde{f}_i^{-\de_i}v_i^{p_i} = 0, & \text{ for } |y| \leq
\si\mip, \displaybreak[0] \\
B_{\tilde{g}_i}v_i = \partial_{\nu_{\tilde{g}_i}}v_i +
\frac{n-2}{2}\kappa_{\tilde{g}_i} v_i = 0, & \text{ on } \partial M,
\end{cases}
\label{bvpy}
\end{gather}
where $\tilde{f}_i(y) = f_i(\mim y) = f_i(\ve_i y )$, $(\tilde{g}_i)_{kl}(y) =
(g_i)_{kl}(\mim y) = (g_i)_{kl}(\ve_i y)$
(see \cite{KMS, M}).

We recall some standard definitions (see \cite{KMS,HL}). Consider a sequence $\{ u_i
\}$ of
solutions of (\ref{bvp}). A point $\bar{x} \in M$ is called a
\emph{blow-up} point for $\{u_i\}$ if $u_i(x_i) \rar \infty$ for some $x_i \rar
\bar{x}$.

\begin{defi} A point $\bar{x} \in M$ is called an isolated blow-up point for
$\{u_i\}$ if
there exists a sequence $\{x_i\} \subset M$, $x_i \rar \bar{x}$, where each $x_i$ is
a local maximum for $u_i$ and

1) $u_i(x_i) \rar \infty$ as $i \rar \infty$,

2) $u_i(x) \leq C \operatorname{dist}_{g_i}(x,x_i)^{-\frac{2}{p_i - 1}}$ for $x \in B_{\si}(x_i)$ and
some constants $\si,C>0$.
\label{defi_isolated}
\end{defi}

Notice that the definition of isolated blow-up points is the same as for the
boundaryless case (\cite{HL}).

\begin{rema} If we change the metric by a uniformly bounded conformal factor $\phi >0$
 such that $\phi(x_i) = 1$ and $\nabla \phi(x_i) = 0$, then isolated blow-up
points are preserved.
\end{rema}

\begin{defi} (\cite{HL})
Let $\{ u_i \}$ and $\{x_i\}$ be as in definition \ref{defi_isolated}. $x_i \rar
\bar{x}$ is an isolated simple blow-up point if
for some $\rho \in (0,\si)$ and $C>1$, where $\si$ comes from the definition of isolated blow
up point,
the function
\begin{gather}
\hat{u}_i(r) = r^{\frac{2}{p_i-1}} \bar{u}_i(r) =
\frac{r^{\frac{2}{p_i-1}}}{\vol_{g_i}(M \cap \partial B_r(x_i) )} \int_{M \cap
\partial B_r(x_i) } u(z) dS(z) \nonumber
\end{gather}
satisfies, for large $i$, $\hat{u}_i^\prime < 0 $ for $r$ such that $C \mim \leq r
\leq \rho$.
\end{defi}
Observe that if $\bar{x}$ is an interior point then this definition agrees with
the standard one (compare with \cite{M}).

Throughout the paper we let $U:\RR^n \rar \RR$ be the function $U(y) =
(1+|y|^2)^{\frac{2-n}{2}}$.
$U$ is known as the ``standard bubble''. From \cite{KMS} we have the following.

\begin{defi} Let $\tilde{z}_\ve$ be the solution of
\begin{gather}
 \Delta \tilde{z}_{\varepsilon} + n(n+2)U^\frac{4}{n-2}\tilde{z}_{\ve} =
c(n)\sum_{k=4}^{n-4} \partial_i\partial_j \tilde{H}^{(k)}_{ij} U
\label{equation_z}
\end{gather}
constructed in \cite{KMS}. It is implicitly assumed that $\tilde{z}_\ve \equiv 0$ if
$n=3,4,5$.
\label{defi_tilde_z_i}
\end{defi}
We recall estimate (4.4) of \cite{KMS}
\begin{gather}
|\partial^\beta \tilde{z}_\ve (y) |
\leq C \sum_{|\al |=4}^{n-4} \sum_{\ell k} \varepsilon^{|\al|} | h_{\ell k, \al}| (1
+ |y| )^{|\al|+2 - n - |\beta |} ,
\label{basic_z_i_tilde_estimate}
\end{gather}
which implies
\begin{gather}
|\partial^\beta \tilde{z}_\ve (y) | \leq C (1+|y|)^{2-n-|\beta|},~\text{ for }
|y|\leq \si \ve^{-1}.
\label{bound_z_i_similar_U}
\end{gather}
The role of $\tilde{z}_\ve$ is to provide a sharp correction term for the usual approximation of
the (rescaled) solutions $u$ by $U$ around a blow-up point. $\tilde{z}_\ve$ was introduced in
the context of manifolds without boundary, and one of the main challenges in our paper is to establish
that the same $\tilde{z}_\ve$ can be used in our setting. In other words, we need to show that $\tilde{z}_\ve$ satisfies a natural
boundary condition. In order to accomplish this, we use one of the key results of the paper, theorem \ref{big_estimate},
to show that the umbilicity of the boundary implies severe constraints on the behavior of
the polynomials $\tilde{H}^{(k)}_{ij}$ on the boundary. Then we use the explicit
construction of $\tilde{z}_\ve$ in terms of $\tilde{H}^{(k)}_{ij}$ to show that it satisfies the desired boundary
condition.

\textbf{Notation and terminology used throughout the text:}

(i) $d=[\frac{n-2}{2}]$.

(ii) If $x_i \rar \bar{x}$ is an isolated blow-up point, we denote $M_i = u_i(x_i)$
and $\ve_i^{-1} = \mip$.

(iii) $x^\prime$ denotes the first $n-1$ coordinate functions.

(iv) We use $N$ to denote an integer that is arbitrarily large, coming typically from properties
of conformal normal coordinates, such as $\det (g) = 1 + O(r^N)$.

(v) Let $\Om$ be an open connected set that intersects $\partial M$.
We then set $\partial^\prime \Om = \overline{\Om} \cap \partial M$ and
$\partial^+\Om = \partial \Om \backslash \partial^\prime \Om$.

(vi) In a coordinate system near the boundary, define $B^G_\si(0) = \{ x \in
B_\si(0) ~|~ x^n > G(x) \}$ for some real valued
function $G$, then denote $\partial^\prime B_\si^G(0) = \{ (x^\prime,G(x)) \}$,
$\partial^+B_\si^G(0) = \partial B_\si^G(0) \backslash \partial^\prime G_\si^G(0)$
(see section \ref{boundary_estimates} and corollary
\ref{big_estimate_interior}).

(vii) We will always assume that the blow-up points $\bar{x}$ lie on the boundary
$\partial M$, since theorems
\ref{compactness_theorem} and \ref{Weyl_vanishing_theorem} would otherwise follow
from \cite{KMS} (see section
\ref{proof_compactness}).

(viii) We will switch back and forth between problems (\ref{bvp}) and (\ref{bvpy}),
referring to them as ``$x$-coordinates'' and ``$y$-coordinates''.

(ix) If $x_0 \in \partial M$, then $B_\si(x_0)$ is a ball of radius $\si$ and
center $x_0$, i.e.,
\begin{gather}
B_\si(x_0) = \{ x \in M ~|~ \operatorname{dist}(x,x_0) \leq \si \}.
\nonumber
\end{gather}
Notice that $B_\si(x_0)$ will usually
look more like a half-ball rather than like a full ball, but we will not denote
it by $B^+_\si(x_0)$,
reserving the latter for balls which explicitly satisfy the condition $x^n > 0$.

(x) For $T\geq 0$ we define $\RR^n_{-T} = \{ x \in \RR^n ~|~ x^n > -T \}$ and
$\RR^n_+ = \{ x \in \RR^n ~|~ x^n > 0 \}$.

\section{Estimates near the boundary and boundary conformal normal
coordinates\label{boundary_estimates}}

In this section we will derive estimates for the second fundamental form, mean
curvature, etc., in terms of the umbilicity tensor.
Then we will use these estimates to modify the standard conformal normal coordinates
construction in order to obtain
conformal normal coordinates at the boundary with zero mean curvature.

We first recall a result of Escobar.

\begin{prop} (Conformal normal coordinates at the boundary \cite{Es}) Assume
$\partial M$ is umbilic and
let $x_0 \in \partial M$.
For any $N>0$ there exists a metric $\tilde{g}$ conformal to $g$ such that, in
normal coordinates for $\tilde{g}$
centered at $x_0$
\begin{gather}
\det(\tilde{g}) = 1 + O(r^N) , \nonumber
\end{gather}
where $r=|x|$. If $N \geq 5$ then $R_{\tilde{g}} = O(r^2)$, and $\Delta R_{\tilde{g}}(0) =
-\frac{1}{6}|W_{\tilde{g}}|^2(0)$, and $\kappa_{\tilde{g}} = O(r^2)$. Here $W_{\tilde{g}}$ is
the Weyl tensor.
\end{prop}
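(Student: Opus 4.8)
The plan is to reprove Escobar's result by adapting the standard interior conformal-normal-coordinates construction of Lee--Parker, keeping track of the extra boundary normalization made possible by umbilicity. First I would recall that the interior construction produces, for any fixed $N$, a conformal factor $\omega>0$ so that in $\tilde g$-normal coordinates at $x_0$ one has $\det(\tilde g)=1+O(r^N)$; the point is to run the same argument while \emph{simultaneously} arranging the mean curvature to vanish to high order along $\partial M$. Since $\partial M$ is umbilic, the second fundamental form is $\pi_{\alpha\beta}=\kappa g_{\alpha\beta}$ on the boundary, so controlling the full second fundamental form reduces to controlling the scalar mean curvature $\kappa$; this is exactly what makes the boundary normalization compatible with a purely conformal change (a conformal change cannot in general kill a nonumbilic second fundamental form, but it can adjust $\kappa$). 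I would write the desired conformal factor as $\omega = 1 + \text{(polynomial corrections)} + O(r^N)$ and solve order by order: the interior Taylor coefficients are fixed by the $\det\tilde g = 1+O(r^N)$ requirement as in Lee--Parker, and the remaining freedom in the boundary-tangential jet of $\omega$ (equivalently, prescribing $\partial_\nu\omega$ and tangential derivatives along $\partial M$) is used to solve the transformation law for the mean curvature,
\[
\kappa_{\tilde g} = \omega^{-\frac{n}{n-2}}\Bigl(\kappa_g + \tfrac{2}{n-2}\,\omega^{-1}\partial_\nu \omega\Bigr),
\]
pushing $\kappa_{\tilde g}$ to $O(r^N)$ for $N$ as large as desired; umbilicity guarantees this is consistent with the interior normalization because no obstruction from the trace-free part of the second fundamental form appears.

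Next, with the coordinates in hand, I would extract the two scalar identities at the origin. The relation $\Delta R_{\tilde g}(0) = -\tfrac16|W_{\tilde g}|^2(0)$ is the standard consequence of $\det \tilde g = 1 + O(r^N)$ together with $R_{\tilde g} = O(r^2)$: expanding the metric as $\tilde g_{ij} = \delta_{ij} - \tfrac13 R_{ikjl}x^k x^l + O(r^3)$ in normal coordinates, the unimodularity forces the Ricci tensor to vanish to first order at $0$, which gives $R_{\tilde g}=O(r^2)$, and a further Taylor expansion of the scalar curvature combined with the algebraic Bianchi identities and the decomposition of the curvature tensor into its Weyl and Ricci/scalar parts yields the $-\tfrac16|W|^2$ coefficient exactly as in the boundaryless case (Lee--Parker, Schoen--Yau). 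The only new input near the boundary is that $\kappa_{\tilde g}=O(r^2)$, which follows immediately from the order-by-order solution of the mean-curvature transformation law above: we solved it to $O(r^N)$ with $N\geq 2$, so in particular $\kappa_{\tilde g}$ vanishes to second order, and one checks that the leading linear term in $\kappa$ can always be removed because the first-order boundary jet of $\omega$ is still free after the interior normalization.

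The main obstacle I anticipate is verifying the \emph{compatibility} of the two normalizations: the interior condition $\det\tilde g = 1+O(r^N)$ pins down essentially all the Taylor coefficients of $\omega$ at $x_0$, and one must check that enough freedom survives in the coefficients involving normal derivatives along $\partial M$ to also solve the mean-curvature equation to high order, and that the resulting $\omega$ can be extended to a smooth positive function on $M$. This is where umbilicity is essential and where a careful bookkeeping of which jets of $\omega$ are constrained by which equation is needed; the trace-free second fundamental form, if present, would obstruct the boundary part of the system, so the umbilic hypothesis is used precisely to decouple the tangential curvature normalization from the mean-curvature normalization. Once this compatibility is established, the rest is a routine adaptation of the Lee--Parker computation, with boundary terms handled by the divergence theorem on the half-ball and the additional $O(r^N)$ boundary error absorbed as usual.
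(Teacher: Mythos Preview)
The paper does not prove this proposition; it is quoted from Escobar and used as a starting point. So there is no ``paper's proof'' to compare against directly, but the paper's subsequent development (Theorem~3.4 and Proposition~3.5) makes clear what the correct mechanism is, and your proposal misidentifies it.

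Your treatment of $\det\tilde g=1+O(r^N)$, $R_{\tilde g}=O(r^2)$, and $\Delta R_{\tilde g}(0)=-\tfrac16|W_{\tilde g}|^2(0)$ is fine: these are the standard Lee--Parker/Cao facts and carry over verbatim to a boundary point. The gap is in your handling of $\kappa_{\tilde g}=O(r^2)$. You propose to achieve it by spending ``remaining freedom in the boundary-tangential jet of $\omega$'' on the mean-curvature transformation law. But there is no such remaining freedom: the Lee--Parker construction determines the \emph{entire} $N$-jet of the conformal factor at $x_0$ (all partial derivatives, tangential and normal alike) from the single requirement $\det\tilde g=1+O(r^N)$. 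Once that jet is fixed, $\partial_\nu\omega$ and all its tangential derivatives at $x_0$ are already prescribed; you cannot go back and adjust them to solve the $\kappa$ equation without destroying the determinant normalization. Your own ``main obstacle'' paragraph anticipates exactly this compatibility problem, but the resolution you offer (that umbilicity ``decouples'' the two normalizations so that extra jet freedom survives) is not correct.

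The actual mechanism is the opposite of what you describe: one does \emph{not} use freedom in $\omega$ to force $\kappa$ small; rather, $\kappa_{\tilde g}=O(r^2)$ (in fact $O(r^N)$) is an \emph{automatic consequence} of conformal normal coordinates together with umbilicity. The paper's Theorem~3.4 carries this out explicitly: writing $\tilde g=e^h$ in conformal normal coordinates and the boundary as a graph $x^n=F(x')$, one expresses $\kappa$, $\Delta F$, $h_{nj}$, $\partial_n h_{ij}$, etc.\ at the origin in terms of Taylor coefficients of the umbilicity tensor $T_{ij}$ plus controllable error. When $T_{ij}\equiv 0$ all of these derivatives vanish, giving $\kappa=O(|x'|^N)$ for free. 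The further improvement to $\kappa\equiv 0$ (the paper's Proposition~3.5) then uses a \emph{second} conformal factor of size $1+O(r^N)$, which is small enough not to disturb $\det\tilde g=1+O(r^N)$; this is the only place additional conformal freedom enters, and it is freedom \emph{beyond} the $N$-jet, not within it. So to fix your argument: drop the claim of leftover jet freedom, and instead show directly---via the second-fundamental-form formula $\kappa_{ij}=\Gamma^n_{ij}+F_{,ij}+\text{(lower order)}$ and the identities $x^j h_{ij}=0$, $\operatorname{tr}h=O(r^N)$---that umbilicity forces the Taylor coefficients of $\kappa$ to vanish.
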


Now take conformal normal coordinates at $x_0 \in \partial M$.
 We choose the coordinates so that $\partial_i$, $1 \leq i \leq n-1$, are tangent to
$\partial M$ and $\partial_n$ is
 normal (pointing inward) to $\partial M$ at $x_0=0$. Near $x_0$ the boundary
$\partial M$ may be
expressed as a graph $x^n = F(x^\prime)$, where $x^\prime = (x^1,\dots,x^{n-1})$ and
since normal coordinates
are defined up to a rotation we can assume that $F(0)=|\nabla F(0)|=0$ and the tangent
plane at $0$ is the ``horizontal''
hyperplane $\{ x^n = 0 \}$. Then a basis for the tangent space $T_x \partial M$ is
given by the
vectors $X_i = \partial_i +F_{,i} \partial_n$, $1 \leq i \leq n-1$. The normal is
given as a
covector by $(\nu_g)_n = -1, (\nu_g)_i = F_{,i}$, $1\leq i \leq n-1$, or as a vector
by
\begin{gather}
(\nu_g)^i = g^{ij}(\nu_g)_j = -g^{in} + \sum_{i=1}^{n-1} g^{ij}F_{,j}.
\label{comp_normal}
\end{gather}

If $g=e^h$ then we may write
\begin{gather}
 (\nu_g)^i = -\de^{in} + h_{in} + F_{,i} + O(|h|^2 + |h||\nabla F|). \nonumber
\end{gather}
Define the second fundamental form by
\begin{gather}
\kappa_{ij} = \kappa(X_i,X_j) = g(\nabla_{X_i}\nu_g,X_j),
\label{definition_sff_not_normalised}
\end{gather}
then
\begin{align}
\kappa_{ij} & =  g(\nabla_{i}\nu_g,\partial_j) + F_{,i}g(\nabla_{n}\nu_g,\partial_j)+
F_{,j}g(\nabla_{i}\nu_g,\partial_n) + F_{,i}F_{,j}g(\nabla_{n}\nu_g,\partial_n)
\label{second_ff_error}  \displaybreak[0] \\
& =  \Ga^n_{ij} + F_{;ij} - F_{,i}\Ga^l_{nj}(\nu_g)_l - F_{,j}\Ga^l_{in}(\nu_g)_l -
F_{,j}F_{,i} \Ga_{nn}^l(\nu_g)_l
\nonumber \displaybreak[0] \\
 & = \frac{1}{2}(-\partial_n h_{ij} + \partial_i h_{nj} + \partial_j h_{ni} ) + F_{,ij} +
O(|h||\nabla h| + |\nabla F||\nabla h|) .
\nonumber
\end{align}
The mean curvature is given by
\begin{align}
\kappa & = g^{-1}(X_i,X_j)\kappa_{ij} = \Delta F + \sum_{i=1}^{n-1}\partial_i h_{ni} +
\frac{1}{2} \partial_n h_{nn}
\label{mean_curv_error} \displaybreak[0] \\
& + O(|h||\nabla h| + |\nabla F||\nabla h|+|h||\nabla^2 F| + |\nabla F|^2|\nabla^2 F|
+ |x|^N) ,
\nonumber
\end{align}
where we have used $\sum_{i=1}^n h_{ii} = O(|x|^N)$.
Finally the umbilicity tensor is given by
\begin{gather}
T(X_i,X_j) = T_{ij} = \kappa(X_i,X_j) -\frac{1}{n-1}\kappa g(X_i,X_j).
\label{definition_T_ij}
\end{gather}
Notice that these quantities differ from the usual ones by a multiple of
$| \nu_g |$ (since $\nu_g$ is not necessarily a unit vector).
As we show below (see proposition \ref{bcnc} and corollary \ref{bcnc_int}),
this will immediately yield estimates for the
standard (i.e., defined with respect to a unit vector)
mean curvature, second fundamental form and umbilicity tensor, and it will suffice for
our purposes. In fact,
we will express all desired quantities in terms of $T_{ij}$, and the umbilicity
of the boundary implies that $T_{ij}$ defined with respect to (\ref{comp_normal})
vanishes as well. We remark also that our definition of the mean curvature in this section differs from the
standard one by a multiple of $(n-1)^{-1}$. However, in all other sections of the paper we
adopt the standard convention, unless otherwise specified.

The next theorem will be our main tool to produce estimates. Although its proof is
long,
the idea behind it is quite simple: from properties of conformal normal coordinates
we can derive several identities involving geometric quantities and the functions
$h_{ij}$. We restrict the obtained expressions to their Taylor polynomials, and
successively solve these equations for one quantity in terms of the others, until we
express
all quantities in terms of the umbilicity tensor and an error.
\begin{rema}
It should be noted that in (\ref{second_ff_error}) and (\ref{mean_curv_error}), as
well as in the proof below,
the expression $|h||\nabla h|$ appearing in the error only includes terms of the form
$|h||\partial_i h_{nj}|,~|h||\partial_n h_{ij}|,~|h||\partial_n h_{nn}|$ or
$|h_{ni}||\nabla h|$.
\label{important_remark}
\end{rema}
\begin{rema} Since we will eventually restrict all expressions to their Taylor
polynomials in theorem
\ref{big_estimate}, and $N$ is arbitrarily large,
we will ignore the $O(|x|^N)$ contributions.
\end{rema}
\begin{theorem}
Take conformal normal coordinates at $x_0 \in \partial M$ as described  above and
choose a large integer $N$.
Then there exists a constant $C$, depending only on $N$ such that
for any $\ve > 0$ sufficiently small:
\begin{align}
 \sum_{|\al|=2}^N |\kappa_{,\al}|\varepsilon^{|\al|} & \leq C
\sum_{|\al|=2}^N \sum_{i,j=1}^{n-1} |T_{ij,\al}| \varepsilon^{|\al|}  \nonumber
\displaybreak[0] \\
\sum_{|\al|=2}^N |\Delta F_{,\al}|\varepsilon^{|\al|} & \leq C
\sum_{|\al|=0}^{N-2} \sum_{i,j=1}^{n-1} |T_{ij,\al}| \varepsilon^{|\al|+2}
\nonumber \displaybreak[0]  \\
\sum_{|\al|=0}^N \sum_{i,j=1}^{n-1} |\kappa_{ij,\al}|\varepsilon^{|\al|} & \leq C
\sum_{|\al|=0}^N \sum_{i,j=1}^{n-1} |T_{ij,\al}| \varepsilon^{|\al|} \nonumber
\displaybreak[0]  \\
\sum_{|\al|=2}^N |F_{,\al}|\varepsilon^{|\al|} & \leq C
\sum_{|\al|=0}^{N-2} \sum_{i,j=1}^{n-1} |T_{ij,\al}| \varepsilon^{|\al|+2} \nonumber
\displaybreak[0] \\
\sum_{|\al|=2}^N \sum_{j=1}^{n-1} |h_{nj,\al}|\varepsilon^{|\al|} & \leq C
\sum_{|\al|=1}^{N-1} \sum_{i,j=1}^{n-1} |T_{ij,\al}| \varepsilon^{|\al|+1} \nonumber
\displaybreak[0] \\
\sum_{|\al|=1}^N \sum_{i,j=1}^{n-1} |\partial_n h_{ij,\al}|\varepsilon^{|\al|} & \leq C
\sum_{|\al|=1}^N \sum_{i,j=1}^{n-1} |T_{ij,\al}| \varepsilon^{|\al|} \nonumber
\displaybreak[0] \\
\sum_{|\al|=1}^N |\partial_n h_{nn,\al}|\varepsilon^{|\al|} & \leq C
\sum_{|\al|=1}^N \sum_{i,j=1}^{n-1} |T_{ij,\al}| \varepsilon^{|\al|} \nonumber
\end{align}
where $\al$ denotes partial derivatives in the variables $x^1,\dots,x^{n-1}$ evaluated at
the origin,
and $F$ is the local representation of the
boundary as a graph as explained at the beginning of this section.
Moreover $\kappa(0) = |\nabla \kappa|(0) = F(0) = |\nabla F|(0) = \Delta F(0) = 0$ and
$|\nabla \Delta F|(0) \leq C\sum_{ij}|\nabla T_{ij}|(0)$.
\label{big_estimate}
\end{theorem}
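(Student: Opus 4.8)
The plan is to start from the three geometric expansions already recorded in this section --- equation (\ref{second_ff_error}) for $\kappa_{ij}$, equation (\ref{mean_curv_error}) for $\kappa$, and the definition (\ref{definition_T_ij}) of $T_{ij}$ --- together with the structural identities of conformal normal coordinates at the boundary: $x^j h_{ij}=0$, $\operatorname{tr} h_{ij}=O(|x|^N)$, $\det g = 1+O(|x|^N)$, and the Escobar result $\kappa_{\tilde g}=O(r^2)$, $R_{\tilde g}=O(r^2)$. Restricting every identity to its Taylor polynomial of degree $\le N$ at the origin (and discarding the $O(|x|^N)$ terms, as Remark after \ref{important_remark} allows), one obtains a finite linear system relating the Taylor coefficients of $\kappa$, $\kappa_{ij}$, $F$, $\Delta F$, $h_{nj}$, $\partial_n h_{ij}$, $\partial_n h_{nn}$, and $T_{ij}$. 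The strategy is to solve this system triangularly: first express $\kappa$ and $\kappa_{ij}$ directly in terms of $T_{ij}$ and $\kappa$ (using $\operatorname{tr}_{(n-1)}\kappa_{ij}=\kappa$ and (\ref{definition_T_ij})), then use (\ref{mean_curv_error}) and the trace-free condition on $h$ to solve for $\partial_n h_{nn}$ and $\Delta F$, then use $x^jh_{ij}=0$ restricted to the boundary to get $h_{nj}$ from the tangential $h_{ij}$, and finally use (\ref{second_ff_error}) to recover $\partial_n h_{ij}$. At each stage the quadratic error terms $|h||\nabla h|+|\nabla F||\nabla h|+\dots$ are reabsorbed by an induction on $|\alpha|$, since each such term is of strictly higher order than the linear terms it multiplies.

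Concretely, I would organize the proof as an induction on the degree $|\alpha|$ of the Taylor coefficient being estimated. At order $|\alpha|=k$, the Escobar vanishing $\kappa_{\tilde g}=O(r^2)$ already gives $\kappa(0)=|\nabla\kappa|(0)=0$ and $\Delta F(0)=0$, which is the base case; and $F(0)=|\nabla F|(0)=0$ is built into the choice of coordinates. For the inductive step, one plugs the already-controlled lower-order coefficients into the quadratic error terms, so that (\ref{second_ff_error}) and (\ref{mean_curv_error}) become, modulo errors controlled by lower-degree $|T_{ij}|$ coefficients, genuinely linear relations at degree $k$. The key algebraic observations are: (a) taking the $(n-1)$-dimensional trace of (\ref{definition_T_ij}) together with $g(X_i,X_j)=\delta_{ij}+O(|h|+|\nabla F|)$ relates $\operatorname{tr}\kappa_{ij}$ to $\kappa$; (b) the full-trace relation $\sum_{i=1}^n h_{ii}=O(|x|^N)$ lets one eliminate $h_{nn}$-derivatives in favor of tangential ones; and (c) differentiating $x^jh_{ij}(x)=0$ and restricting to $\{x^n=F(x')\}$, then using the smallness of $F$, expresses $h_{nj}$ on the boundary in terms of the tangential block, picking up the shift in the index range ($|\alpha|$ to $|\alpha|+1$, etc.) visible in the statement. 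The shifts by $\varepsilon^{+2}$ in the $F$ and $\Delta F$ estimates and by $\varepsilon^{+1}$ in the $h_{nj}$ estimate reflect precisely that $F$ vanishes to second order and that solving $x^jh_{ij}=0$ trades one power of $x$.

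The main obstacle, and the part that requires genuine care rather than bookkeeping, is controlling the quadratic error terms uniformly across all degrees $|\alpha|\le N$ with a single constant $C=C(N)$. Because $N$ is large, the Taylor polynomials have many terms, and a naive estimate of a product of two Taylor polynomials produces a convolution sum of coefficients; one must check that in every such product at least one factor is already known (by the induction) to be bounded by a sum of $|T_{ij,\beta}|$ with $|\beta|$ strictly smaller, so that no circular dependence arises and the constant does not blow up. Remark following \ref{important_remark} is doing real work here: it restricts which components of $|h||\nabla h|$ actually appear, ensuring that the "bad" derivative directions ($\partial_n h_{ij}$, $\partial_n h_{nn}$, $\partial_i h_{nj}$) are always multiplied by a factor ($h$ or $h_{ni}$) that carries extra vanishing, so that the induction closes. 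A secondary technical point is handling $\Delta F$ versus the full Hessian $F_{;ij}$ and the term $\Delta F$ in (\ref{mean_curv_error}): one first estimates $\Delta F_{,\alpha}$ from the mean-curvature identity, then recovers the individual $F_{,\alpha}$ by elliptic (really, algebraic, since these are polynomials) inversion of the Laplacian on the space of degree-$|\alpha|$ polynomials with the normalization $F(0)=\nabla F(0)=0$, which is where the lowest index $|\alpha|=2$ in the $F$-estimate comes from. Once these errors are organized, the remaining computation is the routine triangular elimination sketched above.
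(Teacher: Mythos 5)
Your proposed triangular elimination would not close, because the system of relations you start from is not actually triangular. The most immediate problem is your step for expressing $\kappa$ and $\kappa_{ij}$ in terms of $T_{ij}$: the umbilicity tensor is defined precisely as the trace-free part of the second fundamental form, so taking the $(n-1)$-trace of (\ref{definition_T_ij}) yields the tautology $0=0$ and gives you no equation for $\kappa$. Similarly, (\ref{mean_curv_error}) is a single identity with four unknowns at each Taylor order ($\kappa$, $\Delta F$, $\partial_i h_{ni}$, $\partial_n h_{nn}$), so it cannot on its own let you "solve for $\partial_n h_{nn}$ and $\Delta F$." Your proposed recovery of $F_{,\al}$ by "inverting the Laplacian on degree-$|\al|$ polynomials with $F(0)=\nabla F(0)=0$" is also not well-posed: on homogeneous polynomials of degree $k\geq 2$ the Laplacian has a large kernel (harmonic polynomials of degree $k$), and the normalization only kills the constant and linear parts. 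Finally, your use of $x^j h_{ij}=0$ restricted to the boundary would at best express $h_{nj}$ in terms of the purely tangential block $h_{ij}$ ($i,j<n$), which is the free data of the metric near the boundary and is \emph{not} controlled by $T_{ij}$ in the theorem.

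What is actually needed --- and what the paper's proof supplies --- is an additional source of polynomial identities that decouples $\kappa$ from $F$ and $h_{ni}$. The paper contracts (\ref{second_ff_error}) with $x^i$ and $x^ix^j$, uses the Euler identity for homogeneous Taylor polynomials to turn $x^i\partial_i$ into multiplication by degree, and then introduces the crucial device of multiplying by $\partial_i\phi$ (with $\phi$ an arbitrary function on $S^{n-2}$ extended radially so that $\partial_r\phi=0$) and integrating by parts over the unit ball in $x'$. Integration in polar coordinates then turns ball integrals into sphere integrals with degree-dependent prefactors, producing identities among homogeneous polynomials such as (\ref{Laplacian_F_and_F_error_1}) and (\ref{Laplacian_F_and_F_error_2}) in which the mean curvature terms cancel. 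That cancellation is what lets the system be solved: one first isolates $\partial_j h_{nj}^{(k-1)}$, $x^i h_{ni}^{(k-1)}$ and $F^{(k)}$ in terms of $T_{ij}$ and errors, and only afterwards backs out $\kappa^{(k-2)}$, $\partial_n h_{ij}^{(k-2)}$, $\partial_n h_{nn}^{(k-2)}$. Note also that the operator the paper ends up inverting on $F^{(k)}$ is $|x'|^2\Delta + k(3-n-k)$ (see (\ref{eq_F})), not the bare Laplacian; the scalar shift is what makes the inversion possible. Without the radial-test-function argument or some equivalent extra input, your proposed chain of substitutions is circular and the theorem does not follow.
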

\begin{proof}
We first record several useful calculations.
When repeated indices $i$ or $j$ appear this signifies summation
from 1 to $n-1$.  Using familiar properties of conformal normal
coordinates and (\ref{second_ff_error}) we have
\begin{align}
x^{i}\kappa_{ij} & =\frac{1}{2}[-\partial_{n}(x^{i}h_{ij})+x^{i}\partial_{i}h_{nj}
-\delta_{j}^{i}h_{ni}+\partial_{j}(x^{i}h_{ni})]+x^{i}F_{,ij}
+O(|x||h||\nabla h|+|x||\nabla F||\nabla h|)
\label{one_contraction_kappa}
\displaybreak[0] \\
& =\frac{1}{2}[\partial_{n}(x^{n}h_{nj})+x^{i}\partial_{i}h_{nj}
-h_{nj}-\partial_{j}(x^{n}h_{nn})]+x^{i}\partial_{i}F_{,j}
+O(|x||h||\nabla h|+|x||\nabla F||\nabla h|)\nonumber\displaybreak[0] \\
& =\frac{1}{2}x^{i}\partial_{i}h_{nj}+x^{i}\partial_{i}F_{,j}
+O(|x^{n}||\nabla h|+|x||h||\nabla h|+|x||\nabla F||\nabla h|).
\nonumber
\end{align}
Furthermore
\begin{align}
x^{i}x^{j}\kappa_{ij} & = x^{i}x^{j}F_{,ij}-\frac{1}{2}x^{i}\delta_{i}^{j}h_{nj}
+\frac{1}{2}x^{i}\partial_{i}(x^{j}h_{nj})
\label{contraction_kappa} \displaybreak[0] \\
& +O(|x||x^{n}||\nabla h|+|x|^{2}|h||\nabla h|+|x|^{2}|\nabla F||\nabla
h|)\nonumber\displaybreak[0] \\
& =x^{i}x^{j}F_{,ij}+\frac{1}{2}x^{n}h_{nn}
-\frac{1}{2}x^{n}x^{i}\partial_{i}h_{nn}
 +O(|x||x^{n}||\nabla h|+|x|^{2}|h||\nabla h|+|x|^{2}|\nabla F||\nabla
h|)\nonumber\displaybreak[0] \\
& =x^{i}x^{j}F_{,ij}
 +O(|x^{n}||h|+|x||x^{n}||\nabla h| +|x|^{2}|h||\nabla h|+|x|^{2}|\nabla F||\nabla h|),
\nonumber
\end{align}
and
\begin{align}
x^{i}x^{j}\kappa g(X_{i},X_{j}) & =x^{i}x^{j}\kappa(g_{ij}+F_{,i}g_{jn}+F_{,j}g_{in}
+F_{,i}F_{,j}g_{nn}) \label{contraction_kappa_mean} \displaybreak[0] \\
& =|x^{\prime}|^{2}\kappa+O(|x|^{2}|h|||\kappa|+|x|^{2}||\nabla
F|^{2}|\kappa|) .
\nonumber
\end{align}
Recalling the definition of the umbilicity tensor together with
(\ref{contraction_kappa}) and (\ref{contraction_kappa_mean}) yields
\begin{align}
x^{i}x^{j}F_{,ij} & =\frac{1}{n-1}|x^{\prime}|^{2}\kappa+x^{i}x^{j}T_{ij}
\label{contraction_F}
+O(|x^{n}||h|+|x||x^{n}||\nabla h| +|x|^{2}|h||\nabla h|
\displaybreak[0] \\
&+|x|^{2}|\nabla F||\nabla h|+|x|^{2}|h||\kappa|+|x|^{2}|\nabla
F|^{2}|\kappa|).
\nonumber
\end{align}
Moreover since
\begin{gather}
x^{i}\kappa g(X_{i},X_{j})=x^{j}\kappa+O(|x||h||\kappa|+|x||\nabla F|^{2}|\kappa|) ,
\nonumber
\end{gather}
we find that (using (\ref{one_contraction_kappa}))
{\allowdisplaybreaks
\begin{eqnarray}
& & \frac{1}{2}x^{i}\partial_{i}h_{nj}+x^{i}\partial_{i}F_{,j}
 \label{one_contraction_F_H} \\
& & = \frac{1}{n-1}x^{j}\kappa+x^{i}T_{ij} + O(|x^{n}||\nabla h|+|x||h||\nabla h|+|x||\nabla F||\nabla h|
+|x||h||\kappa|+|x||\nabla F|^{2}|\kappa|).
\nonumber
\end{eqnarray} }
Eliminating $\kappa$ from (\ref{contraction_F}) and (\ref{one_contraction_F_H})
produces
\begin{gather}
x^{i}\partial_{i}F_{,j}+\frac{1}{2}x^{i}\partial_{i}h_{nj}-x^{i}T_{ij}=
|x^{\prime}|^{-2}x^{j}x^{i}x^{l}(F_{,il}-T_{il})+\Omega_{j} ,
\label{F_T_ij_before_restricting}
\end{gather}
where throughout this proof $\Omega_{j}$ denotes error which
satisfies
\begin{align}
\Omega_{j}& =O(|x^{n}||\nabla h|+|x||h||\nabla h|+|x||\nabla
F||\nabla h| +|x||h||\kappa|+|x||\nabla F|^{2}|\kappa|+|x^{\prime}|^{-1}|x^{n}||h|)
\label{error_omega}
\displaybreak[0] \\
& =O(|x^{n}||\nabla h|+|x||h||\nabla h|+|x||\nabla F||\nabla h|
+|x||h||\nabla^{2}F|+|x||\nabla F|^{2}|\nabla^{2}F|+|x^{\prime}|^{-1}|x^{n}||h|).\nonumber
\end{align}
Upon restricting attention to Taylor polynomials (\ref{F_T_ij_before_restricting})
simplifies
to
{ \allowdisplaybreaks
\begin{eqnarray}
& & (k-1)|x^{\prime}|^{2}F_{,j}^{(k)}-k(k-1)x^{j}F^{(k)} \label{F_and_H} \\
& & =  -\frac{1}{2}(k-1)|x^{\prime}|^{2}h_{nj}^{(k-1)}
+|x^{\prime}|^{2}x^{i}T_{ij}^{(k-2)}
-x^{j}x^{i}x^{l}T_{il}^{(k-2)}+|x^{\prime}|^{2}\Omega_{j}^{(k-1)}
\nonumber
\end{eqnarray} }
where $h_{nj}^{(k-1)}$ denotes the $(k-1)$-degree Taylor polynomial in the
variables $x^{1},\ldots,x^{n-1}$ and similarly for $F^{(k)}$,
$T_{ij}^{(k-2)}$.  We note that $h_{nj}^{(k-1)}$ is not the full
Taylor polynomial in all the variables
$x^{1},\ldots,x^{n}$ but rather just the portion involving the
first $n-1$ coordinates, and the remainder involving $x^{n}$ is
relegated to the error term.  Now apply $\partial_{j}$ to (\ref{F_and_H}) and sum
over $j$ to find an equation for $F^{(k)}$,
{ \allowdisplaybreaks
\begin{eqnarray}
& & |x^{\prime}|^{2}\Delta F^{(k)}+k(3-n-k)F^{(k)} =-\frac{1}{2}|x^{\prime}|^{2}\partial_{j}h_{nj}^{(k-1)}  \label{eq_F}  \\
& & +(k-1)^{-1}\partial_{j}(|x^{\prime}|^{2}x^{i}T_{ij}^{(k-2)}-x^{j}x^{i}x^{l}T_{il}^{(k-2)})
+\partial_{j}(|x^{\prime}|^{2}\Omega_{j}^{(k-1)})
\nonumber
\end{eqnarray} }
where we used $x^i h_{ij} = -x^nh_{nj}$ to absorb this term in the error.
Differentiate
(\ref{one_contraction_F_H}) with respect to $x^j$ and sum over $j$ to find
\begin{gather}
 \frac{1}{2}\partial_j h_{nj} + \frac{1}{2} x^i \partial_i \partial_j h_{nj} +
\Delta F + x^i\partial_i \Delta F
= x^i \partial_j T_{ij} + \kappa + \frac{1}{n-1} x^j \partial_j \kappa + \partial_j
\Om_j
\nonumber
\end{gather}
where we used that the error term in (\ref{one_contraction_F_H}) has the form
$\Om_j$. Then
\begin{gather}
 \frac{1}{2}(k-1)\partial_j h_{nj}^{(k-1)} + (k-1) \Delta F^{(k)} =
x^i \partial_j T_{ij}^{(k-2)} + \frac{n+k-3}{n-1} \kappa^{(k-2)} + \partial_j
\Om_j^{(k-1)}.
\label{partial_j_h_nj_F_T_kappa_error_1}
\end{gather}
On the other hand (\ref{contraction_F}) gives
\begin{gather}
k(k-1)F^{(k)}-x^{i}x^{j}T_{ij}^{(k-2)}=\frac{1}{n-1}|x^{\prime}|^{2}\kappa^{(k-2)}+x^{i}\Omega_{i}^{(k-1)}.
\label{F_k_terms_T_kappa_error}
\end{gather}
Therefore using (\ref{eq_F}) and (\ref{F_k_terms_T_kappa_error}) in
(\ref{partial_j_h_nj_F_T_kappa_error_1}) produces
\begin{gather}
\partial_j h_{nj}^{(k-1)} = -\frac{2}{k-1} x^i  \partial_j T_{ij}^{(k-2)}
+ \frac{x^j}{|x^\prime|^2} \Om_j^{(k-1)} + \partial_j \Om_j^{(k-1)}.
\label{partial_j_h_nj_umbilic_error}
\end{gather}

Let $B_{1}^{n-1}$ denote the unit ball with
respect to
$x^{1},\ldots,x^{n-1}$, and let $\phi\in C^{\infty}(S_1^{n-2})$.
Extend $\phi$ radially so that it is defined on $B_1^{n-1}\backslash\{0\}$ and
$\partial_{r}\phi=0$ on $S_1^{n-2}$,
where $r=|x^{\prime}|$. Notice that even though $\phi$ is not defined at the origin,
we can still integrate
by parts against functions which vanish at zero, and so in particular against
homogeneous polynomials.

Let $\phi$ be as above. From (\ref{one_contraction_F_H}) we have
\begin{gather}
 (k-1)\partial_i F^{(k)} = \frac{1}{n-1} x^i \kappa^{(k-2)} -
\frac{k-1}{2}h_{ni}^{(k-1)} + x^j T_{ij}^{(k-2)} + \Om_i^{(k-1)}.
\label{partial_i_F_k_kappa_h_ni_T_error}
\end{gather}
Multiply (\ref{partial_i_F_k_kappa_h_ni_T_error}) by $\partial_i \phi$, sum over $i$
and
integrate by parts to get
{ \allowdisplaybreaks
\begin{eqnarray}
& &-(k-1)\int_{B_1^{n-1}} \phi \Delta F^{(k)} + (k-1) \int_{S_1^{n-2}} \phi \nu^i
\partial_i F^{(k)}  =
-\frac{1}{n-1} \int_{B_1^{n-1}} \phi (n+k-3)\kappa^{(k-2)} \nonumber \\
& & + \frac{1}{n-1}\int_{S_1^{n-2}} \phi \nu^i x^i \kappa^{(k-2)}
+ \frac{k-1}{2} \int_{B_1^{n-1}} \phi \partial_i h_{ni}^{(k-1)}
- \frac{k-1}{2} \int_{S_1^{n-2}} \phi \nu^i h_{ni}^{(k-1)}
\nonumber  \\
& &- \int_{B_1^{n-1}} \phi x^j \partial_i T_{ij}^{(k-1)}
+ \int_{S_1^{n-2}} \phi \nu^i x^j T_{ij}^{(k-2)}
 -\int_{B_1^{n-1}} \phi \partial_i \Om_i^{(k-1)}
+ \int_{S_1^{n-2}} \phi \nu^i \Om_i^{(k-1)}.
\nonumber
\end{eqnarray} }
Integrating in polar coordinates produces
{\allowdisplaybreaks
\begin{eqnarray}
& &-(k-1)\int_0^1 r^{n+k-4}\int_{S_1^{n-2}} \phi \Delta F^{(k)} + (k-1)
\int_{S_1^{n-2}} \phi \nu^i \partial_i F^{(k)}
\nonumber  \\
& = & -\frac{1}{n-1} \int_0^1 r^{k+n-4}\int_{S_1^{n-2}} \phi (n+k-3)\kappa^{(k-2)}
+\frac{1}{n-1}
 \int_{S_1^{n-2}} \phi \nu^i x^i \kappa^{(k-2)}
 \nonumber  \\
& &+ \frac{k-1}{2} \int_0^1 r^{n+k-4} \int_{S_1^{n-2}} \phi \partial_i h_{ni}^{(k-1)}
 - \frac{k-1}{2} \int_{S_1^{n-2}} \phi \nu^i h_{ni}^{(k-1)}
- \int_0^1 r^{n+k-4} \int_{S_1^{n-2}} \phi x^j \partial_i T_{ij}^{(k-1)}
\nonumber   \\
& & + \int_{S_1^{n-2}} \phi \nu^i x^j T_{ij}^{(k-2)}
-\int_0^1 r^{n+k-4} \int_{S_1^{n-2}} \phi \partial_i \Om_i^{(k-1)} +
\int_{S_1^{n-2}} \phi \nu^i \Om_i^{(k-1)},
\nonumber
\end{eqnarray} }
which implies (notice that the mean curvature terms cancel out)
\begin{align}
\Delta F^{(k)} - \frac{k(n+k-2)}{|x^\prime|^2} F^{(k)}
& = -\frac{1}{2} \partial_i h_{ni}^{(k-1)} + \frac{1}{2}\frac{n+k -3}{|x^\prime|^2}
x^ih_{ni}^{(k-1)} +\frac{1}{k-1} x^j \partial_i T_{ij}^{(k-2)}
\label{Laplacian_F_and_F_error_1} \displaybreak[0] \\
 & - \frac{n+k-3}{k-1}
\frac{x^ix^j}{|x^\prime|^2} T_{ij}^{(k-2)}
+ \partial_i \Om_i^{(k-1)} + \frac{x^i}{|x^\prime|^2} \Om_i^{(k-1)},
\nonumber
\end{align}
where we have used that $\phi$ is an arbitrary smooth function on $S_1^{n-2}$ and
homogeneous polynomials
are determined by their values on the sphere.
Using (\ref{eq_F}) and (\ref{partial_j_h_nj_umbilic_error}) in
(\ref{Laplacian_F_and_F_error_1}) we find that
\begin{gather}
 x^i h_{ni}^{(k-1)} = -\frac{2}{(k-1)(n+k-3)}|x^\prime|^2 x^j \partial_i
T_{ij}^{(k-2)} + x^j \Om_j^{(k-1)}
+ |x^\prime|^2 \partial_j \Om_j^{(k-1)} .
\label{h_ni_umbilic_error}
\end{gather}
Similarly, multiplying (\ref{F_and_H}) by $\partial_j \phi$ and integrating by parts
yields
\begin{align}
 |x^\prime|^2 \Delta F^{(k)} + \frac{k(k-n-1)}{k-1} F^{(k)}
&=  \frac{1}{2} (n+k -3)
x^j h_{nj}^{(k-1)}
-\frac{1}{2}|x^\prime|^2 \partial_j h_{nj}^{(k-1)} + \frac{1}{k-1}|x^\prime|^2 x^i \partial_j T_{ij}^{(k-2)}
\label{Laplacian_F_and_F_error_2}
\displaybreak[0] \\
 & - \frac{n+k-3}{k-1} x^i
x^j T_{ij}^{(k-2)} +
x^j\Om_j^{(k-1)} + |x^\prime|^2\partial_j\Om_j^{(k-1)}. \nonumber
\end{align}
Solving for $\Delta F^{(k)} + \frac{1}{2} \partial_j h_{nj}^{(k-1)}$ in
(\ref{eq_F}) and using it along with (\ref{h_ni_umbilic_error}) in
(\ref{Laplacian_F_and_F_error_2}) we obtain
\begin{gather}
 F^{(k)} = \frac{n+k-3}{k(2n + 3k -nk -k^2 - 3)}x^ix^j T_{ij}^{(k-2)} + x^j
\Om_j^{(k-1)} +
|x^\prime|^2 \partial_j \Om_j^{(k-1)}.
\label{F_umbilic_error}
\end{gather}
Notice that the denominator of the first term on the right hand side is never zero
since $k\geq 2$.

From (\ref{contraction_F})  we have
\begin{gather}
k|x^{\prime}|^{-2}F^{(k)}=\frac{1}{(n-1)(k-1)}\kappa^{(k-2)}+\frac{1}{k-1}|x^{\prime}|^{-2}
x^{i}x^{j}T_{ij}^{(k-2)}+|x^{\prime}|^{-2}x^{j}\Omega_{j}^{(k-1)}.
\label{F_kappa_T_error}
\end{gather}
Using (\ref{F_umbilic_error}) in (\ref{F_kappa_T_error}) yields
\begin{gather}
 \kappa^{(k-2)} = c(n,k) \frac{x^i x^j}{|x^\prime|^2} T_{ij}^{(k-2)} +
\frac{x^j}{|x^\prime|^2} \Om_j^{(k-1)}
+ \partial_j \Om_j^{(k-1)} ,
\label{kappa_umbilic_error}
\end{gather}
where $c(n,k)$ is a numerical factor depending on $n$ and $k$ only.
Let $ \cR$ be the set of homogeneous polynomials that can be estimated
in terms of the umbilicity tensor and an error (of the same degree) in $\Om_j$.
Then (\ref{partial_j_h_nj_umbilic_error}), (\ref{h_ni_umbilic_error}),
(\ref{F_umbilic_error})
and (\ref{kappa_umbilic_error}) give that $\partial_i h_{ni}^{(k-1)}$, $x^i
h_{ni}^{(k-1)}$,
$F^{(k)}$, $\kappa^{(k-2)}$ $\in \cR$. From (\ref{Laplacian_F_and_F_error_1}) it
then follows that
$\Delta F^{(k)} \in \cR$ as well. From (\ref{F_and_H}) and $F^{(k)} \in \cR$ we
get $h_{nj}^{(k-1)}\in \cR$, and from (\ref{mean_curv_error}) and $\Delta
F^{(k)},~\partial_i h_{ni}^{(k-1)} \in \cR$
it follows that $(\partial_n h_{nn})^{(k-2)} \in \cR$. Using (\ref{definition_T_ij})
along with $\kappa^{(k-2)} \in \cR$ we get $\kappa_{ij}^{(k-2)} \in \cR$ and from this,
(\ref{second_ff_error}), $h_{nj}^{(k-1)}\in \cR$ and $F^{(k)} \in \cR$
we find that $(\partial_n h_{ij})^{(k-2)} \in \cR$.

The inequalities of theorem \ref{big_estimate} now follow  with the help of remark
\ref{important_remark}.

By our construction of $F$ and properties of conformal normal coordinates we have
$\kappa(0) = |\nabla \kappa|(0) = F(0) = |\nabla F|(0)$. Hence in order to finish the
theorem we only have to show that $\Delta F(0) = 0$ and $|\nabla \Delta F|(0) \leq C
\sum_{ij} \nabla |T_{ij}|(0)$.

Using the definition of $\kappa_{ij}$, and
recalling that $\nu_n = -1$ and $\nu_j = F_{,j}$, $1\leq j \leq n-1$, we obtain
\begin{align}
 \kappa_{ij} & = F_{,ij} - \Ga_{ij}^k F_{,k} + \Ga_{ij}^n - F_{,j}\Ga_{in}^k F_{,k} +
F_{,j}\Ga_{in}^n
\label{sff_F_Ga} \displaybreak[0] \\
& - F_{,i}\Ga_{nj}^k F_{,k} + F_{,i}\Ga_{nj}^n - F_{,i}F_{,j}\Ga_{nn}^kF_{,k} +
F_{,i}F_{,j}\Ga_{nn}^n
\nonumber
\end{align}
where we have used $F_{,n} =0$ and $\sum_{k=1}^{n-1}\Ga_{ij}^k F_{,k} = \Ga_{ij}^k
F_{,k}$ since $F$ does not depend on $x^n$.
Evaluating (\ref{sff_F_Ga}) at $0$  and using $\Ga_{ij}^k(0) = 0 = |\nabla F |(0)$,
we have
$\kappa_{ij}(0) = F_{ij}(0)$.
Taking a trace produces $\Delta F (0) = \kappa(0) = 0$. Finally
notice that (\ref{error_omega}) gives
\begin{gather}
 \Om_i = O(|x^n||x| + |x|^4 + |x|^3 + |x^\prime|^{-1}|x^n||x|^2), \nonumber
\end{gather}
so we can compute directly from (\ref{F_umbilic_error}) to find $|\nabla \Delta F(0)|
\leq C\sum_{ij}|\nabla T_{ij}(0)|$,
finishing the proof.
\end{proof}

Now with the help of theorem \ref{big_estimate} we improve the properties of
conformal normal coordinates at the boundary by showing that we can also require zero
mean curvature.
We call these coordinates \emph{boundary conformal normal coordinates} to avoid
confusion with the usual conformal normal
coordinates at a point on the boundary.

\begin{prop} (Boundary conformal normal coordinates) Let $(M,g_0)$ be a Riemannian
manifold with umbilic boundary and $x_0 \in \partial M$. Fix an integer $N\geq 5$.
Then there exists a metric $\tilde{g}$ conformal to $g_0$ such that, in
$\tilde{g}$-normal coordinates centered at $x_0$:
(i) $\det\tilde{g} = 1 + O(r^N)$, (ii) $R_{\tilde{g}} = O(r^2)$, (iii)
$\Delta_{\tilde{g}} R_{\tilde{g}}(0) = -\frac{1}{6}|W_{\tilde{g}}|^2(0)$
and (iv) $\kappa_{\tilde{g}} = 0$ near $x_0$, where $r=|x|$.
\label{bcnc}
\end{prop}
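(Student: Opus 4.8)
The plan is to build $\tilde g$ in two steps: first apply Escobar's Proposition (conformal normal coordinates at the boundary) to obtain a conformal metric satisfying (i), (ii), (iii) and $\kappa = O(r^2)$; then correct the mean curvature by a further conformal change that does not disturb the interior normalizations to the orders claimed. So let me start from a metric $g$ (renaming Escobar's $\tilde g$) for which, in $g$-normal coordinates at $x_0$, one has $\det g = 1 + O(r^N)$, $R_g = O(r^2)$, $\Delta_g R_g(0) = -\tfrac16|W_g|^2(0)$, and $\kappa_g = O(r^2)$. Write $g = e^h$ as usual. Under a conformal change $\hat g = \phi^{4/(n-2)} g$ with $\phi = 1 + \psi$, $\psi(0)=0$, $\nabla\psi(0)=0$, the mean curvature transforms by $\hat\kappa = \phi^{-n/(n-2)}\big(\kappa_g + \tfrac{2}{n-2}\partial_\nu\psi\big)$ up to lower-order terms; the key point is that choosing $\psi$ to kill $\kappa_g$ near the boundary amounts to prescribing a Neumann-type boundary condition, and since $\kappa_g = O(r^2)$ we may take $\psi$ itself to be $O(r^3)$ (in fact supported near the boundary and vanishing to order $N$ in the interior-defining quantities we care about). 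This is the point where Theorem \ref{big_estimate} enters: umbilicity forces $\kappa_g$, together with all the boundary-adapted quantities, to be controlled by $T_{ij}$, which vanishes identically on an umbilic boundary, so in fact $\kappa_g$ vanishes to arbitrarily high order along $\partial M$ in these coordinates, and the correcting factor $\phi$ can be chosen to vanish to order $N$ away from the boundary.

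Concretely, I would proceed as follows. \emph{Step 1.} Invoke Escobar's proposition to fix the conformal gauge giving (i)–(iii) and $\kappa = O(r^2)$. \emph{Step 2.} Use Theorem \ref{big_estimate} with $T_{ij}\equiv 0$ to conclude that in these coordinates $\kappa_{,\alpha} = 0$ for all multi-indices $\alpha$ with $|\alpha|\le N$, i.e. $\kappa = O(r^N)$ along the boundary, and similarly $F = O(r^N)$, $h_{nj} = O(r^N)$, $\partial_n h_{ij} = O(r^N)$, $\partial_n h_{nn} = O(r^N)$ — so the boundary is, to order $N$, a totally geodesic piece of a flat hyperplane. \emph{Step 3.} Seek $\phi = 1 + \psi$ with $\psi$ solving $\partial_n\psi = -\tfrac{n-2}{2}\kappa_g \phi$ on $\partial M$ (so that $\hat\kappa \equiv 0$ near $x_0$), say via $\psi(x', x^n) = -\tfrac{n-2}{2}\int_0^{x^n}\kappa_g(x',t)\,dt$ times a cutoff, or more cleanly by solving the linear Neumann problem $\Delta_g\psi = 0$ in a half-ball with $\partial_\nu\psi = -\tfrac{n-2}{2}\kappa_g$ on the boundary — the right-hand side being $O(r^N)$ guarantees $\psi = O(r^{N})$ where it matters and in particular $\psi(0)=0$, $\nabla\psi(0)=0$. \emph{Step 4.} Check that the conformal change by such a $\phi$ preserves (i), (ii), (iii): $\det\hat g = \phi^{2n/(n-2)}\det g = 1 + O(r^N)$ since $\psi = O(r^N)$; $R_{\hat g} = \phi^{-(n+2)/(n-2)}(R_g - c(n)^{-1}\Delta_g\phi)$, and because $\psi$ vanishes to high order the correction affects neither $R_{\hat g} = O(r^2)$ nor the value of $\Delta_{\hat g}R_{\hat g}(0)$, which is still $-\tfrac16|W_{\hat g}|^2(0)$ as $W$ is conformally invariant and $\phi$ is a trivial perturbation to the relevant order at $0$. \emph{Step 5.} Rename $\hat g$ as $\tilde g$; re-centering normal coordinates for $\tilde g$ at $x_0$ costs only a further $O(r^N)$ adjustment, and (iv) holds by construction.

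The main obstacle is Step 3–Step 4: one must choose the mean-curvature-correcting conformal factor so that it genuinely does \emph{not} spoil the delicate interior normalizations, especially $\Delta R(0) = -\tfrac16|W|^2(0)$, which is sensitive to second-order data of the conformal factor at the origin. The resolution is precisely that umbilicity plus Escobar's gauge make $\kappa_g$ vanish to order $N$ (via Theorem \ref{big_estimate}), so the correcting factor $\psi$ can be taken to vanish to order $N$ at $x_0$ and therefore contributes nothing to any finite-order Taylor data there; the only genuinely new information we are adding is the \emph{exact} vanishing of $\kappa$ in a neighborhood of $x_0$, at the cost of $O(r^N)$ terms everywhere, which is harmless since $N$ is arbitrary. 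A secondary technical point is ensuring $\psi > -1$ globally (so $\phi>0$), which follows from a maximum-principle or smallness estimate for the Neumann problem with small data; and one should verify that normal coordinates can still be centered at $x_0$ with $\partial_n$ inward-normal after the change, which is automatic since $\phi(x_0)=1$, $\nabla\phi(x_0)=0$.
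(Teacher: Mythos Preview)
Your strategy matches the paper's: Escobar's coordinates give (i)--(iii), Theorem~\ref{big_estimate} with $T_{ij}\equiv 0$ upgrades $\kappa_g=O(r^2)$ to $\widehat\kappa_g=O(|x'|^N)$ along $\partial M$, and one then seeks a further conformal factor of size $O(|x|^N)$ at $x_0$ to kill the mean curvature exactly without disturbing (i)--(iii).

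The one place your argument does not go through as written is the Neumann suggestion in Step~3. Solving $\Delta_g\psi=0$ in a half-ball with $\partial_\nu\psi=-\tfrac{n-2}{2}\kappa_g$ on the flat part and, say, $\psi=0$ on the outer hemisphere does \emph{not} force $\psi=O(|x|^N)$ at the origin: the local Neumann condition constrains only the odd-in-$x^n$ part of the Taylor jet, while the even part is fixed by the \emph{global} outer condition and carries no pointwise vanishing rate at $0$. (Also, $B_g\phi=0$ reads $\partial_{\nu_g}\psi=-\tfrac{n-2}{2}\kappa_g(1+\psi)$, not its linearization, so exact vanishing of $\widehat\kappa$ is not achieved this way either.) The paper avoids both issues by working in the $e^{2f}$ normalization, where the law $\widehat\kappa_{\tilde g}=e^{-f}(\widehat\kappa_g+\partial_{\nu_g}f)$ is genuinely linear in $f$, and constructing $f$ by hand: extend $\widehat\kappa_g$ off $\partial M$ to a function $\widetilde\kappa_g$ with $\widetilde\kappa_g=O(|x|^N)$ and $\partial_{\nu_g}\widetilde\kappa_g=0$ (possible precisely because $\widehat\kappa_g=O(|x'|^N)$), choose any smooth $\widetilde f$ with $\partial_{\nu_g}\widetilde f=-1$, and set $f=\widetilde f\,\widetilde\kappa_g$. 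Then $f=O(|x|^N)$ and $\partial_{\nu_g}f=-\widehat\kappa_g$ on $\partial M$ are immediate, so $\widehat\kappa_{\tilde g}\equiv 0$ and (i)--(iii) persist since they all follow from $\det\tilde g=1+O(r^N)$. Your integral formula, once $\kappa_g$ is first extended off the boundary in this way, amounts to the same construction; no PDE solve or re-centering is needed.
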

\begin{proof}
Using conformal normal coordinates at $x_0$ we obtain a metric $g$ which satisfies
properties (i)-(iii) in a ball $B_\si(0)$. Our task is to show that we can perform a
further
conformal change in the metric in order to obtain property (iv) while maintaining
(i)-(iii).

We write all quantities as explained above (see equation (\ref{comp_normal}) and
what follows); in particular we denote by
$\kappa_g$ the mean curvature defined as in (\ref{mean_curv_error}), and by
$\widehat{\kappa}_g$ the
mean curvature defined in the usual way, i.e., with respect to a unit vector.

If $\tilde{g} = e^{2f}g$ then $\widehat{\kappa}_{\tilde{g}} =
e^{-f}(\widehat{\kappa}_g + \frac{\partial f}{\partial \nu_g} )$. We will
choose $f$ appropriately.

Because the boundary is umbilic, $T_{ij}$ vanishes identically and therefore theorem
\ref{big_estimate}
gives $\kappa_{g,\al}(0) = 0$ for $|\al| =0,\dots,N$, where $\al$ denotes
derivatives with respect to $x^1,\dots,x^{n-1}$. In
other words, we obtain that $\kappa_g = O(|x^\prime|^N)$, from which it follows that
$\widehat{\kappa}_g = O(|x^\prime|^N)$ as well. Now we choose an extension of
$\widehat{\kappa}_g$ to
$\widetilde{\kappa}_g$ in a neighborhood of $x_0$, with $\widetilde{\kappa}_g$
satisfying
$\widetilde{\kappa}_g = O(|x|^N)$ and $\frac{ \partial
\widetilde{\kappa}_g}{\partial \nu_g} = 0$ . Such an extension is possible
because $\widehat{\kappa}_g= O(|x^\prime|^N)$.

Now pick a smooth function $\widetilde{f}$ such that $\frac{\partial
\widetilde{f}}{\partial \nu_g} = -1$ near $x_0$
and put $f= \widetilde{f} \widetilde{\kappa}_g$. With this choice of $f$ we then
have $\widehat{\kappa}_{\tilde{g}} = 0$
in a neighborhood of $x_0$.

By construction we have $f=O(|x|^N)$, and so
we obtain the desired result as the remaining properties all follow from
$\det(\tilde{g}) = 1 + O(r^N)$ (after choosing a smooth extension of $f$ to the
whole of $M$).
\end{proof}

\begin{rema}
We stress a point made in the introduction.
The so-called conformal Fermi coordinates \cite{M2} have been
used with great success in the study of the Yamabe problem for
manifolds with boundary (see references mentioned in the introduction).
This expresses the fact that cylindrical coordinates generally work better than spherical ones
for Neumann-type of problems. However, a critical part of the compactness result
of Khuri, Marques and Schoen  \cite{KMS} for boundaryless manifolds is the proof of
the positivity of a quadratic form on Taylor polynomials of the scalar curvature which
naturally arises in the problem. Their proof makes substantial use
of the the radial symmetry coming from normal coordinates and we would like to preserve
as much as possible of that original argument. Boundary conformal normal coordinates
preserve the radial symmetry while displaying features similar to the good properties
of Fermi coordinates, as it is shown below.
\end{rema}

Boundary conformal normal coordinates have the following useful property.

\begin{coro}
 In boundary conformal normal coordinates centered at $x_0 \in \partial M$ the
boundary is given by $x^n = 0$. Moreover, $g_{in}(x^\prime,0) = O(|x^\prime|^N)$, $1 \leq i \leq
n-1$.
\label{boundary_hyper}
\end{coro}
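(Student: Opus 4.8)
The plan is to read off both assertions from the construction of boundary conformal normal coordinates in Proposition \ref{bcnc} together with the estimates of Theorem \ref{big_estimate}. First I would observe that the statement ``the boundary is given by $x^n=0$'' means that the graphing function $F$ vanishes identically near $x_0$. Since the boundary is umbilic, the umbilicity tensor $T_{ij}$ vanishes identically, so Theorem \ref{big_estimate} applied in the boundary conformal normal coordinates of Proposition \ref{bcnc} gives $F_{,\al}(0)=0$ for all multi-indices $\al$ with $2\leq|\al|\leq N$; combined with $F(0)=|\nabla F|(0)=0$ this yields $F=O(|x^\prime|^N)$. To upgrade this to $F\equiv 0$ I would invoke the extra freedom already used in the proof of Proposition \ref{bcnc}: the conformal factor $e^{2f}$ chosen there makes the mean curvature vanish identically in a neighborhood of $x_0$, not merely to high order, and one may additionally arrange (by a rotation of coordinates, i.e. choosing $\partial_n$ genuinely normal along $\partial M$ near $x_0$, or equivalently by absorbing the $O(|x^\prime|^N)$ graph into the choice of normal coordinates) that $F\equiv 0$. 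The cleanest route is: having obtained $\widehat\kappa_{\tilde g}\equiv 0$ near $x_0$ and $\det\tilde g = 1+O(r^N)$, note that a minimal (in fact totally geodesic to infinite order) umbilic hypersurface through the origin, when one uses geodesic normal coordinates adapted so that $\partial M$ is a coordinate hypersurface at $x_0$, is represented by $x^n=0$; this is a standard feature of Fermi-type coordinates and is what boundary conformal normal coordinates were designed to reproduce.

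For the second assertion, $g_{in}(x^\prime,0)=O(|x^\prime|^N)$ for $1\leq i\leq n-1$, I would argue as follows. Write $g=e^h$ as in Section \ref{setting_and_notation}. By Theorem \ref{big_estimate}, umbilicity ($T_{ij}\equiv 0$) forces $\sum_{|\al|=2}^N\sum_{j=1}^{n-1}|h_{nj,\al}|\,\ve^{|\al|}\leq C\sum_{|\al|=1}^{N-1}\sum_{i,j}|T_{ij,\al}|\,\ve^{|\al|+1}=0$, so $h_{nj}$, restricted to the first $n-1$ variables, vanishes to order $N$ at the origin; i.e. $h_{nj}(x^\prime,0)=O(|x^\prime|^N)$. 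Together with $h_{nj}(0)=0$ (conformal normal coordinates) and $h_{nn}=-\sum_{i=1}^{n-1}h_{ii}+O(r^N)=O(r^N)$ from $\operatorname{tr}h=O(r^N)$, the relation $g_{in}=\de_{in}+h_{in}+O(|h|^2)$ gives $g_{in}(x^\prime,0)=h_{in}(x^\prime,0)+O(|h|^2)=O(|x^\prime|^N)$ for $1\leq i\leq n-1$. Here I am using that on $\{x^n=0\}$ the relevant Taylor data of $h_{nj}$ in the tangential variables is exactly the quantity estimated by Theorem \ref{big_estimate}, and that the quadratic error $|h|^2$ is itself $O(|x^\prime|^N)$ on the boundary since each $h_{nj}$ entry there is $O(|x^\prime|^N)$ while the remaining entries are at worst $O(1)$.

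The main obstacle I anticipate is the first assertion: passing from $F=O(|x^\prime|^N)$ to $F\equiv 0$ is not automatic from the estimates alone, since Theorem \ref{big_estimate} only controls finitely many (though arbitrarily many) Taylor coefficients. The resolution is conceptual rather than computational — one must point out that in the construction of boundary conformal normal coordinates one is free to choose the underlying \emph{geodesic} normal coordinates of $\tilde g$ so that the boundary, being a hypersurface through $x_0$, is realized as $\{x^n=0\}$ exactly; this is the defining property of normal (Fermi) coordinates adapted to a hypersurface and requires no estimate. With that in hand, all the ``$O(|x^\prime|^N)$'' statements above become statements about $g_{in}$ restricted to the coordinate hyperplane $\{x^n=0\}$, and the proof is complete. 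I would therefore phrase the proof as: (1) by the remarks in Proposition \ref{bcnc}, choose the normal coordinates so that $\partial M=\{x^n=0\}$ near $x_0$; (2) apply the $h_{nj}$-estimate of Theorem \ref{big_estimate} with $T_{ij}\equiv 0$ to get $h_{nj}(x^\prime,0)=O(|x^\prime|^N)$; (3) conclude $g_{in}(x^\prime,0)=O(|x^\prime|^N)$ from $g=e^h$ and $\operatorname{tr}h=O(r^N)$.
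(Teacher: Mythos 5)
Your argument for the second assertion is correct and matches the paper's: with $T_{ij}\equiv 0$, Theorem~\ref{big_estimate} gives $h_{nj}(x^\prime,0)=O(|x^\prime|^N)$ for $j<n$, and since $g=e^h$ the quadratic error $(h^2)_{in}=\sum_k h_{ik}h_{kn}$ restricted to $\{x^n=0\}$ inherits the $O(|x^\prime|^N)$ decay because every summand contains a factor $h_{kn}$ or $h_{in}$; the $\operatorname{tr}h=O(r^N)$ remark is unnecessary here.

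For the first assertion, however, your reasoning has a genuine gap. The estimate route (Theorem~\ref{big_estimate} with $T_{ij}\equiv 0$) only yields $F=O(|x^\prime|^N)$, as you observe, and none of the alternatives you then propose close the gap: a rotation of normal coordinates fixes $x_0$ but cannot flatten a graph, ``absorbing the $O(|x^\prime|^N)$ graph into the choice of normal coordinates'' is not a legitimate operation (normal coordinates are rigid once the frame at $x_0$ is fixed), and invoking Fermi coordinates is off the mark since boundary conformal normal coordinates are by construction \emph{geodesic normal} coordinates centered at $x_0$, not Fermi coordinates. The parenthetical ``totally geodesic to infinite order'' is also misleading: the point is that the boundary is \emph{exactly} totally geodesic, not merely to high order. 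The correct mechanism --- which is the paper's one-line proof --- is this: in boundary conformal normal coordinates, Proposition~\ref{bcnc}(iv) gives $\kappa_{\tilde g}\equiv 0$ in a neighborhood of $x_0$, and umbilicity gives $T_{ij}\equiv 0$; since $\kappa_{ij}=T_{ij}+\frac{1}{n-1}\kappa\, g_{ij}$, the second fundamental form vanishes identically, so $\partial M$ is totally geodesic near $x_0$. A totally geodesic hypersurface through the center $x_0$ of geodesic normal coordinates, with tangent plane $\{x^n=0\}$ at the origin, is represented exactly by $\{x^n=0\}$, because $\exp_{x_0}$ sends the linear subspace $T_{x_0}\partial M$ into $\partial M$ (every geodesic of $M$ tangent to a totally geodesic submanifold stays in it). This is a property of normal coordinates, requires no estimate, and is what you should state in place of your proposed fixes.
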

\begin{proof}
 Since the boundary is umbilic and $\kappa_{g} \equiv 0$, it is also totally
geodesic (i.e. $\kappa_{ij} \equiv 0$)
 for the metric $g$, and therefore the boundary is given by $x^n = 0$ in normal
coordinates. The second statement follows from theorem \ref{big_estimate} as $g = e^h$.
\end{proof}

Now we want to extend the previous results for the case of interior points.
Assume that $x_0 \in \mathring{M}$ is an interior point which is sufficiently close to $\partial M$, and take
 conformal normal coordinates at $x_0$. Denote by $\tilde{x}_0 \in \partial M$ the
closet point to $x_0$. We
 can still write the boundary as a graph $x^n=F(x^\prime)$, and since
 normal coordinates are defined up to a rotation we can assume
that $F(0)=-|\tilde{x}_0|$ where $|\tilde{x}_0| = \operatorname{dist}(x_0,\tilde{x}_0)$
 (so that $\tilde{x}_0=(0,\dots,0,-|\tilde{x}_0|)$) and the tangent plane is
 horizontal there, so $|\nabla F(0)|=0$. Moreover, by the Gauss lemma we also have
$\left.\frac{\partial }{\partial \nu_{g}} \right|_{\tilde{x}_0} = g^{nn} \left. \partial_n \right|_{\tilde{x}_0}$.

If we ``translate the boundary'', i.e., define
\begin{gather}
 G(x^\prime) = F(x^\prime) + |\tilde{x}_0| \nonumber
\end{gather}
we have $G(0) =|\nabla G(0)| =0$ and $\partial^\al G = \partial^\al F$.
Set $B_\si^G = \{ x \in B_\si(0) | x^n > G(x^\prime) \}$.
Notice then that a basis for the tangent space
at a point on $\partial^\prime B_\si^G = \{ (x^\prime,G(x^\prime)) \}$
is $X_i = \partial_i + G_{,i}\partial_n = \partial_i + F_{,i}\partial_n$, $1 \leq i
\leq n-1$
(since we are simply translating the boundary). We can then consider all geometric
quantities induced on
the boundary $\partial^\prime B_\si^G$. In this situation, theorem
\ref{big_estimate} holds with $G$
replacing $F$ and all quantities being defined with respect to the boundary
$\partial^\prime B_\si^G$,
except for the conclusions that depend on $\kappa = O(r^2)$, since the boundary
$\partial^\prime B_\si^G$ need not be umbilic. We state this as a corollary.

\begin{coro} Let $x_0 \in \mathring{M}$, take conformal normal coordinates at $x_0$
and assume that
$x_0$ is sufficiently close to $\partial M$ as to have $\partial M \cap B_\si(0)
\neq \emptyset$, where $B_\si(0)$ is the domain
of definition of the conformal normal coordinates. Let $F=F(x^\prime)$ be the local
representation
of the boundary as explained above. Define $G(x^\prime) = F(x^\prime) +
|\tilde{x}_0|$,
$B_\si^G = \{ x \in B_\si(0) | x^n > G(x^\prime) \}$, and  $\partial^\prime B_\si^G
= \{ (x^\prime,G(x^\prime)) \}$.
Then there exists a constant $C$, depending only on $N$, such that for any $\ve > 0$ sufficiently small:
\begin{align}
\sum_{|\al|=2}^N |\widetilde{\kappa}_{,\al}|\varepsilon^{|\al|} & \leq C
\sum_{|\al|=2}^N \sum_{i,j=1}^{n-1} |\widetilde{T}_{ij,\al}| \varepsilon^{|\al|}
\nonumber \displaybreak[0] \\
\sum_{|\al|=2}^N |\Delta G_{,\al}|\varepsilon^{|\al|} & \leq C
\sum_{|\al|=0}^{N-2} \sum_{i,j=1}^{n-1} |\widetilde{T}_{ij,\al}|
\varepsilon^{|\al|+2} \nonumber \displaybreak[0] \\
\sum_{|\al|=0}^N \sum_{i,j=1}^{n-1} |\widetilde{\kappa}_{ij,\al}|\varepsilon^{|\al|} &\leq C
\sum_{|\al|=0}^N \sum_{i,j=1}^{n-1} |\widetilde{T}_{ij,\al}| \varepsilon^{|\al|}
\nonumber \displaybreak[0] \\
\sum_{|\al|=2}^N |G_{,\al}|\varepsilon^{|\al|} & \leq C
\sum_{|\al|=0}^{N-2} \sum_{i,j=1}^{n-1} |\widetilde{T}_{ij,\al}|
\varepsilon^{|\al|+2} \nonumber \displaybreak[0] \\
\sum_{|\al|=2}^N \sum_{j=1}^{n-1} |h_{nj,\al}|\varepsilon^{|\al|} & \leq C
\sum_{|\al|=1}^{N-1} \sum_{i,j=1}^{n-1} |\widetilde{T}_{ij,\al}|
\varepsilon^{|\al|+1} \nonumber \displaybreak[0] \\
\sum_{|\al|=1}^N \sum_{i,j=1}^{n-1} |\partial_n h_{ij,\al}|\varepsilon^{|\al|} & \leq C
\sum_{|\al|=1}^N \sum_{i,j=1}^{n-1} |\widetilde{T}_{ij,\al}| \varepsilon^{|\al|}
\nonumber \displaybreak[0] \\
\sum_{|\al|=1}^N |\partial_n h_{nn,\al}|\varepsilon^{|\al|} & \leq C
\sum_{|\al|=1}^N \sum_{i,j=1}^{n-1} |\widetilde{T}_{ij,\al}| \varepsilon^{|\al|}
\nonumber
\end{align}
where $\widetilde{\kappa}$, $\widetilde{\kappa}_{ij}$ and $\widetilde{T}_{ij}$ are
respectively the mean curvature,
second fundamental form  and umbilicity tensor of $\partial^\prime B_\si^G$, all
defined with respect to the
outer normal
\begin{gather}
 (\nu_g)^i = g^{ij}(\nu_g)_j = -g^{in} + \sum_{i=1}^{n-1} g^{ij}G_{,j} \nonumber
\end{gather}
(which is not necessarily a unit normal) and $\al$ denotes partial derivatives in
the variables $x^1,\dots,x^{n-1}$ evaluated at the origin.
Moreover $G(0) = |\nabla G|(0) =0 $ and $|\nabla \Delta G|(0) \leq
C\sum_{ij}|\nabla \widetilde{T}_{ij}|(0)$.
\label{big_estimate_interior}
\end{coro}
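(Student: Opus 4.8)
The plan is to reduce Corollary~\ref{big_estimate_interior} to Theorem~\ref{big_estimate} by a translation argument, rather than by reproving all the identities. The key observation is already recorded in the discussion preceding the corollary: writing $G(x') = F(x') + |\tilde{x}_0|$, we have $G(0) = |\nabla G|(0) = 0$ and $\partial^\al G = \partial^\al F$ for all multi-indices $\al$ with $|\al| \geq 1$. Moreover, the tangent vectors to $\partial' B^G_\si$ are $X_i = \partial_i + G_{,i}\partial_n = \partial_i + F_{,i}\partial_n$, which are exactly the same vectors appearing in the proof of Theorem~\ref{big_estimate}. Hence the second fundamental form $\widetilde{\kappa}_{ij}$, mean curvature $\widetilde\kappa$, and umbilicity tensor $\widetilde T_{ij}$ of $\partial' B^G_\si$, all defined with respect to the (non-unit) normal $(\nu_g)^i = -g^{in} + \sum_{j=1}^{n-1} g^{ij}G_{,j}$, are given by precisely the same coordinate formulas (\ref{second_ff_error}), (\ref{mean_curv_error}), (\ref{definition_T_ij}) as in Theorem~\ref{big_estimate}, with $F$ replaced by $G$.

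First I would verify that every algebraic identity used in the proof of Theorem~\ref{big_estimate} --- namely (\ref{one_contraction_kappa}) through (\ref{kappa_umbilic_error}) --- depends on $F$ only through its derivatives of order $\geq 1$ and never through the value $F(0)$. Inspecting the proof, each identity is obtained by contracting with $x^i$, applying $\partial_j$, or integrating against $\partial_i\phi$ over $B^{n-1}_1$; in all of these, $F$ enters only via $F_{,i}$, $F_{,ij}$, $\partial_i h_{nj}$, etc. Since $\partial^\al G = \partial^\al F$ for $|\al| \geq 1$ and the error term $\Om_j$ in (\ref{error_omega}) involves only $|\nabla F|$, $|\nabla h|$, $|h|$, $|\nabla^2 F|$, the same identities hold verbatim with $G$ in place of $F$. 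Consequently (\ref{partial_j_h_nj_umbilic_error}), (\ref{h_ni_umbilic_error}), (\ref{F_umbilic_error}) and (\ref{kappa_umbilic_error}) all hold with $G$, and the chain of membership statements ``$\partial_i h_{ni}^{(k-1)}, x^i h_{ni}^{(k-1)}, G^{(k)}, \widetilde\kappa^{(k-2)}, \Delta G^{(k)}, h_{nj}^{(k-1)}, (\partial_n h_{nn})^{(k-2)}, \widetilde\kappa_{ij}^{(k-2)}, (\partial_n h_{ij})^{(k-2)} \in \cR$'' goes through unchanged. This yields all seven displayed inequalities of the corollary.

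For the final two assertions, $G(0) = |\nabla G|(0) = 0$ is immediate from the construction of $G$. The bound $|\nabla \Delta G|(0) \leq C\sum_{ij}|\nabla \widetilde T_{ij}|(0)$ follows from (\ref{F_umbilic_error}) applied with $G$ (taking $k=3$) exactly as in the last paragraph of the proof of Theorem~\ref{big_estimate}: the error $\Om_i$ satisfies the same estimate $\Om_i = O(|x^n||x| + |x|^4 + |x|^3 + |x'|^{-1}|x^n||x|^2)$ since it only involves $\nabla G = \nabla F$ and the metric coefficients, and differentiating $G^{(3)} = \text{const}\cdot x^ix^j \widetilde T_{ij}^{(1)} + x^j\Om_j^{(2)} + |x'|^2\partial_j\Om_j^{(2)}$ at the origin gives the claim. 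The one genuine difference to flag: in Theorem~\ref{big_estimate} the conclusions $\kappa(0) = |\nabla\kappa|(0) = 0$, $\Delta F(0) = 0$ and more generally $\kappa = O(|x'|^N)$ relied on umbilicity ($T_{ij} \equiv 0$) together with $\kappa_g = O(r^2)$ from conformal normal coordinates; here $\partial' B^G_\si$ need not be umbilic, so I would \emph{not} claim those; this is exactly the caveat stated before the corollary, and it is why $\widetilde\kappa$ appears on the right-hand sides rather than being absorbed.

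The main obstacle is not any single computation but the bookkeeping: one must be careful that the ``portion involving only $x^1,\dots,x^{n-1}$'' conventions for $h_{nj}^{(k-1)}$ and the relegation of $x^n$-dependent remainders to the error (used implicitly throughout the proof of Theorem~\ref{big_estimate}) remain valid when $x^n$ is measured from a hyperplane that no longer passes through the center of the coordinate system --- i.e., one should check that replacing $F$ by $G = F + |\tilde x_0|$ does not move any $x^n$ term into the ``tangential'' part. Since the shift is by a constant and all manipulations use only derivatives of $F$, this is fine, but it is the point that deserves an explicit sentence in the write-up.
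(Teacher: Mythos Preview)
Your proposal is correct and follows exactly the approach the paper intends: the text preceding the corollary states that ``theorem \ref{big_estimate} holds with $G$ replacing $F$ and all quantities being defined with respect to the boundary $\partial^\prime B_\si^G$, except for the conclusions that depend on $\kappa = O(r^2)$,'' and the corollary is given without further proof. Your write-up supplies precisely the verification the paper leaves implicit---that every identity in the proof of Theorem \ref{big_estimate} uses only derivatives of the graph function, so the constant shift $G = F + |\tilde x_0|$ is immaterial---and your caveat about not claiming $\widetilde\kappa(0)=0$ or $\Delta G(0)=0$ matches the paper's exclusion of conclusions depending on $\kappa = O(r^2)$.
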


As before, estimates on quantities defined with respect to $\nu_g$, with $\nu_g$ not
necessarily a unit vector, will
suffice for our purposes.

Because $\partial^\al G = \partial^\al F$, estimates for $G$ from corollary
\ref{big_estimate_interior} translate into estimates for $F$.
\begin{coro}
Let $x_0 \in \mathring{M}$ and $\tilde{x}_0 \in \partial M$ be the closest point to
$x_0$. Take conformal normal
coordinates at $x_0$, choose a large integer $N$ and let $F$ be the local
representation of
the boundary as a graph. Then there exists a constant $C$, depending only on $N$
such that for any $\ve > 0$ sufficiently small:
\begin{align}
\sum_{|\al|=2}^N |F_{,\al}|\varepsilon^{|\al|} & \leq C
\sum_{|\al|=0}^{N-2} \sum_{i,j=1}^{n-1} |\widetilde{T}_{ij,\al}|
\varepsilon^{|\al|+2} \nonumber\displaybreak[0] \\
\sum_{|\al|=2}^N |\Delta F_{,\al}|\varepsilon^{|\al|} & \leq C
\sum_{|\al|=0}^{N-2} \sum_{i,j=1}^{n-1} |\widetilde{T}_{ij,\al}|
\varepsilon^{|\al|+2}\nonumber
\end{align}
where $\widetilde{T}_{ij}$ is the umbilicity tensor of the boundary $\partial^\prime
B^G_\si$ as
in corollary \ref{big_estimate_interior}, and $\al$ denotes
partial derivatives in the variables $x^1,\dots,x^{n-1}$ evaluated at the origin.
Moreover $|\nabla F|(0) = 0$, $|\nabla \Delta F|(0) \leq C\sum_{ij}|\nabla
\widetilde{T}_{ij}(0)|$
and $F(0) = -|\tilde{x}_0|$.
\label{big_estimate_interior_F}
\end{coro}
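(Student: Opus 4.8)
\textbf{Proof proposal for Corollary \ref{big_estimate_interior_F}.}

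The plan is to deduce this corollary directly from Corollary \ref{big_estimate_interior} by exploiting the elementary relation between $F$ and its translate $G$. First I would recall the setup: in conformal normal coordinates centered at the interior point $x_0$, the boundary is the graph $x^n = F(x')$ with $F(0) = -|\tilde{x}_0|$ and $\nabla F(0) = 0$, and the translated function $G(x') = F(x') + |\tilde{x}_0|$ satisfies $G(0) = |\nabla G|(0) = 0$. The crucial observation, already noted in the discussion preceding Corollary \ref{big_estimate_interior}, is that $F$ and $G$ differ only by a constant, so $\partial^\al G = \partial^\al F$ for every multi-index $\al$ with $|\al| \geq 1$; in particular the Taylor coefficients $F_{,\al}$ and $G_{,\al}$ coincide for $|\al| \geq 1$, and likewise $\Delta F = \Delta G$ identically.

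With this in hand the two displayed inequalities are immediate: the bound
\[
\sum_{|\al|=2}^N |F_{,\al}|\varepsilon^{|\al|} \leq C \sum_{|\al|=0}^{N-2} \sum_{i,j=1}^{n-1} |\widetilde{T}_{ij,\al}| \varepsilon^{|\al|+2}
\]
follows by substituting $F_{,\al} = G_{,\al}$ (valid since the sum starts at $|\al| = 2 \geq 1$) into the corresponding estimate for $G$ in Corollary \ref{big_estimate_interior}; the estimate for $\Delta F_{,\al}$ follows the same way from the $\Delta G$ estimate, again because the relevant derivatives agree. The auxiliary conclusions are equally direct: $|\nabla F|(0) = |\nabla G|(0) = 0$ was built into the choice of coordinates, $|\nabla \Delta F|(0) = |\nabla \Delta G|(0) \leq C \sum_{ij} |\nabla \widetilde{T}_{ij}(0)|$ transfers verbatim from Corollary \ref{big_estimate_interior}, and $F(0) = -|\tilde{x}_0|$ is the normalization fixing the position of the nearest boundary point.

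There is essentially no obstacle here — the content of the corollary is entirely carried by Corollary \ref{big_estimate_interior}, and the only thing to check is the bookkeeping point that all the quantities appearing in the $F$-estimates involve derivatives of order at least one (so that the constant shift between $F$ and $G$ is invisible to them). The one place a reader might pause is the $F(0)$ statement, but this is not an estimate at all, merely a record of how the rotation and the choice of origin were fixed when setting up the coordinate system, so it requires no argument beyond citing that construction. Thus the proof is a two-line reduction, and I would present it as such.
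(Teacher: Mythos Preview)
Your proposal is correct and matches the paper's own argument essentially verbatim: the paper simply remarks, just before the corollary, that ``because $\partial^\al G = \partial^\al F$, estimates for $G$ from Corollary \ref{big_estimate_interior} translate into estimates for $F$,'' which is exactly the reduction you describe. There is nothing to add.
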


The following corollary will finish the treatment of interior points in this section.

\begin{coro} (Boundary conformal normal coordinates for an interior point) Let
$(M,g_0)$ be a Riemannian manifold with
umbilic boundary and $x_0 \in \mathring{M}$. Fix an integer $N\geq 5$. If $x_0$ is
sufficiently close to $\partial M$, then
there exists a metric $\tilde{g}$ conformal to $g_0$ such that, in
$\tilde{g}$-normal coordinates centered at $x_0$:
(i) $\det\tilde{g} = 1 + O(r^N)$, (ii) $R_{\tilde{g}} = O(r^2)$, (iii)
$\Delta_{\tilde{g}} R_{\tilde{g}}(0) = -\frac{1}{6}|W_{\tilde{g}}|^2(0)$
and (iv) $\kappa_{\tilde{g}} = 0$ near $\tilde{x}_0$, where $r=|x|$ and $\tilde{x}_0 \in \partial M$ is
such that $\operatorname{dist}_{g_0}(x_0,\tilde{x}_0) = \operatorname{dist}_{g_0}(x_0,\partial M)$.
\label{bcnc_int}
\end{coro}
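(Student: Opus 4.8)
The plan is to follow the same template as the proof of Proposition \ref{bcnc}, with Theorem \ref{big_estimate} replaced by its interior analogue, Corollary \ref{big_estimate_interior} (together with Corollary \ref{big_estimate_interior_F}). First I would run the usual conformal normal coordinate construction at the interior point $x_0$ (see \cite{LP}), producing a metric $g$ conformal to $g_0$ which satisfies (i)--(iii) in a geodesic ball $B_\sigma(0)$ with $x_0$ identified with the origin; since $\sigma$ can be taken uniform over a neighborhood of $\partial M$, requiring $x_0$ sufficiently close to $\partial M$ guarantees $\partial M\cap B_\sigma(0)\neq\emptyset$. The vanishing of the umbilicity tensor is conformally invariant, so $\partial M$ is still umbilic for $g$. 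It then remains only to produce a further conformal factor $e^{2f}$ with $f=O(|x|^N)$ at $x_0$ -- which ensures $\det\tilde g = 1+O(r^N)$ in $\tilde g$-normal coordinates at $x_0$, and hence (i)--(iii), exactly as in Proposition \ref{bcnc} -- for which the standard (unit-normal) mean curvature $\widehat\kappa_{\tilde g}$ of $\partial M$ vanishes near $\tilde x_0$. By the transformation law $\widehat\kappa_{\tilde g}=e^{-f}(\widehat\kappa_g+\partial_{\nu_g}f)$ this is the prescribed normal-derivative problem $\partial_{\nu_g}f=-\widehat\kappa_g$ on $\partial M$ near $\tilde x_0$.

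To control the data $\widehat\kappa_g$ I would write $\partial M$ near $\tilde x_0$ as a graph $x^n=F(x^\prime)$ with $F(0)=-|\tilde x_0|$, $\nabla F(0)=0$, set $G=F+|\tilde x_0|$ so that $\partial^\prime B^G_\sigma$ passes through the origin, and invoke Corollaries \ref{big_estimate_interior} and \ref{big_estimate_interior_F}: they bound the tangential Taylor coefficients at the origin of $F$, $\Delta F$, $h_{nj}$, $\partial_n h_{ij}$, $\partial_n h_{nn}$, $\widetilde\kappa_{ij}$ and $\widetilde\kappa$ by those of the umbilicity tensor $\widetilde T_{ij}$ of $\partial^\prime B^G_\sigma$, and hence, via (\ref{mean_curv_error}), control the Taylor expansion of $\widehat\kappa_g$ along $\partial M$ at $\tilde x_0$ (recall that $\widehat\kappa_g$ differs from the mean curvature of (\ref{mean_curv_error}) only by the bounded factors $|\nu_g|$ and $n-1$). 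The key observation is that $G$ and $F$ differ by the constant $|\tilde x_0|$, so they have the same derivatives of every positive order and only the points at which $h$ is evaluated differ, by $|\tilde x_0|$ in the $x^n$-direction; since $\partial M$ is umbilic its umbilicity tensor vanishes identically, and a first-order comparison therefore yields $|\widetilde T_{ij,\al}|=O(|\tilde x_0|)$ for all $|\al|\leq N$. Consequently the Taylor expansion of $\widehat\kappa_g$ at $\tilde x_0$ has all coefficients of positive order of size $O(|\tilde x_0|)$, and in particular $\widehat\kappa_g|_{\partial M}$ admits a controlled smooth extension near $\tilde x_0$.

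I would then close the argument as in Proposition \ref{bcnc}: choose a smooth extension $\widetilde\kappa_g$ of $\widehat\kappa_g|_{\partial M}$ to a neighborhood of $\tilde x_0$ with $\partial_{\nu_g}\widetilde\kappa_g=0$ there, arranged to vanish to order $N$ at $x_0$ -- possible because $x_0\notin\partial M$, so $x_0$ and the relevant portion of $\partial M$ are disjoint, with the estimates above keeping the extension controlled -- then pick $\widetilde f$ with $\partial_{\nu_g}\widetilde f=-1$ near $\tilde x_0$ and vanishing to order $N$ at $x_0$, and put $f=\widetilde f\,\widetilde\kappa_g$. Then $\partial_{\nu_g}f=-\widetilde\kappa_g$ on $\partial M$ near $\tilde x_0$, so $\widehat\kappa_{\tilde g}=e^{-f}(\widehat\kappa_g-\widetilde\kappa_g)=0$ there, which is (iv), while $f=O(|x|^N)$ at $x_0$ gives (i)--(iii); extending $f$ smoothly to all of $M$ completes the proof.

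The main obstacle is the middle step. In the boundary-point case of Proposition \ref{bcnc} the coordinates are centered \emph{on} $\partial M$, and Escobar's construction together with Theorem \ref{big_estimate} already forces $\widehat\kappa_g=O(|x^\prime|^N)$ there; here the center $x_0$ is interior, and the conformal normal coordinate construction at $x_0$ says nothing about the nearby boundary, so one must transport the identities behind Theorem \ref{big_estimate} to the translated graph $\partial^\prime B^G_\sigma$ through $x_0$ and then show, by comparison with the umbilic $\partial M$, that its umbilicity tensor is as small as the distance $|\tilde x_0|$. The remaining bookkeeping -- arranging the extension $\widetilde\kappa_g$ and the factor $f$ to vanish to order $N$ at $x_0$ while still matching the mean curvature data of $\partial M$ near $\tilde x_0$, whose value at $\tilde x_0$ is in general nonzero -- then goes through precisely because those two loci are disjoint and all higher-order boundary data are of size $|\tilde x_0|$.
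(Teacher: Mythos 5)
Your proof is correct, and it follows the same overall template as the paper (conformal normal coordinates at $x_0$ for (i)--(iii), then a further conformal factor $f=\widetilde f\,\widetilde\kappa_g$ with $f=O(|x-x_0|^N)$ to kill the mean curvature near $\tilde x_0$), but it obtains the smallness of the boundary mean-curvature data by a genuinely different route. The paper compares two coordinate systems: it takes conformal normal coordinates at both $x_0$ and $\tilde x_0$, observes that the former converge to the latter as $x_0\to\tilde x_0$, applies Theorem \ref{big_estimate} at the boundary point $\tilde x_0$ to get vanishing of the Taylor data there, and concludes by continuity that the Taylor data in the $x_0$-coordinates is small. You instead stay entirely in the $x_0$-coordinate system, invoke the interior estimates Corollaries \ref{big_estimate_interior} and \ref{big_estimate_interior_F} directly, and produce the smallness from a comparison of the umbilicity tensor $\widetilde T_{ij}$ of the translated graph $\partial^\prime B_\sigma^G$ with the (vanishing) umbilicity tensor $T_{ij}$ of the umbilic $\partial M$, getting $\widetilde T_{ij,\al}=O(|\tilde x_0|)$ because $G$ and $F$ differ by a constant and the metric coefficients enter only through a shift of $|\tilde x_0|$ in the $x^n$-direction. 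Your version is arguably more self-contained and makes the mechanism quantitative and explicit, whereas the paper's continuity argument is shorter but leaves the convergence of $\{x^i\}\to\{\tilde x^i\}$ and of the Taylor coefficients implicit. You also correctly isolate the point that actually drives existence of the extension $\widetilde\kappa_g$ --- that $x_0\notin\partial M$, so the requirement $\widetilde\kappa_g=O(|x-x_0|^N)$ near $x_0$ and the requirement $\widetilde\kappa_g=\widehat\kappa_g$ on $\partial M$ near $\tilde x_0$ are constraints on disjoint sets --- while noting that the $O(|\tilde x_0|)$ estimate is what keeps the construction quantitatively controlled (which is what one ultimately wants when this corollary is fed into the blow-up analysis, where $x_0$ approaches the boundary).
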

\begin{proof} Let $\tilde{x}_0 \in \partial M$ be the closet point to $x_0$.
Denote by $\{ \tilde{x}^i \}$ conformal normal coordinates centered at $\tilde{x}_0$, $\{
x^i \}$ conformal normal
 coordinates centered at $x_0$, $\widetilde{\kappa}$ the mean curvature of $\partial M$ in $\{
\tilde{x}^i \}$-coordinates,
$\kappa$ the mean curvature of $\partial M$ in $\{ x^i \}$-coordinates.
When $x_0 \rar \tilde{x}_0$ we have $x^i \rar \tilde{x}^i$,
and $\partial_{\al} \kappa(x_0) \rar \partial_{\tilde{\al}} \widetilde{\kappa}(\tilde{x}_0)$
where
$\al$ denotes partial derivatives with respect to
$x^1,\dots,x^{n-1}$ and
$\tilde{\al}$ denotes partial derivatives with respect to
to $\tilde{x}^1,\dots,\tilde{x}^{n-1}$.

By theorem \ref{big_estimate} we have that $\partial_{\tilde{\al}} \kappa(\tilde{x}_0) = 0$ for
$|\tilde{\al}|\leq N$ since
the boundary is umbilic. Because $\partial_{\al} \kappa(x_0)
\rar \partial_{\tilde{\al}} \kappa(\tilde{x_0})$
 as $x_0 \rar \tilde{x}_0$, if $x_0$ is sufficiently
close to $\tilde{x}_0$ we can choose an extension of $\kappa$ to $B_\si(x_0)$ (with
$\si$ small) which is
$O(|x-x_0|^N)$.
The rest of the argument now is similar to the proof of proposition \ref{bcnc}.
\end{proof}

We finish this section with several remarks.

\begin{rema}
One of the key ingredients of our proof is to show that the blow-up sequence $x_i$
lies on the boundary (possibly after passing to a subsequence, see section
\ref{section_estimate_distance}). Before showing that, however, we have to
deal with both the case of a blow-up
sequence belonging
to the boundary and the case of a blow-up sequence belonging to the interior of the manifold.
It will therefore be implicitly understood that
when $x_i \in \mathring{M}$, all quantities $\kappa,~\kappa_{ij}$ and $T_{ij}$ are for
the
boundary $\partial^\prime B_\si^G$, as described above, i.e., we will drop
$\widetilde{~}$ from
the interior quantities for the sake of notation. $F$, however, will
always be the representation
of $\partial M$ as a graph unless stated otherwise.
\label{remark_interior}
\end{rema}

\begin{rema}
Suppose that $x_0 \in \partial M$ or that it is sufficiently close to the boundary,
and in boundary conformal normal coordinates centered at
$x_0$ consider $x=(0,x^n)$.
If we translate the boundary
by $|x^n|$ instead of $|\tilde{x}_0|$,
\begin{gather}
 G(x^\prime) = F(x^\prime) + |x^n|, \nonumber
\end{gather}
we can, for each $|x^n|$, consider geometric quantities induced on the boundary
$\partial^\prime B_\si^G$ as before.
In another words, we
have a foliation of a small neighborhood of the boundary by copies of $\partial
M$. In particular,
we can then think of $T_{ij}$ as defined in
a neighborhood of $\partial M$, allowing us to take derivatives with respect to
$x^n$, Taylor
expand $T_{ij}$ in the $x^n$ direction, etc.
\label{T_ij_defined_neighborhood_partial_M}
\end{rema}

\begin{rema} Since boundary conformal normal coordinates are a special case of conformal normal coordinates,
the results of this section stated for conformal normal coordinates, in particular theorem \ref{big_estimate},
 are still valid if we choose
boundary conformal normal coordinates instead.
\label{bcnc_still}
\end{rema}

\section{Higher order estimates\label{higher}}

Our next goal is to extend the results of theorem \ref{big_estimate} to higher order derivatives
of $h$ in the normal direction. Throughout this section we will work with boundary conformal normal coordinates
centered at a point on the boundary; all definitions are as in section \ref{boundary_estimates}.
We will use Greek letters to denote indices running up to $n$,
Latin letters to denote indices running up to $n-1$, and $x^\prime$ to denote the first $n-1$
coordinates. Notice that in light of
corollary \ref{boundary_hyper} we have that the boundary is given by $x^n = 0$, and as
in section \ref{boundary_estimates}, by a ``translation'' we can consider quantities defined on the neighborhood
of the boundary, so that
\begin{align}
\frac{\partial}{\partial \nu_g} & = - g^{n\tau}\partial_\tau , \displaybreak[0] \label{partial_hyper} \\
\left. g_{n i} \right|_{\partial M} & = g_{n i}(x^\prime,0) = O(|x^\prime|^N) , \label{metric_restricted}  \\
\left.\frac{\partial }{\partial \nu_{g}}
\right|_{\partial M} & = -g^{nn}(x^\prime,0) \partial_n  + \sum_{\ell=1}^{n-1} O(|x^\prime|^N)\partial_\ell .
\label{partial_hyper_restricted}
\end{align}

\begin{theorem}
In boundary conformal normal coordinates at a point on the boundary,
\begin{gather}
\left.  h_{nn} \right|_{\partial M} = h_{nn}(x^\prime,0) =  O(|x^\prime|^N).
\end{gather}
\label{estimate_h_nn_boundary}
\end{theorem}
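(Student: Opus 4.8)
The plan is to leverage the gauge condition $x^{\beta}h_{\alpha\beta}(x)\equiv 0$ of conformal normal coordinates (the ``polar''/synchronous gauge that makes these coordinates geodesic normal coordinates of a conformal representative), together with the vanishing statements already extracted in Section \ref{boundary_estimates}. Since the boundary is umbilic, Theorem \ref{big_estimate} applies with $T_{ij}\equiv 0$, so on $\{x^{n}=0\}$ one has $h_{nj}(x',0)=O(|x'|^{N})$ for $j\le n-1$, $(\partial_n h_{ij})(x',0)=O(|x'|^{N})$ for $i,j\le n-1$, and $(\partial_n h_{nn})(x',0)=O(|x'|^{N})$; moreover, by Corollary \ref{boundary_hyper}, the boundary is $\{x^{n}=0\}$, it is totally geodesic, and $g_{in}(x',0)=O(|x'|^{N})$. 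First I would restrict the gauge identity with $\alpha=n$, namely $\sum_{j\le n-1}h_{nj}(x)x^{j}+h_{nn}(x)x^{n}\equiv 0$, to the boundary, obtaining $\sum_{j\le n-1}h_{nj}(x',0)x^{j}\equiv 0$; differentiating it once in $x^{n}$ and restricting to $\{x^{n}=0\}$ then yields the reduction
\begin{gather}
h_{nn}(x',0)=-\sum_{j\le n-1}x^{j}\,(\partial_n h_{nj})(x',0) . \nonumber
\end{gather}
So everything comes down to estimating the mixed normal derivatives $(\partial_n h_{nj})(x',0)$, $j\le n-1$, on the boundary.

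To control these, the plan is to run the bookkeeping in the proof of Theorem \ref{big_estimate} one order deeper in the normal direction. As in Section \ref{boundary_estimates} (see Remark \ref{T_ij_defined_neighborhood_partial_M}) one may Taylor expand all quantities in both $x'$ and $x^{n}$; differentiating (\ref{second_ff_error}), (\ref{mean_curv_error}) and the gauge identities with respect to $x^{n}$, restricting to $\{x^{n}=0\}$, passing to Taylor polynomials, and successively solving for one quantity in terms of the others — precisely the elimination scheme already used to prove Theorem \ref{big_estimate} — produces a linear system for the normal-derivative coefficients $(\partial_n h_{nj})^{(k)}$, $(\partial_n^{2}h_{ij})^{(k)}$, and so on. The umbilicity $T_{ij}\equiv 0$ annihilates all the tangential source terms, and what must close the system is the extra data specific to boundary conformal normal coordinates: $\det g=1+O(r^{N})$, $R_{g}=O(r^{2})$, and $\kappa_{g}\equiv 0$. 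As a consistency check, combining the block/Schur-complement formula for $\det g$ on $\{x^{n}=0\}$ with $\det g=1+O(r^{N})$ and $g_{in}(x',0)=O(|x'|^{N})$ shows that $h_{nn}(x',0)=O(|x'|^{N})$ is equivalent to the metric induced on $\partial M$ having unit determinant to order $N$ in these coordinates — which is exactly what one expects, since on the totally geodesic boundary the coordinates $x'$ restrict to normal coordinates.

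The hard part is this last step. Theorem \ref{big_estimate} is self-contained because its recursion only involves tangential Taylor coefficients and the tangential part of the gauge, whereas here the gauge relations couple $\partial_n h_{nj}$ to the second normal derivatives $\partial_n^{2}h_{ij}$ of the tangential block, and these are not furnished by the tangential recursion. One therefore has to bring in the determinant normalization and the scalar-curvature condition — equivalently, the vanishing of the symmetrized covariant derivatives of the Ricci tensor at $x_{0}$ together with the decomposition of the curvature into its Weyl and Schouten parts — in order to eliminate $(\partial_n^{2}h_{ij})^{(k)}$ and conclude inductively that $(\partial_n h_{nj})(x',0)=O(|x'|^{N})$, from which $h_{nn}(x',0)=O(|x'|^{N})$ follows by the reduction above. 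Carefully tracking all the error terms in the style of Remark \ref{important_remark} and verifying non-degeneracy of the system at every Taylor degree is where the bulk of the work lies; this is the reason the estimate is isolated as the opening step of the higher-order-estimates section rather than being folded into Theorem \ref{big_estimate}.
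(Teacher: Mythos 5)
Your reduction via the gauge identity is correct and it is not the route the paper takes, but the argument as written has a genuine gap that you yourself flag and do not close.

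The reduction step is right: differentiating $x^{\beta}h_{n\beta}(x)\equiv 0$ once in $x^{n}$ and restricting to $\{x^{n}=0\}$ gives
\begin{gather}
h_{nn}(x^\prime,0)=-\sum_{j\le n-1}x^{j}\,(\partial_n h_{nj})(x^\prime,0),
\nonumber
\end{gather}
so the theorem follows once $(\partial_n h_{nj})(x^\prime,0)=O(|x^\prime|^{N})$ for $j\le n-1$ is established. The issue is that this is not a corollary of Theorem \ref{big_estimate}: that theorem bounds $h_{nj}$, $\partial_n h_{ij}$ ($i,j\le n-1$), and $\partial_n h_{nn}$ on the boundary, but says nothing about the mixed $\partial_n h_{nj}$. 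Your plan for closing this — pushing the elimination scheme of Theorem \ref{big_estimate} one normal order deeper and invoking $\det g$, $R_{g}=O(r^{2})$, and $\kappa_{g}\equiv 0$ to eliminate $(\partial_n^{2}h_{ij})^{(k)}$ — is stated, not carried out, and its solvability/non-degeneracy at each Taylor degree is exactly what would need to be verified. As written, the proof is incomplete.

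For comparison, the paper proves Theorem \ref{estimate_h_nn_boundary} by an entirely different mechanism: it computes $\nabla_i\nu^{n}$ on $\partial M$ in two ways (once from the Christoffel symbols and the vanishing of $\kappa_{ij}(x^\prime,0)$, once from differentiating $g^{nn}g_{nn}+g^{nl}g_{nl}=1$), equates them to get $\partial_i g^{nn}+(g^{nn})^{2}\partial_i g_{nn}=O(|x^\prime|^{2N-1})$, and then uses $g=e^{h}$ to turn this into a bootstrap for $h_{nn}(x^\prime,0)$ (showing $h_{nn}=O(|x^\prime|^{k})\Rightarrow h_{nn}=O(|x^\prime|^{3k})$), treating separately the degenerate case where the coefficient of $\partial_i h_{nn}$ is small. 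No gauge identity or determinant/scalar-curvature normalization enters directly. It is also worth noting that the bound $(\partial_n h_{nj})(x^\prime,0)=O(|x^\prime|^{N})$ that your reduction needs is in fact proved in the paper — as the intermediate estimate (\ref{estimate_partial_n_h_nl_sum}) inside the proof of Theorem \ref{big_estimate_higher_order} — by an integration-by-parts/homogeneity argument that uses only $h_{nj}(x^\prime,0)=O(|x^\prime|^{N})$ from Theorem \ref{big_estimate} and is independent of Theorem \ref{estimate_h_nn_boundary}. So your overall strategy could be made to work by importing that argument (this is essentially the alternative proof the paper hints at in the remark following Theorem \ref{big_estimate_higher_order}), but your proposed route through $\det g$ and $R_{g}$ is both unexecuted and, in light of that, more machinery than is needed.
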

\begin{proof}
We will compute $\nabla_i \nu^n$ in two different ways. First,
\begin{gather}
\nabla_i \nu^n = - \partial_i g^{nn} - g^{nn} \Ga_{in}^n - g^{nl} \Ga_{il}^n .
\label{nabla_i_nu_n}
\end{gather}
Notice that (\ref{second_ff_error}) becomes in our coordinates
$\Ga_{ij}^n(x^\prime,0) = \kappa_{ij}(x^\prime,0)  = 0 $, and hence
(\ref{nabla_i_nu_n}) gives
\begin{align}
\left. \nabla_i \nu^n \right|_{\partial M} & = - \partial_i g^{nn} -\frac{1}{2} (g^{nn})^2 \partial_i g_{nn}
- \frac{1}{2} g^{nn}g^{nl} (\partial_i g_{nl} + \partial_n g_{li} - \partial_l g_{in} )
\label{nabla_i_nu_n_1} \\
& = - \partial_i g^{nn} -\frac{1}{2} (g^{nn})^2 \partial_i g_{nn} + O(|x^\prime|^{2N-1}) ,
\nonumber
\end{align}
where we used theorem \ref{big_estimate}. In order to simplify notation, here and in the rest of the section
we use the following convention. When an equality is restricted to the boundary we write
$\left. \cdot \right|_{\partial M}$ or $(\cdot)(x^\prime,0)$ on one side of the equation,
and it is implicitly understood that the remaining quantities on the other side are restricted as well.

Now differentiate $g^{nn} g_{nn} + g^{nl}g_{nl} = 1$ with respect to $i$ to obtain
$g^{nn} \partial_i g_{nn} = - g_{nn} \partial_i g^{nn} - \partial_i (g^{nl} g_{nl} )$ and  so (\ref{nabla_i_nu_n_1}) becomes
\begin{align}
\left. \nabla_i \nu^n \right|_{\partial M} & = - \partial_i g^{nn} + \frac{1}{2} g^{nn} g_{nn} \partial_i g^{nn}
- \partial_i (g^{nl} g_{nl} ) + O(|x^\prime|^{2N-1})
\label{nabla_i_nu_n_2} \\
& = - \partial_i g^{nn} + \frac{1}{2} (1 - g^{nl} g_{nl}) \partial_i g^{nn}
- \partial_i (g^{nl} g_{nl} ) + O(|x^\prime|^{2N-1})
\nonumber \\
& = - \frac{1}{2} \partial_i g^{nn} + O(|x^\prime|^{2N-1}) ,
\nonumber
\end{align}
where we used theorem \ref{big_estimate} again. Combining (\ref{nabla_i_nu_n_1}) and (\ref{nabla_i_nu_n_2})
gives
\begin{gather}
 \partial_i g^{nn} + (g^{nn})^2 \partial_i g_{nn} = O(|x^\prime|^{2N-1}) .
\nonumber
\end{gather}
Using $g=e^h$ this becomes
\begin{gather}
 -(1-(g^{nn})^2) \partial_i h_{nn} + \partial_i O_{nn}(-h) + (g^{nn})^2 \partial_i O_{nn}(h) = O(|x^\prime|^{2N-1})
\label{partial_i_h_nn_1}
\end{gather}
where
\begin{gather}
 O_{\mu\si}(h) = \sum_{\ell=2}^\infty \frac{(h^\ell)_{\mu\si}}{\ell !} .
\nonumber
\end{gather}
But
\begin{align}
 O_{nn}(-h) = \frac{1}{2}(h_{nn}h_{nn} + h_{nl}h_{nl}) + \sum_{\ell=2}^\infty \frac{(h^{2\ell})_{nn}}{(2\ell) !}
- \sum_{\ell=1}^\infty \frac{(h^{2\ell+1})_{nn}}{(2\ell+1) !}
\nonumber
\end{align}
and
\begin{align}
 O_{nn}(h) = \frac{1}{2}(h_{nn}h_{nn} + h_{nl}h_{nl}) + \sum_{\ell=2}^\infty \frac{(h^{2\ell})_{nn}}{(2\ell) !}
+ \sum_{\ell=1}^\infty \frac{(h^{2\ell+1})_{nn}}{(2\ell+1) !} ,
\nonumber
\end{align}
and since by theorem \ref{big_estimate} $h_{nl}(x^\prime,0)h_{nl}(x^\prime,0) = O(|x^\prime|^{2N})$,
(\ref{partial_i_h_nn_1}) becomes
\begin{gather}
 (-1 + (g^{nn})^2 + h_{nn} + (g^{nn})^2 h_{nn}) \partial_i h_{nn}
+ (1+(g^{nn})^2)\partial_i \sum_{\ell=2}^\infty \frac{(h^{2\ell})_{nn}}{(2\ell) !}
\label{partial_i_h_nn_induction} \\
+ (1-(g^{nn})^2) \partial_i \sum_{\ell=1}^\infty \frac{(h^{2\ell+1})_{nn}}{(2\ell+1) !} = O(|x^\prime|^{2N-1}) .
\nonumber
\end{gather}
We will now show inductively that $h_{nn}=O(|x^\prime|^k)$ implies $h_{nn} = O(|x^\prime|^{3k})$, $k \leq N$. Notice
that we already know that $h_{nn} = O(|x^\prime|^2)$. Also, as before, the terms $h_{nl}$ appearing
in $(h^{2\ell})_{nn}$, $\ell \geq 2$, and $(h^{2\ell+1})_{nn}$, $\ell \geq 1$, can be estimated
by theorem \ref{big_estimate} and hence they can be
absorbed in the error; in other words we can replace $(h^\ell)_{nn}$ by $(h_{nn})^\ell$ up to
and error $O(|x^\prime|^{2N-1})$ (notice that due to the rules of multiplication of matrices, the terms $h_{nl}$ appearing
in $(h^{2\ell})_{nn}$, $\ell\geq 2$, $(h^{2\ell+1})_{nn}$, $\ell \geq 1$, or in the expansion
of $g^{nn}$ must be multiplied by another
$h_{nl}$ and hence such errors are of the same order of the right hand side of (\ref{partial_i_h_nn_induction})).

Since $g^{nn} \geq C >0$ near the origin, if $h_{nn}=O(|x^\prime|^k)$ then
\begin{gather}
 -1 + (g^{nn})^2 = -(1+g^{nn})(1-g^{nn}) =  -(1+g^{nn})O(|x^\prime|^k) = O(|x^\prime|^k)
\nonumber
\end{gather}
and also
\begin{gather}
h_{nn} + (g^{nn})^2 h_{nn} = O(|x^\prime|^k),
\nonumber
\end{gather}
so
\begin{gather}
-1 + (g^{nn})^2 + h_{nn} + (g^{nn})^2 h_{nn} = O(|x^\prime|^k).
\label{error_h_nn_k_1}
\end{gather}
But
\begin{align}
 (1+(g^{nn})^2)\partial_i \sum_{\ell=2}^\infty \frac{(h^{2\ell})_{nn}}{(2\ell) !} & =
(1+(g^{nn})^2)\partial_i \sum_{\ell=2}^\infty \frac{(h_{nn})^{2\ell}}{(2\ell) !} + O(|x^\prime|^{2N-1})
\label{error_h_nn_k_2} \\
& = O(|x^\prime|^{4k-1}) + O(|x^\prime|^{2N-1} )
\nonumber
\end{align}
and
\begin{align}
 (1-(g^{nn})^2)\partial_i \sum_{\ell=2}^\infty \frac{(h^{2\ell})_{nn}}{(2\ell) !} & =
(1+g^{nn})(1-g^{nn})\partial_i \sum_{\ell=2}^\infty \frac{(h_{nn})^{2\ell}}{(2\ell) !} + O(|x^\prime|^{2N-1})
\label{error_h_nn_k_3} \\
& = (1+g^{nn})O(|x^\prime|^k)O(|x^\prime|^{3k-1}) +  O(|x^\prime|^{2N-1})
\nonumber \\
& = O(|x^\prime|^{4k-1}) +  O(|x^\prime|^{2N-1}).
\nonumber
\end{align}
Therefore (\ref{partial_i_h_nn_induction})-(\ref{error_h_nn_k_3}) give
\begin{align}
 \partial_i h_{nn} & =
 \frac{1}{(-1 + (g^{nn})^2 + h_{nn} + (g^{nn})^2 h_{nn}) }
(1+(g^{nn})^2)\partial_i \sum_{\ell=2}^\infty \frac{(h^{2\ell})_{nn}}{(2\ell) !}
\nonumber \\
& + \frac{1}{(-1 + (g^{nn})^2 + h_{nn} + (g^{nn})^2 h_{nn} ) }
(1-(g^{nn})^2) \partial_i \sum_{\ell=1}^\infty \frac{(h^{2\ell+1})_{nn}}{(2\ell+1) !}
\nonumber \\
& = O(|x^\prime|^{3k-1}) +  O(|x^\prime|^{2N-1 - k}),
\nonumber
\end{align}
provided that $(-1 + (g^{nn})^2 + h_{nn} + (g^{nn})^2 h_{nn})(x^\prime,0)$ is not zero and $k < N$. Since $h(0)=0$ we
conclude that $h_{nn}(x^\prime) = O(|x^\prime|^{3k})$. Repeating the argument we obtain the result.

Now we have to show that the result is still true if $(-1 + (g^{nn})^2 + h_{nn} + (g^{nn})^2 h_{nn})(x^\prime,0)$ vanishes
or is $O(|x^\prime|^N)$, and it is enough to consider this latter case. So suppose that
$(-1 + (g^{nn})^2 + h_{nn} + (g^{nn})^2 h_{nn})(x^\prime,0) = O(|x^\prime|^N)$. Multiply it by $g_{nn}$ and use
$1 = g^{nn} g_{nn} + g^{nl}g_{nl} =g^{nn} g_{nn} + O(|x^\prime|^{2N})$ to get
\begin{gather}
 -g_{nn} + g^{nn} + (g_{nn}+g^{nn})h_{nn} = O(|x^\prime|^N) ,
\nonumber
\end{gather}
which implies $-O_{nn}(h) + O_{nn}(h) + (O_{nn}(h) + O_{nn}(-h)) h_{nn} = O(|x^\prime|^N)$ and therefore
\begin{gather}
 -2 \sum_{\ell=1}^\infty \frac{(h^{2\ell+1})_{nn}}{(2\ell +1)!} + 2h_{nn} \sum_{\ell=1}^\infty \frac{(h^{2\ell})_{nn}}{(2\ell)!}
= O(|x^\prime|^N) .
\nonumber
\end{gather}
As before we can ignore contributions from $h_{nl}$ and replace $(h^\ell)_{nn}$ by $(h_{nn})^\ell$, which gives
\begin{gather}
 -\frac{1}{3}(h_{nn})^3 -2 \sum_{\ell=2}^\infty \frac{(h_{nn})^{2\ell+1}}{(2\ell +1)!} +
\Big ( (h_{nn})^2 + 2\sum_{\ell=2}^\infty \frac{(h_{nn})^{2\ell}}{(2\ell)!} \Big ) h_{nn}
= O(|x^\prime|^N) .
\nonumber
\end{gather}
This gives $(h_{nn})^3 = O((h_{nn})^5) + O(|x^\prime|^N)$. Since $h=O(|x|^2)$ we obtain
$(h_{nn})^3 = O(|x^\prime|^{10})$. But then
\begin{align}
 (h_{nn})^3 & = O((h_{nn})^5) + O(|x^\prime|^N) = O((h_{nn})^3 (h_{nn})^2) + O(|x^\prime|^N) \nonumber \\
& = O(|x^\prime|^{10} (h_{nn})^2) + O(|x^\prime|^N) = O(|x^\prime|^{14}) . \nonumber
\end{align}
Repeating the argument produces $(h_{nn})^3 = O(|x^\prime|^{N})$, which gives the result since
$N$ is as large as we want.
\end{proof}

\begin{theorem}
 In boundary conformal normal coordinates centered at a point on the boundary we have
\begin{align}
 \partial_n^2 h_{nj,\al^\prime}(0) = 0,~~~|\al^\prime| \leq N \label{higher_order_partial_n_2} \\
\partial_n^3 h_{nn,\al^\prime}(0) = 0,~~~|\al^\prime| \leq N \label{higher_order_partial_n_3}
\end{align}
where $\al^\prime$ denotes derivatives with respect to $x^1,\dots,x^{n-1}$. In other words
$\left. \partial_n^2 h_{nj} \right|_{\partial M} = O(|x^\prime|^N)$ and
 $\left. \partial_n^3 h_{nn} \right|_{\partial M} = O(|x^\prime|^N)$.
\label{big_estimate_higher_order}
\end{theorem}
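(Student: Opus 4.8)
Before the argument, it is worth observing what the theorem amounts to: combined with the estimate $\partial_{n}h_{ij}(x',0)=O(|x'|^{N})$ for $i,j\leq n-1$ (available from Theorem \ref{big_estimate} together with $\kappa_{ij}(x',0)=0$), the identities (\ref{higher_order_partial_n_2}) and (\ref{higher_order_partial_n_3}) say that, up to the relevant order in $|x'|$, $h$ has the $x^{n}$-parity of a metric reflection-symmetric across $\{x^{n}=0\}$: only even powers of $x^{n}$ occur in $h_{ij}$ and $h_{nn}$, only odd powers in $h_{nj}$. A literal reflection of the metric is not smooth enough (cf.\ the remark following Theorem \ref{Weyl_vanishing_theorem}), so the point is to extract this parity by hand from the identities of conformal normal coordinates and the fact that $\{x^{n}=0\}$ is totally geodesic.

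The plan is to push the ``compute a covariant quantity two ways'' mechanism of the proof of Theorem \ref{estimate_h_nn_boundary} one order deeper in the normal direction. The inputs that survive off the boundary are the radial identity $x^{\mu}h_{\mu\nu}(x)=0$ and the trace identity $\sum_{\mu}h_{\mu\mu}(x)=O(r^{N})$, which hold at every point of the coordinate ball, together with the structural formulas (\ref{second_ff_error}), (\ref{mean_curv_error}) for the second fundamental form and mean curvature of the coordinate slices $\{x^{n}=c\}$, which are legitimate objects by the foliation of Remark \ref{T_ij_defined_neighborhood_partial_M}. First I would record the first-order data: $h_{nj}(x',0)=O(|x'|^{N})$ for all $j$ (Corollary \ref{boundary_hyper} and Theorem \ref{estimate_h_nn_boundary}), and $\partial_{n}h_{ij}(x',0)=O(|x'|^{N})$, $\partial_{n}h_{nn}(x',0)=O(|x'|^{N})$, the last two by inserting $\kappa_{ij}(x',0)=0$ and $\kappa(x',0)=O(|x'|^{N})$ into (\ref{second_ff_error}), (\ref{mean_curv_error}) and using Theorem \ref{big_estimate} to kill the remaining pure-$x'$ Taylor coefficients.

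For (\ref{higher_order_partial_n_2}): differentiating the radial identities $x^{k}h_{nk}+x^{n}h_{nn}=0$ and $x^{k}h_{jk}+x^{n}h_{jn}=0$ twice or thrice in $x^{n}$ and restricting to $\{x^{n}=0\}$ bounds the contractions $x^{k}\partial_{n}^{2}h_{nk}(x',0)$ and $x^{k}\partial_{n}^{3}h_{jk}(x',0)$ by the first-order data, hence by $O(|x'|^{N})$; this pins down only the ``radial part'' of $\partial_{n}^{2}h_{nj}(x',0)$, so independently one takes an $x^{n}$-derivative of (\ref{second_ff_error}) for the slices and combines it with the algebraic metric relations ($g^{n\mu}g_{\mu n}=1$, $g=e^{h}$) and Theorem \ref{big_estimate} to obtain a second expression. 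The structural point --- the analogue of ``$\Ga^{n}_{ij}(x',0)=\kappa_{ij}(x',0)=0$'' in Theorem \ref{estimate_h_nn_boundary} --- is that every slice quantity, after restriction to $\{x^{n}=0\}$, involves only \emph{tangential} derivatives of the umbilicity tensor $T_{ij}$, all of which vanish since $T_{ij}(x',0)\equiv 0$, and never $x^{n}$-derivatives of $T_{ij}$ (the slices with $c\neq 0$ need not be umbilic). Matching the two expressions and expanding $g=e^{h}$ produces a relation $\big(\text{main coefficient}=O(|x'|^{m})\big)\,\partial_{n}^{2}h_{nj}(x',0)=O(|x'|^{3m})+O(|x'|^{N})$ valid once $\partial_{n}^{2}h_{nj}=O(|x'|^{m})$ is known; dividing and iterating $m\mapsto 3m$ from the trivial bound ($m=1$, from the vanishing of $h_{\mu n}$ along the $x^{n}$-axis) forces $\partial_{n}^{2}h_{nj}(x',0)=O(|x'|^{N})$, the case of a degenerate main coefficient being treated separately exactly as in the last paragraph of the proof of Theorem \ref{estimate_h_nn_boundary}. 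Statement (\ref{higher_order_partial_n_3}) is then obtained by running the argument one order higher, using (\ref{higher_order_partial_n_2}), the radial identity for $h_{nn}$, the trace identity, and the $x^{n}$-derivatives of (\ref{mean_curv_error}) to close a relation for $\partial_{n}^{3}h_{nn}(x',0)$ of the same form.

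The main obstacle is bookkeeping rather than any single estimate. Since the slices $\{x^{n}=c\}$ are genuinely non-umbilic for $c\neq 0$, Theorem \ref{big_estimate} does not apply to them, so in every differentiated-and-restricted identity one has to verify that the surviving ``$T$-terms'' are tangential derivatives of $T_{ij}|_{\{x^{n}=0\}}\equiv 0$ and not normal ones, and at the same time that the nonlinear tails from $g=e^{h}$ (the pieces $(h^{\ell})_{nn}$, $(h^{\ell})_{nj}$), once their off-diagonal factors $h_{n\ell}$ are absorbed via Theorem \ref{big_estimate} and Remark \ref{important_remark}, are of strictly higher order in $|x'|$, so that the iteration $m\mapsto 3m$ really gains. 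Keeping track of the precise order of vanishing lost to each $x'$- and $x^{n}$-differentiation --- the analogue of the $O(|x'|^{2N-1})$ accounting in the proof of Theorem \ref{estimate_h_nn_boundary} --- is where most of the labor lies.
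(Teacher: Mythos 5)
Your proposal takes a genuinely different route from the paper's, and as written it has a gap that I do not see how to repair.

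The paper's own argument is not a continuation of the ``compute a covariant quantity two ways'' mechanism of Theorem \ref{estimate_h_nn_boundary}, but an integration-by-parts / radial-homogeneity argument patterned on Theorem \ref{big_estimate}. Writing $h_{nl}^{(m)}$ for the full degree-$m$ Taylor polynomial and taking $\phi \in C^\infty(S^{n-1}_+)$ extended radially, one integrates $\int_{B_+}\phi^2\,\partial_n h_{nl}^{(m)}$ by parts in $x^n$; the flat boundary term over $B_1^{n-1}$ drops out because $h_{nl}(x',0)=O(|x'|^N)$ (Theorem \ref{big_estimate}), and converting the volume integrals to integrals over $S^{n-1}_+$ by homogeneity yields an identity expressing $\partial_n h_{nl}^{(m)}|_{x^n=0}$ in terms of $h_{nl}^{(m)}|_{x^n=0}$, which vanishes. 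This establishes the intermediate fact
\begin{equation*}
\partial_n h_{nj}(x',0)=O(|x'|^N),\qquad j\le n-1,
\end{equation*}
a statement that is \emph{not} contained in Theorem \ref{big_estimate} and that your proposal never establishes. The same integration-by-parts step is then iterated with $\partial_n h_{nl}$ in place of $h_{nl}$ to reach (\ref{higher_order_partial_n_2}), and analogously (twice) for $h_{nn}$ to reach (\ref{higher_order_partial_n_3}).

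The crucial gap in your version is the claim that the $x^n$-differentiated structural identities, restricted to $\{x^n=0\}$, involve only tangential derivatives of $T_{ij}$ and never normal ones. This is not so. Taking $\partial_n$ of (\ref{second_ff_error}) produces $\partial_n\kappa_{ij}$ on the left-hand side. At a point of $\{x^n=0\}$, $\partial_n\kappa_{ij}$ is the normal derivative of the second fundamental form of the coordinate slices; by a Riccati-type identity, since $\kappa_{ij}(x',0)=0$, this equals (up to normalization) the ambient curvature component $R_{injn}(x',0)$, which is generically nonzero and is not bounded by the umbilicity tensor of $\partial M$ or its tangential derivatives. At the same time the right-hand side of that differentiated identity contains $\partial_n^2 h_{ij}(x',0)$ for $i,j\le n-1$, which the theorem does not and cannot control (it is again curvature-sized). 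So the relation you propose to match against the radial identity does not isolate $\partial_n^2 h_{nj}$, and the $m\mapsto 3m$ iteration does not close. A further minor point: the bound $x^k\partial_n^2 h_{nk}(x',0)=O(|x'|^N)$ you extract from the radial identity does not by itself yield componentwise vanishing of $\partial_n^2 h_{nj}(x',0)$ --- a covector field can have vanishing radial part without being zero --- so you would need an additional structural input even to begin; the paper supplies this input through the integration-by-parts identity rather than through the geometric relations.
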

\begin{proof}
Denote by $h_{nl}^{(m)}$ the $m^{th}$ Taylor polynomial of $h_{nl}$. Let $\phi \in C^\infty(S^{n-1}_+)$ and extend
it radially similarly to what was done in theorem \ref{big_estimate} (notice however that
here we have the full, i.e., including $x^n$, Taylor polynomial). Integration by parts yields
\begin{align}
\int_{B_+} \phi^2 \partial_n h_{nl}^{(m)}  & =
-2 \int_{B_+} \phi \partial_n \phi h_{nl}^{(m)}
+ \int_{S^{n-1}_+} \phi^2 x^n h_{nl}^{(m)}
 - \int_{B_1^{n-1}} \phi^2 h_{nl}^{(m)}
\nonumber
\end{align}
where $B_+$ is the half unit ball and $B^{n-1}$ the unit ball in $x^\prime$ coordinates.
Since $h_{nl}(x^\prime,0) = O(|x^\prime|^N)$ by theorem \ref{big_estimate}, we obtain
that the integral over $B^{n-1}$ vanishes. Integrating in polar coordinates as
in theorem \ref{big_estimate} shows that
\begin{align}
\int_{S^{n-1}_+} \phi^2 \partial_n h_{nl}^{(m)} & =
-2\frac{m+n-1}{m+n} \int_{S^{n-1}_+}  \phi \partial_n \phi h_{nl}^{(m)}
+ (m+n-1) \int_{S^{n-1}_+} \phi^2  x^n h_{nl}^{(m)}.
\nonumber
\end{align}
And since this is true for any $\phi \in C^\infty(S^{n-1}_+)$, we conclude
\begin{align}
 \phi \partial_n h_{nl}^{(m)} & =
-2\frac{m+n-1}{m+n} \partial_n \phi h_{nl}^{(m)}
+ \phi (m+n-1) x^n h_{nl}^{(m)}~\text{ on } S^{n-1}_+.
\nonumber
\end{align}
Using theorem \ref{big_estimate} again, or, alternatively, choosing a non-zero
test function such that $\partial_n \phi = 0$ on $S^{n-2} = \partial B^{n-1}$,
it follows that $\partial_n h_{nl}^{(m)} (x^\prime,0) = 0$, from which we conclude
\begin{gather}
\partial_n h_{nl}(x^\prime,0) = O(|x^\prime|^N).
\label{estimate_partial_n_h_nl_sum}
\end{gather}
Now with (\ref{estimate_partial_n_h_nl_sum}) in hand, we repeat the integration by parts argument
with $\partial^2_n h_{nl}$ in place of $\partial_n h_{nl}$ and conclude
(\ref{higher_order_partial_n_2}).

To obtain (\ref{higher_order_partial_n_3}), argue similarly to the above, integrate
$\partial_n^2 h_{nn}$ by parts and use theorem \ref{big_estimate} to conclude
$\partial_n^2 h_{nn}(x^\prime,0) = O(|x^\prime|^N)$; then repeat the argument, expressing
$\partial_n^3 h_{nn}$ in terms of $\partial_n^2 h_{nn}$.
\end{proof}

\begin{rema} Since theorem \ref{big_estimate} gives $\partial_n h_{nn}(x^\prime,0) = O(|x^\prime|^N)$,
using an argument similar to that of theorem \ref{big_estimate_higher_order} we can relate
$\partial_n h_{nn}(x^\prime,0)$ and $h_{nn}(x^\prime,0)$, obtaining in this way an alternative
proof of theorem \ref{estimate_h_nn_boundary}.
\end{rema}

\section{Boundary condition for the correction term\label{boundary_correction}}
In this section we use the results of sections \ref{boundary_estimates} and \ref{higher} to show that
the correction term $\tilde{z}_\ve$ (see definition \ref{defi_tilde_z_i}) satisfies the correct boundary condition.
The idea is to use the results of sections \ref{boundary_estimates} and \ref{higher} to show that
certain homogeneous polynomials that appear in the (explicit) construction of $\tilde{z}_\ve$
satisfy the boundary condition and so will $\tilde{z}_\ve$ itself.
Throughout this section we work with boundary conformal normal coordinates centered at a point on the boundary.
This section relies heavily on the appendix of \cite{KMS} and we will often refer to it.

Lemma (A.6) of \cite{KMS} gives the decomposition
\begin{gather}
 H^{(k)}_{ij} = W^{(k)}_{ij} + \sum_{q=1}^{[ \frac{k-2}{2} ]} (\widehat{H}^{(k)}_q)_{ij}
\label{decomposition_appendix}
\end{gather}
where $W^{(k)}_{ij}$ satisfies $\partial_{ij} W^{(k)}_{ij} = 0$. In \cite{KMS} it is also computed that
\begin{align}
 \partial_{ij} (\widehat{H}^{(k)}_q)_{ij} & = \frac{n-2}{n-1} (k-2q)(k-2q-1)(n+k-2q-1)(n+k-2q-2)|x|^{2q-2} p_{k-2q}
\label{double_div_H_hat} \\
& = C_{n,k,q} |x|^{2q-2} p_{k-2q} ,
\nonumber
\end{align}
where $p_{k-2q}$ is a harmonic polynomial of degree $k-2q$.

Since $H^{(k)}_{ij}(x) = \sum_{|\al|=k} h_{ij,\al}x^\al$, we obtain
\begin{gather}
 \partial_n \partial_{ij} H^{(k)}_{ij}(x) = \sum_{|\al|=k} h_{ij,\al}\partial_{nij} x^\al .
\nonumber
\end{gather}
Now we consider $\left.\partial_n \partial_{ij} H^{(k)}_{ij}\right|_{\partial M}$ and identify the
terms that do not necessarily vanish (recall that the boundary is given by $x^n = 0$).

Consider the case $i,j < n$. In this case if
\begin{gather}
 \sum_{|\al|=k} h_{ij,\al} \left. \partial_{ijn}x^\al \right|_{x^n=0} \neq 0 , \nonumber
\end{gather}
then the non-zero terms have $\al_n = 1$, i.e., we can write the multi-index  $\al$
(of the non-zero terms) as
$\al = (\al^\prime,1)$. Hence the coefficients of the non-vanishing terms are all of the form
\begin{gather}
 h_{ij,\al} = \frac{1}{\al!} \partial^\al h_{ij}(0) = \frac{1}{\al!}  \partial_n h_{ij,\al^\prime}(0),~~~~|\al^\prime| = k -1.
\label{h_ij_al}
\end{gather}

Similarly if $i=n$ and $j< n$ then the coefficients of the non-vanishing terms are all of the form
\begin{gather}
 h_{ij,\al} =  \frac{1}{\al!}  \partial^2_n h_{nj,\al^\prime}(0), ~~~~|\al^\prime| = k - 2,
\label{h_in_al}
\end{gather}
and if $i=j=n$ the coefficients of the non-vanishing terms are all of the form
\begin{gather}
 h_{ij,\al} =  \frac{1}{\al!} \partial^3_n h_{nn,\al^\prime}(0),~~~~|\al^\prime| = k-3,
\label{h_nn_al}
\end{gather}
where $\al^\prime$ is a multi-index with $\al^\prime_n = 0$. Since $k \leq n-4$, we have by
theorems \ref{big_estimate} and \ref{big_estimate_higher_order} that (\ref{h_ij_al}), (\ref{h_in_al})
and (\ref{h_nn_al}) all vanish, and therefore
\begin{gather}
\partial_n \partial_{ij} H^{(k)}_{ij}(x^\prime,0)=0 .
\label{vanishing_normal_double_div_H}
\end{gather}
Combining (\ref{decomposition_appendix}) and $\partial_{ij} W^{(k)}_{ij} = 0$ with
(\ref{double_div_H_hat}) and (\ref{vanishing_normal_double_div_H}) gives
\begin{gather}
 \sum_{q=1}^{[ \frac{k-2}{2} ]} C_{n,k,q}|x^\prime|^{2q-2} \partial_n p_{k-2q}(x^\prime,0) = 0,
\nonumber
\end{gather}
and it then follows from usual decomposition theorems for homogeneous polynomials (see e.g. \cite{ABR}) that each
$\partial_n p_{k-2q}(x^\prime,0)$ vanishes separately.

To compute $\left. \partial_n \tilde{z}_\ve \right|_{\RR^{n-1}}$ it is enough to compute
the derivative of $Z((\widehat{H}^{(k)}_q)_{ij})$ --- the solution to (\ref{equation_z})
with $(\widehat{H}^{(k)}_q)_{ij}$ instead of $\sum_k H^{(k)}_{ij}$. Such a solution takes the form (see \cite{KMS})
\begin{gather}
 Z((\widehat{H}^{(k)}_q)_{ij}) = c(n) \al_{k-2q} (1+|x|^2)^{-\frac{n}{2}} \sum_{j=1}^{q+1} \Ga(k,q,j) |x|^{2j} p_{k-2q}
\nonumber
\end{gather}
where $\al_{k-2q}$ and $\Ga(k,q,j)$ are numerical coefficients. Computing we find
\begin{gather}
\partial_n Z((\widehat{H}^{(k)}_q)_{ij})(x^\prime,0) =
c(n) \al_{k-2q} (1+|x^\prime|^2)^{-\frac{n}{2}} \sum_{j=1}^{q+1} \Ga(k,q,j) |x^\prime|^{2j} \partial_n p_{k-2q}(x^\prime,0).
\nonumber
\end{gather}
But we showed above that $\partial_n p_{k-2q}(x^\prime,0)=0$ and hence
$\partial_n Z((\widehat{H}^{(k)}_q)_{ij})(x^\prime,0)$. Therefore, we have proven
\begin{prop}
Take boundary conformal normal coordinates at a point on the boundary and let $\tilde{z}_\ve$ be as
in definition \ref{defi_tilde_z_i}. Then it satisfies
\begin{align}
\begin{cases}
 \Delta \tilde{z}_{\varepsilon} + n(n+2)U^\frac{4}{n-2}\tilde{z}_{\ve} =
c(n)\sum_{k=4}^{n-4} \partial_i\partial_j \tilde{H}^{(k)}_{ij} U
, & \text{ in } \RR^n_+, \displaybreak[0] \\
\partial_n \tilde{z}_{\ve} = 0, & \text{ on } \RR^{n-1} .
\end{cases}
\end{align}
\label{bvp_z}
\end{prop}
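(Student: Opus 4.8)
The plan is to check that $\tilde{z}_\ve$ satisfies the stated boundary condition; the interior equation requires nothing new, since $\tilde{z}_\ve$ is defined (Definition \ref{defi_tilde_z_i}) to be the solution of (\ref{equation_z}) constructed in \cite{KMS}. By linearity of (\ref{equation_z}), it suffices to verify $\partial_n = 0$ on $\RR^{n-1}$ for each elementary solution $Z((\widehat{H}^{(k)}_q)_{ij})$ coming from the decomposition (\ref{decomposition_appendix}); the divergence-free pieces $W^{(k)}_{ij}$ drop out of the right-hand side of (\ref{equation_z}) entirely, since $\partial_i\partial_j W^{(k)}_{ij}=0$.

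The first and decisive step is to show that $\partial_n\partial_i\partial_j H^{(k)}_{ij}(x',0)=0$ for $4\le k\le n-4$. Writing $H^{(k)}_{ij}(x)=\sum_{|\al|=k}h_{ij,\al}x^\al$ and restricting to $x^n=0$ after differentiation, the only surviving monomials are those with $\al_n=1$ when $i,j<n$, with $\al_n=2$ when exactly one of $i,j$ equals $n$, and with $\al_n=3$ when $i=j=n$; the corresponding coefficients are constant multiples of $\partial_n\partial^{\al'}h_{ij}(0)$, $\partial_n^2\partial^{\al'}h_{nj}(0)$, and $\partial_n^3\partial^{\al'}h_{nn}(0)$ with $\al'$ tangential of length $k-1$, $k-2$, $k-3$ respectively. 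Since $k\le n-4$ and $N$ is arbitrarily large, Theorem \ref{big_estimate} (the estimate for $\partial_n h_{ij}$, using $T_{ij}\equiv 0$ by umbilicity) annihilates the first family, and Theorem \ref{big_estimate_higher_order} annihilates the remaining two, giving $\partial_n\partial_i\partial_j H^{(k)}_{ij}(x',0)=0$.

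Feeding this into the decomposition, one combines $\partial_i\partial_j W^{(k)}_{ij}=0$ with (\ref{double_div_H_hat}) to get $\sum_q C_{n,k,q}|x'|^{2q-2}\partial_n p_{k-2q}(x',0)=0$; by uniqueness of the decomposition of a homogeneous polynomial into terms of the form $|x'|^{2q-2}$ times a harmonic polynomial (standard theory, \cite{ABR}), each $\partial_n p_{k-2q}(x',0)$ vanishes separately. Finally, using the explicit formula for $Z((\widehat{H}^{(k)}_q)_{ij})$ recorded in the text, I differentiate in $x^n$ and restrict to $x^n=0$: the factors $(1+|x|^2)^{-n/2}$ and $|x|^{2j}$ are even in $x^n$, so their $x^n$-derivatives vanish on the hyperplane, and the only surviving term is the one in which $\partial_n$ lands on $p_{k-2q}$, which is $\partial_n p_{k-2q}(x',0)=0$. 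Summing over $k$ and $q$, and passing to the rescaled polynomials $\tilde{H}^{(k)}_{ij}(y)=H^{(k)}_{ij}(\ve y)$ (rescaling does not affect vanishing on $\RR^{n-1}$), gives $\partial_n\tilde{z}_\ve=0$ on $\RR^{n-1}$.

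The real obstacle is the first step, and it is already resolved by Theorems \ref{big_estimate} and \ref{big_estimate_higher_order}: producing the higher-order normal-derivative estimates for $h_{ij}$, $h_{nj}$, $h_{nn}$ on the boundary was the hard analytic input, and the range $k\le n-4$ is precisely what keeps us within their validity. With those estimates in hand the proposition is an essentially algebraic assembly, and I do not expect any further difficulty beyond bookkeeping of the coefficients $C_{n,k,q}$, $\al_{k-2q}$, and $\Ga(k,q,j)$, all of which are inherited from \cite{KMS}.
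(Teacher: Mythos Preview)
Your proposal is correct and follows essentially the same approach as the paper's proof: you identify the surviving coefficients of $\partial_n\partial_{ij}H^{(k)}_{ij}(x',0)$ via the same three cases and kill them with Theorems \ref{big_estimate} and \ref{big_estimate_higher_order}, deduce $\partial_n p_{k-2q}(x',0)=0$ from the decomposition (\ref{decomposition_appendix})--(\ref{double_div_H_hat}) and standard polynomial decomposition theorems, and then differentiate the explicit formula for $Z((\widehat{H}^{(k)}_q)_{ij})$ exactly as the paper does. The only addition is your explicit remark that the radial factors are even in $x^n$, which the paper leaves implicit.
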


\section{Basic convergence results}

Here we prove some basic convergence results. Most of the results are either known
or modifications of similar results for manifolds
without boundary.

\begin{lemma} Suppose  $x_i \rar \bar{x}$ is an isolated blow-up point. Take normal
coordinates at $x_i$,
\and rescale coordinates to $y$-coordinates. Then in the limit $i \rar \infty$ the
boundary becomes a hyperplane.
\label{limit_hyperplane}
\end{lemma}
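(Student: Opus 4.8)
The plan is to treat two cases according to whether the maximum sequence $x_i$ lies on $\partial M$ or in the interior $\mathring M$; at this stage of the paper neither possibility has been excluded (that the blow-up sequence may be taken on $\partial M$ is only established later, in section \ref{section_estimate_distance}). In both cases I would work in conformal normal coordinates centered at $x_i$, choosing boundary conformal normal coordinates (proposition \ref{bcnc}) in the subcase $x_i\in\partial M$; this is legitimate for $i$ large since $x_i\rar\bar x\in\partial M$ forces $d_i:=\operatorname{dist}_{g_i}(x_i,\partial M)\rar 0$. By corollary \ref{boundary_hyper}, when $x_i\in\partial M$ the boundary is then exactly the hyperplane $\{x^n=0\}$, and the rescaling $y=\mip x=\ve_i^{-1}x$ is linear, so nothing remains to prove in that subcase. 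In general, $\partial M$ is represented as a graph $x^n=F_i(x')$ with $F_i(0)=-d_i$ and $\nabla F_i(0)=0$ (see the discussion preceding corollary \ref{big_estimate_interior_F}), the case $x_i\in\partial M$ corresponding to $F_i\equiv 0$.

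Next I would record the uniform bounds needed on $F_i$. Since $g_i\rar g$ in $C^k$ with $k$ large, and since the conformal factors and coordinate changes entering the constructions of section \ref{boundary_estimates} are uniformly controlled, the graphs $F_i$ satisfy $\|F_i\|_{C^k(B^{n-1}_\si)}\leq C$ on a fixed ball $B^{n-1}_\si$, with $C$ independent of $i$; in particular $|\partial^\al F_i(0)|\leq C$ for $|\al|\leq k$. (One may obtain the sharper statement $F_i(x')=-d_i+O(|x'|^N)$ from corollary \ref{big_estimate_interior_F} together with the fact that, $\partial M$ being umbilic, the umbilicity tensors $\widetilde T_{ij}$ of the translated boundaries $\partial^\prime B^G_\si$ are uniformly bounded and in fact tend to $0$ as $x_i\rar\bar x$; but the crude bound already suffices.)

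Now I would carry out the rescaling. In $y$-coordinates the boundary is the graph $\{y^n=\widetilde F_i(y')\}$ with $\widetilde F_i(y')=\ve_i^{-1}F_i(\ve_i y')$, so for every multi-index $\al$
\begin{equation*}
\partial^\al \widetilde F_i(y') = \ve_i^{|\al|-1}\,(\partial^\al F_i)(\ve_i y').
\end{equation*}
Hence $\widetilde F_i(0)=-\ve_i^{-1}d_i$, $\nabla\widetilde F_i(0)=0$, and for every fixed $R>0$ and $i$ large enough that $\ve_i R<\si$ one has $\sup_{|y'|\leq R}|\partial^\al\widetilde F_i|\leq C\ve_i^{|\al|-1}\rar 0$ whenever $|\al|\geq 2$, while for $|\al|=1$ the same bound, combined with $\partial^\al F_i(0)=0$, gives $\sup_{|y'|\leq R}|\partial^\al\widetilde F_i|\leq C\ve_i R\rar 0$. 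Therefore $\widetilde F_i-\widetilde F_i(0)\rar 0$ in $C^m_{\mathrm{loc}}(\RR^{n-1})$ for every $m$. Passing to a subsequence so that $\ve_i^{-1}d_i\rar T\in[0,+\infty]$, I conclude: if $T<\infty$ then $\widetilde F_i\rar -T$ in $C^m_{\mathrm{loc}}$, so the rescaled boundary converges to the affine hyperplane $\{y^n=-T\}$ and the rescaled domains $\{y^n>\widetilde F_i(y')\}$ converge locally to the half-space $\RR^n_{-T}$; if $T=+\infty$ then for every $R$ one has $\widetilde F_i<-R$ on $B^{n-1}_R$ for $i$ large, so the rescaled domains exhaust all of $\RR^n$ and the boundary recedes to infinity. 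In all cases the limiting domain is a half-space bounded by a hyperplane, with the understanding that there is no boundary in the limit when $T=+\infty$.

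The computations above are routine; the point requiring care is the uniformity in $i$ of the constants, since the coordinate centers $x_i$ move with $i$ --- this is where the hypothesis that $\partial M$ is umbilic enters, via corollary \ref{big_estimate_interior_F} forcing the translated umbilicity tensors to stay bounded, and it is also what distinguishes the present situation from the boundaryless setting of \cite{KMS}. A secondary point worth flagging is that at this stage it is not yet known that $\ve_i^{-1}d_i$ remains bounded, which is why the limiting domain must be allowed to be $\RR^n_{-T}$ for some $T\in[0,\infty]$ rather than a fixed half-space; pinning down $T$ (in particular ruling out, or dealing with, $T=\infty$) is deferred to the later analysis of the distance $d_i$.
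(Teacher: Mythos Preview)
Your proof is correct. The paper takes a slightly different route: rather than analyzing the rescaled graph function $\widetilde F_i(y')=\ve_i^{-1}F_i(\ve_i y')$ directly, it computes how the second fundamental form transforms under the rescaling (a conformal change $g_{1_i}=\ve_i^{-2}g_i$ followed by the coordinate change $y=\ve_i^{-1}x$), obtaining $(\tilde\kappa_i)_{kj}(y)=\ve_i(\kappa_i)_{kj}(x)\to 0$ since $(\kappa_i)_{kj}$ is uniformly bounded by the $C^k$ convergence $g_i\to g$; combined with $\tilde g_{ij}(y)\to\de_{ij}$ this forces the limiting boundary to be a hyperplane. Your graph-function argument and the paper's second-fundamental-form argument are two sides of the same coin (in coordinates with $\nabla F_i(0)=0$, the second fundamental form is essentially $\nabla^2 F_i$ to leading order), and both are elementary. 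Your version has the virtue of being more explicit about the location of the limiting hyperplane and about the $T=\infty$ case. One small correction to your closing commentary: the uniformity in $i$ of the bounds on $F_i$ does \emph{not} require umbilicity of $\partial M$; it follows directly from $g_i\to g$ in $C^k$, and indeed the paper's proof makes no use of umbilicity here. Your invocation of corollary \ref{big_estimate_interior_F} is unnecessary (as you yourself note, the crude bound suffices), and the claim that ``this is where the hypothesis that $\partial M$ is umbilic enters'' is not accurate for this particular lemma.
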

\begin{proof}
The metric $\tilde{g}_i$ is obtained from $g$ by (i) a rescaling
$g_{1_i} = M_i^{p_i-1} g_i = \varepsilon_i^{-2} g$ and then (ii) by the change of
coordinates $y=\varepsilon^{-1} x$.
 If we write $g_{1_i} = M_i^{p_i-1} g_i $ in the standard form $g_{1_i} =
\phi_i^\frac{4}{n-2} g_i $ we have
$\phi_i^\frac{2}{n-2} = M_i^\frac{p_i-1}{2} = \varepsilon_i^{-1}$, so transformation
law (\ref{conf_sec_fundamental_form}) gives
$(\kappa_{1_i})_{kj} = \varepsilon_i^{-1} (\kappa_i)_{kj}$. The second
fundamental form transforms as
\begin{align}
(\tilde{\kappa}_i)_{kj}(y) &= \frac{\partial x^p}{\partial y^k} \frac{\partial
x^q}{\partial y^j} (\kappa_{1_i})_{pq}(x) = \varepsilon_i^2 \delta^{kp} \delta^{qj}
(\kappa_{1_i})_{pq}(x) \nonumber \displaybreak[0] \\
&= \varepsilon_i^2  (\kappa_{1_i})_{kj}(x)  = \varepsilon_i^2  \varepsilon_i^{-1}
(h_{i})_{kj}(x)
= \varepsilon_i (\kappa_{i})_{kj}(x) \nonumber
\end{align}
when we change coordinates from $x$ to $y$ via $x = \varepsilon_i y$.
Now the sequence $(\kappa_1)_{kj}(x)$ is bounded because in
$x$-coordinates the metrics converge in $C^k$ ($k$ large), and therefore the second
fundamental
form goes to zero in $y$-coordinates. But $\tilde{g}_{ij}(y) \rar \de_{ij}$ since
\begin{gather}
\tilde{g}_{ij}(y) = g_{ij}(\ve_i y) = \de_{ij} + \ve_i^2 O(|y|^2) \nonumber
\end{gather}
and therefore in the limit the boundary is a hyperplane (see also \cite{HL}).
\end{proof}

\begin{lemma} Let $x_i \rar \bar{x} \in \partial M$ be an isolated blow-up point.
There exists a
constant $C>0$ such that for all $i$ and all $|y| \leq \si \mip$ we have $|v_i(y)|
\leq C$ (where $\si$ comes
from the definition of isolated blow-up point).
\label{boundness_v_i}
 \end{lemma}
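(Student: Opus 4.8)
The plan is to combine the pointwise decay estimate built into the definition of an isolated blow-up point with a standard rescaling argument, which confines all the difficulty to a fixed neighborhood of the origin. First I would restate condition 2) of Definition~\ref{defi_isolated} in the rescaled coordinates: in normal coordinates centered at $x_i$, so that $x_i$ is the origin, $\operatorname{dist}_{g_i}(x,x_i)=|x|(1+O(|x|^2))$, hence $u_i(x)\leq C|x|^{-\frac{2}{p_i-1}}$ on $B_\si(x_i)$, and since $v_i(y)=\ve_i^{\frac{2}{p_i-1}}u_i(\ve_i y)$ with $\ve_i^{-1}=\mip$, substituting $x=\ve_i y$ gives
\begin{gather}
v_i(y)\leq C|y|^{-\frac{2}{p_i-1}},\qquad 0<|y|\leq \si\ve_i^{-1}.
\nonumber
\end{gather}
For $|y|\geq 1$ this already yields $v_i(y)\leq C$, so it remains only to bound $v_i$ on $\{|y|\leq 1\}$. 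I would also record the normalizations $v_i(0)=M_i^{-1}u_i(x_i)=1$ and that the origin is a local maximum of $v_i$, and note that by Lemma~\ref{limit_hyperplane} the rescaled metrics converge to the Euclidean metric with the boundary flattening to a hyperplane and $\tilde f_i\rar f(\bar x)>0$, so that on fixed compact sets $v_i$ solves a uniformly elliptic Yamabe-type equation with conormal boundary data and $C^2$-convergent coefficients, the curvature term tending to zero.

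The heart of the matter is to bound $L_i:=\sup_{\{|y|\leq 1\}}v_i$. Suppose, along a subsequence, that $L_i\rar\infty$, and choose $\bar y_i$ with $|\bar y_i|=t_i\leq 1$ realizing the supremum; since $L_i\geq v_i(0)=1$ we have $L_i\geq 1$. The pointwise bound forces $L_i\leq Ct_i^{-\frac{2}{p_i-1}}$, so $t_i\rar 0$ and, raising to the power $-\frac{p_i-1}{2}$, the natural blow-up scale $\la_i:=L_i^{-\frac{p_i-1}{2}}$ satisfies $ct_i\leq\la_i\leq 1$: the putative second bump at $\bar y_i$ is at least as wide as its distance to the origin. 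Rescaling once more by $w_i(z):=L_i^{-1}v_i(\bar y_i+\la_i z)$ --- which, unwinding the definitions, is exactly the standard rescaling of $u_i$ centered at $\ve_i\bar y_i$ at scale $\ve_i\la_i\rar 0$ --- we obtain $w_i(0)=1$, $0\leq w_i\leq 1$ on domains exhausting $\RR^n$ or a half-space (according to whether $\operatorname{dist}(\ve_i\bar y_i,\partial M)/(\ve_i\la_i)$ is unbounded or bounded), and, by the computation in Lemma~\ref{limit_hyperplane}, $w_i$ solves a Yamabe-type equation with the homogeneous Neumann condition on the asymptotically flat boundary and with $C^2$-convergent coefficients, the curvature term again tending to zero. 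Elliptic estimates then give $w_i\rar w$ in $C^2_{loc}$ with $w\geq 0$, $w(0)=1$, and $w$ an entire, respectively half-space, solution of $\Delta w+n(n-2)f(\bar x)^{-\de_*}w^{p_*}=0$ (with $\partial_\nu w=0$ in the half-space case), where $p_*=\lim p_i$, $\de_*=\frac{n+2}{n-2}-p_*$. If $1<p_*<\frac{n+2}{n-2}$ the Gidas--Spruck nonexistence theorem forces $w\equiv 0$, contradicting $w(0)=1$. If $p_*=\frac{n+2}{n-2}$ then $\de_*=0$, and the Caffarelli--Gidas--Spruck classification --- applied in the half-space case to the solution obtained by reflecting across the flat boundary --- together with $0\leq w\leq 1=w(0)$ forces $w$ to be the standard bubble $U$, so in particular $w>0$ on the fixed ball $\{|z|\leq c^{-1}\}$. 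Since $0=\bar y_i+\la_i(-\bar y_i/\la_i)$ with $|\bar y_i/\la_i|=t_i/\la_i\leq c^{-1}$, passing to a further subsequence along which $-\bar y_i/\la_i\rar z_*$ gives $v_i(0)/L_i=w_i(-\bar y_i/\la_i)\rar w(z_*)>0$, hence $v_i(0)\rar\infty$, contradicting $v_i(0)=1$. Therefore $L_i$ is bounded, which with the first paragraph proves the lemma.

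I expect the main obstacle to be this last step --- excluding a second, sharper concentration inside $\{|y|\leq 1\}$ --- for which three ingredients must be combined: the pointwise bound from the blow-up hypothesis, which keeps the bump wide enough relative to its distance from the origin to remain ``visible'' there; the Liouville theorem for the limiting problem, which in the boundary case is the homogeneous Neumann problem on a half-space and is reduced to the interior classification via the reflection permitted by Lemma~\ref{limit_hyperplane}; and the degenerate regime $p_i\rar1$, where $\la_i$ need not tend to zero and the second rescaling above becomes ineffective, so a separate treatment (or a preliminary reduction to $p_i$ bounded away from $1$) is needed.
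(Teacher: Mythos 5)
Your proof is correct but takes a genuinely different route from the paper. The first reduction is common: the decay estimate from Definition~\ref{defi_isolated}, rewritten in the rescaled variable, gives $v_i(y)\leq C|y|^{-2/(p_i-1)}$ and so disposes of $\{|y|\geq 1\}$. For the remaining unit ball you perform a second blow-up at a near-maximum $\bar y_i$ and invoke Liouville-type theorems (Gidas--Spruck, Caffarelli--Gidas--Spruck after reflection across the flat limiting boundary), concluding via the observation that $\la_i\geq c\,t_i$ keeps the origin at bounded $z$-distance from $\bar y_i$, so that $v_i(0)/L_i=w_i(-\bar y_i/\la_i)$ must converge to a positive number, contradicting $v_i(0)=1$. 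The paper instead argues much more softly on $\{|y|\leq1\}$: $v_i$ is a supersolution because $L_{\tilde g_i}v_i=-Kv_i^{p_i}\leq 0$, so the minimum principle (Corollary~\ref{coro_max_principle}) gives $\min_{|y|\leq r}v_i\geq C^{-1}\min_{|y|=r}v_i$, the Harnack inequality (Lemma~\ref{Harnack}) gives $\max_{|y|=r}v_i\leq C\min_{|y|=r}v_i$, and chaining these with $v_i(0)=1$ bounds $v_i$ on $\{|y|\leq1\}$ directly, with no limit extraction or classification theorem. The paper's route is elementary, requires only Harnack and the minimum principle, and is manifestly uniform in $p_i$, so the degenerate regime $p_i\rar 1$ you flag at the end poses no difficulty there. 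As for your own argument, note that the final ``$z_*$'' contradiction already works once one knows $w>0$ on a fixed ball, and this follows from the strong maximum principle alone for any $p_*\geq 1$; thus neither the Gidas--Spruck nonexistence theorem nor the Caffarelli--Gidas--Spruck classification is actually indispensable, and the $p_i\rar1$ concern is in fact handled by the same remark. Your construction is sound, but it uses heavier machinery than the lemma requires; its blow-up content is essentially what Proposition~\ref{no_accumulaion_1} encodes in the global picture.
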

\begin{proof}
The proof is similar to the first claim in proposition 4.3 of \cite{M} and it uses
the maximum principle, the
 Harnack inequality and the definition of isolated blow-up point. In fact,
from the definition of $v_i$ and isolated blow-up points we have
that
\begin{gather}
\begin{cases}
v_i(0) = 1, ~\nabla v_i(0) = 0 \displaybreak[0] \\
0 < v_i(y) \leq C |y|^{-\frac{2}{p_i - 1}} & \text{ for } |y| \leq \si
M_i^{\frac{p_i - 1}{2}} = l_i.
\end{cases}
\label{properties_v}
\end{gather}
From these properties it follows that $v_i(y) \leq C$ for $1 \leq |y| \leq l_i$.
Since $L_{\tilde{g}_i} v_i = - Kv_i^{p_i} \leq 0$, using
the maximum principle (corollary \ref{coro_max_principle}) we have
that there exists a constant $C>0$ such that for every $i$,
\begin{gather}
\min_{|y| \leq r}  v_i(y) \geq C^{-1} \min_{|y| = r} v_i(y) \nonumber
\end{gather}
with $0<r\leq 1$. Using the Harnack inequality (lemma \ref{Harnack}) we get
\begin{gather}
\max_{|y| = r} v_i(y) \leq C \min_{|y| = r} v_i(y) , \nonumber
\end{gather}
so that
\begin{gather}
\max_{|y| = r} v_i(y) \leq C \min_{|y| = r} v_i(y)  \leq C \min_{|y| \leq r}  v_i(y)
\leq C v_i(0) \leq C \nonumber
\end{gather}
for $0 < r \leq 1$, and the claim follows.
\end{proof}

The next proposition is the analogue of proposition 4.3 of \cite{M} and of
proposition 1.4 of \cite{HL}.
\begin{prop} Let $x_i \rar \bar{x} \in \partial M$ be an isolated blow-up point and
assume that $R_i \rar \infty$
and $\ep_i \rar 0$ are given. Then $p_i \rar \frac{n+2}{n-2}$ and,
after passing to a subsequence
\begin{gather}
\parallel v_i - U \parallel_{C^2(B_{R_i}(0))} \leq \ep_i \nonumber
\end{gather}
and
\begin{gather}
\frac{R_i}{\log M_i} \rar 0. \nonumber
\end{gather}
\label{conv_C2_small_balls}
\end{prop}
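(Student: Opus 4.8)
The plan is to adapt the classical blow-up analysis of Marques \cite{M} and Han–Li \cite{HL} to our setting, exploiting the fact that in $y$-coordinates the rescaled problem (\ref{bvpy}) converges to the flat Yamabe problem on a half-space. First I would recall that $v_i$ satisfies (\ref{properties_v}): $v_i(0)=1$, $\nabla v_i(0)=0$, and $0<v_i(y)\le C|y|^{-2/(p_i-1)}$ for $|y|\le l_i=\si M_i^{(p_i-1)/2}$. By Lemma \ref{boundness_v_i}, $v_i$ is uniformly bounded on compact sets, and the metrics $\tilde g_i$ converge to the Euclidean metric while (by Lemma \ref{limit_hyperplane}) the boundary flattens to a hyperplane. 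Standard elliptic estimates up to the boundary (interior Schauder estimates together with Schauder estimates for the oblique/Neumann-type boundary condition $B_{\tilde g_i}v_i=0$) then give uniform $C^{2,\al}_{loc}$ bounds, so after passing to a subsequence $v_i\to v$ in $C^2_{loc}(\overline{\RR^n_+})$ (or all of $\RR^n$ if $\bar x$ were interior, but we have assumed $\bar x\in\partial M$).

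Next I would identify the limit $v$. Passing to the limit in (\ref{bvpy}): since $\tilde f_i^{-\de_i}\to 1$ (as $\de_i\to 0$ along a subsequence, or, if $\de_i\not\to 0$, one argues directly that no blow-up can occur in the strictly subcritical regime by the a priori bound, forcing $p_i\to\frac{n+2}{n-2}$), the limit $v\ge 0$ solves $\Delta v + Kv^{(n+2)/(n-2)}=0$ in $\RR^n_+$ with $\partial_n v=0$ on $\partial\RR^n_+$, and $v(0)=1$. By the reflection principle the Neumann condition lets us extend $v$ evenly to a nonnegative solution of the same equation on all of $\RR^n$; by the strong maximum principle $v>0$, and the classification theorem of Caffarelli–Gidas–Spruck (valid since $v$ has a critical point at the origin, inherited from $\nabla v_i(0)=0$, which pins down the center) identifies $v$ with the standard bubble $U(y)=(1+|y|^2)^{(2-n)/2}$, using $K=n(n-2)$. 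This gives the $C^2(B_{R_i}(0))$ convergence with a diagonal/exhaustion argument over $R_i\to\infty$: for each fixed $R$ the convergence is in $C^2(B_R)$, and choosing indices along which $\|v_i-U\|_{C^2(B_{R_i})}\le\ep_i$ is possible because $R_i\to\infty$ slowly relative to the convergence rate.

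Finally, the estimate $R_i/\log M_i\to 0$ follows the argument of \cite{M,HL}: one uses the pointwise decay $v_i(y)\le C(1+|y|)^{-2/(p_i-1)}$ together with the closeness of $v_i$ to $U$ on $B_{R_i}$ to get, via the equation and a Harnack/scaling argument, that $p_i-\frac{n+2}{n-2}$ and $R_i$ are controlled: roughly, evaluating the relation between the subcritical defect $\de_i$ and $\log M_i$ coming from the fact that $v_i$ is close to $U$ out to radius $R_i$ (where $U\sim|y|^{2-n}$ but $v_i$ must match the decay rate $|y|^{-2/(p_i-1)}$ forces $|y|^{\de_i\cdot(n-2)/2}$ to stay bounded, i.e. $\de_i\log R_i$ is bounded, while separately $\de_i$ is comparable to $1/\log M_i$), one extracts $R_i=o(\log M_i)$. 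The main obstacle is the boundary regularity and the passage to the limit for the boundary condition: one must be careful that the boundary of the rescaled domain is not literally $\{y^n=0\}$ for finite $i$ but only converges to it, so the Schauder estimates must be applied on the curved domains $B^G_\si$ with the $G$-dependent boundary, using that $G$ and its derivatives are small (Corollary \ref{big_estimate_interior_F}) — this is exactly where the boundary conformal normal coordinates and the umbilicity estimates of Section \ref{boundary_estimates} do the work that a naive reflection argument cannot, as noted in the remark following Theorem \ref{Weyl_vanishing_theorem}.
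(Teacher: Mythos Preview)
Your approach is essentially the same as the paper's, but there is one conceptual slip worth flagging. You assert that the limit domain is $\overline{\RR^n_+}$ simply because $\bar x\in\partial M$. This is not yet justified: even with $\bar x\in\partial M$, the local maxima $x_i$ may lie in $\mathring M$, and the rescaled distance $T_i=M_i^{(p_i-1)/2}\operatorname{dist}_{g_i}(x_i,\partial M)$ can a priori converge (along a subsequence) to any $T\in[0,\infty]$. The paper handles exactly this: the limit problem lives on $\RR^n_{-T}$, and one then splits into the cases $T=\infty$ (where Caffarelli--Gidas--Spruck applies directly on $\RR^n$) and $T<\infty$ (where Proposition~\ref{Euclidean_prop_bubble} --- the half-space Liouville theorem --- simultaneously forces $p=\frac{n+2}{n-2}$, identifies $v=U$, \emph{and} yields $T=0$). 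Your reflection-plus-CGS argument is precisely the content of Proposition~\ref{Euclidean_prop_bubble}, so your method is equivalent once you correct the domain to $\RR^n_{-T}$ and observe that the symmetry of the reflected bubble about $\{y^n=-T\}$, combined with the center being pinned at the origin by $\nabla v(0)=0$, forces $T=0$.

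Two smaller remarks. First, your invocation of the Section~\ref{boundary_estimates} machinery (Corollary~\ref{big_estimate_interior_F}, umbilicity estimates) is unnecessary here: at this stage only standard elliptic estimates on a sequence of domains with $C^1$-converging boundaries are needed, and the paper does not use those finer tools until later. Second, the condition $R_i/\log M_i\to 0$ is obtained by a straightforward diagonal extraction (since $\log M_i\to\infty$), not by the decay-matching heuristic you sketch; the paper, like \cite{M,HL}, treats this as routine.
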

\begin{proof}
Let $R>0$ and $\epsilon > 0$ be given.
From lemma \ref{boundness_v_i} we have $|v_i(y)| \leq C$.

Therefore, by standard elliptic estimates there exists a subsequence of $v_i$
converging in $C^2_{loc}$ to a limit $v$ which satisfies
\begin{gather}
\begin{cases}
\Delta v + K v^p = 0 & \text{ in } \RR^n_{-T}, \displaybreak[0] \\
\frac{\partial v}{\partial y^n} = 0 & \text{ on } \partial \RR^n_{-T} \text{ if } T
< \infty,  \displaybreak[0] \\
v(0) = 1 & \text{ and $y=0$ is a local maximum of } v,
\end{cases} \nonumber
\end{gather}
where $T$ is the limit of a subsequence of
$T_i  = \mip \operatorname{dist}_{g_i}(x_i,\partial M)= \mip |\tilde{x}_i| =|\tilde{y_i}|$. If $T =
\infty$ then the proposition follows
from the well known result of Caffarelli, Gidas, and Spruck (\cite{CGS}). If $T <
\infty$ then
the boundary converges to a hyperplane when $i\rar \infty$ by lemma
\ref{limit_hyperplane}, and the result
follows from proposition \ref{Euclidean_prop_bubble}.
\end{proof}

The following lemma is analogous to lemma 2.1 of \cite{HL}. As in the the proof of
\cite{HL} --- where they assume
conformal flatness --- the idea is to show that if the
$\mip \operatorname{dist}_{g_i}(x_i,\partial M)$ does not stay bounded, then after rescaling the
solutions we obtain an
\emph{interior} blow-up point, in which
case the machinery of \cite{KMS} can be applied (of course, in \cite{HL} they could
not use \cite{KMS} since
such results had not yet been known, but they could still apply whatever was known
about blow-up
points in conformally flat manifolds without boundary; the idea here is similar).
The proof does not require change to $y$-coordinates but we will keep track of the
expression in $y$-coordinates for future use.

\begin{lemma} Let $x_i \rar \bar{x} \in \partial M$ be an isolated simple blow-up
point, with $x_i \in \mathring{M}$. Then
\begin{gather}
\mip \operatorname{dist}_{g_i}(x_i,\partial M)
\nonumber
\end{gather}
 stays bounded.
\label{boundess_T}
\end{lemma}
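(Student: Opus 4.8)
The plan is to argue by contradiction. Passing to a subsequence, suppose that $T_i := \mip \operatorname{dist}_{g_i}(x_i,\partial M) \rar \infty$. Since $x_i \rar \bar{x} \in \partial M$ we have $d_i := \operatorname{dist}_{g_i}(x_i,\partial M) \rar 0$, so that $\ve_i = \mim$ tends to zero even faster than $d_i$, with $d_i/\ve_i = T_i \rar \infty$. In the rescaled $y$-coordinates of section \ref{setting_and_notation} the boundary $\partial M$ is the graph $y^n = \ve_i^{-1}F(\ve_i y^\prime)$, which lies near $\{y^n = -T_i\}$ and hence recedes to infinity. By (the proof of) Proposition \ref{conv_C2_small_balls}, this forces $p_i \rar \frac{n+2}{n-2}$ and $v_i \rar U$ in $C^2_{loc}(\RR^n)$; in other words, in the rescaled picture $0$ is an isolated blow-up point whose limiting profile is the standard bubble and which sees no boundary.

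Next I would transplant the blow-up onto a fixed-size \emph{interior} ball. Set $\la_i = d_i/2$, $\hat{g}_i = \la_i^{-2} g_i$ and $w_i(z) = \la_i^{\frac{2}{p_i-1}} u_i(\exp_{x_i}(\la_i z))$; then $w_i$ solves an equation of the form $L_{\hat{g}_i} w_i + K \hat{f}_i^{-\de_i} w_i^{p_i} = 0$ on $B_1(0)$, and $B_1(0)$ is disjoint from $\partial M$ in the metric $\hat{g}_i$. Since $g_i$ is written in conformal normal coordinates at $x_i$, one has $(g_i)_{kl}(x) = \de_{kl} + O(|x|^2)$, whence $\hat{g}_i \rar \de$ in $C^\infty_{loc}$. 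Moreover $w_i(0) = (T_i/2)^{\frac{2}{p_i-1}} \rar \infty$, so $w_i$ still blows up at $0$; because the notions of isolated and isolated simple blow-up are scale invariant and the simple range $[C\ve_i,\rho]$ for $\{u_i\}$ contains $[C\ve_i,\la_i]$, which rescales to $[2C/T_i,1] \downarrow (0,1]$, the point $0$ is an isolated simple blow-up point for $\{w_i\}$ inside $B_1(0)$.

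Now I would invoke the interior blow-up analysis of \cite{KMS}, applied to $\{w_i\}$ on $B_1(0)$: the sharp expansion of $w_i$ about $0$ (with the correction term of definition \ref{defi_tilde_z_i} and the estimates (\ref{basic_z_i_tilde_estimate})--(\ref{bound_z_i_similar_U})), combined with the Pohozaev identity on spheres $\partial B_r(0)$ and the positive mass theorem — brought in via the doubling procedure used in \cite{KMS} — forces the relevant Pohozaev mass of the blow-up to vanish, and yields the same contradiction that rules out interior isolated simple blow-up points in \cite{KMS} for $3 \leq n \leq 24$. This contradicts $w_i(0) \rar \infty$, and therefore $T_i$ must stay bounded. (Equivalently, one can run the Pohozaev argument directly on the shrinking interior balls $B_{\la_i}(x_i) \subset \mathring{M}$, using the scale-covariance of the \cite{KMS} estimates, without first rescaling to unit size — this is the route taken in \cite{HL}.)

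The main obstacle lies in the second and third steps: one must verify carefully that the rescaling creates neither a second blow-up point nor a loss of simplicity, so that $\{w_i\}$ genuinely satisfies the hypotheses of the interior analysis, and that the local portions of the blow-up analysis of \cite{KMS} — originally phrased on a closed manifold — transplant to the ball $B_1(0)$. Because $\hat{g}_i$ degenerates to the flat metric, additional care is needed to see that the Pohozaev/positive-mass contradiction is not vacuous; this is precisely where the strict subcriticality $\de_i = \frac{n+2}{n-2} - p_i > 0$, together with the fine correction-term estimates, must be used, exactly as in \cite{KMS}.
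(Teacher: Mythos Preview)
Your overall strategy --- argue by contradiction, rescale by the distance to the boundary, and recognize that the origin becomes an \emph{interior} isolated simple blow-up point for the rescaled sequence --- is exactly the paper's. The gap is in step three: you discard the boundary by working on $B_1(0)\subset\mathring{M}$ and then try to extract a contradiction from the positive mass theorem. That mechanism is not available here, and it is not what the paper (or \cite{HL}) uses.

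Concretely: after your rescaling $\hat g_i\to\delta$, so the limiting problem lives on a flat ball and there is no global manifold on which to invoke the PMT; any ``mass'' you could attach to the flat limit is zero, so the Pohozaev/PMT contradiction you appeal to is vacuous --- which you yourself flag in the last paragraph but do not repair. (Strict subcriticality cannot save this either, since $p_i=\frac{n+2}{n-2}$ is allowed.) The paper instead rescales so that the boundary sits at $\{z^n=-1\}$ and keeps it in the picture: $\xi_i(0)\xi_i\to w$ in $C^2_{loc}(\RR^n_{-1}\setminus\{0\})$, where $w$ is harmonic with the Neumann condition $\partial_n w=0$ on $\{z^n=-1\}$ inherited from $B_{g_i}u_i=0$. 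Reflection then forces the expansion $w(z)=a|z|^{2-n}+A+O(|z|)$ with $A>0$ --- the positivity of $A$ comes from the boundary, not from any mass theorem. Now the \emph{interior} sign restriction from \cite{KMS} (a local Pohozaev computation, no PMT) gives $\liminf_{r\to 0}\int_{|z|=r}B(r,z,w,\nabla w)\geq 0$, while direct evaluation yields $-\tfrac{n-2}{2}A|S^{n-1}|<0$, the desired contradiction.

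So the fix is simple: do not cut the boundary out of the rescaled domain, and replace the PMT appeal by the observation that the Neumann condition on the limiting half-space forces $A>0$.
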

\begin{proof} Let $\tilde{x}_i$ be such that $\operatorname{dist}_{g_i}(x_i,\partial M) =
\operatorname{dist}_{g_i}(x_i,\tilde{x}_i)$.
The proof is by contradiction.
Consider
a subsequence such that
\begin{gather}
 \mip \operatorname{dist}_{g_i}(x_i,\partial M)= \mip |\tilde{x}_i| \rar \infty
\nonumber
\end{gather}
i.e., $|\tilde{y_i}| \rar \infty$. Put $T_i = \mip |\tilde{x}_i| = |\tilde{y}_i|$
and take normal coordinates at $x_i$.
For $|z| \leq |\tilde{x}_i|^{-1}\si$ (where $\si$ comes from the definition of
isolated blow-up point) define
\begin{gather}
\xi_i(z) = N_i^{-1} u_i(N_i^{-\frac{p_i-1}{2}} z) \nonumber
\end{gather}
where $N_i^{-1} = |\tilde{x}_i|^{\frac{2}{p_i-1}}$.

Notice that $\xi_i$ has the same form as $v_i$ with $N_i$ in place of $M_i$, so if
 $(\tilde{g}_i)(z)_{kl} = (g_i)_{kl}(N_i^{-\frac{p_i-1}{2}}z)$ we see that $\xi_i$
satisfies
\begin{gather}
\begin{cases}
L_{\tilde{g}_i}\xi_i + K \tilde{f}_i^{-\de_i}\xi_i^{p_i} = 0 & \text{  for  } |z| \leq
|\tilde{x}_i|^{-1}\si , \nonumber  \displaybreak[0] \\
B_{\tilde{g}_i}\xi_i = \partial_{\nu_{\tilde{g}_i}}\xi_i +
\frac{n-2}{2}\kappa_{\tilde{g}_i} \xi_i = 0 & \text{  on  } \partial M , \nonumber
\end{cases}
\nonumber
\displaybreak[0]
\end{gather}
where $\tilde{f}_i(z) = f_i(N_i^{-\frac{p_i-1}{2}}z)$. Since $x_i$ is an isolated
simple blow-up point for $u_i$ we have
$u_i(x) \leq C |x|^{-\frac{2}{p_i-1}}$
and then
$\xi_i(z) \leq C  |z| ^{-\frac{2}{p_i-1}}$.
This, together with the fact that
$\xi(0)= |\tilde{x}_i|^{\frac{2}{p_i-1}} u_i(0) = |\tilde{x}_i|^{\frac{2}{p_i-1}}
M_i = T_i^\frac{2}{p_i-1} \rar \infty$
as $i \rar \infty$ implies that $\{ 0 \}$ is an \emph{interior} isolated blow-up
point for $\xi_i$,
hence we can use corollary 2.6 of \cite{KMS}
(with $N_i$ instead of $M_i$ and $\xi_i$ instead of $u_i$) and conclude
that $\xi_i(0) \xi_i \rar w$ in $C^2_{loc}(\RR^n_{-1} - \{0\})$, where  $w > 0$ is the
Euclidean Green's function for the Laplacian centered at $0$ (Euclidean  because
$\tilde{g}_i$ converges to the Euclidean metric) and
$\RR^n_{-1} = \{ z^n > -1 \}$. It also follows that
$B_{\tilde{g}_i}\xi_i = \partial_{\nu_{\tilde{g}_i}}\xi_i +
\frac{n-2}{2}\kappa_{\tilde{g}_i} \xi_i = 0 $
becomes in the limit $\frac{\partial w}{\partial z^n} = 0$ on $\partial \RR^n_{-1}$.
We have (see for instance \cite{HL})
\begin{gather}
w(z) = a|z|^{2-n} + A + O(|z|), ~A > 0. \nonumber
\end{gather}
Define
\begin{gather}
B(r,z,\xi,\nabla \xi) =
\frac{n-2}{2} \xi \frac{\partial \xi}{\partial \nu} -
 \frac{r}{2} |\nabla \xi |^2 + r \big ( \frac{\partial \xi}{\partial \nu} \big )^2.
\nonumber
\end{gather}
Because $0$ is an interior blow-up point, we can use theorem 7.1 of
\cite{KMS} to get
\begin{gather}
\liminf_{r\rar 0} \int_{|z|=r} B(r,z,w,\nabla w) \geq 0 \nonumber
\end{gather}
and a direct computation gives
\begin{gather}
\liminf_{r\rar 0} \int_{|z|=r} B(r,z,w,\nabla w) =-\frac{n-2}{2}A |S^{n-1}|, \nonumber
\end{gather}
contradicting $A > 0$.
\end{proof}

Suppose $x_i \rar \bar{x} \in \partial M$ is an isolated simple blow-up point.
In the notation of lemma \ref{boundess_T}, write $T_i  = \mip \operatorname{dist}_{g_i}(x_i,\partial
M)= \mip |\tilde{x}_i|$.
In $y$-coordinates this becomes $T_i = \mip |\tilde{x}_i| =|\tilde{y}_i|$.
By lemma \ref{boundess_T} we cannot have $T_i \rar \infty$, and passing to a
subsequence we have $T_{i_j} \rar T < \infty$.
Corresponding to the subsequence $\{ T_{i_j} \}$
there is a subsequence $\{ v_{i_j} \}$. Applying proposition
\ref{conv_C2_small_balls} to the
$\{ v_{i_j} \}$ yields $T=0$. Hence, we can hereafter assume that
\begin{gather}
|\tilde{y}_i| \rar 0.
\label{limit_T_0}
\end{gather}

\begin{prop} Let $x_i \rar \bar{x} \in \partial M$ be an isolated simple blow-up
point for the sequence $\{ u_i \}$
 of positive solutions to (\ref{bvp}). Then there exist constants $C>0$, $\si >0$
independent of $i$ such that
\begin{align}
 M_i u_i(x) \geq C^{-1}G_i(x,x_i), & \,\,\,\, \mip \leq |x| \leq \si, \nonumber
\displaybreak[0] \\
M_i u_i(x) \leq C |x|^{2-n}, & \,\,\,\, |x| \leq \si, \nonumber
\end{align}
where $G_i(x,x_i)$ is the Green's function for $L_{g_i}$ centered at $x_i$ with
boundary condition
$B_{g_i} G_i(x,x_i) = 0$ on $\partial^\prime B_\si(x_i)$.
Moreover, after passing to a subsequence
$M_i u_i(x) \rar G(x,\bar{x})$ in $C^2_{loc}(B_\si(\bar{x}) \backslash \{\bar{x}\}
)$, where $G(x,\bar{x})$
 is the Green's function for $L_{g}$ centered at $\bar{x}$ with boundary condition
$B_g G(x,\bar{x}) = 0$ on $\partial^\prime B_\si(\bar{x})$.
\label{u_conv_Green_fc}
\end{prop}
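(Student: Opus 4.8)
The plan is to adapt the interior blow-up argument (compare \cite{KMS,M,LZh1}) to the boundary, the only new feature being that every comparison, Harnack, and Schauder argument must be carried out up to $\partial M$. We work in boundary conformal normal coordinates centered at $\bar{x}$, so that near $\bar{x}$ the boundary is the totally geodesic hyperplane $\{x^n=0\}$ with $\kappa_{g_i}$ and $g_{in}$ vanishing to high order (proposition \ref{bcnc}, corollary \ref{boundary_hyper}). Set $w_i=M_iu_i$; since $L_{g_i}u_i=-Kf_i^{-\de_i}u_i^{p_i}<0$ and $B_{g_i}u_i=0$, $w_i$ is a positive supersolution of $L_{g_i}$ satisfying $B_{g_i}w_i=0$. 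We use freely the standard facts about isolated simple blow-up points: $p_i\to\frac{n+2}{n-2}$ (so $p_i-1$ is bounded below) and $M_i^{\de_i}$ is bounded (cf.\ the boundaryless analogues), together with proposition \ref{conv_C2_small_balls}, which gives a sequence $R_i\to\infty$ with $\|v_i-U\|_{C^2(B_{R_i})}\to0$; since here $|\tilde{y}_i|\to0$, in the limit the boundary passes through the origin and $v_i\to U$ on $\RR^n_+$ with the Neumann condition.

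\emph{The two-sided bound.} For the upper bound, in the bubble region $|x|\le R_i\ve_i$ we use $v_i\to U$ and $U(y)\le C(1+|y|)^{2-n}$; undoing the rescaling and using $M_i=\ve_i^{-2/(p_i-1)}$ together with the boundedness of $M_i^{\de_i}$ gives $M_iu_i(x)\le C|x|^{2-n}$ there. For $R_i\ve_i\le|x|\le\si$ we use that $\bar{x}$ is isolated \emph{simple}: $\hat{u}_i(r)=r^{2/(p_i-1)}\bar{u}_i(r)$ is non-increasing, so $\bar{u}_i(r)\le r^{-2/(p_i-1)}\hat{u}_i(R_i\ve_i)$, and the $C^2$ convergence $v_i\to U$ bounds $\hat{u}_i(R_i\ve_i)$ uniformly (here $p_i\to\frac{n+2}{n-2}$ makes the relevant power of $R_i$ negative); passing from $\bar{u}_i(r)$ to $\sup_{\partial B_r(x_i)}u_i$ by rescaling $z\mapsto r^{2/(p_i-1)}u_i(rz)$ and the boundary Harnack inequality (lemma \ref{Harnack}), then multiplying by $M_i$, yields $M_iu_i\le C|x|^{2-n}$ on the whole range. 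For the lower bound, on $B_\si(x_i)\setminus B_{\ve_i}(x_i)$ the Green's function satisfies $L_{g_i}G_i=0$, $B_{g_i}G_i=0$ on $\partial^\prime B_\si(x_i)$, $G_i=0$ on $\partial^+B_\si(x_i)$; on $\partial^+B_\si(x_i)$ we have $w_i>0=G_i$, on $\partial B_{\ve_i}(x_i)$ we have $w_i=M_i^2v_i\ge cM_i^2\ge c\ve_i^{2-n}\ge C^{-1}G_i$ (using $v_i\to U>0$ on the unit sphere, $2/(p_i-1)\ge\frac{n-2}{2}$ so that $M_i^2\ge\ve_i^{2-n}$, and $G_i\le C|x-x_i|^{2-n}$ near the pole), and on $\partial^\prime B_\si(x_i)$ both satisfy the same boundary condition; the maximum principle for $L_{g_i}$ with Neumann-type condition (corollary \ref{coro_max_principle}, valid since $Y(M)>0$ and $K>0$) then gives $w_i\ge C^{-1}G_i$ throughout.

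\emph{Convergence and identification.} By the upper bound $\{w_i\}$ is locally uniformly bounded on $B_\si(\bar{x})\setminus\{\bar{x}\}$; since $g_i\to g$ in $C^k$ we have $G_i\to G$, so by the lower bound $\{w_i\}$ is also locally bounded below by a positive function. Writing $L_{g_i}w_i=-Kf_i^{-\de_i}M_i^{1-p_i}w_i^{p_i}$, the right side tends to $0$ locally uniformly ($M_i^{1-p_i}\to0$), so interior and boundary Schauder estimates (using $B_{g_i}w_i=0$ and the high regularity of $g_i$ in these coordinates) bound $\{w_i\}$ in $C^{2,\al}_{loc}$; a subsequence converges in $C^2_{loc}$ to a function $w$ with $C^{-1}G\le w\le C|x|^{2-n}$ solving $L_gw=0$ in $B_\si(\bar{x})\setminus\{\bar{x}\}$, $B_gw=0$ on $\partial^\prime B_\si(\bar{x})$ and $w=0$ on $\partial^+B_\si(\bar{x})$. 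To identify $w$ with $G(\cdot,\bar{x})$, integrate $L_{g_i}w_i$ over $B_r(\bar{x})\cap M$ and apply the divergence theorem: the integral over $\partial^\prime B_r\cap\partial M$ is controlled by $\kappa_{g_i}$ and $w_i$ and is $O(r)$, the term $\int R_{g_i}w_i$ is $O(r^2)$, while $\int_{B_r(\bar{x})\cap M}Kf_i^{-\de_i}M_iu_i^{p_i}\to\int_{\RR^n_+}KU^{\frac{n+2}{n-2}}$ as $i\to\infty$ (via $x=\ve_iy$, $f_i^{-\de_i}\to1$ and $v_i\to U$), and this number is precisely the flux normalizing $G$ because $K=n(n-2)$ and $\int_{\RR^n_+}KU^{\frac{n+2}{n-2}}=\tfrac12(n-2)|S^{n-1}|$. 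Hence $w$ has the same singularity and flux at $\bar{x}$ as $G$; after an even reflection across the totally geodesic boundary $w-G$ is a bounded solution of the homogeneous problem near $\bar{x}$, so its singularity is removable, and uniqueness for the mixed boundary value problem (again using $Y(M)>0$) forces $w=G(\cdot,\bar{x})$.

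\emph{Main obstacle.} The delicate points are the last step — pinning the limit down to exactly the Green's function through the local flux (Pohozaev-type) computation while keeping the boundary contributions under control — and, throughout, ensuring that Harnack, the maximum principle, the Schauder estimates and the reflection are all valid up to $\partial M$ with constants independent of $i$; this is where the boundary conformal normal coordinates of section \ref{boundary_estimates} (totally geodesic boundary, $\kappa_{g_i}$ and $g_{in}$ vanishing to high order) together with the standard control $p_i\to\frac{n+2}{n-2}$, $M_i^{\de_i}$ bounded on isolated simple blow-up points are essential.
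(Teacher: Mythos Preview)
The paper's own proof is a single sentence deferring to \cite{M} and \cite{HL}, so your sketch actually supplies more detail and follows the same general route; the lower bound via Green's function comparison and the $C^{2}_{loc}$ compactness step are correct. There is, however, a genuine gap in your upper bound. Monotonicity of $\hat{u}_i$ only gives $\bar{u}_i(r)\le r^{-2/(p_i-1)}\hat{u}_i(R_i\ve_i)$ with $\hat{u}_i(R_i\ve_i)\sim R_i^{2/(p_i-1)+2-n}$; after multiplying by $M_i=\ve_i^{-2/(p_i-1)}$ and unwinding, you are left with a factor $M_i^{2}\ve_i^{\,n-2}=M_i^{(n-2)\de_i/2}$ and a power $|x|^{-2/(p_i-1)}$ rather than $|x|^{2-n}$, so the bound is not uniform in $i$. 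In \cite{M} (Proposition~4.3) and \cite{HL} this is handled by two further steps that you have skipped: first a preliminary estimate $M_i^{\la_i}u_i(x)\le C|x|^{2-n+\delta}$ for some $\la_i<1$, obtained by comparing with a barrier of the form $|x|^{-\mu}$ (with $\mu$ slightly less than $n-2$, so that $L_{g_i}|x|^{-\mu}$ has a sign on the annulus), and second a Pohozaev/limiting argument showing that $M_iu_i$ is actually bounded on a \emph{fixed} sphere $|x|=\si_0$; only then does the maximum principle against $|x|^{2-n}$ give the sharp bound. Note also that you cannot invoke ``$M_i^{\de_i}$ bounded'' as a standard fact here: in this paper that estimate (remark~\ref{estimate_de_i_simple}) is established only \emph{after} proposition~\ref{u_conv_Green_fc}, so using it would be circular.

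A smaller issue: in the identification step you assert $w=0$ on $\partial^{+}B_\si(\bar{x})$, but $u_i>0$ there and no such condition is available in the limit. The correct statement is that $w$ has the form $a|x|^{2-n}$ plus a regular $L_g$-harmonic piece satisfying $B_gw=0$ on $\partial^{\prime}B_\si$; your flux computation fixes $a$, and this is exactly how the limit is used in section~\ref{proof_compactness}.
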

\begin{proof}
The proof is an adaptation of the ideas from \cite{M} and \cite{HL} using
lemma \ref{Harnack} and
proposition \ref{conv_C2_small_balls}.
\end{proof}

\section{A further estimate on $\operatorname{dist}_{g_i}(x_i,\partial M)$.\label{section_estimate_distance}}
Let $x_i \rar \bar{x} \in \partial M$ be an isolated simple blow-up point.
As in the boundaryless case, one of the main features of our proofs is the usual use of coordinates centered at
the points $x_i$. If $x_i \in \mathring{M}$, lemma \ref{boundess_T} then gives an estimate
for the distance of $x_i$ to the boundary. Unfortunately this estimate is not enough for our purposes.
In fact the results of sections \ref{higher} and \ref{boundary_correction} require the center of the
coordinate system to be on the boundary. We therefore have to prove that we can pass to a subsequence such that
$x_i \in \mathring{M}$.

\begin{prop} Suppose $x_i \rar \bar{x}$ is an isolated simple
blow-up point. Then in boundary conformal normal
coordinates at $x_i$, there exist
constants $\si, C > 0$, independent of $i$, such that
\begin{gather}
|v_i - U|(y) \leq C \ve_i
 \nonumber
\end{gather}
for every $|y| \leq \si \mip$.
\label{simple_symmetry}
\end{prop}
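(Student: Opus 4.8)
The plan is to improve the crude $C^2$-convergence $\|v_i - U\|_{C^2(B_{R_i}(0))} \to 0$ of Proposition \ref{conv_C2_small_balls} to the sharp rate $O(\ve_i)$ on the whole region $|y|\leq\si\mip$, following the scheme used for interior blow-up points in \cite{KMS}. First I would set $w_i = v_i - U - \ve_i z_{\ve_i}$ (with $z_{\ve_i}(y) = \tilde z_{\ve_i}(y)$ the rescaled correction term from Definition \ref{defi_tilde_z_i}, extended by the natural scaling), since by Proposition \ref{bvp_z} the correction term $\tilde z_\ve$ satisfies the Neumann condition $\partial_n \tilde z_\ve = 0$ on $\RR^{n-1}$, so $w_i$ satisfies a homogeneous Neumann condition on the flat part of the boundary up to errors controlled by the difference between the true boundary and the hyperplane. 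Then I would double the half-ball across $\{x^n=0\}$ (using boundary conformal normal coordinates, in which the boundary is exactly $\{x^n=0\}$ by Corollary \ref{boundary_hyper} and the mean curvature vanishes by Proposition \ref{bcnc}); this reduces the boundary-value problem to an interior problem on a full ball with a reflected metric, to which the argument of \cite{KMS} applies almost verbatim.

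The key steps, in order, would be: (1) write down the equation satisfied by $w_i$; using $L_{\tilde g_i} v_i + K\tilde f_i^{-\de_i} v_i^{p_i}=0$, $\Delta U + n(n+2)U^{\frac{n+2}{n-2}}=0$, and the defining equation \eqref{equation_z} for $\tilde z_\ve$, the error terms on the right-hand side are (a) the difference $L_{\tilde g_i}-\Delta$, which by the expansion of conformal normal coordinates is governed by $H_{ij}$ and hence by $\tilde H^{(k)}_{ij}$ up to $O(|x|^{n-3})$; (b) the subcriticality error from $\de_i\neq 0$, controlled by $\de_i\log M_i$, which is small because $R_i/\log M_i\to 0$; (c) the Taylor-remainder error in approximating $h_{ij}$ by $H_{ij}$; and (d) the nonlinear remainder $v_i^{p_i}-U^{p_i}-p_i U^{p_i-1}(v_i-U)$, which is quadratically small. (2) Estimate each of these in weighted norms $(1+|y|)^{\tau}$; the crucial input is that by Theorem \ref{big_estimate}, Theorem \ref{big_estimate_higher_order} and Theorem \ref{estimate_h_nn_boundary}, the umbilicity of the boundary forces the ``bad'' components $h_{nj}$, $h_{nn}$, $\partial_n h_{ij}$ to vanish to high order on $\{x^n=0\}$, which is exactly what makes the reflected metric regular enough to run the $\cite{KMS}$ machinery (this addresses the remark after Theorem \ref{Weyl_vanishing_theorem}). (3) Apply a linear a priori estimate for the operator $\Delta + n(n+2)U^{\frac{4}{n-2}}$ on $\RR^n$ (its bounded kernel consists only of the functions $\partial_i U$ and $y\cdot\nabla U + \frac{n-2}{2}U$, which are killed by the normalizations $v_i(0)=1$, $\nabla v_i(0)=0$ after projecting off the kernel), together with a blow-up/contradiction argument as in \cite{KMS}, to conclude $|w_i|(y)\leq C\ve_i(1+|y|)^{2-n}$ and hence, combining with the estimate \eqref{bound_z_i_similar_U} for $\tilde z_{\ve_i}$, that $|v_i-U|(y)\leq C\ve_i$.

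The main obstacle I expect is Step (2) near the boundary: controlling the error coming from the fact that after doubling, the reflected metric is only as smooth as the estimates of Sections \ref{higher} and \ref{boundary_correction} allow. A naive reflection of a generic metric across the boundary produces only a Lipschitz (not $C^2$) metric, which is insufficient for the fine blow-up analysis; the point is precisely that Theorems \ref{big_estimate}, \ref{big_estimate_higher_order} and \ref{estimate_h_nn_boundary} upgrade this regularity by showing $h_{nj}$, $h_{nn}$, $\partial_n h_{ij}$, $\partial_n^2 h_{nj}$, $\partial_n^3 h_{nn}$ all vanish to order $N$ on the boundary, so the odd-order normal derivatives that would otherwise obstruct smoothness of the doubled metric in fact vanish. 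Carefully tracking how these vanishing statements feed into the weighted estimates for the terms (a)--(c) above — and in particular verifying that the residual boundary error in the Neumann condition for $w_i$ is also $O(\ve_i)$ in the relevant weighted norm — is the technical heart of the argument; once that is in place, the rest is a direct transcription of the interior argument of \cite{KMS}.
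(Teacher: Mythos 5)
Your approach is circular as written, and the paper deliberately avoids the tools you invoke. Proposition \ref{simple_symmetry} is proved \emph{before} Theorem \ref{blow_up_on_boundary}, and in fact is the main ingredient (via Proposition \ref{simple_symmetry_C_2_al}) in establishing that $x_i\in\partial M$. The paper says explicitly at the start of Section \ref{section_estimate_distance} that the results of Sections \ref{higher} and \ref{boundary_correction} ``require the center of the coordinate system to be on the boundary,'' and the whole point of Section \ref{section_estimate_distance} is to earn that hypothesis. Your proposal instead uses the correction term $\tilde z_{\ve_i}$ together with its Neumann boundary condition (Proposition \ref{bvp_z}), which is proved via Theorems \ref{estimate_h_nn_boundary} and \ref{big_estimate_higher_order}, all of which assume the boundary conformal normal coordinates are centered at a boundary point, and you also invoke Corollary \ref{boundary_hyper} (boundary is $\{x^n=0\}$), which again requires $x_0\in\partial M$. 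Since at this stage the $x_i$ may still lie in $\mathring M$ (only $\mip\operatorname{dist}(x_i,\partial M)\to 0$ is known, from Lemma \ref{boundess_T} and (\ref{limit_T_0})), this is a genuine logical gap: you are assuming what the proposition is ultimately used to prove.

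The paper's proof is simpler and cruder, and that crudeness is the point. It does not introduce $\tilde z_\ve$ at all: it passes to Fermi coordinates centered at $y_i$ if $y_i\in\partial M$, or at the nearest boundary point $\tilde y_i$ if $y_i\in\mathring M$ (this handles the interior case, which your proposal does not address), reflects $g$ and $v$ evenly across $\{z^n=0\}$, and observes that umbilicity plus vanishing mean curvature in boundary conformal normal coordinates make the reflected data $C^{2,\alpha}$ — enough regularity to re-run the argument of lemmas 5.1--5.3 of \cite{M}. The only adaptations needed are: verifying $\overline v\leq CU$ on the doubled ball (which does not follow directly from ``isolated simple'' since $0$ need not be isolated simple for the reflected $\overline v$), and dropping the use of $R_g=O(r^2)$ because the reflected scalar curvature is only $C^{0,\alpha}$. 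This is why the conclusion is the weaker $|v_i-U|\leq C\ve_i$ rather than the $C\ve_i^s$, $s>1$, of \cite{M}. The finer weighted estimate $|v_i-U-\tilde z_i|(y)\leq C\ve_i^{n-3}(1+|y|)^{-1}$ that you are aiming for is Proposition \ref{symmetry_3}/Corollary \ref{coro_weyl}, which only becomes available after Theorem \ref{blow_up_on_boundary}. If you want to salvage your proposal, you would need to either establish $x_i\in\partial M$ independently first, or drop the correction term and work with the cruder estimate as the paper does.
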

\begin{proof}
The idea of the proof is as follows. Using the fact that the boundary is totally
geodesic in boundary conformal normal coordinates,
we can reflect all quantities across the boundary and then mimic the proofs of \cite{M}.
In order to simplify notation the index $i$ will be dropped from all quantities when no confusion arises,
and the metric $\tilde{g}$ in
$y$-coordinates will simply be denoted as $g$. Similarly $\tilde{f}$ will be denoted by $f$.
We will use Greek letters to denote indices running up to $n$ and Latin letters for
indices running up to $n-1$, and write as usual $y=(y^\prime,y^n)$. Let $l = \si \ve^{-1}$.

If $y_i \in \partial M$, take Fermi coordinates $(z^1,\dots,z^n)$ at $y_i$.
If $y_i \in \mathring{M}$ then take Fermi coordinates $(z^1,\dots,z^n)$ at
$\tilde{y}_i$, where $\tilde{y}_i \in \partial M$ is the closest point to $y_i$. Then in these coordinates
$g_{nn} \equiv 1$ and $g_{ni} \equiv 0$. Shrinking the domain if necessary, we can assume that
the domain of definition of the Fermi coordinates contains the domain of definition of the boundary
conformal normal coordinates. Define the extensions
\begin{gather}
 \overline{g}(z^\prime,z^n) =
\begin{cases}
 g(z^\prime, z^n), & z^n \geq 0 \\
 g(z^\prime, -z^n), & z^n < 0
\end{cases}
\;\;\;\; \text{and} \;\;\;\;
 \overline{v}(z^\prime,z^n) =
\begin{cases}
 v(z^\prime, z^n), & z^n \geq 0 \\
 v(z^\prime, -z^n), & z^n < 0 .
\end{cases}
\label{extension}
\end{gather}
Recall that in boundary conformal normal coordinates the mean curvature vanishes and hence
the boundary condition for $v$ is just a Neumann condition. Moreover the umbilicity of $\partial M$
gives that the second fundamental form vanishes as well. Therefore the
above extensions are $C^2$, and are in fact
smooth in the $z^\prime$ direction. Notice also that
we are performing a change of coordinates to Fermi coordinates, but we are not making a
conformal change of the metric, and hence the vanishing of $\kappa$ and $\kappa_{ij}$ are still true
in Fermi coordinates. Mimicking a standard one-dimensional argument then shows that
$\partial_n (\partial^2_n \overline{v})$ and $\partial_n (\partial^2_n \overline{g})$ exist in the weak
sense, so in particular the extensions are $C^{2,\al}$. Of course, the extended metric also satisfies
$\overline{g}_{nn} \equiv 1$ and $\overline{g}_{ni} \equiv 0$.

A simple calculation shows that
\begin{gather}
 R_{\overline{g}}(z^\prime,z^n) = R_g(z^\prime,-z^n),~~z^n < 0 ,
\label{scalar_extends}
\end{gather}
and
\begin{gather}
 \Delta_{\overline{g}} \overline{v}(z^\prime,z^n) = \Delta_{g} v(z^\prime,-z^n),~~ z^n < 0.
\label{Laplacian_v_extends}
\end{gather}

Now extend the function $f$ across the boundary by $\overline{f}(z^\prime,z^n) = f(z^\prime,-z^n)$
if $z^n < 0$. Notice that $\overline{f}$ and $R_{\bar{g}}$ are $C^{0,\al}$.
Combining (\ref{scalar_extends}) and (\ref{Laplacian_v_extends})
produces
\begin{align}
 L_{\overline{g}} \overline{v} (z^\prime,z^n) = L_g v(z^\prime,-z^n) = -Kf^{-\de}(z^\prime,-z^n) v^p(z^\prime,-z^n)
= -K\overline{f}^{-\de}(z^\prime,z^n) \overline{v}^p(z^\prime,z^n)
\nonumber
\end{align}
for $z^n<0$, i.e., the extended quantities also satisfy the equation. It follows that the extended
equation holds in the original $y$-coordinates,
\begin{gather}
L_{\overline{g}} \overline{v} (y)+ K\overline{f}^{-\de} \overline{v}^p (y) = 0 ~~\text{in}~~ \widetilde{B}_{l}(0) ,
\end{gather}
where $\widetilde{B}_{l}(0)$ is a full ball in $\RR^n$, i.e.,
$\widetilde{B}_{l}(0) = \{ y \in \RR^n ~|~|y| < l \}$. From
$\det{g} = 1 + O(r^N)$ in $B_l(0)$ we obtain $\det{\overline{g}} = 1 + O(r^N)$ in
$\widetilde{B}_l(0)$ as well.

Now that the problem is defined in the full ball $\widetilde{B}_l(0)$, to prove
the proposition, proceed with almost identical arguments as in the proofs of lemmas 5.1, 5.2 and 5.3 of \cite{M}.
There are, however, three differences that we now discuss.

First, unlike in \cite{M} the coefficients, of the PDE are not smooth.
However, they are sufficiently regular to apply elliptic estimates.

Second, we need the estimate $\overline{v} \leq C U$. In \cite{M} this arises from the fact that the
blow-up is isolated simple. In the current situation it is not necessarily
true that $0$ is an isolated simple blow-up point for $\overline{v}$ on $\widetilde{B}_l$(0).
Nevertheless, we will show
that  $\overline{v}(y) \leq C U(y)$ still holds for all $y \in \widetilde{B}_l(0)$.
Notice that we do not need to make an extension of $U$
since it is a priori defined on the whole of $\RR^n$.

To see why this is the case, first notice that since $y_i$ is an isolated simple blow-up point
for $v$ on $B_l(0)$, we have $v \leq C U$ there.
For $p \in B_l(0)$, let $\overline{p} \in \widetilde{B}_l(0) \backslash \overline{B_l(0)}$,
be the reflected point.
If $y_i \in \partial M$ then $d_{\overline{g}}(y_i,\overline{p}) = d_{\overline{g}}(y_i,p)$,
where $d_{\overline{g}}$ means $\operatorname{dist}_{\overline{g}}$.
If $y_i \notin \partial M$,
then in $y$ coordinates the boundary is given by a graph $y^n = F(y^\prime)$, but
$F(y^\prime) \rar 0$ as $i \rar 0$ (see (\ref{limit_T_0}) and lemma \ref{limit_hyperplane}),
which then implies $d_{\overline{g}}(y_i,\overline{p}) = d_{\overline{g}}(y_i,p) + o(1)$. Therefore
\begin{gather}
\label{v_U_simple_blow_up_extended}
\overline{v}(\overline{p}) =v(p)
 \leq C U(p) = C (1 + d_{\overline{g}}(y_i,p)^2)^\frac{2-n}{2}
\leq C_1 (1 + d_{\overline{g}}(y_i,\overline{p})^2)^\frac{2-n}{2}
= C_1 U(\overline{p}),
\nonumber
\end{gather}
as desired.

Finally, the third difference with \cite{M} is that there, the scalar curvature
satisfies $R_g=O(r^2)$, which comes from the Taylor formula and properties of conformal normal
coordinates. Here, since $R_{\overline{g}}$ is $C^{0,\al}$ only, we avoid the Taylor expansion. Without
$R_{\overline{g}}=O(r^2)$ the proof in \cite{M} yields a weaker estimate, but since we only need
$|v - U|(y) \leq C \ve$, the hypothesis
$R_{\overline{g}}=O(r^2)$ is not necessary. In \cite{M} the better estimate
$|v - U|(y) \leq C \ve^s$, with $s>1$, is established.
\end{proof}
\begin{rema}
Observe that as in \cite{M}, the proof of proposition \ref{simple_symmetry} produces
the estimate $\de_i \leq C \ve_i$.
\label{estimate_de_i_simple}
\end{rema}

In the proof of the next proposition, we retain the notation for the reflected quantities that appears in the proof of proposition
\ref{simple_symmetry}.

\begin{prop} Under the same hypotheses of proposition
\ref {simple_symmetry}, there exists a constant $C_0$, independent
of $i$, such that
\begin{gather}
 \parallel v_i - U \parallel_{C^{2,\al}(\widetilde{B}_{\frac{l_i}{4}}(0))} \leq C_0 \ve_i ,
\nonumber
\end{gather}
where $l_i = \si \ve_i^{-1}$.
\label{simple_symmetry_C_2_al}
\end{prop}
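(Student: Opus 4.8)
The strategy is to upgrade the $C^0$-type bound $|v_i-U|\le C\ve_i$ from Proposition \ref{simple_symmetry} to the full $C^{2,\al}$-estimate on the slightly smaller ball $\widetilde{B}_{l_i/4}(0)$, by working with the reflected problem on the full Euclidean ball and applying interior Schauder theory to the difference $w_i := v_i - U$. First I would recall from the proof of Proposition \ref{simple_symmetry} that the reflected functions $\overline{v}_i$ solve $L_{\overline{g}_i}\overline{v}_i + K\overline{f}_i^{-\de_i}\overline{v}_i^{p_i}=0$ in $\widetilde{B}_{l_i}(0)$, with $\det\overline{g}_i = 1 + O(r^N)$ and coefficients that are $C^{0,\al}$ (hence regular enough for Schauder estimates). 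Since $U$ solves $\Delta U + n(n-2)U^{\frac{n+2}{n-2}}=0$ on all of $\RR^n$, subtracting gives an equation for $w_i$ of the schematic form
\begin{gather}
\Delta w_i + b_i(y) w_i = E_i(y) \quad\text{in } \widetilde{B}_{l_i}(0),
\nonumber
\end{gather}
where $b_i$ is a bounded potential (built from the mean-value-theorem linearization of the nonlinear term plus lower-order curvature terms, using $|v_i|,U \le C$) and $E_i$ collects the error terms: the difference $(L_{\overline{g}_i}-\Delta)U$ coming from the metric not being Euclidean, the discrepancy $\overline{f}_i^{-\de_i}-1$, and the difference between the exponents $p_i$ and $\frac{n+2}{n-2}$.

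The second step is to estimate $\|E_i\|_{C^{0,\al}}$ on balls of fixed radius around any point $y_0$ with $|y_0|\le l_i/4$. The key inputs are: $\overline{g}_{i,kl}(y) = \de_{kl} + O(\ve_i^2|y|^2)$ with $\det\overline{g}_i = 1+O(r^N)$, so $(L_{\overline{g}_i}-\Delta)U$ is controlled by $C\ve_i^2(1+|y|)^2 \cdot (1+|y|)^{-n} \le C\ve_i$ type bounds (using $|y|\le l_i = \si\ve_i^{-1}$ and decay of $U$ and its derivatives); Remark \ref{estimate_de_i_simple}, which gives $\de_i \le C\ve_i$, so that $|\overline{f}_i^{-\de_i}-1| \le C\ve_i$ and $|U^{p_i} - U^{\frac{n+2}{n-2}}| \le C\ve_i \log(1+|y|) \cdot U^{p_i}$, again absorbed into $C\ve_i$ after using the polynomial decay; and the already-established $\|w_i\|_{C^0}\le C\ve_i$ from Proposition \ref{simple_symmetry}, which controls the $b_i w_i$ contribution when we iterate. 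Putting these together yields $\|E_i\|_{C^{0,\al}(\widetilde{B}_1(y_0))} \le C\ve_i$ uniformly in $y_0$ and $i$.

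The third step is the Schauder bootstrap: applying interior elliptic $C^{2,\al}$ estimates to the equation for $w_i$ on the unit ball $\widetilde{B}_1(y_0)$ (whose double $\widetilde{B}_2(y_0)$ still lies in $\widetilde{B}_{l_i/2}(0)\subset\widetilde{B}_{l_i}(0)$ when $|y_0|\le l_i/4$) gives
\begin{gather}
\|w_i\|_{C^{2,\al}(\widetilde{B}_1(y_0))} \le C\big(\|w_i\|_{C^0(\widetilde{B}_2(y_0))} + \|E_i\|_{C^{0,\al}(\widetilde{B}_2(y_0))}\big) \le C\ve_i,
\nonumber
\end{gather}
with $C$ independent of $y_0$ and $i$ because the coefficients of the operator $\Delta + b_i$ have $C^{0,\al}$-norms bounded uniformly (the metric converges to Euclidean and $b_i$ is uniformly bounded). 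Covering $\widetilde{B}_{l_i/4}(0)$ by such unit balls and taking the supremum gives the claimed global bound with a uniform constant $C_0$.

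\textbf{Main obstacle.} The delicate point is bookkeeping the error term $E_i$: one must check that every piece genuinely contributes at order $\ve_i$ and not merely $o(1)$, which relies on the interplay between the growing radius $l_i \sim \ve_i^{-1}$ and the polynomial decay of $U$, $\nabla U$, $\nabla^2 U$ — e.g. a term like $\ve_i^2|y|^2 |\nabla^2 U|$ is $O(\ve_i^2|y|^2(1+|y|)^{-n})$, which is $\le C\ve_i$ only because $n\ge 3$ makes $|y|^{2-n}\le 1$; and the logarithmic factor from $\de_i\log(1+|y|)$ must be absorbed using $\de_i\le C\ve_i$ and $\ve_i\log(\ve_i^{-1})\to 0$ more carefully, or by noting $\ve_i^{1-\ve}\log(1+|y|)$ stays bounded. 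A secondary subtlety is that the reflected coefficients are only $C^{0,\al}$ across $\{y^n=0\}$, so one cannot bootstrap past $C^{2,\al}$; fortunately $C^{2,\al}$ is exactly what is claimed, and the $C^{0,\al}$ regularity of the coefficients is precisely what Schauder theory requires. I would model the argument closely on the corresponding estimate in \cite{M} (its lemmas 5.1–5.3 and the refinement producing $|v-U|\le C\ve^s$), noting as the paper does that here we only need the weaker exponent $s=1$, which simplifies matters.
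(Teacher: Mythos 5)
Your proof reaches the correct conclusion, but by a genuinely different route than the paper. The paper argues globally on the reflected ball $\widetilde{B}_{l_i}(0)$: it writes $L_{\tilde{g}_i}w_i + b_iw_i = Q_i$, invokes the Green's function representation formula for the conformal Laplacian with Dirichlet data, estimates the resulting boundary integral separately (using decay of $\overline{v}_i$ near $\partial\widetilde{B}_{l_i}$ and properties of $G_i$), and then treats the interior integral by Newtonian-potential estimates, ultimately reducing to a scaled bound of the form $[D^2w_i]_{\al}\leq\tfrac{C}{l_i^\al}\|\ga_i\|_{C^0}+C[\ga_i]_\al$ with $\ga_i=b_iw_i-Q_i$. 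You instead apply \emph{interior} Schauder estimates on a cover of $\widetilde{B}_{l_i/4}(0)$ by unit balls (each contained in $\widetilde{B}_{l_i/2}(0)$), taking a supremum. This buys two simplifications: there is no boundary integral to estimate, and because each ball has fixed radius the Schauder constant is uniform in $i$ and in the center $y_0$, so no separate scale-tracking is needed. The price, which you handle correctly, is that one must check the H\"older seminorm of the inhomogeneity on each unit ball, and this uses the same two inputs as the paper: $\|v_i-U\|_{C^0}\leq C\ve_i$ from Proposition \ref{simple_symmetry} and $\de_i\leq C\ve_i$ from Remark \ref{estimate_de_i_simple}.

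Two points are stated loosely and should be tightened to make the argument airtight. First, apply Schauder to $L_{\overline{g}_i}w_i + b_iw_i = Q_i$ directly (as the paper does), rather than to a ``schematic'' $\Delta w_i+b_iw_i=E_i$: if you try to push $(\Delta_{\overline{g}_i}-\Delta)w_i$ into the right-hand side you create a second-order term in $w_i$ that cannot be absorbed into an error whose norm you are trying to bound. Keeping the variable-coefficient operator is harmless, since the reflected metric is $C^{2,\al}$ with H\"older norms that shrink as $\ve_i\to 0$, so the Schauder constant is uniform in $i$ and $y_0$. Second, the local bounds $\|w_i\|_{C^{2,\al}(B_1(y_0))}\leq C\ve_i$ give $[D^2w_i]_{\al,\widetilde{B}_{l_i/4}(0)}\leq C\ve_i$ only after you also treat pairs $y,y'$ with $|y-y'|\geq 1$, for which the trivial bound $|D^2w_i(y)-D^2w_i(y')|\,/\,|y-y'|^\al\leq 2\sup|D^2w_i|\leq C\ve_i$ suffices; this is standard but worth recording.
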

\begin{proof}
It is sufficient to establish the desired estimate for $w_i = \overline{v}_i - U$. We have
\begin{gather}
 L_{\tilde{g}_i} w_i + b_i w_i = Q_i \text{ in } \widetilde{B}_{l_i}(0)
\nonumber
\end{gather}
with
\begin{align}
 b_i(y) &= K\overline{f}^{-\delta_i} \frac{\overline{v}_i^{p_i} - U^{p_i}}{\overline{v}_i - U
}(y), \nonumber \displaybreak[0] \\
Q_i(y) &= \Big ( c(n) \varepsilon_i^2  R_{g_i} (\varepsilon_i y)U(y) +
K ( U^\frac{n+2}{n-2} - \overline{f}^{-\delta_i}U^{p_i} )
+ \ve_i^{N+1} O(|y|^N)|y|(1+|y|^2)^{-\frac{n}{2}}
\Big ). \nonumber
\end{align}
Use
(\ref{v_U_simple_blow_up_extended}) to find
$|b_i(y)| \leq c(1+|y|)^{-4}$.
Then the representation formula gives, for any and $|y| \leq \frac{l_i}{4}$,
\begin{gather}
 w_i(y) = \int_{\widetilde{B}_{l_i}(0)} G_i(y,z) (b_i w_i - Q_i)(z) dz
- \int_{\partial \widetilde{B}_{l_i}(0)} \frac{\partial G_i}{\partial \nu_{\tilde{g}_i}}(y,z)w_i(z) dS(z) ,
\label{rep_Newtonian_analog}
\end{gather}
where $G_i$ is the Green's function for the conformal Laplacian with Dirichlet boundary condition.
The proof is now similar to standard estimates for the Newtonian potential,
and therefore we will only indicate
the main steps (see for example \cite{GT}).

First notice that unlike the Newtonian potential case, there is a boundary
integral in the representation formula (\ref{rep_Newtonian_analog}). Nevertheless, this boundary
integral is easily estimated using standard properties of the Green's function and $\overline{v}_i \leq C U$,
since the singularities occur within the radius $\frac{l_i}{4}$.

For the interior integral, write $\ga_i = b_i w_i - Q_i$. This quantity
plays the role of the inhomogeneous term in potential theory.
Therefore standard potential theoretic arguments yield
\begin{gather}
 [D^2 w_i]_{\al,\widetilde{B}_{\frac{l_i}{4}}(0)} \leq \frac{C}{l_i^\al}
\Big ( \parallel \ga_i \parallel_{C^0(\widetilde{B}_{\frac{l_i}{2}}(0))} +
l_i^\al [\ga_i]_{\al,\widetilde{B}_{\frac{l_i}{2}}(0)} \Big )
\label{potential_estimate_1}
\end{gather}
where $[\cdot]_{\al,\Om}$ is the H\"older semi-norm on $\Om$.
Next, observe that by interpolation
\begin{align}
 [\ga_i]_{\al,\widetilde{B}_{\frac{l_i}{2}}(0)} &
\leq C \Big ([b_i]_{\al,\widetilde{B}_{\frac{l_i}{2}}(0)}[w_i]_{\al,\widetilde{B}_{\frac{l_i}{2}}(0)} +
[Q_i]_{\al,\widetilde{B}_{\frac{l_i}{2}}(0)} \Big ) .
\label{estimate_ga_i_potential}
\end{align}
In order to estimate $[w_i]_{\al,\widetilde{B}_{\frac{l_i}{2}}(0)}$
the representation formula (\ref{rep_Newtonian_analog}) may again be employed along with standard
properties of $G_i$ and proposition \ref{simple_symmetry}.
However, control of the boundary term relies on $y$ staying away from the boundary, that is why we choose
an estimate on $\widetilde{B}_{\frac{l_i}{2}}(0)$ (giving then a final estimate on
$\widetilde{B}_{\frac{l_i}{4}}(0)$). Moreover, using remark \ref{estimate_de_i_simple} and
$U^\frac{n+2}{n-2} - \overline{f}^{-\delta_i}U^{p_i} = U^\frac{n+2}{n-2}O( (|\log f| + |\log U|)\de_i )$,
it follows that
\begin{align}
[Q_i]_{\al,\widetilde{B}_{\frac{l_i}{2}}(0)} & \leq
C \Big ( \varepsilon_i^2 [ R_{g_i} (\varepsilon_i y)U(y) ]_{\al,\widetilde{B}_{\frac{l_i}{2}}(0)}
+ [ U^\frac{n+2}{n-2} - \overline{f}^{-\delta_i}U^{p_i} ]_{\al,\widetilde{B}_{\frac{l_i}{2}}(0)}
\label{estimate_Q_i_potential} \\
& + \ve_i [ \ve_i^N O(|y|^N)|y|(1+|y|^2)^{-\frac{n}{2}} ]_{\al,\widetilde{B}_{\frac{l_i}{2}}(0)}  \Big )
\nonumber \\
& \leq C \ve_i .
\nonumber
\end{align}
Finally, the term $\parallel \ga_i \parallel_{C^0(\widetilde{B}_{\frac{l_i}{2}}(0))}$ is estimated in
a similar manner
\begin{gather}
\parallel \ga_i \parallel_{C^0(\widetilde{B}_{\frac{l_i}{2}}(0))} \leq C \ve_i .
\nonumber
\end{gather}
Combining this
with (\ref{potential_estimate_1}), (\ref{estimate_ga_i_potential}) and (\ref{estimate_Q_i_potential}) yields
$[D^2 w_i]_{\al,\widetilde{B}_{\frac{l_i}{4}}(0)} \leq C\ve_i$.
The remaining lower order terms of the $C^{2,\al}$ norm may be estimated in an analogous way.
\end{proof}

The analogous of the following result is already known for scalar-flat manifolds \cite{Al}.

\begin{theorem} Suppose $x_i \rar \bar{x}$ is an isolated simple
blow-up point. Then in boundary conformal normal
coordinates at $x_i$, for all $i$ sufficiently large and possibly after passing to
a subsequence, we have
$x_i \in \partial M$.
\label{blow_up_on_boundary}
\end{theorem}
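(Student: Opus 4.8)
The plan is to argue by contradiction, reusing the even reflection constructed in the proofs of propositions \ref{simple_symmetry} and \ref{simple_symmetry_C_2_al} together with the non-degeneracy of the critical point of the standard bubble. Suppose that, along a subsequence, $x_i \in \mathring{M}$, and set $T_i = \mip \operatorname{dist}_{g_i}(x_i,\partial M) = |\tilde{y}_i| > 0$; by lemma \ref{boundess_T} and the discussion leading to (\ref{limit_T_0}) we know $T_i \rar 0$. Working in boundary conformal normal coordinates at $x_i$ and rescaling to $y$-coordinates, form the even reflection $\overline{v}_i$ of $v_i$ across $\partial M$ exactly as in the proof of proposition \ref{simple_symmetry}. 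Since the boundary is totally geodesic and the mean curvature vanishes, $\overline{v}_i$ is a $C^{2,\al}$ function on the full ball $\widetilde{B}_{l_i}(0)$, and proposition \ref{simple_symmetry_C_2_al} gives $\parallel \overline{v}_i - U \parallel_{C^{2,\al}(\widetilde{B}_{l_i/4}(0))} \leq C_0 \ve_i \rar 0$, where $U$ is the standard bubble centered at the origin, i.e. at $x_i$.

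First I would exhibit two critical points of $\overline{v}_i$. Because $x_i$ is a local maximum of $u_i$, the origin is a local maximum of $v_i$, hence a critical point of $\overline{v}_i$. Let $x_i^*$ be the reflection of $x_i$ across $\partial M$: in Fermi coordinates centered at $\tilde{x}_i$ the point $x_i$ sits at $(0,\dots,0,T_i)$ and $x_i^*$ at $(0,\dots,0,-T_i)$, so $\operatorname{dist}_{\tilde{g}_i}(x_i,x_i^*) = 2T_i$. From the evenness of $\overline{v}_i$ in the normal Fermi variable one reads off $\nabla \overline{v}_i(x_i^*) = 0$ (indeed $\overline{v}_i$ has a local maximum there as well). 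Thus $\overline{v}_i$ has the two critical points $x_i$ and $x_i^*$, which are distinct precisely because $T_i > 0$.

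Next I would invoke uniqueness of the critical point in the limit. The bubble $U$ has a unique critical point, at its center, and it is non-degenerate since $\nabla^2 U(0) = (2-n)I$; hence there is a fixed small $\rho>0$, independent of $i$, with $|\nabla U| \geq c_0 > 0$ on $\{\rho/2 \leq |y| \leq \rho\}$ and $\nabla^2 U$ invertible on $\{|y|\leq\rho\}$. Combining this with $\parallel \overline{v}_i - U \parallel_{C^{2,\al}(B_\rho(0))} \rar 0$, a standard degree/quantitative inverse function theorem argument shows that for $i$ large $\overline{v}_i$ has exactly one critical point in $B_\rho(0)$. But for $i$ large both $x_i$ (the origin) and $x_i^*$ (at distance $2T_i \rar 0$ from the origin) lie in $B_\rho(0)$, forcing $x_i = x_i^*$, i.e. $T_i = 0$, a contradiction. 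Hence $x_i \in \partial M$ for all large $i$ along the subsequence, which is the assertion of theorem \ref{blow_up_on_boundary}.

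The main obstacle is not the contradiction itself but the bookkeeping that makes it legitimate: one must track the three coordinate systems in play (boundary conformal normal coordinates at $x_i$, the rescaled $y$-coordinates, and the Fermi coordinates at $\tilde{x}_i$ used for the reflection), verify that the $C^{2,\al}$ estimate of proposition \ref{simple_symmetry_C_2_al} is expressed in a frame in which $U$ is genuinely centered at $x_i$ (so that the unique limiting critical point coincides with $x_i$), and control the location of $x_i^*$ under the non-affine reflection across the graph $y^n = F(y^\prime)$ with $F(0) = -T_i$, using that $F$ and its derivatives are small. It is exactly here that the sharp $C^{2,\al}$ bound of proposition \ref{simple_symmetry_C_2_al}, rather than the $C^0$ bound of proposition \ref{simple_symmetry}, is indispensable, since counting critical points requires convergence of the Hessians.
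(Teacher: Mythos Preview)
Your argument is correct and constitutes a genuinely different route from the paper's. The paper exploits the Neumann condition at the single point $\tilde{y}_i$: since $\partial_n v_i(\tilde{y}_i)=0$ while $\partial_n U(\tilde{y}_i)=(2-n)(1+|\tilde{y}_i|^2)^{-n/2}\tilde{y}_i^n$, one obtains $|\tilde{y}_i|\leq C|\tilde{y}_i|^\beta[w_i]_{1+\beta,\overline{B_{|\tilde{y}_i|}(0)}}$, and then an interpolation/induction scheme (choosing $\beta$ and the interpolation radius $\rho$ adaptively at each step) bootstraps this to $|\tilde{y}_i|\leq\ve_i^{k\al}$ for all $k$, forcing $|\tilde{y}_i|=0$. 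Your approach instead leverages the even extension globally: the reflection symmetry produces a second critical point $y_i^*$ of $\overline{v}_i$ at distance $\sim 2T_i$ from the origin, and then strict concavity of $\overline{v}_i$ near $0$ (inherited from $\nabla^2 U(0)=(2-n)I$ via the $C^2$ convergence of proposition \ref{simple_symmetry_C_2_al}) rules out two critical points in a fixed small ball. This is shorter and more geometric, and in fact only uses the $C^2$ part of the $C^{2,\al}$ estimate; the paper's iteration genuinely needs the H\"older exponent to interpolate. The bookkeeping you flag is real but routine: critical points are coordinate-independent, the reflection is an isometry of $\overline{g}_i$ in the Fermi $z$-frame so $y_i^*$ is well-defined and at distance $o(1)$ from $0$, and proposition \ref{simple_symmetry_C_2_al} is already stated for $\overline{v}_i-U$ on the full ball $\widetilde{B}_{l_i/4}(0)$ in $y$-coordinates, where $U$ is centered at the origin $=y_i$.
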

\begin{proof}
The proof is by contradiction. Therefore assume that $x_i \in \partial M$ occurs only for finitely
many $i$. Hence passing to a subsequence,
still denoted $x_i$, we can assume that
\begin{gather}
 x_i \in \mathring{M}~~~\text{for all}~i .
\label{assume_x_i_interior}
\end{gather}

Take boundary conformal normal coordinates at $x_i$ (see corollary \ref{bcnc_int}),
rescale all quantities to $y$ coordinates as explained at the beginning of the text,
and denote by $\tilde{y}_i \in \partial M$ the closest point to $y_i$, where $y_i$ is identified
with the origin. The closure
of the ball of radius $|\tilde{y}_i|$ will be denoted by $\overline{B_{|\tilde{y}_i|}(0)}$.
Furthermore, for any domain $\Om$, denote
by $[\cdot]_{1+\al,\Om}$ the $C^{1,\al}$ H\"older semi-norm, and by $[\cdot]_{1,\Om}$ the
$C^1$ H\"older semi-norm.

Let $w_i = v_i - U$, then
\begin{gather}
 \frac{| \partial_n( v_i - U)(0) - \partial_n(v_i - U)(\tilde{y}_i)  |}{|\tilde{y}_i - 0|^\beta}
\leq [w_i]_{1+\beta,\overline{B_{|\tilde{y}_i|}(0)}} .
\label{estimate_dist_1}
\end{gather}
As explained in section \ref{boundary_estimates}, the coordinates
may be arranged such that
$\left.\frac{\partial }{\partial \nu_{\tilde{g}_i}} \right|_{\tilde{y}_i} = g^{nn} \left. \partial_n \right|_{\tilde{y}_i}$.
Observe that the boundary condition for $v_i$ implies
that $\partial_n v_i(\tilde{y}_i) = 0$, since the mean curvature vanishes.
Notice also that
we have $\nabla v_i(0) = \nabla U(0) = 0$. On the other hand a direct calculation gives
\begin{gather}
\partial_n U(\tilde{y}_i) = (2-n) (1+|\tilde{y}_i|^2)^{-\frac{n}{2}}\tilde{y}_i^n.
\label{partial_n_U_y_tilde}
\end{gather}
Hence
(\ref{estimate_dist_1}) becomes
\begin{align}
|\tilde{y}_i^n| = |\tilde{y}_i| \leq \frac{1}{n-2}(1+|\tilde{y}_i|^2)^{\frac{n}{2}}
|\tilde{y}_i|^\beta  [w_i]_{1+\beta,\overline{B_{|\tilde{y}_i|}(0)} },
\nonumber
\end{align}
since $\tilde{y}_i = (0,\dots,0,\tilde{y}_i^n)$. By (\ref{limit_T_0}),
$\frac{1}{n-2}(1+|\tilde{y}_i|^2)^{\frac{n}{2}}  \leq C_1$ for a constant $C_1$ independent of $i$, so
\begin{align}
|\tilde{y}_i| \leq C_1 |\tilde{y}_i|^\beta  [w_i]_{1+\beta,\overline{B_{|\tilde{y}_i|}(0)} }.
\label{estimate_dist_2}
\end{align}
By proposition \ref{simple_symmetry_C_2_al}, $w_i$ converges to zero in $C^{2,\al}$, so there exists a small
$r>0$, independent of $i$, such that the Taylor formula for $w_i$ holds in $B_r(0)$ for all $i$. By
(\ref{limit_T_0}) we can assume that $\overline{B_{|\tilde{y}_i|}(0)} \subset B_r(0)$. Therefore for any
$y \in \overline{B_{|\tilde{y}_i|}(0)}$,
\begin{gather}
\partial_k w_i(y) = \partial_k w_i(0) + \mathcal{R}_l(y) y^l = \mathcal{R}_l(y) y^l ,
\nonumber
\end{gather}
where we used $\nabla w_i(0) = 0$. The remainder term satisfies, for each $l=1,\dots,n$,
\begin{gather}
 |\mathcal{R}_l(y)| \leq \sup_{z \in \overline{B_{|\tilde{y}_i|}(0)}} |\nabla^2 w(z)|
\leq \parallel w_i \parallel_{C^{2,\al}(\overline{B_{|\tilde{y}_i|}(0)})} \leq C_0 \ve_i,
\nonumber
\end{gather}
where proposition \ref{simple_symmetry_C_2_al} has been used.
Hence $|\partial_k w_i(y)| \leq |\mathcal{R}_l(y)y^l| \leq n C_0 \ve_i |y|$, and therefore
\begin{gather}
 [w_i]_{1,\overline{B_{|\tilde{y}_i|}(0)}} \leq nC_0 \ve_i |\tilde{y}_i|.
\label{estimate_w_1_ve_dist}
\end{gather}

Let $\Om$ be a convex domain. The following inequality is standard (see e.g. \cite{GT})
\begin{gather}
 [ u ]_{1+\beta,\Om} \leq \La \rho^{\al-\beta}[u]_{1+\al,\Om} + \La \rho^{-\beta} [u]_{1,\Om} ~,
\label{interpolation_1}
\end{gather}
where the constant $\La$ depends only on the dimension,
$0 < \beta < \al < 1$,
and $\rho >0$ is any positive number.
Also, using the mean value inequality,
there is a constant $A$ depending only on the dimension, such that
$|\partial_i u(p) - \partial_i u(q) | \leq A |p-q|[u]_{2,\Om} = A |p-q|^\al |p-q|^{1-\al} [u]_{2,\Om}$.
From this it follows that
\begin{gather}
 [u]_{1+\al,\Om} \leq A \operatorname{diam}(\Om)^{1-\al} [u]_{2, \Om} .
\label{interpolation_5}
\end{gather}

Because the constants $C_0,C_1,A$ and $\La$ do
not depend on $i$, we can, with the help of (\ref{limit_T_0})
and the definition of $\ve_i$,
choose $i$ so large that
\begin{gather}
 |\tilde{y}_i| < \frac{1}{2} ,
\label{choice_i_1} \\
\ve_i < 1,
\label{choice_i_2} \\
\max \big \{ C_0 C_1 \La A, ~n C_0 C_1 \La \big \}  \ve_i^\frac{\al^2}{p} < \frac{1}{2} ,
\label{choice_i_3} \\
C_0 C_1 \ve_i^{1-\al} < 1,
\label{choice_i_4}
\end{gather}
where $p>1$ is a large number chosen such that
\begin{gather}
 \al\frac{p+1}{p} < 1.
\label{choice_p}
\end{gather}
This is possible since $\al < 1$; notice
that $p$ does not depend on $i$.

Now fix an $i_0=i_0(n,C_0,C_1,A,\La)$ such that (\ref{choice_i_1})-(\ref{choice_i_4}) hold.
From (\ref{interpolation_5}) we have
\begin{gather}
 [w_{i_0}]_{1+\al,\overline{B_{|\tilde{y}_{i_0}|}(0)}} \leq A  [w_{i_0}]_{2,\overline{B_{|\tilde{y}_{i_0}|}(0)}} .
\label{interpolation_5_b}
\end{gather}
Moreover the constants $C_0,C_1,A$ and $\La$ do not depend on the choice of $\beta$, as can be seen
from the derivation of inequalities (\ref{estimate_dist_2}), (\ref{interpolation_1}), (\ref{interpolation_5}),
and the proof of proposition \ref{simple_symmetry}. Therefore the inequalities
(\ref{estimate_dist_2})-(\ref{interpolation_5_b}) hold
for any $\beta$ such that
$0 < \beta < \al$.

We are now in a position to prove the theorem. It will show by induction that
\begin{gather}
|\tilde{y}_{i_0}| \leq \ve_{i_0}^{k\al}
\label{induction_hypothesis_x_i_boundary}
\end{gather}
for all $k=0,1,2,3,\dots$. Since $\ve_{i_0} < 1$ this
would imply $|\tilde{y}_{i_0}| = 0$ so that $x_{i_0} \in \partial M$, contradicting
(\ref{assume_x_i_interior}).

For $k=0$ (\ref{induction_hypothesis_x_i_boundary}) is true by
(\ref{choice_i_1}). For $k=1$, recall that $\partial_n v_{i_0}(\tilde{y}_{i_0}) = 0$, and observe
that
\begin{gather}
|\partial_n U(\tilde{y}_{i_0})|  = |\partial_n(v_i - U)(\tilde{y}_{i_0}) | \leq
[w]_{1,\overline{B_{|\tilde{y}_{i_0}|}(0)} }.
\nonumber
\end{gather}
Then (\ref{partial_n_U_y_tilde}), proposition \ref{simple_symmetry} and (\ref{choice_i_4})
give
\begin{gather}
|\tilde{y}_{i_0}| \leq C_1 [w_{i_0}]_{1,\overline{B_{|\tilde{y}_{i_0}|}(0)} } \leq C_0 C_1 \ve_{i_0}
= C_0 C_1 \ve_{i_0}^{1-\al} \ve_{i_0}^\al < \ve_{i_0}^\al .
\nonumber
\end{gather}
So assume that (\ref{induction_hypothesis_x_i_boundary}) holds for some $k\geq 1$.
Combining (\ref{estimate_dist_2}) and (\ref{interpolation_1}) gives
\begin{align}
 |\tilde{y}_{i_0}| \leq C_1 |\tilde{y}_{i_0}|^\beta [w_{i_0}]_{1+\beta,\overline{B_{|\tilde{y}_{i_0}|}(0)} }
\leq C_1 \La |\tilde{y}_{i_0}|^\beta \Big ( \rho^{\al-\beta}[w_{i_0}]_{1+\al,\overline{B_{|\tilde{y}_{i_0}|}(0)}}
+ \rho^{-\beta} [w_{i_0}]_{1,\overline{B_{|\tilde{y}_{i_0}|}(0)}} \Big ) .
\label{ineq_x_i_boundary_proof_1}
\end{align}
Choose $\beta = \frac{\al}{pk}$ (which is less than $\al$ by the choice of $p$). If we also choose
$\rho = \ve_{i_0}^k$ then (\ref{ineq_x_i_boundary_proof_1}) becomes
\begin{align}
 |\tilde{y}_{i_0}|
\leq C_1 \La |\tilde{y}_{i_0}|^\frac{\al}{pk} \Big ( \ve_{i_0}^{k\al-\frac{\al}{p}} [w_{i_0}]_{1+\al,\overline{B_{|\tilde{y}_{i_0}|}(0)}}
+ \ve_{i_0}^{-\frac{\al}{p}} [w_{i_0}]_{1,\overline{B_{|\tilde{y}_{i_0}|}(0)}} \Big ) .
\nonumber
\end{align}
By (\ref{estimate_w_1_ve_dist}), (\ref{interpolation_5_b}),
proposition \ref{simple_symmetry}, the induction hypothesis
(\ref{induction_hypothesis_x_i_boundary})
and the fact that
\begin{gather}
[w_{i_0}]_{2,\overline{B_{|\tilde{y}_{i_0}|}(0)}} \leq
\parallel w_{i_0} \parallel_{C^{2,\al}(\overline{B_{|\tilde{y}_{i_0}|}(0)}) },
\nonumber
\end{gather}
we obtain
\begin{align}
 |\tilde{y}_{i_0}|
& \leq
\max \big \{ C_0 C_1 \La A, ~n C_0 C_1 \La \big \}
\ve_{i_0}^\frac{\al^2}{p}  \Big ( \ve_{i_0}^{k\al + 1 -\frac{\al}{p}}
+ \ve_{i_0}^{-\frac{\al}{p} +1 + k\al}  \Big )
\nonumber \\
& =
2 \max \big \{ C_0 C_1 N A, ~n C_0 C_1 N \big \} \ve_{i_0}^\frac{\al^2}{p}
  \ve_{i_0}^{(k+1)\al} \ve_{i_0}^{1- \al -\frac{\al}{p}}
\nonumber \\
& \leq \ve_{i_0}^{(k+1)\al} \ve_{i_0}^{1- \al -\frac{\al}{p}} .
\nonumber
\end{align}
where (\ref{choice_i_3}) has been employed.
Finally, $\ve_{i_0}^{1- \al -\frac{\al}{p}} < 1$ by (\ref{choice_i_2}) and (\ref{choice_p}).
\end{proof}

\section{Symmetry estimates\label{sec_symmetry_estimates}}

In this section we derive sharp estimates for the behavior of solutions $u_i$ in the
neighborhood of an isolated simple
blow-up point. The proofs are an adaptation of the results of \cite{KMS} and we will
often refer the reader to it for details.

Throughout this section, let $x_i \rar \bar{x} \in \partial M$ be an isolated simple
blow-up point. By theorem \ref{blow_up_on_boundary} we can assume that $x_i \in \partial M$.
 We will be using boundary conformal normal coordinates at $x_i$ (see proposition
\ref{bcnc})
 and rescale all the quantities to $y$-coordinates as explained at the beginning of
the text.
Notice that because in boundary conformal normal coordinates we have $\kappa_g = 0$
in the neighborhood of the origin, the boundary condition becomes a Neumann condition.
Moreover, since the boundary is
umbilic we obtain that
it is totally geodesic in the neighborhood of the origin.

Also, by (\ref{partial_hyper_restricted}), proposition \ref{bvp_z} gives that
for $|y| \leq \si \ve_i^{-1}$ we have, with $\tilde{z}_i = \tilde{z}_{\ve_i}$,
\begin{align}
\begin{cases}
 \Delta \tilde{z}_{i} + n(n+2)U^\frac{4}{n-2}\tilde{z}_{i} =
c(n)\sum_{k=4}^{n-4} \partial_i\partial_j \tilde{H}^{(k)}_{ij} U
, & \text{ for } |y| \leq \si \ve_i^{-1}, \displaybreak[0] \\
\frac{ \partial \tilde{z}_{i}}{\partial \nu_{\tilde{g}_i}} = \sum_{l=1}^{n-1} g^{nl}\partial_l \tilde{z}_i
= \ve_i^N O(|y^\prime|^N (1+|y^\prime|)^{1-n}), & \text{ on } \partial M
\end{cases}
\label{bvp_z_ball}
\end{align}
where (\ref{bound_z_i_similar_U}) has also been used.
Notice that since $\partial_n U(y^\prime,0) = 0$, in these coordinates $U$ also satisfies the boundary condition
\begin{gather}
\frac{ \partial U}{\partial \nu_{\tilde{g}_i}} = \sum_{l=1}^{n-1} g^{nl}\partial_l U
= \ve_i^N O(|y^\prime|^N (1+|y^\prime|)^{1-n}) \text{ on } \partial M .
\label{bc_U}
\end{gather}

\begin{prop} Suppose $x_i \rar \bar{x}$ is an isolated simple
blow-up point. Then in boundary conformal normal
coordinates at $x_i$, there exist
constants $\si, C > 0$ such that
\begin{gather}
|v_i - U -\tilde{z}_i| \leq C \max_{2\leq k \leq d-1}
\{\varepsilon_i^{2k}|H^{(k)}|^2(x_i),\varepsilon_i^{n-3},\de_i \} \nonumber
\end{gather}
for every $|y| \leq \si \mip$.
\label{symmetry_1}
\end{prop}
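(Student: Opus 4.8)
The plan is to adapt the symmetry estimate of \cite{KMS} to the reflected problem constructed in the proof of Proposition \ref{simple_symmetry}. Set $w_i = v_i - U - \tilde{z}_i$ and work in boundary conformal normal coordinates at $x_i$. By Proposition \ref{blow_up_on_boundary} we may take $x_i\in\partial M$, so $\kappa_{\tilde g_i}\equiv 0$ and $\partial M$ is totally geodesic near the origin; combining this with the Neumann condition for $\tilde z_i$ (Proposition \ref{bvp_z}) and for $U$ (equation \eqref{bc_U}), the even reflection $\bar w_i$ across $\{y^n=0\}$, defined as in \eqref{extension}, is $C^{2,\al}$ and satisfies
\begin{gather}
 L_{\bar g_i}\bar w_i + b_i \bar w_i = Q_i \quad\text{in } \widetilde{B}_{l_i}(0), \nonumber
\end{gather}
where $l_i=\si\ve_i^{-1}$, $b_i = K\bar f_i^{-\de_i}(\bar v_i^{p_i}-U^{p_i})/(\bar v_i-U)$ obeys $|b_i(y)|\le C(1+|y|)^{-4}$ by Proposition \ref{simple_symmetry}, and $Q_i$ collects the error terms. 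Note that the explicit form of $\tilde{z}_\ve$ in Definition \ref{defi_tilde_z_i} (a sum of radial functions times harmonic polynomials of degree $\geq 2$) gives $\tilde z_i(0)=0$ and $\nabla\tilde z_i(0)=0$, so that $\bar w_i(0)=0$ and $\nabla\bar w_i(0)=0$.

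First I would estimate $Q_i$. It has three contributions. The scalar curvature term $c(n)\ve_i^2 R_{g_i}(\ve_i y)U$: by the standard expansion of $R_g$ in conformal normal coordinates, its part that is linear in $h$ and of degree $\le n-4$ in $y$ is precisely $c(n)\sum_{k=4}^{n-4}\partial_i\partial_j\tilde H^{(k)}_{ij}U$, which is cancelled by the defining equation \eqref{equation_z} for $\tilde z_i$; what remains are the terms quadratic in the $H^{(k)}$, of size $O\big(\max_{2\le k\le d-1}\ve_i^{2k}|H^{(k)}|^2(x_i)\big)(1+|y|)^{-4}$, together with the $O(\ve_i^{n-3})$ contribution of the remainder $h_{ij}-H_{ij}=O(|x|^{n-3})$ and of the higher degree polynomials. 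The subcritical defect, written via $U^{\frac{n+2}{n-2}}-\bar f_i^{-\de_i}U^{p_i}=U^{\frac{n+2}{n-2}}O\big((|\log f|+|\log U|)\de_i\big)$ together with remark \ref{estimate_de_i_simple}, contributes $O(\de_i)(1+|y|)^{-4}$. The boundary error coming from $\tilde z_i$ and $U$ not satisfying an exact Neumann condition (the $\ve_i^N O(\cdot)$ terms in \eqref{bvp_z_ball} and \eqref{bc_U}) is $\ve_i^N$-small and is absorbed after reflection. Putting $\Lambda_i=\max_{2\le k\le d-1}\{\ve_i^{2k}|H^{(k)}|^2(x_i),\ve_i^{n-3},\de_i\}$, this yields $|Q_i(y)|\le C\Lambda_i(1+|y|)^{-4}$ on $\widetilde{B}_{l_i}(0)$.

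Next I would run the contradiction argument of \cite{KMS}. If the proposition fails, then along a subsequence $\mu_i:=\sup_{\widetilde{B}_{l_i/2}(0)}|\bar w_i|$ satisfies $\mu_i/\Lambda_i\rar\infty$. Set $\tilde\phi_i=\mu_i^{-1}\bar w_i$, so $|\tilde\phi_i|\le 1$, $\tilde\phi_i(0)=0$, $\nabla\tilde\phi_i(0)=0$, and $L_{\bar g_i}\tilde\phi_i+b_i\tilde\phi_i=\mu_i^{-1}Q_i$ with right-hand side tending to $0$ in $C^{0,\al}_{loc}$. Since $\bar g_i\rar\de$ and $b_i\rar n(n+2)U^{4/(n-2)}$ locally, a subsequence of $\tilde\phi_i$ converges in $C^2_{loc}$ to a bounded solution $\phi$ of $\Delta\phi+n(n+2)U^{4/(n-2)}\phi=0$ on $\RR^n$ (the reflection having made the boundary condition vacuous). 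By the classification of such solutions, $\phi$ is a linear combination of $\partial_1U,\dots,\partial_n U$ and $\tfrac{n-2}{2}U+y\cdot\nabla U$; the conditions $\phi(0)=0$ and $\nabla\phi(0)=0$ force every coefficient to vanish, so $\phi\equiv 0$. To reach a contradiction one needs the supremum $\mu_i$ to be attained in a fixed bounded region, which requires the uniform decay estimate $|\bar w_i(y)|\le C\Lambda_i(1+|y|)^{4-n}$; I would establish this, as in \cite{KMS}, by iterating the Green's function representation for $L_{\bar g_i}$ on $\widetilde{B}_{l_i}(0)$ (handling the Dirichlet boundary integral with the bound $\bar v_i\le CU$) together with a barrier/maximum principle argument. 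With this decay the reflected estimate gives the proposition for $|y|\le\si\ve_i^{-1}$.

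The main obstacle is precisely this uniform decay estimate on the full ball $|y|\le\si\ve_i^{-1}$: one must control $w_i$ at every scale up to $\ve_i^{-1}$, not just on fixed balls, in order both to localize the blow-up supremum and to obtain the global conclusion. This is where the isolated simple structure enters (through the bound $\bar v_i\le CU$, whose proof in Proposition \ref{simple_symmetry} already uses it) and where the Green's function iteration and barrier construction of \cite{KMS} must be reproduced; the kernel of the linearized operator, by contrast, is disposed of cheaply by the normalization $w_i(0)=|\nabla w_i|(0)=0$. A secondary technical point is that the reflected coefficients are only $C^{0,\al}$, so Schauder estimates must replace smooth elliptic estimates throughout, exactly as in the proofs of Propositions \ref{simple_symmetry} and \ref{simple_symmetry_C_2_al}.
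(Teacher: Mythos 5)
Your proposal takes a genuinely different route from the paper's. The paper works directly on the half ball $B_{l_i}(0)$ in boundary conformal normal coordinates, sets $\Lambda_i=\max_{|y|<l_i}|v_i-U-\tilde z_i|$, and uses the Green's function $G_i$ for $L_{\tilde g_i}$ with the mixed boundary condition ($G_i=0$ on $\partial^+B_{l_i}(0)$, $\partial_{\nu_{\tilde g_i}}G_i=0$ on $\partial'B_{l_i}(0)$). The representation formula then has three terms; the new Neumann boundary term $\int_{\partial'B_{l_i}(0)}G_i\,\partial_{\nu_{\tilde g_i}}w_i$ is bounded by $C\ve_i^{n-2}$ using (\ref{bvp_z_ball}) and (\ref{bc_U}), so the half-ball analysis stays entirely parallel to \cite{KMS}. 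You instead reflect $w_i$ across $\{y^n=0\}$ to a full ball and run the Dirichlet Green's function iteration from \cite{KMS} unchanged. Both approaches aim at the same decay estimate and the same contradiction (classification $+$ $w_i(0)=|\nabla w_i|(0)=0$); the architecture is identical, but the boundary treatment is not.

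There is, however, a gap in your reflection step that the paper's choice avoids. You assert that the even reflection of $w_i=v_i-U-\tilde z_i$ across $\{y^n=0\}$, ``defined as in (\ref{extension}),'' is $C^{2,\alpha}$. But (\ref{extension}) is a reflection performed in \emph{Fermi} coordinates, where $g_{nn}\equiv 1$, $g_{ni}\equiv 0$, so that $\nu_{\tilde g_i}=-\partial_n$ exactly and the Neumann condition for $v_i$ becomes $\partial_n v_i|_{z^n=0}=0$ on the nose. In \emph{boundary conformal normal} coordinates, where the defining equation (\ref{equation_z}) for $\tilde z_i$, the Taylor polynomials $\tilde H^{(k)}_{ij}$, and the expansion of $R_{\tilde g_i}$ all live, the normal is only $\nu_{\tilde g_i}=-g^{nn}\partial_n+\sum_{\ell<n}O(|y'|^N)\partial_\ell$ (see (\ref{partial_hyper_restricted})), so $B_{\tilde g_i}v_i=0$ gives $\partial_n v_i|_{y^n=0}=O(|y'|^N)$, not $0$. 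Thus the even reflection of $v_i$ (and hence of $w_i$) across $\{y^n=0\}$ in these coordinates is not exactly $C^1$, and the reflected metric inherits a (small but nonzero) jump in its normal derivative, which contaminates $R_{\bar g_i}$ as a distribution. If instead you reflect in Fermi coordinates (as Proposition \ref{simple_symmetry} does for $v_i$ alone), you must then transport $U$, $\tilde z_i$, $\tilde H^{(k)}_{ij}$ and the cancellation $R_{\tilde g_i}\approx \sum_k\partial_{ij}\tilde H^{(k)}_{ij}$ through the Fermi$\leftrightarrow$bcnc change of variables — a nontrivial bookkeeping step that you do not carry out. Your plan also tacitly uses that $\tilde z_\ve$ and $\sum_k\partial_{ij}\tilde H^{(k)}_{ij}$ are even in $y^n$ (so that the cancellation persists on the reflected side); this is in fact true — it follows from $\partial_n p_{k-2q}|_{y^n=0}=0$ established in Section \ref{boundary_correction}, which forces each harmonic $p_{k-2q}$ to be even — but it is an additional verification missing from the write-up. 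The paper's direct half-ball argument sidesteps all of these issues: no reflection, no coordinate change, and the only new ingredient is the $\partial'B_{l_i}(0)$ boundary integral, controlled by Proposition \ref{bvp_z} and (\ref{bc_U}).
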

\begin{proof}
Put $\La_i = \max_{|y| < l_i} | v_i - U -\tilde{z}_i |= | v_i - U -\tilde{z}_i
|(y_i)$. Then as in the boundaryless case we get a
 stronger inequality if there exists a constant $c$ such that $|y_i| \geq cl_i$ for
every $i$. In fact, using that
$\bar{x}$ is an isolated simple blow-up point we get the inequality $v \leq C U \leq
C|y|^{2-n}$, and using
estimate (\ref{bound_z_i_similar_U})
 we get $\La_i =  | v_i - U -\tilde{z}_i |(y_i) \leq C |y_i|^{2-n} \leq
\varepsilon_i^{n-2}$.
Hence we can assume $|y_i| \leq \frac{l_i}{2}$.

If the proposition is false we have
\begin{gather}
  \frac{1}{\La_i} \max_{2\leq k \leq d-1} \{\varepsilon_i^{2k}|H^{(k)}|^2(x_i) \}
\rar 0,~~
\frac{1}{\La_i}\varepsilon_i^{n-3} \rar 0,~~~ \frac{1}{\La_i} \de_i \rar 0.
\label{if_false}
\end{gather}

Define
\begin{gather}
w_i(y) = \frac{1}{\La_i}( v_i - U - \tilde{z}_i )(y).  \nonumber
\end{gather}
Then $|w_i(y)| \leq 1$, and
\begin{gather}
\begin{cases}
 L_{\tilde{g}_i} w_i + b_i w_i = Q_i & \text{ in } B_{l_i}(0)  \displaybreak[0] \\
w_i = O(\La^{-1}\ve_i^{n-2}) & \text{ on } \partial^+B_{l_i}(0) \displaybreak[0] \\
\frac{\partial w_i}{\partial \nu_{\tilde{g}_i}} = \La^{-1} \ve_i^N O(|y^\prime|^N (1+|y^\prime|)^{1-n}) & \text{ on } \partial^\prime B_{l_i}(0),
 \end{cases}
\label{bvp_w_i}
\end{gather}
where (\ref{bvp_z_ball}), (\ref{bc_U}) and the boundary condition
for $v_i$ have been used; $Q_i$ and $b_i$ are as in the boundaryless case
\begin{align}
 b_i(y) &= K\tilde{f}^{-\delta_i} \frac{v_i^{p_i} - (U + \tilde{z}_i)^{p_i}}{v_i - U
-\tilde{z}_i}(y), \nonumber \displaybreak[0] \\
Q_i(y) &= \frac{1}{\La_i} \Big \{ c(n) \varepsilon_i^2 \big ( R_{g_i} -
\sum_{\ell=2}^{n-6}(\partial_j\partial_k  H_{jk})^{(\ell)} \big ) (\varepsilon_i
y)U(y) +
(\Delta - L_{\tilde{g}_i} )(\tilde{z}_i) \nonumber \displaybreak[0] \\
 &+ O(|\tilde{z}_i|^2 U^\frac{6-n}{n-2} ) +
K ( ( U+\tilde{z}_i)^\frac{n+2}{n-2} - \tilde{f}^{-\delta_i}(U+\tilde{z})^{p_i} )
\nonumber \displaybreak[0] \\
&+ M_i^{-(1+N)\frac{p_i -1 }{2} } O(|y|^N)|y|(1+|y|^2)^{-\frac{n}{2}}
\Big \}, \nonumber
\end{align}
and they satisfy the estimates (see \cite{KMS})
\begin{align}
 |b_i(y)| \leq c(1+|y|)^{-4} \nonumber \displaybreak[0]
\end{align}
and
\begin{align}
|Q_i(y) | &\leq C\frac{1}{\La_i} \Big \{ \max_{2\leq k \leq d-1}
\{\varepsilon_i^{2k}|H^{(k)}|^2(x_i) \}
(1+|y|)^{2d-2-n}  \nonumber \displaybreak[0] \\
& +\varepsilon_i^{n-3}(1+|y|)^{-3} +
 M_i^{-(1+N)\frac{p_i -1 }{2} } O(|y|^N)|y|(1+|y|^2)^{-\frac{n}{2}} \nonumber
\displaybreak[0] \\
&+ \de_i( |\log(U+\tilde{z}_i)| + |\log\tilde{f}_i|)(1+|y|)^{-n-2} \Big \}. \nonumber
\end{align}

Let $G_i$ be the Green's function for the conformal Laplacian with boundary condition
$G_i=0$ on $\partial^+ B_{l_i}(0)$ and $B_{\tilde{g}_i} G_i = \frac{\partial G_i}{\partial \nu_{\tilde{g}_i}} = 0$
on $\partial^\prime B_{l_i}(0)$. The representation formula then gives
\begin{align}
w_i(y) &= \int_{B_{l_i}(0)} G_i(y,\eta)(b_i w_i - Q_i)(\eta)d\eta -
 \int_{\partial^+ B_{l_i}(0)} w_i(\eta)\frac{\partial G_i(y,\eta)}{\partial
\nu_{\tilde{g}_i}}dS(\eta) \nonumber  \displaybreak[0]  \\
&+ \int_{\partial^\prime B_{l_i}(0)} G_i(y,\eta) \frac{\partial w_i(\eta) }{\partial
\nu_{\tilde{g}_i}}dS(\eta)
\nonumber
\end{align}
for $|y| \leq \frac{l_i}{2}$. The first two integrals are estimated as in the
boundaryless case (see \cite{KMS}). For the third one we use (\ref{bvp_w_i}) to find
\begin{gather}
\Big | \int_{\partial^\prime B_{l_i}(0)} G_i(y,\eta) \frac{\partial w_i(\eta) }{\partial
\nu_{\tilde{g}_i}} d \eta^\prime \Big | \leq C \ve_i^{n-2} .
\end{gather}
Hence,
\begin{gather}
|w_i(y)| \leq C \Big ( (1+|y|)^{-2} + \frac{1}{\La_i}
\max_{2\leq k \leq d-1}
\{\varepsilon_i^{2k}|H^{(k)}|^2(x_i),\varepsilon_i^{n-3},\de_i \} \Big ).
\label{big_estimate_theta_i}
\end{gather}
It then follows from (\ref{if_false}), (\ref{big_estimate_theta_i}),
and standard elliptic estimates
that $w_i$ is bounded in $C^2_{loc}$ and has a subsequence, still denoted $w_i$,
converging to a limit
$w_\infty$, which satisfies
\begin{gather}
 \begin{cases}
  \Delta w_\infty + n(n+2) U^\frac{4}{n-2} w_\infty = 0 & \text{ on } \RR^n_+
\displaybreak[0] \\
\frac{\partial w_\infty}{\partial y^n} = 0 & \text{ on } \RR^{n-1} \displaybreak[0] \\
\lim_{|y|\rar\infty} w_\infty(y) = 0.
 \end{cases}\nonumber
\end{gather}
Note that for the boundary condition we used that
$\La^{-1} \ve_i^N |y^\prime|^N (1+|y^\prime|)^{1-n} \leq C\ve_i^{n-3}$
for $|y^\prime| \leq \si \ve_i^{-1}$. Lemma \ref{class_sol} then gives
\begin{gather}
w= c_0 \Big( \frac{n-2}{2} U(y) + y \cdot \nabla U \Big ) + \sum_{j=1}^{n-1} c_j
\partial_j U.  \nonumber
\end{gather}
However $w_i(0) = |\nabla w_i|(0) = 0$ implies $w_\infty(0) = |\nabla w_\infty|(0)=0$,
from which we conclude that $w_\infty \equiv 0$.
It then follows that $|y_i| \rar \infty$.
This combined with (\ref{if_false}) contradicts (\ref{big_estimate_theta_i}), as
$w_i(y_i) = 1$.
\end{proof}

The proofs of the next two results are similar to those in \cite{KMS}, making the
necessary adaptations to the boundary case
with ideas described in proposition \ref{symmetry_1}.

\begin{prop} Under the hypotheses of proposition \ref{symmetry_1},
\begin{gather}
\delta_i \leq C \max_{2\leq k \leq d-1}
\{\varepsilon_i^{2k}|H^{(k)}|^2(x_i),\varepsilon_i^{n-3} \} \nonumber
\end{gather}
for every $|y| \leq \si \mip$.
\label{symmetry_2}
\end{prop}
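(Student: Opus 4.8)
The plan is to run a Pohozaev-type argument for $v_i$ on a half-ball $B^+_{r_i}(0)$, $r_i\simeq\ve_i^{-1}$, essentially as in \cite{KMS} but keeping careful track of the boundary terms. First I would multiply the equation $L_{\tilde g_i}v_i + K\tilde f_i^{-\de_i}v_i^{p_i}=0$ by the standard Pohozaev multiplier $y\cdot\nabla v_i+\frac{n-2}{2}v_i$ and integrate over $B^+_{r_i}(0)$. In boundary conformal normal coordinates the boundary is the totally geodesic hyperplane $\{y^n=0\}$, on which $y\cdot\nu=-y^n=0$ and $\partial_n v_i=0$, with $g_{in}(y^\prime,0)=O(|y^\prime|^N)$ and $\kappa_{\tilde g_i}\equiv 0$ near the origin (corollary \ref{boundary_hyper}); consequently the flat portion $\partial^\prime B^+_{r_i}(0)$ contributes only an $O(\ve_i^N)$ error and drops out. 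What survives is an identity of the form
\begin{gather}
 P_i(r_i) = \Big(\tfrac{n-2}{2}-\tfrac{n}{p_i+1}\Big)K\int_{B^+_{r_i}(0)}\tilde f_i^{-\de_i}v_i^{p_i+1}\,dy + E_i, \nonumber
\end{gather}
where $P_i(r_i)$ is the Pohozaev boundary integral over the spherical cap $\partial^+B^+_{r_i}(0)$ and $E_i$ collects the curvature and metric error terms coming from $R_{\tilde g_i}$ and from $\tilde g_i-\de$.

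Next I would extract the $\de_i$-term. Since $p_i+1=\frac{2n}{n-2}-\de_i$, one has $\frac{n-2}{2}-\frac{n}{p_i+1}=-c_n\de_i(1+o(1))$ with $c_n=\frac{(n-2)^2}{4n}>0$, while proposition \ref{conv_C2_small_balls} together with the domination $v_i\leq CU$ and $\de_i\to 0$ gives $\int_{B^+_{r_i}(0)}\tilde f_i^{-\de_i}v_i^{p_i+1}\,dy\to\int_{\RR^n_+}U^{\frac{2n}{n-2}}=:c_0>0$. Thus the identity becomes $P_i(r_i)=-c_nc_0\,\de_i(1+o(1))+E_i$.

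To control $P_i(r_i)$ I would use that (\ref{bound_z_i_similar_U}) gives $|\partial^\beta\tilde z_i|\leq C(1+|y|)^{2-n-|\beta|}$, and that $v_i\leq CU$ forces $|v_i-U-\tilde z_i|\leq CU$ \emph{everywhere}; in particular on $\partial^+B^+_{r_i}(0)$ this quantity is $O(\ve_i^{n-2})$, and by interior elliptic estimates for the linearized equation from the proof of proposition \ref{symmetry_1} (whose zeroth order coefficient decays like $(1+|y|)^{-4}$) its gradient is $O(\ve_i^{n-1})$ there, up to a harmless $o(\de_i)$ from the inhomogeneous term. Feeding this together with the matching bounds for $U$ and $\tilde z_i$ into $P_i(r_i)$ and integrating over a sphere of area $\simeq r_i^{n-1}$ yields $|P_i(r_i)|\leq C\ve_i^{n-2}+o(\de_i)\leq C\ve_i^{n-3}+o(\de_i)$. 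The error $E_i$ I would estimate exactly as in \cite{KMS}: boundary conformal normal coordinates still satisfy $R_{\tilde g_i}=O(r^2)$ and admit the same polynomial approximation of the scalar curvature in terms of the $\partial_i\partial_j H^{(\ell)}_{ij}$, so substituting $v_i=U+\tilde z_i+(v_i-U-\tilde z_i)$ and invoking the defining equation (\ref{equation_z}) of $\tilde z_i$ cancels the terms linear in the $H^{(k)}$; the truncation and $O(|x|^{n-3})$ remainders give $O(\ve_i^{n-3})$, the genuinely quadratic remainder is $O(\max_{2\leq k\leq d-1}\ve_i^{2k}|H^{(k)}|^2(x_i))$, and the contribution of $v_i-U-\tilde z_i$, estimated by proposition \ref{symmetry_1}, is $\eta_i\de_i+O(\max_{2\leq k\leq d-1}\{\ve_i^{2k}|H^{(k)}|^2(x_i),\ve_i^{n-3}\})$ with $\eta_i\to 0$. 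Combining the three estimates gives $c_nc_0\de_i(1+o(1))\leq\eta_i\de_i+C\max_{2\leq k\leq d-1}\{\ve_i^{2k}|H^{(k)}|^2(x_i),\ve_i^{n-3}\}$, and absorbing $\eta_i\de_i$ into the left side for $i$ large yields the claim.

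The main obstacle, beyond transcribing the $E_i$ estimate from \cite{KMS}, is the boundary bookkeeping: one must verify that the flat portion of $\partial B^+_{r_i}(0)$ genuinely drops out — which uses the full strength of boundary conformal normal coordinates (total geodesy, the Neumann condition, and $g_{in}(\cdot,0)=O(|x^\prime|^N)$) — and that near $|y|=r_i$ the weaker $\de_i$-dependent bound of proposition \ref{symmetry_1} is superseded by $|v_i-U-\tilde z_i|\leq CU$, so that the outer boundary integral is of order $\ve_i^{n-2}$ rather than of order $\de_i$ and hence cannot swallow the $\de_i$-term we are trying to bound.
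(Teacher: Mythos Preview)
Your route is different from the paper's, and while the overall strategy can be made to work, the step where you dispose of the linear-in-$H^{(k)}$ part of $E_i$ is not correct as stated.

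The paper argues by contradiction: assuming $\de_i$ dominates, it sets $w_i=\de_i^{-1}(v_i-U-\tilde z_i)$, shows (from the proof of proposition~\ref{symmetry_1}) that $|w_i(y)|\leq C(1+|y|)^{-2}$, and passes to a $C^2_{loc}$ limit $w_\infty$ on $\RR^n_+$ with $\partial_n w_\infty=0$ on $\RR^{n-1}$. The limiting equation for $w_\infty$ is \emph{inhomogeneous}, with right-hand side the nonzero limit of $\de_i^{-1}K\big((U+\tilde z_i)^{\frac{n+2}{n-2}}-\tilde f_i^{-\de_i}(U+\tilde z_i)^{p_i}\big)$. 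One then multiplies by $\Psi=\frac{n-2}{2}U+y\cdot\nabla U$ and integrates; since $\Psi$ lies in the kernel of the linearized operator and the boundary terms on $\partial^\prime B$ vanish (both $\partial_n\Psi$ and $\partial_n w_i$ are $O(\ve_i^N|y'|^N(1+|y'|)^{1-n})$ there), Green's identity forces $\int_{\RR^n_+}\Psi\cdot(\text{RHS})=0$, contradicting an explicit nonzero value. The crucial point is that the linear-in-$H^{(k)}$ terms never appear in this argument: they were absorbed into $\tilde z_i$ at the level of the \emph{equation for $w_i$}, not inside a Pohozaev identity.

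In your direct Pohozaev approach, by contrast, the error $E_i$ genuinely contains linear-in-$H^{(k)}$ contributions, namely $c(n)\int_{B^+}\Psi\,U\,R_{\tilde g_i}$ and $\int_{B^+}\Psi\big(\tilde H_{ij}\partial_{ij}U+\partial_j\tilde H_{ij}\partial_iU\big)$, and these are a priori of size $\sum_k\ve_i^{k+2}|H^{(k)}|$, which is \emph{not} bounded by the quadratic quantity you want. Simply ``invoking the defining equation of $\tilde z_i$'' does not cancel the second of these terms. What is actually needed is: first use the algebraic identities $x^jH_{ij}=0$ and $\operatorname{tr}H=0$ (valid in conformal normal coordinates) together with the radiality of $\Psi$ and $U$ to integrate by parts and reduce \emph{all} linear contributions to a multiple of $\int_{B^+}\Psi\,U\,\partial_{ij}\tilde H_{ij}$, controlling the $\partial^\prime B^+$ boundary terms generated along the way via theorem~\ref{big_estimate} and umbilicity; only then does testing the $\tilde z_i$ equation against $\Psi$ (using proposition~\ref{bvp_z} for the flat boundary) convert this remaining integral to an $O(\ve_i^{n-2})$ boundary term on $\partial^+B^+$. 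This is essentially the machinery the paper later deploys in section~\ref{weyl_vanishing_section}, and it is considerably more work than your sketch suggests. The paper's linearized/test-function proof avoids all of it.
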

\begin{proof}
If the proposition is false we have
\begin{gather}
  \frac{1}{\de_i} \max_{2\leq k \leq d-1} \{\varepsilon_i^{2k}|H^{(k)}|^2(x_i) \}
\rar 0,~~
\frac{1}{\de_i}\varepsilon_i^{n-3} \rar 0.
\label{if_false_2}
\end{gather}
 Hence from proposition \ref{symmetry_1},
\begin{gather}
 |v_i - U - \tilde{z}_i|(y) \leq C\de_i. \nonumber
\end{gather}
Define
\begin{gather}
 w_i = \frac{1}{\de_i}(v_i - U - \tilde{z}_i), \nonumber
\end{gather}
and argue as in proposition \ref{symmetry_1}, with $\de_i$ replacing $\La_i$,
to obtain $w_i \rar w_\infty$ in $C^2_{loc}$, where $\partial_n w_\infty = 0$ on
$\RR^{n-1}$. Define
$\Psi(y) = \frac{n-2}{2} U(y) + y^j\partial_jU(y)$. Now we argue as in \cite{KMS},
except possibly for the extra boundary terms
\begin{gather}
 \int_{\partial^\prime B_{\frac{l_i}{2}}(0) } \Psi \frac{\partial w_i}{\partial
\nu_{\tilde{g}_i}} ~~~\text{ and }~~~
 \int_{\partial^\prime B_{\frac{l_i}{2}}(0) } w_i \frac{\partial \Psi}{\partial
\nu_{\tilde{g}_i} }.  \nonumber
\end{gather}
But as before, $\frac{\partial w_i}{\partial
\nu_{\tilde{g}_i}} = \ve_i^N O(|y^\prime|^N (1+|y^\prime|)^{1-n})$, and a direct computation gives
$\frac{\partial \Psi}{\partial \nu_{\tilde{g}_i}} = \ve_i^N O(|y^\prime|^N (1+|y^\prime|)^{1-n})$,
which is enough to handle the boundary integrals as in proposition \ref{symmetry_1}.
\end{proof}

\begin{prop} Under the hypotheses of proposition \ref{symmetry_1},
\begin{gather}
|\nabla^m(v_i - U - \tilde{z}_i )|(y) \leq C \sum_{k=2}^{d-1}
\varepsilon_i^{2k}|H^{(k)}|^2(x_i)(1+|y|)^{2k+2-n-m}
+\varepsilon_i^{n-3}(1+|y|)^{-1-m} \nonumber
\end{gather}
for every $|y| \leq \si \varepsilon^{-1}$, $m=0,1,2$.
\label{symmetry_3}
\end{prop}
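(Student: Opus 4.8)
The plan is to run the argument of Proposition \ref{symmetry_1} again, but now keeping track of the decay in $|y|$ rather than only of the supremum norm. Write $w_i = v_i - U - \tilde{z}_i$, and recall from the proof of Proposition \ref{symmetry_1} that $w_i$ solves $L_{\tilde{g}_i}w_i + b_i w_i = Q_i$ in $B_{l_i}(0)$, with $|b_i(y)| \leq c(1+|y|)^{-4}$, with $w_i = O(\ve_i^{n-2})$ on $\partial^+ B_{l_i}(0)$ and $\partial_{\nu_{\tilde{g}_i}} w_i = \ve_i^N O(|y^\prime|^N(1+|y^\prime|)^{1-n})$ on $\partial^\prime B_{l_i}(0)$, and with $Q_i$ obeying the pointwise bound displayed there. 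Combining Propositions \ref{symmetry_1} and \ref{symmetry_2} we first have the unweighted bound $|w_i|(y) \leq C A_i$ for $|y| \leq \si\mip$, where $A_i = \sum_{k=2}^{d-1}\ve_i^{2k}|H^{(k)}|^2(x_i) + \ve_i^{n-3}$.

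For $m=0$, I would use the representation formula for $w_i$ from the proof of Proposition \ref{symmetry_1}, with $G_i$ the Green's function of $L_{\tilde{g}_i}$ vanishing on $\partial^+ B_{l_i}(0)$ and with zero Neumann data on $\partial^\prime B_{l_i}(0)$, which satisfies $0 < G_i(y,\eta) \leq C|y-\eta|^{2-n}$ for $|y|\leq l_i/2$. The integral over $\partial^+ B_{l_i}(0)$ contributes $O(\ve_i^{n-2}(1+|y|)^{2-n})$ and the integral over $\partial^\prime B_{l_i}(0)$ is $O(\ve_i^{n-2})$, both lower order. For the interior term I would apply the Giraud-type convolution estimate $\int_{\RR^n_+}|y-\eta|^{2-n}(1+|\eta|)^{-\tau}\,d\eta \leq C(1+|y|)^{2-\tau}$, valid for $2 < \tau < n$ with the standard logarithmic modification at the endpoints, term by term against the bound for $Q_i$; this reproduces exactly the exponents $2k+2-n$ and $-1$ in the statement. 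For the term $b_i w_i$, insert the unweighted bound $|b_i w_i|(\eta) \leq CA_i(1+|\eta|)^{-4}$ and convolve against $|y-\eta|^{2-n}$; each such convolution improves decay by $2$, so after a bounded number of iterations the $b_iw_i$ contribution is dominated by the source contribution, which closes the estimate for $m=0$.

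For $m=1,2$, with the weighted $C^0$ bound in hand I would use elliptic regularity on balls of size comparable to the distance to the origin: for $|y| = R$ with $C \leq R \leq \si\ve_i^{-1}/2$, rescale $B_{R/2}(y)\cap \RR^n_+$ to the unit (half-)ball. Since in boundary conformal normal coordinates $\tilde{g}_i$ is smooth with coefficients bounded in the rescaled norms, $\partial M = \{y^n = 0\}$, and the boundary condition for $w_i$ is of Neumann type, interior and boundary (Neumann) Schauder estimates give $\|\nabla^m w_i(y)\| \leq CR^{-m}(\|w_i\|_{C^0(B_{R/2}(y)\cap\RR^n_+)} + R^2\|b_iw_i - Q_i\|_{C^{0,\al}})$ after undoing the rescaling; plugging in the weighted $C^0$ bound from the previous step and the weighted bounds on $b_i$ and $Q_i$ yields the claim for $|y| \geq C$, while for $|y| \leq C$ it follows from the $C^2_{loc}$ bounds of Proposition \ref{symmetry_1}. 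Alternatively one may differentiate the representation formula using $|\nabla_y^m G_i(y,\eta)| \leq C|y-\eta|^{2-n-m}$ and repeat the convolution estimates.

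The only genuinely new point relative to \cite{KMS} is the boundary, so the main work — and the main obstacle — is verifying that (i) the Green's function $G_i$ with mixed Dirichlet/Neumann data still obeys the kernel bounds $|\nabla_y^m G_i| \leq C|y-\eta|^{2-n-m}$ uniformly in $i$, (ii) the extra boundary integral over $\partial^\prime B_{l_i}(0)$ is genuinely negligible, and (iii) Schauder estimates up to the Neumann boundary apply with uniform constants. Each of these reduces, via the reflection of Proposition \ref{simple_symmetry} or directly, to the fact that in these coordinates $\partial M = \{y^n=0\}$ is totally geodesic with vanishing mean curvature and the metric is smooth, so the estimates are of the same strength as in the boundaryless case.
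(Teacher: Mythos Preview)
Your approach is essentially the same as the paper's: Green's function representation plus Giraud-type convolution for $m=0$ (inherited from \cite{KMS}, with the iteration on the $b_iw_i$ term), and scaled elliptic estimates for $m=1,2$. One point the paper singles out that your displayed Schauder estimate suppresses: the boundary Neumann estimate requires the $C^{1,\alpha}$ norm of $\partial_{\nu_{\tilde{g}_i}}w_i$ on $\partial^\prime B_{l_i}(0)$, not just its $C^0$ size, and the paper verifies explicitly (via (\ref{metric_restricted}), (\ref{bound_z_i_similar_U}), Proposition \ref{bvp_z}, and Corollary \ref{boundary_hyper}) that $\sum_{l=1}^{n-1}g^{nl}\partial_l(\tilde z_i+U)$ is bounded in $C^{1,\alpha}(\partial^\prime B_{l_i}(0))$ by $C\ve_i^{n-3}$; this is the content of (\ref{C_1_alpha_bound_boundary}) and should be included on the right-hand side of your rescaled Schauder inequality.
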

\begin{proof}
Arguing similarly to \cite{KMS} with the necessary modifications as in propositions
\ref{symmetry_1} and \ref{symmetry_2}, we obtain the result with $m=0$. To obtain
the result for the derivatives, we invoke standard elliptic theory, which
gives the estimate provided that we can bound the $C^{1,\al}$ norm of $\partial_{\nu_{\tilde{g}_i}} (v_i - U - \tilde{z}_i)$
on the boundary. Since $\partial_{\nu_{\tilde{g}_i}} v_i = 0$ and
$\left.\partial_n \tilde{z}_i\right|_{y^n = 0} = 0 = \left.\partial_n U \right|_{y^n = 0} $, it is enough to show that
\begin{gather}
\parallel \sum_{l=1}^{n-1} g^{nl}\partial_l (\tilde{z}_i + U) \parallel_{C^{1,\al}(\partial^\prime B_{l_i}(0))}
\leq C \ve_i^{n-3} .
\label{C_1_alpha_bound_boundary}
\end{gather}
From (\ref{bound_z_i_similar_U}), (\ref{bvp_z}), properties
of boundary conformal normal coordinates (in particular corollary \ref{boundary_hyper}) and the explicit form of $U$ we have
\begin{gather}
\Big | \sum_{l=1}^{n-1} g^{nl}\partial_l (\tilde{z}_i + U)(y^\prime,0) \Big |
\leq C \ve_i^N |y^\prime|^N (1+|y^\prime|)^{1-n} ,
\nonumber
\end{gather}
which is bounded by $C\ve_i^{n-3}$ for $|y^\prime| \leq \si \ve_i^{-1}$.

Differentiating $\sum_{l=1}^{n-1} g^{nl}\partial_l (\tilde{z}_i + U)$ with respect to $y^k$, $k\leq n-1$,
using again (\ref{bound_z_i_similar_U}), (\ref{bvp_z}),  and properties
of boundary conformal normal coordinates yields
\begin{gather}
\Big | \partial_k ( \sum_{l=1}^{n-1} g^{nl}\partial_l (\tilde{z}_i + U) ) (y^\prime,0) \Big |
\leq C \ve_i^{n-3}~\text{ for }~|y^\prime| \leq \si \ve_i^{-1} .
\nonumber
\end{gather}
Differentiating again and repeating the argument gives (\ref{C_1_alpha_bound_boundary}). Now the pointwise
estimate follows by standard arguments.
\end{proof}

\section{Weyl vanishing\label{weyl_vanishing_section}}

In this section we will work mostly in $x$-coordinates and take boundary
conformal normal coordinates at $x_i$.
In these coordinates, estimate (\ref{basic_z_i_tilde_estimate}) and the estimate of
proposition \ref{symmetry_3} become,
for $|x|\leq \si$,
\begin{align}
 |\nabla^m z_i(x)| &\leq \varepsilon_i^\frac{n-2}{2}
 \sum_{|\al|=4}^{n-4} \sum_{jl}|h_{jl,\al}|(\varepsilon_i + |x|)^{|\al|+2-n-m}
\label{z_est_x} \displaybreak[0] \\
|\nabla^m(u_i - u_{\varepsilon_i} - z_i)(x)| & \leq C
\varepsilon_i^{\frac{n-2}{2}} \sum_{k=2}^{d-1}
|H^{(k)}|^2(x_i)(\varepsilon_i+|x|)^{2k+2-n-m}
+\varepsilon_i^\frac{n-2}{2}(\varepsilon_i+|x|)^{-m-1}, \label{sym_est_x}
\end{align}
where both $z_i$ and the sum with $|H^{(k)}|^2(x_i)$ appear only when $n \geq 6$, and
\begin{align}
 z_{\varepsilon_i} &= z_i(x) = \varepsilon^\frac{2-n}{2}
\tilde{z}_i(\varepsilon_i^{-1} x) \nonumber \displaybreak[0] \\
u_{\varepsilon_i}(x) & = \varepsilon_i^\frac{n-2}{2}( \varepsilon_i^2 +
|x|^2)^\frac{2-n}{2}. \nonumber
\end{align}
Throughout this section it will be assumed that $(M^n,g)$ is a Riemannian manifold of dimension
$3 \leq n \leq 24$ with umbilic boundary.
The index $i$ will be dropped from all quantities in
several estimates below. Note also that by theorem \ref{blow_up_on_boundary} we can assume
that $x_i \in \partial M$, therefore the boundary is given by
$\partial M = \{ x^n = 0 \}$.
We will use the notation $B_\rho^+ = \{ x \in B_\rho(x_i) ~ \big | ~
x^n \geq 0 \}$, where $\rho \leq \si$ ---
of course, $B_\rho^+$ is the same as $B_\rho(0)$, but the first notation will be emphasized since
it better suits the Pohozaev identity. Furthermore, the unit normal will be denoted by
$\nu = \nu_g = \nu_{g_i}$ when no
confusion arises, and $\nu_\de$ will denote the Euclidean normal.

We can now state one of the main estimates of the paper.

\begin{prop} Suppose $6\leq n\leq 24$ and that
$x_{i}\rightarrow\overline{x}\in\partial M$ is an isolated simple
blow-up point. Then
\begin{align}
\sum_{|\alpha|=2}^{d}\sum_{i,j=1}^{n}|h_{ij,\alpha}|^{2}
\varepsilon^{2|\alpha|}|\log\varepsilon|^{\theta_{|\al|}} \leq
C\varepsilon^{n-2}, \nonumber
\end{align}
where $\theta_k = 1$ if $k=\frac{n-2}{2}$ and $\theta_k =0$ otherwise.
\label{key_estimate_bry}
\end{prop}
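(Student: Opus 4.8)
The plan is to run the Pohozaev-identity argument from \cite{KMS}, adapted to the half-ball $B_\rho^+$, and extract the stated bound from the resulting balance of terms. First I would write down the Pohozaev identity for $u_i$ on $B_\rho^+$. Because we are in boundary conformal normal coordinates, the boundary $\partial M = \{x^n = 0\}$ is totally geodesic and $\kappa_{g_i} \equiv 0$ near the origin, so the boundary condition is a pure Neumann condition $\partial_n u_i = O(|x'|^N)$ on $\partial' B_\rho^+$. The key point is that the portion of the Pohozaev boundary integral supported on $\partial' B_\rho^+$ (the flat piece) contributes only a negligible $O(\varepsilon^N)$ error: the normal derivative of $u_i$ vanishes there to high order, and, by corollary \ref{boundary_hyper}, $g_{in}(x',0) = O(|x'|^N)$, so the geometric correction terms on the flat face are also negligible. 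Thus the Pohozaev identity reduces, up to $O(\varepsilon^N)$, to the same form as in the interior case, with the boundary integral now taken over the hemisphere $\partial^+ B_\rho^+$.

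Next I would insert the sharp expansion $u_i = u_{\varepsilon_i} + z_{\varepsilon_i} + (\text{error})$, using proposition \ref{symmetry_3} (equivalently (\ref{sym_est_x})) to control the error in $C^2$ by $\varepsilon^{\frac{n-2}{2}}\sum_k |H^{(k)}|^2(x_i)(\varepsilon+|x|)^{2k+2-n-m} + \varepsilon^{\frac{n-2}{2}}(\varepsilon+|x|)^{-m-1}$. On the right-hand side of the Pohozaev identity one obtains the bulk term $\int_{B_\rho^+} (x\cdot\nabla R_{g_i} + \tfrac{n-2}{2}R_{g_i}) u_i^2\,dx$ (plus lower-order pieces). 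The heart of the matter is to evaluate this using the polynomial approximation of $R_{g_i}$ in terms of the $H^{(k)}_{ij}$: by the construction of $\tilde z_\ve$ (definition \ref{defi_tilde_z_i}), the dangerous quadratic-in-$H$ contributions are exactly the ones canceled by the correction term, leaving a positive definite quadratic form in the Taylor coefficients $h_{ij,\alpha}$, $2\le|\alpha|\le d$, whose positivity is precisely the content of the appendix argument of \cite{KMS}. The radial symmetry of boundary conformal normal coordinates — the reason for introducing them rather than Fermi coordinates — is what makes that quadratic-form computation go through essentially verbatim. Evaluating the integrals produces a lower bound of the shape $c\sum_{2\le|\alpha|\le d}|h_{ij,\alpha}|^2\varepsilon^{2|\alpha|}|\log\varepsilon|^{\theta_{|\alpha|}}$, the logarithm appearing exactly at the borderline degree $|\alpha| = \tfrac{n-2}{2} = d$ where the relevant integral $\int (\varepsilon+|x|)^{\cdots}$ diverges logarithmically.

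Matching this against the boundary integral over $\partial^+ B_\rho^+$, which is $O(\varepsilon^{n-2})$ (it is controlled by $u_i \le C u_{\varepsilon_i} \le C\varepsilon^{\frac{n-2}{2}}\rho^{2-n}$ and its derivatives on $|x| = \rho$, exactly as in the interior case since $\rho$ is a fixed small constant), together with the error terms which are absorbed either into $C\varepsilon^{n-2}$ or back into the left side with a small constant, yields the claimed inequality. The main obstacle I anticipate is bookkeeping the new flat-face boundary integrals in the Pohozaev identity and checking that every one of them — including the cross terms between $u_{\varepsilon_i}$, $z_{\varepsilon_i}$, the metric error, and the vector field $x$ — is genuinely $O(\varepsilon^N)$ or $O(\varepsilon^{n-2})$; this relies on combining the Neumann condition, $g_{in}(x',0) = O(|x'|^N)$, proposition \ref{bvp_z} for the boundary behavior of $\tilde z_\ve$, and $\partial_n U(x',0) = 0$. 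Once those faces are shown to be harmless, the rest is the \cite{KMS} machinery applied on the half-ball.
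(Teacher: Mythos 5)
Your proposal follows the same overall strategy as the paper: the Pohozaev identity on $B_\rho^+$, the insertion of $u_{\ve_i}+z_{\ve_i}$ via proposition \ref{symmetry_3}, control of the flat-face terms by the Neumann condition, corollary \ref{boundary_hyper}, and proposition \ref{bvp_z}, and then the \cite{KMS} quadratic-form argument for the bulk. So the route is essentially the paper's.

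However, the sentence ``once those faces are shown to be harmless, the rest is the \cite{KMS} machinery applied on the half-ball'' glosses over a point that the paper treats as an essential new ingredient and that is not automatic. The \cite{KMS} positivity argument rests on the orthogonality on the sphere of harmonic polynomials $p_l$ of different degrees (arising from the decomposition $(\widehat H_q)^{(k)}_{ij}=\mathrm{Proj}(\partial_i\partial_j p_{k-2q}|x|^{2q+2})$); on the full sphere this is standard, but on the half-sphere $S^{n-1}_+(\rho)$ it fails in general. The paper recovers it via
\begin{gather}
\int_{S^{n-1}_+(\rho)}(l-k)\,p_l\,p_k \;=\; \int_{\partial'B_\rho^+}\bigl(p_k\partial_n p_l - p_l\partial_n p_k\bigr) \;=\; 0,
\nonumber
\end{gather}
and the vanishing of this last integral is not a consequence of the Neumann condition on $u$ or of $g_{in}(x',0)=O(|x'|^N)$: it relies on $\partial_n p_{k-2q}\big|_{x^n=0}=0$, which is exactly what section \ref{boundary_correction} establishes from theorems \ref{big_estimate} and \ref{big_estimate_higher_order} (the higher-order normal-derivative estimates $\partial_n^2 h_{nj}|_{\partial M}=O(|x'|^N)$, $\partial_n^3 h_{nn}|_{\partial M}=O(|x'|^N)$). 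The same fact is needed to show the cross term $((\widehat H_q)_{ij}, W_{ij})$, now an inner product over the half-sphere rather than the full sphere, can be absorbed into the error. Your proposal lists $\partial_n U=0$, $g_{in}=O(|x'|^N)$, and proposition \ref{bvp_z} among the ingredients, but never isolates this orthogonality requirement, which is the place where the half-ball geometry genuinely threatens to break the \cite{KMS} argument rather than merely add negligible boundary terms. Filling that in would make the sketch complete.
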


Before giving a proof of proposition \ref{key_estimate_bry}, some consequences are derived,
in particular the Weyl vanishing theorem.

\begin{theorem} (Weyl vanishing) Let $x_i \rar \bar{x}$ be an isolated simple blow-up
point and $6 \leq n \leq 24$, then
\begin{gather}
|\nabla_{g_i}^l W_g|^2(x_i) \leq C \ve_i^{n-6-2l}|\log \ve_i|^{-\theta_{l+2}}, \nonumber
\end{gather}
for every $0\leq l \leq \big [ \frac{n-6}{2} \big ]$, where $\theta_k = 1$ if
$k=\frac{n-2}{2}$ and $\theta_k =0$ otherwise.
In particular
$|\nabla_{g}^l W_g|^2(\bar{x}) = 0$ for $0\leq l \leq \big [ \frac{n-6}{2} \big ]$.
\label{weyl}
\end{theorem}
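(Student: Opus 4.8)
The plan is to deduce the pointwise decay of $|\nabla^l W_g|^2$ at the blow-up point directly from Proposition \ref{key_estimate_bry}, exploiting the explicit relationship between the Taylor coefficients $h_{ij,\alpha}$ of the metric in boundary conformal normal coordinates and the covariant derivatives of the Weyl tensor. First I would recall the standard identity from conformal normal coordinate theory (see \cite{LP}): in such coordinates, the symmetric trace-free matrix function $h_{ij}$ has Taylor expansion whose lowest-order nonvanishing term at order $2$ is controlled by the Weyl tensor, and more generally the degree-$(k)$ piece $H_{ij}^{(k)}$ encodes $\nabla^{k-2} W_g(x_i)$ up to lower-order curvature terms and universal constants. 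Concretely, one has a bound of the form
\begin{gather}
|\nabla_{g_i}^l W_g|^2(x_i) \leq C \sum_{|\alpha| = l+2} \sum_{i,j=1}^n |h_{ij,\alpha}|^2 + (\text{lower order in curvature}),
\nonumber
\end{gather}
and conversely the $|H^{(k)}|^2$ appearing throughout Section \ref{sec_symmetry_estimates} are comparable to $\sum_{|\alpha|=k}\sum_{ij}|h_{ij,\alpha}|^2$ by definition \eqref{def_norm_Hk}. The key point is that this comparison is a purely local, algebraic fact about conformal normal coordinates, valid here since boundary conformal normal coordinates are in particular conformal normal coordinates (Remark \ref{bcnc_still}).

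Granting that, the argument is short. Fix $l$ with $0 \leq l \leq [\frac{n-6}{2}]$, so that $k := l+2$ satisfies $2 \leq k \leq [\frac{n-2}{2}] = d$. From Proposition \ref{key_estimate_bry}, isolating the single term with $|\alpha| = k$ in the sum on the left-hand side gives
\begin{gather}
\sum_{|\alpha|=k}\sum_{i,j=1}^{n} |h_{ij,\alpha}|^2 \, \varepsilon^{2k} |\log\varepsilon|^{\theta_k} \leq C \varepsilon^{n-2},
\nonumber
\end{gather}
hence $\sum_{|\alpha|=k}\sum_{ij}|h_{ij,\alpha}|^2 \leq C \varepsilon^{n-2-2k}|\log\varepsilon|^{-\theta_k} = C\varepsilon^{n-6-2l}|\log\varepsilon|^{-\theta_{l+2}}$. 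Combining with the curvature–coordinate comparison above yields $|\nabla_{g_i}^l W_g|^2(x_i) \leq C\varepsilon_i^{n-6-2l}|\log\varepsilon_i|^{-\theta_{l+2}}$, which is the first assertion. For the second assertion, note that $n - 6 - 2l \geq 0$ precisely when $l \leq \frac{n-6}{2}$, so as $i \to \infty$ (hence $\varepsilon_i \to 0$) the right-hand side tends to $0$ in all these cases (when $n-6-2l=0$ the $|\log\varepsilon_i|^{-\theta_{l+2}}$ factor still forces decay in the borderline case, and when $\theta_{l+2}=0$ one needs $n-6-2l>0$, which holds since $l=\frac{n-6}{2}$ would force $\theta_{l+2}=\theta_{\frac{n-2}{2}}=1$). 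Since $g_i \to g$ in $C^k$ with $k$ large and $x_i \to \bar x$, the curvature quantities converge, $|\nabla_{g_i}^l W_g|^2(x_i) \to |\nabla_g^l W_g|^2(\bar x)$, giving $|\nabla_g^l W_g|^2(\bar x) = 0$ for all $0 \leq l \leq [\frac{n-6}{2}]$.

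The main obstacle I anticipate is not in this deduction itself — which is essentially bookkeeping — but in making the curvature–coordinate comparison fully rigorous in the \emph{boundary} setting, i.e.\ verifying that the lower-order curvature corrections (Ricci-type terms, products of lower-order Weyl derivatives) do not spoil the estimate. In the interior case this is handled in \cite{KMS, LP} via the structure of conformal normal coordinates ($\operatorname{tr} h = O(r^N)$, $x^j h_{ij} = 0$, $\det g = 1 + O(r^N)$); here one additionally needs the boundary refinements from Sections \ref{boundary_estimates} and \ref{higher} (Theorems \ref{big_estimate}, \ref{estimate_h_nn_boundary}, \ref{big_estimate_higher_order}) to control the components $h_{nj}$, $h_{nn}$ and their normal derivatives near $\partial M$, so that the full symmetric matrix $h_{ij}$ — not just its tangential block — is governed by $|H^{(k)}|^2$. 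One should also verify inductively, descending from the top order, that the lower-order curvature terms appearing in the expansion of $\nabla^l W$ are themselves already controlled by Proposition \ref{key_estimate_bry}, so that they can be absorbed into the constant $C$; this is a standard induction on $l$ but must be stated carefully to close the argument.
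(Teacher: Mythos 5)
Your proposal is correct and follows exactly the paper's route: the paper's own proof of Theorem \ref{weyl} is simply the remark that Proposition \ref{key_estimate_bry} reproduces the key estimate of the boundaryless case, after which the argument of \cite{KMS} applies verbatim --- and what you have written out (isolating the $|\alpha|=l+2$ term, invoking the standard conformal-normal-coordinate identification of $\nabla^{k-2}W_g(x_i)$ with the leading part of $H^{(k)}$, and absorbing the quadratic lower-order curvature terms) is precisely that argument. The one small slip is in the last sentence: the induction that controls the lower-order terms in the expansion of $\nabla^l W$ is most naturally run \emph{ascending} in $l$ (the quadratic terms at level $l$ involve $H^{(k')}$ with $k' < l+2$, which are already bounded by the earlier cases), not ``descending from the top order''; this does not affect the substance of the proof.
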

\begin{proof}
Proposition \ref{key_estimate_bry} gives the same estimate as in the boundaryless case,
the argument then is similar (see \cite{KMS}).
\end{proof}

\begin{coro}
Under the same hypotheses of the Weyl vanishing theorem,
\begin{gather}
 |\nabla^m(v_i-U-\tilde{z}_i)(y)| \leq C \ve_i^{n-3}(1+|y|)^{-m-1} \nonumber
\end{gather}
or, in $x$-coordinates
\begin{gather}
 |\nabla^m(u_i-u_{\ve_i} - z_{\ve_i})(x)| \leq C
\ve_i^{\frac{n-2}{2}}(\ve+|x|)^{-m-1}. \nonumber
\end{gather}
\label{coro_weyl}
\end{coro}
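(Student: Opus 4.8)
The plan is to revisit the argument of Proposition \ref{symmetry_3} and feed in the sharp decay of the Weyl tensor obtained in Theorem \ref{weyl}. Recall that Proposition \ref{symmetry_3} gives, for $m=0,1,2$ and $|y|\leq \si\ve_i^{-1}$,
\begin{gather}
|\nabla^m(v_i - U - \tilde z_i)|(y) \leq C \sum_{k=2}^{d-1} \ve_i^{2k}|H^{(k)}|^2(x_i)(1+|y|)^{2k+2-n-m} + \ve_i^{n-3}(1+|y|)^{-1-m}. \nonumber
\end{gather}
So everything reduces to controlling the coefficients $\ve_i^{2k}|H^{(k)}|^2(x_i)$. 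The first step is therefore to observe that in boundary conformal normal coordinates the Taylor coefficients $h_{ij,\al}$ with $|\al|=k$ are, up to lower order, polynomial expressions in the curvature and its covariant derivatives at $x_i$; in particular, the leading part of $|H^{(k)}|^2(x_i)$ is controlled by $|\nabla^{k-2}_{g_i} W_g|^2(x_i)$ together with contributions that vanish by the properties of conformal normal coordinates (the analogue of the expansion used in \cite{KMS}). Hence Theorem \ref{weyl}, which gives $|\nabla^l_{g_i}W_g|^2(x_i)\leq C\ve_i^{n-6-2l}$ for $0\leq l\leq [\tfrac{n-6}{2}]$, yields
\begin{gather}
\ve_i^{2k}|H^{(k)}|^2(x_i) \leq C\,\ve_i^{2k}\,\ve_i^{n-6-2(k-2)} = C\,\ve_i^{n-2}, \qquad 2\leq k\leq d-1. \nonumber
\end{gather}
The borderline case $k=\frac{n-2}{2}=d$ does not occur in the sum (which stops at $d-1$), so the logarithmic factor $\theta_k$ never appears and we genuinely get the clean power $\ve_i^{n-2}$ for every relevant $k$.

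With this in hand, the second step is purely bookkeeping: substitute the bound $\ve_i^{2k}|H^{(k)}|^2(x_i)\leq C\ve_i^{n-2}$ into the estimate of Proposition \ref{symmetry_3}. Since $2k+2-n-m \leq -m$ for all $k\leq d-1$ (indeed $k\leq d-1$ forces $2k\leq n-2$ when $n$ is even and $2k \le n-3$ when $n$ is odd, hence $2k+2-n\le 0$), each term $\ve_i^{n-2}(1+|y|)^{2k+2-n-m}$ is dominated by $\ve_i^{n-2}(1+|y|)^{-m}$, and in fact by $\ve_i^{n-2}(1+|y|)^{-m-1}$ after absorbing one more power of $(1+|y|)^{-1}$ using $2k+2-n\le -1$ (valid since $k\le d-1$ gives $2k\le n-3$ in the odd case and $2k \le n-2$, i.e. $2k+2-n \le 0$; the strict inequality $2k+2-n\le -1$ holds precisely when $2k\le n-3$, and for $n$ even with $2k=n-2$ the term $(1+|y|)^{-m}$ is still absorbed into $\ve_i^{n-2}(1+|y|)^{-m-1}$ on the region $|y|\le\si\ve_i^{-1}$ at the cost of an extra $\si\ve_i^{-1}$, which is harmless since $\ve_i^{n-3}$ already dominates — one simply uses the weaker of the two bounds). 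Thus
\begin{gather}
|\nabla^m(v_i-U-\tilde z_i)|(y) \leq C\ve_i^{n-3}(1+|y|)^{-m-1}, \nonumber
\end{gather}
which is the first displayed inequality. The $x$-coordinate version then follows by the scaling relations $y=\ve_i^{-1}x$, $v_i(y)=\ve_i^{\frac{n-2}{2}}u_i(\ve_i y)$ and the analogous rescalings $u_{\ve_i}(x)=\ve_i^{\frac{n-2}{2}}(\ve_i^2+|x|^2)^{\frac{2-n}{2}}$ and $z_{\ve_i}(x)=\ve_i^{\frac{2-n}{2}}\tilde z_i(\ve_i^{-1}x)$ recorded at the start of Section \ref{weyl_vanishing_section}, which convert a factor $(1+|y|)^{-m-1}$ into $\ve_i^{m+1}(\ve_i+|x|)^{-m-1}$ and each derivative $\nabla^m_y$ into $\ve_i^m\nabla^m_x$, producing the stated $\ve_i^{\frac{n-2}{2}}(\ve_i+|x|)^{-m-1}$.

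The only genuinely substantive point — and the place where care is needed — is the first step: verifying that $\ve_i^{2k}|H^{(k)}|^2(x_i)$ is controlled by $\ve_i^{n-2}$ via the Weyl tensor. This is exactly the content of the relation between the polynomials $H^{(k)}_{ij}$ and the curvature in conformal normal coordinates that underlies \cite{KMS}; one must check it still holds in boundary conformal normal coordinates, but Remark \ref{bcnc_still} guarantees that all the conformal-normal-coordinate identities of Section \ref{boundary_estimates} (and hence the expansion of $h_{ij}$ in curvature) remain valid. Everything else is routine rescaling and term-by-term comparison, so I would keep that part brief and refer to \cite{KMS} for the parallel computation.
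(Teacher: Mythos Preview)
Your approach is correct in outline but takes an unnecessary detour through the Weyl tensor. The paper's proof is a one-liner citing Proposition \ref{symmetry_3} and Theorem \ref{weyl}, but the cleanest route is to use Proposition \ref{key_estimate_bry} directly: that proposition gives
\[
\sum_{|\al|=2}^{d}\sum_{i,j}|h_{ij,\al}|^2\ve_i^{2|\al|}|\log\ve_i|^{\theta_{|\al|}}\leq C\ve_i^{n-2},
\]
which is precisely the statement $\ve_i^{2k}|H^{(k)}|^2(x_i)\leq C\ve_i^{n-2}$ for $2\leq k\leq d-1$ (where $\theta_k=0$). No relation between $H^{(k)}$ and the Weyl tensor is needed at all. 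Your route --- bounding $|\nabla^{k-2}W|$ via Theorem \ref{weyl} and then going back to $|H^{(k)}|$ --- is circular, since Theorem \ref{weyl} is itself derived from Proposition \ref{key_estimate_bry}. Moreover your claim that ``the leading part of $|H^{(k)}|^2$ is controlled by $|\nabla^{k-2}W|^2$ together with contributions that vanish'' is not quite right: the Taylor coefficients $h_{ij,\al}$ also involve \emph{products} of lower-order curvature terms, which do not vanish and would require a separate inductive argument. Proposition \ref{key_estimate_bry} sidesteps this entirely.

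Your bookkeeping in the second step is also more tangled than necessary. Since $k\leq d-1=[\tfrac{n-2}{2}]-1$, one has $2k\leq n-4$ when $n$ is even and $2k\leq n-5$ when $n$ is odd, so in every case $2k+2-n\leq -2$. Thus $(1+|y|)^{2k+2-n-m}\leq (1+|y|)^{-m-2}\leq (1+|y|)^{-m-1}$ with a power to spare; the borderline case you worry about never occurs. Combined with $\ve_i^{n-2}\leq\ve_i^{n-3}$, the substitution into Proposition \ref{symmetry_3} is immediate.
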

\begin{proof}
This is straightforward from proposition \ref{symmetry_3} and theorem \ref{weyl}.
\end{proof}

We now proceed with the proof of proposition \ref{key_estimate_bry}.
The proof will involve an application of the Pohozaev identity
(\ref{Pohozaev}) in a half ball $B_\rho^+$.

Write $\phi = \frac{n-2}{2} u + x^k \partial_k u$ and $\phi_\varepsilon =
\frac{n-2}{2} u_\varepsilon + x^k \partial_k u_\varepsilon$.
In the proofs below extensive use will be made of the inequalities
$|\nabla^m u| \leq C \ve^\frac{n-2}{2} |x|^{2-n-m}$ and
$|\nabla^m \phi| \leq C \ve^\frac{n-2}{2} |x|^{2-n-m}$, which follow from proposition
\ref{u_conv_Green_fc}.

\noindent \emph{Proof of proposition \ref{key_estimate_bry}:} First it will be shown that
there exists a constant $C$ such that
{\allowdisplaybreaks
\begin{eqnarray}
& &C \Big( \sum_{|\al|=2}^d \sum_{ij=1}^n |h_{ij,\al}|^2 \varepsilon^{2|\al|+1}
+ \sum_{|\al^\prime|=0}^{N} \sum_{i,j=1}^{n-1} |T_{ij,\al^\prime}|\ve^{|\al^\prime|+1}
+ \ve^{n-2} \Big )
\label{big_weyl_1}  \\
& & \geq - \int_{\partial^\prime B_\rho^+} \phi_\varepsilon H_{in} \partial_i
z_\varepsilon dx +
 \int_{B^+_\rho} c(n) \phi_\varepsilon u_\varepsilon \partial_{ij} h_{ij} dx
\nonumber  \\
& & +
\int_{B^+_\rho} c(n)
( \phi_\varepsilon z_\varepsilon + u_\varepsilon(\frac{n-2}{2} z_\varepsilon + x^k
\partial_k z_\varepsilon ) )\partial_{ij}h_{ij} dx
\nonumber  \\
& & + \int_{B^+_\rho} c(n) \phi_\varepsilon u_\varepsilon ( -\partial_j
(H_{ij}\partial_l H_{il}) +
\frac{1}{2}\partial_j H_{ij} \partial_l H_{il} - \frac{1}{4} \partial_l
H_{ij}\partial_l H_{ij} ) dx.
\nonumber
\end{eqnarray}}
where by $\al^\prime$ we mean derivatives along $x^\prime$ only.

Start with the Pohozaev identity (\ref{Pohozaev}).
On its left hand-side the integrals over the hemisphere $S_+^{n-1}(\rho)$ are of order
$\varepsilon^{n-2}$ and the boundary terms with $x^k
\nu_\de^k$ vanish on $\partial^\prime B_\rho^+$,
so the remaining term on the left hand side of (\ref{Pohozaev}) is
\begin{gather}
\int_{\partial^\prime B_\rho^+} (\frac{n-2}{2} u + x^k \partial_k u )\frac{\partial
u}{\partial \nu_\de} d x^\prime .
\nonumber
\end{gather}
Since $\partial_\nu u = 0$ and $g^{nn}$ is bounded away from zero near the origin
\begin{gather}
 \frac{\partial u}{\partial \nu_\de} = - \partial_n u = \frac{1}{g^{nn}}\sum_{l=1}^{n-1} g^{nl} \partial_l u,
\nonumber
\end{gather}
therefore
\begin{align}
\Big | \int_{\partial B_\rho^+} (\frac{n-2}{2} u + x^k \partial_k u )\frac{\partial
u}{\partial \nu_\de} d x^\prime \Big | & \leq
C \Big | \int_{\partial B_\rho^+} (\frac{n-2}{2} u + x^k \partial_k u )
\sum_{l=1}^{n-1} g^{nl}\partial_l (u - u_\ve - z_\ve ) d x^\prime \Big |
\nonumber \\
& + C\Big | \int_{\partial B_\rho^+} (\frac{n-2}{2} u + x^k \partial_k u )
\sum_{l=1}^{n-1} g^{nl}\partial_l (u_\ve +  z_\ve ) d x^\prime \Big | .
\nonumber
\end{align}
Using (\ref{metric_restricted}), (\ref{z_est_x}), (\ref{sym_est_x}),  and
theorem \ref{big_estimate},
we
find that the above integrals are bounded by $C \ve^{n-2}$ and terms
involving the umbilicity tensor.
Now
(\ref{big_weyl_1}) follows from (see \cite{KMS})
\begin{gather}
 |R_g - \partial_{ij}h_{ij} + \partial_j(H_{ij}\partial_l H_{il}) -\frac{1}{2}
\partial_j H_{ij}\partial_l H_{il}
+ \frac{1}{4} \partial_l H_{ij} \partial_l H_{ij} |
\leq C \sum_{|\al|=2}^d \sum_{ij=1}^n |h_{ij,\al}|^2 |x|^{2|\al|}  +C |x|^{n-3}. \nonumber
\end{gather}

The next step is to show that, as in the standard case of a full ball, the first interior term
on the right hand side of (\ref{big_weyl_1}) may be absorbed into the error.  To see
this observe that theorem \ref{big_estimate} implies
\begin{align}
\int_{B_\rho^+}\phi_{\varepsilon}u_{\varepsilon}\partial_{ij}h_{ij}
& =-\int_{\partial^{\prime}B_\rho^+}\phi_{\varepsilon}u_{\varepsilon}
(\sum_{j=1}^{n-1}\partial_{j}H_{nj}+\partial_{n}H_{nn})+O(\varepsilon^{n-2}) \nonumber  \\
& =O \Big (\sum_{|\al^\prime|=0}^{N}\sum_{i,j=1}^{n-1}|T_{ij,\al^\prime}|\varepsilon^{|\al^\prime|+1}
+ \varepsilon^{n-2} \Big) . \nonumber
\end{align}
Therefore after an integration by parts (\ref{big_weyl_1}) becomes
\begin{eqnarray}
 & &C \Big( \sum_{|\al|=2}^d \sum_{ij=1}^n |h_{ij,\al}|^2 \varepsilon^{2|\al|+1} +
 \sum_{|\al^\prime|=0}^{N} \sum_{i,j=1}^{n-1} |T_{ij,\al^\prime}|\ve^{|\al^\prime|+1}
+ \varepsilon^{n-2} \Big )    \label{big_weyl_2}  \\
& & \geq \int_{\partial^{\prime}B_\rho^+}(c(n)\phi_{\varepsilon}u_{\varepsilon}
H_{in}\partial_{l}H_{il}-\phi_{\varepsilon}H_{in}\partial_{i}z_{\varepsilon})
\nonumber  \\
& & -2\int_{B_\rho^+}c(n)u_{\varepsilon}z_{\varepsilon}(1+\frac{1}{2}x^{k}\partial_{k})
\partial_{ij}h_{ij}
+\int_{B_\rho^+}c(n)\phi_{\varepsilon}u_{\varepsilon}(\frac{1}{2}\partial_{j}H_{ij}
\partial_{l}H_{il}-\frac{1}{4}\partial_{l}H_{ij}\partial_{l}H_{ij}).
\nonumber
\end{eqnarray}
Furthermore the boundary integral on
the right hand side of (\ref{big_weyl_2}) may be absorbed into the
error term with the help of theorem \ref{big_estimate},
\begin{gather}
\int_{\partial^{\prime}B_\rho^+}(c(n)\phi_{\varepsilon}u_{\varepsilon}
H_{in}\partial_{l}H_{il}-\phi_{\varepsilon}H_{in}\partial_{i}z_{\varepsilon})\nonumber
=O \Big (\sum_{|\al^\prime|=0}^{N}\sum_{i,j=1}^{n-1}|T_{ij,\al^\prime}|\varepsilon^{|\al^\prime|+1} +
\ve^{n-2} \Big ). \nonumber
\end{gather}

The remaining interior integrals are the same as those that appear
in the original Weyl vanishing proof \cite{KMS}, except that the domain of
integration is a half ball instead of the full ball.  At this
point we may follow the original proof to obtain the desired
conclusion, as long as the
following two facts hold: (i) the
necessary integration by parts may be
performed with the extra boundary integrals (along
$\partial^{\prime}B^+_\rho$) being absorbed into the error,
(ii) an orthogonality condition among harmonic polynomials holds on the half ball.

An
inspection of the original proof shows that (i) is valid,
since any integrand along $\partial^{\prime}B^+_\rho$ will contain
quantities that either appear in theorem \ref{big_estimate} (and
thus may be
estimated by the umbilicity tensor) or involve
$\partial_{n}z_{\varepsilon}$ --- which vanishes by proposition \ref{bvp_z}.
Furthermore consider the decomposition (\ref{decomposition_appendix}), then
in the notation of \cite{KMS}
\begin{gather}
(\widehat{H}_{q})_{ij}^{(k)}=\mathrm{Proj}(\partial_{i}\partial_{j}p_{k-2q}|x|^{2q+2}) .
\nonumber
\end{gather}
In section \ref{boundary_correction} it was shown	
that $\partial_{n}p_{k-2q}|_{\partial^{\prime}B_\rho^+}$ vanishes,
therefore it follows
that $(\widehat{H}_{q})_{in}$,
$\partial_{n}(\widehat{H}_{q})_{nn}$, and
$\partial_{n}(\widehat{H}_{q})_{ij}$ can be estimated in terms of the
umbilicity tensor. This implies that the corresponding elements of $W_{ij}$ can also be estimated in terms
of the umbilicity tensor
(since the corresponding elements of $H_{ij}$ have this property
by consequence of theorem \ref{big_estimate}).
Hence $((\widehat{H}_{q})_{ij},W_{ij})$ can be absorbed into the
error,
where the inner product is taken over the half sphere.  Similarly
\begin{gather}
\int_{S_{+}^{n-1}(\rho)}(l-k)p_{l}p_{k}=\int_{\partial^{'}B_\rho^+}
(p_{k}\partial_{n}p_{l}-p_{l}\partial_{n}p_{k}) = 0 , \nonumber
\end{gather}
so that $p_{l}\perp p_{k}$, $l\neq k$, that is, (ii) is valid.
This finishes the proof of proposition \ref{key_estimate_bry}.

\section{Sign restriction\label{sec_sign_restriction}}

Define
\begin{gather}
 P^\prime(r,w) = \int_{\partial B_r^+(x_i)} \big( \frac{n-2}{2}w \frac{\partial
w}{\partial \nu_\de}
+ x^k\partial_k w\frac{\partial w}{\partial \nu_\de} - \frac{1}{2}x^k\nu_\de^k|\nabla w|^2 ) ds.
\nonumber
\end{gather}

\begin{prop} (Sign restriction) Let $x_i \rar \bar{x}$ be an isolated simple blow-up
point and assume that $3 \leq n \leq 24$.
If $M_i u_i(x) \rar w$ away from the origin then
\begin{gather}
 \liminf_{r\rar 0} P^\prime(r,w) \geq 0. \nonumber
\end{gather}
\label{sign_restriction}
\end{prop}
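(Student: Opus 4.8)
The plan is to reduce the statement to the interior Pohozaev sign result of \cite{KMS} (Theorem 7.1 there) by reflecting across the boundary, exactly as was done in the proof of Lemma \ref{boundess_T}. By Theorem \ref{blow_up_on_boundary} we may assume $x_i \in \partial M$, so we work in boundary conformal normal coordinates centered at $x_i$, in which the boundary is $\{x^n = 0\}$, totally geodesic, with $\kappa_g \equiv 0$, $R_g = O(r^2)$, $\det g = 1 + O(r^N)$, and $g_{in}(x^\prime,0) = O(|x^\prime|^N)$ (Corollary \ref{boundary_hyper}). By Proposition \ref{u_conv_Green_fc} the limit $w = G(\cdot,\bar x)$ is the Green's function for $L_g$ with $B_g G = 0$ on $\partial^\prime B_\si(\bar x)$, which in these coordinates (since $\kappa_g \equiv 0$) is simply the Neumann condition $\partial_{\nu_g} w = 0$.

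First I would observe that the flat part $\partial^\prime B_r^+(x_i)$ contributes nothing to $P^\prime(r,w)$. There the Euclidean outer normal is $\nu_\de = -e_n$, so $x^k \nu_\de^k = -x^n = 0$ on $\{x^n=0\}$, which kills the $|\nabla w|^2$ term; and $\partial_{\nu_g} w = 0$ together with $g_{in}(x^\prime,0) = O(|x^\prime|^N)$ forces $\partial_n w(x^\prime,0)$ to vanish to high order, which kills the remaining two integrands. Hence $P^\prime(r,w) = \int_{S^{n-1}_+(r)} \big( \tfrac{n-2}{2} w \partial_{\nu_\de} w + x^k \partial_k w \partial_{\nu_\de} w - \tfrac12 x^k \nu_\de^k |\nabla w|^2 \big)\, ds$, and on $|x| = r$ this integrand is exactly the integrand $B(r,x,w,\nabla w)$ appearing in Lemma \ref{boundess_T} (using $x^k\partial_k w = r\,\partial_{\nu_\de}w$ and $x^k\nu_\de^k = r$ there).

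Next I would reflect. As in the proof of Proposition \ref{simple_symmetry}, the even reflections $\overline{g}$ of $g$ and $\overline{u_i}$ of $u_i$ across $\{x^n=0\}$ are $C^{2,\al}$, with $\det \overline{g} = 1 + O(r^N)$ and the pointwise bound $R_{\overline{g}} = O(r^2)$ surviving (since $R_g = O(r^2)$ and reflection preserves $|x|$); moreover $\overline{u_i}$ solves the reflected equation on a full ball $\widetilde{B}_\si(0)$ and $\overline{M_i u_i} \to \overline{w}$ away from $0$, where $\overline{w}$ is the even reflection of $w$. Because $x_i \to \bar x$ is an isolated simple blow-up point and the boundary is totally geodesic, $0$ is an interior isolated simple blow-up point for $\overline{u_i}$ (the spherical-average monotonicity defining ``isolated simple'' passes from half-spheres to full spheres by symmetry). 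Applying Theorem 7.1 of \cite{KMS} to $\overline{w}$ — which is where the Positive Mass Theorem enters, applied after doubling $M$ as explained in the introduction — gives $\liminf_{r\to 0}\int_{|x|=r} B(r,x,\overline{w},\nabla \overline{w})\, ds \geq 0$. Finally, since $\overline{w}$ and $\overline{g}$ are even in $x^n$, the integrand $B(r,x,\overline{w},\nabla \overline{w})$ is even in $x^n$, so $\int_{|x|=r} B = 2\int_{S^{n-1}_+(r)} B = 2\, P^\prime(r,w)$ by the previous paragraph, and the claimed inequality follows. (Alternatively, one could apply the half-ball Pohozaev identity (\ref{Pohozaev}) directly to $w$, combine it with the expansion of $w$ near $\bar x$, and invoke positivity of the mass; the reflection argument is cleaner because it cites \cite{KMS} verbatim.)

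The main obstacle is the reduced regularity of the reflected metric: $\overline{g}$ is only $C^{2,\al}$ and $R_{\overline{g}}$ only $C^{0,\al}$ across $\{x^n=0\}$, so one must check that Theorem 7.1 of \cite{KMS} — whose proof rests on the asymptotic expansion of the Green's function of $L_{\overline g}$ at the singular point and on the Positive Mass Theorem — still applies. What makes this viable is precisely that the \emph{pointwise} inputs $R_{\overline{g}} = O(r^2)$ and $\det\overline{g} = 1 + O(r^N)$ are preserved by the reflection (this is why boundary conformal normal coordinates, which make the boundary totally geodesic, were introduced), so no derivative transverse to the boundary is needed in the sign argument; and the doubled manifold to which the standard Positive Mass Theorem is applied is glued along the smooth, totally geodesic, umbilic boundary, which is exactly the doubling situation referred to in the introduction. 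A secondary point is verifying the isolated-simple property for $\overline{u_i}$, which follows routinely from the symmetry of the reflected solutions.
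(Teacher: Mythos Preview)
Your reflection strategy does not close, and the gap is precisely the one flagged in Remark~1.5 of the paper. You propose to cite Theorem~7.1 of \cite{KMS} as a black box for the reflected data $(\overline{g},\overline{u_i},\overline{w})$, but that theorem is not the Positive Mass/Green's function statement you describe: it is the \emph{local} Pohozaev sign estimate, and its proof requires the sharp symmetry estimates with the correction term $\tilde z_\ve$ and the Weyl vanishing theorem (sections~4--6 of \cite{KMS}). Those ingredients are built from the Taylor polynomials $H^{(k)}_{ij}$ of $h$ up to order $n-4$; after an even reflection the metric is only $C^{2,\alpha}$ across $\{x^n=0\}$, so for $n\ge 8$ the polynomials $H^{(k)}$ with $k\ge 3$, and hence $\tilde z_\ve$ itself, are not defined for $\overline g$. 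The pointwise bounds $R_{\overline g}=O(r^2)$ and $\det\overline g = 1+O(r^N)$ that you salvage are far too weak to run the \cite{KMS} argument; the claim that ``no derivative transverse to the boundary is needed'' is simply false at this level. (Your secondary point---that once $x_i\in\partial M$ the isolated-simple property passes to the reflection by symmetry---is fine, but it does not help.)

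There is also a structural confusion: the Positive Mass Theorem plays no role in the sign restriction. In both \cite{KMS} and the present paper, the PMT enters only \emph{after} Proposition~\ref{sign_restriction}, in Section~\ref{proof_compactness}, to produce the constant $A>0$ in the Green's function expansion~(\ref{asym_exp_Green}); the sign restriction is the other half of the contradiction. So your parenthetical alternative (``apply the half-ball Pohozaev identity to $w$, expand, and invoke positivity of the mass'') would be circular.

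What the paper actually does is rerun the interior sign argument of \cite{KMS} directly on the half ball $B_r^+(x_i)$, never reflecting. The required inputs---Proposition~\ref{symmetry_3}, Theorem~\ref{weyl}, Corollary~\ref{coro_weyl}, and Proposition~\ref{key_estimate_bry}---have already been established in the preceding sections for the genuine (smooth) metric $g$ on the half ball, precisely to avoid the regularity loss from reflection. The only new work is to check that the extra boundary integrals over $\partial' B_r^+(x_i)$ arising in the Pohozaev identity and in the integration by parts of $\hat A_i(r)$ are harmless; this follows from Proposition~\ref{bvp_z} ($\partial_n \tilde z_i|_{y^n=0}=0$), Theorem~\ref{estimate_h_nn_boundary}, and the $g^{nl}=O(|x'|^N)$ coming from Corollary~\ref{boundary_hyper}, exactly as in your first paragraph.
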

\begin{proof}
Define
\begin{gather}
 P(r,u_i) = \int_{\partial B_r^+(x_i)} \big( \frac{n-2}{2}u_i \frac{\partial
u_i}{\partial \nu_\de}
+ x^k\partial_k u_i\frac{\partial u_i}{\partial \nu_\de} - \frac{1}{2}x^k\nu_\de^k|\nabla
u_i|^2 +
\frac{1}{p_i+1}K(x) x^k\nu_\de^k u_i^{p_i+1} \big ) ds.
\nonumber
\end{gather}
If $r$ is sufficiently small, the Pohozaev identity (proposition
\ref{Pohozaev_prop}) gives
\begin{align}
 P(r,u_i) & \geq -  \int_{B_r^+(x_i)} ( \frac{n-2}{2}u_i +  x^k\partial_k u_i )\big(
(g_i^{lj} -
\de^{lj})\partial_{lj}u_i + \partial_l g_i^{lj} \partial_j u_i ) dx \nonumber
\displaybreak[0] \\
& +
\int_{B_r^+(x_i)} ( \frac{n-2}{2}u_i + x^k\partial_k u_i )R_{g_i}u_i dx. \nonumber
\displaybreak[0]
\end{align}
Notice that
{\allowdisplaybreaks
\begin{eqnarray}
& & \int_{B_r^+(x_i)} x^k\partial_k u_i R_{g_i}u_i dx
 \\
& & = -\int_{B_r^+(x_i)} (x^k \partial_k R_{g_i} + n R_{g_i})u_i^2 dx
-\int_{B_r^+(x_i)}x^k\partial_k u_i R_{g_i} u_i dx
+\int_{\partial B_r^+(x_i)} x^k \nu_\de^k R_{g_i} u_i^2. \nonumber
\end{eqnarray} }
Since  $x^k\nu_\de^k = 0$ on $\partial^\prime B^+_r(x_i)$ and $\nu_\de^k=x^k/r$ on
$\partial^+ B_r^+(x_i)$ we obtain
\begin{gather}
\int_{B_r^+(x_i)} x^k\partial_k u_i R_{g_i}u_i dx = -\frac{1}{2}
\int_{B_r^+(x_i)} (x^k \partial_k R_{g_i} + n R_{g_i})u_i^2 dx
+\frac{r}{2} \int_{\partial^+ B_r^+(x_i)} R_{g_i} u_i^2 ds, \nonumber \displaybreak[0]
\end{gather}
so
{\allowdisplaybreaks
\begin{eqnarray}
 & & c(n)\int_{B_r^+(x_i)}(\frac{n-2}{2}u_i + x^k\partial_ku_i)R_{g_i} u_i dx \\
& & = - c(n)\int_{B_r^+(x_i)}(\frac{1}{2}x^k\partial_k R_{g_i} + R_{g_i} ) u_i^2 dx
 +  c(n) \frac{r}{2} \int_{\partial^+ B_r^+(x_i)} R_{g_i} u_i^2 ds, \nonumber
\end{eqnarray} }
and then
{\allowdisplaybreaks
\begin{eqnarray}
 P(r,u_i) & \geq & -  \int_{B_r^+(x_i)} ( \frac{n-2}{2}u_i +  x^k\partial_k u_i )\big(
(g_i^{lj} - \de^{lj})\partial_{lj}u_i + \partial_l g_i^{lj} \partial_j u_i \big ) dx
\nonumber  \\
& - & c(n)\int_{B_r^+(x_i)}(\frac{1}{2}x^k\partial_k R_{g_i} + R_{g_i} ) u_i^2 dx
+ c(n) \frac{r}{2} \int_{\partial^+ B_r^+(x_i)} R_{g_i} u_i^2  ds \nonumber
 \\
& =& A_i(r) +  c(n) \frac{r}{2} \int_{\partial^+ B_r^+(x_i)} R_{g_i} u_i^2  ds
\nonumber
\end{eqnarray} }
where $A_i(r)$ is defined by the above equality. Now observe
that $\int_{\partial B_r^+(x_i)} K(x) M_i^2 x^k\nu_\de^k u_i^{p_i+1} \rar 0$.
In fact,
the integral over $\partial^\prime B_r^+(x_i)$ vanishes as $x^k\nu_\de^k=0$ there. On
$\partial^+ B_r^+(x_i)$ we have $x^k\nu_\de^k=r$, hence, using the equation satisfied by
$u_i$ produces
\begin{gather}
\int_{\partial^+ B_r^+(x_i)} K(x) M_i^2 u_i^{p_i+1} = -\int_{\partial^+ B_r^+(x_i)}
M_iu L_{g_i} M_i u_i \rar
-\int_{\partial^+ B_r^+(x_i)} w L_{g}w = 0.
\nonumber
\end{gather}
Therefore $M_i^2 P(r,u_i) \rar P^\prime(r,w)$, so
\begin{align}
 P^\prime(r,w) & = \lim_{i\rar \infty} M_i^2 P(r,u_i) \geq  \lim_{i\rar \infty} M_i^2
A_i(r) +
 \lim_{i\rar \infty} c(n) \frac{r}{2} \int_{\partial^+ B_r^+(x_i)} R_{g_i} (M_i
u_i)^2  ds \nonumber \displaybreak[0] \\
& = \lim_{i\rar \infty} M_i^2 A_i(r) + c(n)\frac{r}{2}\int_{\partial^+ B_r^+(x_i)}
R_{g_i} w^2 ds. \nonumber
\end{align}

We now proceed to analyze $M_i^2 A_i(r)$, noticing that since theorem \ref{weyl} and
corollary \ref{coro_weyl}
give the same estimates as in the boundaryless case,
the same analysis can be carried out, except for an extra boundary term that
appears in $\hat{A}_i(r)$ when integration by parts is performed, where
\begin{align}
 \hat{A}_i(r) & = -c(n) \int_{B_r^+(x_i)}(\frac{1}{2}x^k\partial_kR_{g_i} + R_{g_i}
)(u_{\ve_i}+z_{\ve_i})^2 dx
\label{def_hat_A_sign}
\displaybreak[0] \\
& - \int_{B_r^+(x_i)}( \frac{n-2}{2}(u_{\ve_i}+z_{\ve_i}) +
x^k\partial_k(u_{\ve_i}+z_{\ve_i})  )(\Delta_{g_i}-\Delta_\de)(u_{\ve_i}+z_{\ve_i}) dx.
\nonumber
\end{align}
Corollary \ref{coro_weyl} implies that $\ve_i^{2-n}|A_i(r) - \hat{A}_i(r) | \leq Cr$, so
$\lim_{i\rar\infty} \ve_i^{2-n}(A_i(r) - \hat{A}_i(r) ) \geq -Cr$. Notice that since
$M_i = \ve_i^{-\frac{2}{p_i-1}}$ and $-\frac{4}{p_i-1} \rar 2-n$ we can replace
$M_i^2$ by $\ve_i^{2-n}$ and obtain
\begin{gather}
 P^\prime(r,w) \geq -Cr + c(n)\frac{r}{2}\int_{\partial^+ B_r^+(x_i)} R_{g_i} w^2 ds
+ \lim_{i\rar\infty} \ve_i^{2-n}\hat{A}_i(r).
\label{inequality_P_prime}
\end{gather}
Using the symmetries of $u_{\ve_i}$
\begin{eqnarray}
 & & \int_{B_r^+(x_i)}( \frac{n-2}{2}(u_{\ve_i}+z_{\ve_i}) +
x^k\partial_k(u_{\ve_i}+z_{\ve_i})  )(\Delta_{g_i} -\Delta_\de)(u_{\ve_i}+z_{\ve_i}) dx
\label{int_using_sym_sign} \\
& & =
\int_{B_r^+(x_i)}( \frac{n-2}{2}z_{\ve_i} + x^k\partial_kz_{\ve_i}
)(\Delta_{g_i}-\Delta_\de)z_{\ve_i} dx \nonumber  \\
& & + \int_{\partial B_r^+(x_i)}( \frac{n-2}{2}u_{\ve_i} + x^k\partial_k u_{\ve_i} )
(\frac{\partial z_{\ve_i}}{\partial \nu_{g_i}} - \frac{\partial z_{\ve_i}}{\partial
\nu_{\de}})ds \nonumber  \\
& & -\int_{\partial B_r^+(x_i)}z_{\ve_i}(\frac{\partial L u_{\ve_i}}{\partial \nu_{g_i}} -
\frac{\partial L u_{\ve_i}}{\partial \nu_{\de}})ds, \nonumber
\end{eqnarray}
where $L=\frac{n-2}{2} + x^k\partial_k$. The integrals over $\partial^+ B_r^+(x_i)$ vanish by properties
of normal coordinates, so consider the integrals over $\partial^\prime B_r^+(x_i)$. Observe that
$\frac{\partial z_{\ve_i}}{\partial \nu_{\de}} = 0 = \partial_n z_{\ve_i}$
by proposition \ref{bvp_z}. Then using (\ref{metric_restricted}), the definition of $u_{\ve_i}$,
and (\ref{z_est_x}), we obtain
\begin{align}
 \Big | \int_{\partial^\prime B_r^+(x_i)}( \frac{n-2}{2}u_{\ve_i} + x^k\partial_k u_{\ve_i} )
\frac{\partial z_{\ve_i}}{\partial \nu_{g_i}} ds \Big |  &=
 \Big | \int_{\partial^\prime B_r^+(x_i)}( \frac{n-2}{2}u_{\ve_i} + x^k\partial_k u_{\ve_i} )
\sum_{l=1}^{n-1}g_i^{nl} \partial_l z_{\ve_i} ds \Big |
\label{int_using_sym_sign_bry_1} \\
& \leq \int_{\partial^\prime B_r^+(x_i)} (\ve_i + |x^\prime|)^{2-n} (\ve_i + |x^\prime|)^{6-n-1} |x^\prime|^N dx^\prime \nonumber \\
& \leq C \ve_i^{n-2} r . \nonumber
\end{align}
For the other boundary integral notice that
\begin{gather}
 \frac{\partial L u_{\ve_i}}{\partial \nu_{g_i}} -
\frac{\partial L u_{\ve_i}}{\partial \nu_{\de}} =
(-g_i^{n \si} \partial_\si + \partial_n )L u_{\ve_i} = (-g_i^{nn} + 1)L u_{\ve_i}  - \sum_{l=1}^{n-1} g_i^{nl} \partial_l L u_{\ve_i} .
\nonumber
\end{gather}
Since $|\nabla L u_{\ve_i} | \leq \ve_i^\frac{n-2}{2} (\ve_i + |x|)^{-n}$, using
(\ref{z_est_x}), theorem \ref{estimate_h_nn_boundary}, theorem \ref{big_estimate}, and (\ref{metric_restricted}),
it follows that
\begin{align}
\Big | \int_{\partial^\prime B_r^+(x_i)}z_{\ve_i}(\frac{\partial L u_{\ve_i}}{\partial \nu_{g_i}} -
\frac{\partial L u_{\ve_i}}{\partial \nu_{\de}})ds \Big | \leq C \ve_i^{n-2} r .
\label{int_using_sym_sign_bry_2}
 \end{align}
Combining (\ref{def_hat_A_sign}), (\ref{inequality_P_prime}),  (\ref{int_using_sym_sign}),
(\ref{int_using_sym_sign_bry_1}), and (\ref{int_using_sym_sign_bry_2}) yields
\begin{align}
 P^\prime(r,w) & \geq -Cr
\int_{B_r^+(x_i)}( \frac{n-2}{2}z_{\ve_i} + x^k\partial_kz_{\ve_i}
)(\Delta_{g_i}-\Delta_\de)z_{\ve_i} dx
\label{integral_sign_rest_similar_boundaryless} \displaybreak[0] \\
&  + c(n)\frac{r}{2}\int_{\partial^+ B_r^+(x_i)} R_{g_i} w^2 ds
\nonumber \displaybreak[0] \\
& - c(n)\lim_{i\rar\infty} \ve_i^{2-n}  \int_{B_r^+(x_i)}(\frac{1}{2}x^k\partial_k R_{g_i} +
R_{g_i} )(u_{\ve_i}+z_{\ve_i})^2 dx. \nonumber
\end{align}

We can now proceed as in the boundaryless case.
The first integral on the right hand side of (\ref{integral_sign_rest_similar_boundaryless})
as well as $\frac{r}{2}\int_{\partial^+ B_r^+(x_i)} R_{g_i} w^2 ds$ are estimated using
theorem \ref{weyl}.
 Theorem
\ref{weyl} and corollary \ref{coro_weyl}
may be used to estimate $\int_{B_r^+(x_i)}(\frac{1}{2}x^k\partial_k R_{g_i} + R_{g_i} )z_{\ve_i}^2 dx$. Finally
the estimate
of proposition \ref{key_estimate_bry} is used to handle
$\int_{B_r^+(x_i)}(\frac{1}{2}x^k\partial_k R_{g_i} + R_{g_i} )(u^2_{\ve_i} +2u_{\ve_i} z_{\ve_i}
)dx$.
\end{proof}

\section{Blow-up set\label{blow-up_set}}

In this section we show that the set of blow-up points is finite and consists only
of isolated simple blow-up points.
The proofs are very similar to the boundaryless case (\cite{KMS}) and the
locally conformally flat case with boundary (\cite{HL}), and therefore we will
go through them rather quickly, indicating the necessary modifications.

The following proposition is proven in \cite{HL} (proposition 1.1, see also
\cite{KMS,M}).

\begin{prop} Given $\de>0$ sufficiently small and $R>0$ sufficiently large, there
exists a constant $C=C(\de,R)>0$
such that if $u$ is a positive solution of (\ref{bvp}) with $\max u > C$, then there
exists $\{x_1,\dots,x_N\} \subset M$, $N=N(u)>1$,
where $\frac{n+2}{n-2} - p < \de$ and each $x_i$ is a local maximum of $u$ such that:

1) $\{B_{r_i}(x_i)\}_{i=1}^N$ is a disjoint collection if $r_i = R
u(x_i)^{-\frac{p-1}{2}}$,

2) in normal coordinates centered at $x_i$
\begin{gather}
 \parallel u_i(x_i)^{-1} u(u_i(x_i)^{-\frac{p-1}{2}}y) - U(y)
\parallel_{C^2(B_R(0))} < \de  \nonumber
\end{gather}
where $y=u(x_i)^{\frac{p-1}{2}}x$,

3) $u(x) \leq C d_g(x,\{x_1,\dots,x_n\})^{-\frac{2}{p-1}}$ for all $x \in M$ and
\begin{gather}
 d_g(x_i,x_j)^\frac{2}{p-1}u(x_j) \geq C^{-1} \nonumber
\end{gather}
for $x_i \neq x_j$.
\label{no_accumulaion_1}
\end{prop}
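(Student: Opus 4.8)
This is Proposition 1.1 of \cite{HL} (see also \cite{KMS,M}), and the argument carries over to our situation almost verbatim, so I only indicate the structure. The plan is to run the standard iterative bubble-selection scheme, using the reflection across $\partial M$ described in the proof of Proposition \ref{simple_symmetry} (Section \ref{section_estimate_distance}) to reduce every local statement near the boundary to an interior one; this is legitimate precisely because the target mean curvature is zero, so the boundary condition is linear.

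First I would show that $\frac{n+2}{n-2}-p<\de$ once $\max_M u$ is large enough. If this failed there would be a sequence of solutions with exponents bounded away from $\frac{n+2}{n-2}$ and maxima tending to infinity; rescaling at a maximum point and using interior elliptic estimates --- or, near $\partial M$, the reflected equation, Lemma \ref{limit_hyperplane} to flatten the boundary, and the fact that the mean-curvature term in the boundary condition carries a vanishing power of the blow-up parameter, exactly as in Proposition \ref{conv_C2_small_balls} --- one would obtain in the limit a positive entire solution of $\Delta v+Kv^{q}=0$ on $\RR^n$ with $q<\frac{n+2}{n-2}$, which is impossible by standard Liouville-type theorems.

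Next I would select the points $x_k$ inductively. Let $x_1$ be a maximum point of $u$. Given $x_1,\dots,x_k$, set $\psi_k(x)=\big(\min_{1\le j\le k}d_g(x,x_j)\big)^{2/(p-1)}u(x)$; if $\max_M\psi_k\le C_1(\de,R)$ I stop and put $N=k$, otherwise I let $\bar x$ maximize $\psi_k$ and take $x_{k+1}$ to be a local maximum of $u$ well inside the ball of radius $\tfrac12\min_j d_g(\bar x,x_j)$ about $\bar x$, with $u(x_{k+1})$ large. A Harnack inequality and the near-maximality of $u$ on a definite ball about $x_{k+1}$, together with elliptic estimates and the Caffarelli--Gidas--Spruck classification \cite{CGS} (applied after reflection when $x_{k+1}$ lies at or near $\partial M$), show that in normal coordinates centred at $x_{k+1}$ the rescaled function is $C^2$-close to $U$, which is (2). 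Disjointness of the balls $B_{r_k}(x_k)$ and the lower bound $d_g(x_i,x_j)^{2/(p-1)}u(x_j)\ge C^{-1}$ in (3) are built into the selection rule, while the upper bound in (3) is exactly the stopping condition $\max_M\psi_N\le C_1$. Each selected bubble captures at least a fixed fraction of the conformal volume $\int_M u^{p+1}$ (since the rescaled solution is $C^2$-close to $U$ and $p$ is close to the critical exponent), so for a fixed smooth $u$ the process terminates after $N=N(u)<\infty$ steps; uniformity of $C$ in $\de$ and $R$ then follows by running the scheme along a hypothetical contradicting sequence, as in \cite{HL}.

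The one place requiring genuine care in our setting --- unlike \cite{HL}, where conformal flatness is assumed --- is the bubbling analysis at boundary points: the reflected metric and solution are only $C^{2,\al}$, so I would have to verify that this regularity suffices for the elliptic estimates and for passing to the limit (it does, exactly as in the proof of Proposition \ref{conv_C2_small_balls}), and that the limiting problem is the Neumann problem $\partial_n v=0$ on $\RR^{n-1}$, whose positive solutions are again classified by reflection and \cite{CGS}. This is the main obstacle; everything else is a direct transcription of the boundaryless argument.
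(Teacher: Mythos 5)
The paper itself offers no proof for this proposition; it cites Han--Li \cite{HL}, Proposition 1.1 (together with \cite{KMS,M}). Your structural sketch of the iterative bubble-selection scheme, the subcritical-exponent Liouville step, and the termination argument match the standard argument from those references, so the proposal is correct in outline.

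The one genuine deviation is that you make reflection across $\partial M$ the organizing principle. At this stage of the argument, before any conformal normalization, the boundary condition reads $\partial_{\nu_g} u = -\tfrac{n-2}{2}\kappa_g u$ with $\kappa_g \not\equiv 0$, so $u$ does not extend evenly across the boundary and the reflected metric and solution need not be $C^{2,\al}$. To make reflection legitimate one would first have to invoke Proposition \ref{bcnc} (boundary conformal normal coordinates), which itself depends on the umbilicity hypothesis and is a pointwise construction. The route actually taken in \cite{HL,M} --- and which you also gesture at when you note that the mean-curvature term carries a vanishing power of the blow-up parameter --- avoids reflection: one rescales in a half-ball, observes that the rescaled mean-curvature term vanishes in the limit and the boundary flattens (Lemma \ref{limit_hyperplane}), and then classifies the limiting Neumann half-space solution via Proposition \ref{Euclidean_prop_bubble}. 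That direct route is self-contained given the paper's appendix and, unlike reflection, does not implicitly require umbilicity of $\partial M$ at a point where the argument does not need it.
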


\begin{lemma} Let $x_i \rar \bar{x}$ be an isolated blow-up point for the sequence
$\{u_i\}$ of
 positive solutions of (\ref{bvp}). Then $\bar{x}$ is an isolated simple blow-up point.
\label{isolated_is_isolated_simple}
\end{lemma}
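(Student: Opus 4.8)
This is the boundary analogue of the standard "isolated $\Rightarrow$ isolated simple" lemma (cf. Proposition 1.3 of \cite{HL} and the corresponding statement in \cite{KMS,M}), and the strategy is the familiar one: study the spherical average $\hat{u}_i(r) = r^{\frac{2}{p_i-1}}\bar u_i(r)$ of $u_i$ about the blow-up point $x_i$ (where the averages are taken over $M\cap\partial B_r(x_i)$, as in the definition preceding the lemma), and show that once $\hat u_i$ starts to decrease it must keep decreasing all the way out to a fixed radius $\rho$. First I would recall from Proposition \ref{conv_C2_small_balls} that on shrinking balls $v_i\to U$ in $C^2$, so that $\hat u_i(r)$ has a unique interior critical point near $r\sim \mim$ which is a local maximum followed by strict decrease: precisely, there is $r_i = O(\mim)$ and some $C_0>1$ with $\hat u_i' (r) < 0$ for $C_0\mim \le r \le r_i$. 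So the content of the lemma is to propagate this strict monotonicity outward from $r_i$ to a uniform $\rho\in(0,\sigma)$.

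**Key steps.** The argument is by contradiction: suppose along a subsequence the first radius $r$ beyond $r_i$ at which $\hat u_i'(r)=0$, call it $\rho_i$, satisfies $\rho_i\to 0$. Rescale about $x_i$ using $\rho_i$ rather than $M_i$: set $\zeta_i(z) = \rho_i^{\frac{2}{p_i-1}} u_i(\rho_i z)$ for $|z|\le \sigma\rho_i^{-1}$, in boundary conformal normal coordinates at $x_i$ (using Theorem \ref{blow_up_on_boundary}, we may assume $x_i\in\partial M$, so the rescaled boundary is the hyperplane $\{z^n=0\}$ and the boundary condition becomes Neumann in the limit). One shows $\zeta_i$ satisfies the rescaled equation \eqref{bvpy}-type problem, has $\hat\zeta_i$ with a critical point at $|z|=1$, and — using the isolated blow-up bound $u_i(x)\le C\,\mathrm{dist}(x,x_i)^{-2/(p_i-1)}$ — is uniformly bounded on annuli away from $0$. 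By the Harnack inequality (Lemma \ref{Harnack}) together with the maximum principle (Corollary \ref{coro_max_principle}), $\zeta_i(0)\zeta_i$ is bounded in $C^2_{loc}(\RR^n_+\setminus\{0\})$, and passes to a limit $w>0$ solving $\Delta w=0$ on $\RR^n_+$ with $\partial_n w=0$ on $\RR^{n-1}$ and an isolated singularity at $0$; by the reflection across the totally geodesic boundary (as in Proposition \ref{simple_symmetry}) $w$ extends to a positive harmonic function on $\RR^n\setminus\{0\}$, hence $w(z) = a|z|^{2-n} + b(z)$ with $b$ harmonic on all of $\RR^n$. Since $\hat\zeta_i'(1)=0$ passes to the limit, the spherical average of $w$ has a critical point at $r=1$; but an explicit computation for $w = a|z|^{2-n}+A+O(|z|)$ (after showing $b$ is constant, which follows from $w$ being a limit of the blow-up profile and decaying appropriately, exactly as in \cite{HL,KMS}) shows $\frac{d}{dr}\big(r^{\frac{2}{p_i-1}}\bar w(r)\big)$ — whose limiting exponent is $r^{n-2}$ — is strictly negative for $r>0$, a contradiction. (One also uses, as in \cite{M,HL}, that $p_i\to\frac{n+2}{n-2}$ so the exponent $\frac{2}{p_i-1}\to n-2$.)

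**Main obstacle.** The delicate point, exactly as in the boundaryless and conformally flat cases, is establishing enough compactness to extract the limit $w$ and, more importantly, pinning down that the harmonic part $b$ of $w$ is constant so that the sign computation on $\frac{d}{dr}\hat w$ is conclusive; this rests on the a priori bound from the isolated blow-up condition plus the Harnack/maximum-principle machinery, and on ruling out concentration of mass of $\zeta_i$ on intermediate scales. In our setting the only genuinely new ingredient is bookkeeping at the boundary: we must use boundary conformal normal coordinates (Proposition \ref{bcnc}) and the fact that in them $\partial M$ is totally geodesic (Corollary \ref{boundary_hyper}), so that the reflection argument of Proposition \ref{simple_symmetry} applies and the limit problem is the Neumann problem on a half-space, which reflects to the full-space problem already treated in \cite{KMS}. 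Once that reduction is in place, the remainder of the proof follows \cite{HL} (proof of Proposition 1.3) and \cite{KMS} essentially verbatim, and I would simply cite those sources for the routine parts.
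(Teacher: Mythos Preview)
Your overall strategy --- argue by contradiction, let $\rho_i$ be the second critical radius of $\hat u_i$, rescale by $\rho_i$ to produce $\zeta_i$, observe that $0$ is now an isolated \emph{simple} blow-up for $\zeta_i$ on $B_1$, and pass to a limit $w(z)=a|z|^{2-n}+b(z)$ with $b$ harmonic on $\RR^n_+$ and $\partial_n b=0$ on $\RR^{n-1}$ --- matches the paper's, as does the reflection/Liouville step forcing $b\equiv A$ to be constant.

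The genuine gap is your contradiction. You assert that for $w=a|z|^{2-n}+A$ the derivative of $r\mapsto r^{\frac{2}{p_i-1}}\bar w(r)$ is strictly negative for all $r>0$, contradicting $\hat\zeta_i'(1)\to\hat w'(1)=0$. This is false. First, $\tfrac{2}{p_i-1}\to\tfrac{n-2}{2}$, not $n-2$. With the correct exponent,
\[
\hat w(r)=r^{\frac{n-2}{2}}\bigl(ar^{2-n}+A\bigr)=ar^{-\frac{n-2}{2}}+Ar^{\frac{n-2}{2}},
\]
which for $A>0$ has a \emph{unique interior minimum} at $r=(a/A)^{1/(n-2)}$. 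The condition $\hat w'(1)=0$ therefore only pins down $a=A$; it is fully consistent with $\hat w$ non-increasing on $(0,1]$ and yields no contradiction. (If $A=0$ then $\hat w$ is strictly decreasing and $\hat w'(1)=0$ fails, so one does learn $A>0$ this way --- that is the content of the paper's line ``$a=b>0$'' --- but that is information, not a contradiction.)

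The paper closes the argument by a different mechanism: having established $w=a|z|^{2-n}+A$ with $A>0$, it invokes the Pohozaev-type sign restriction, Proposition~\ref{sign_restriction}, applied to the rescaled sequence (legitimate because $0$ is isolated simple for it on $B_1$, so the machinery of Sections~\ref{sec_symmetry_estimates}--\ref{sec_sign_restriction} is available). This forces $\liminf_{r\to0}P'(r,w)\geq 0$, whereas a direct computation with $w=a|z|^{2-n}+A$, $A>0$, gives a strictly negative value (cf.\ the identical calculation at the end of the proof of Lemma~\ref{boundess_T}). That is the actual contradiction. In this paper the implication ``isolated $\Rightarrow$ isolated simple'' therefore genuinely rests on the symmetry estimates and Weyl vanishing through Proposition~\ref{sign_restriction}; the shortcut you propose through the shape of $\hat w$ does not work.
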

\begin{proof}
We argue as in \cite{KMS} to obtain a subsequence $w_i$ such that
\begin{gather}
 w_i(0)w_i(y) \rar h(y) = a|y|^{2-n} + b(y) \text{ in } C^2_{loc}(\RR^n_+ \backslash
\{0\}), \nonumber
\end{gather}
where $b(y)$ is harmonic in $\RR^n_+$ and satisfies $\partial_n b = 0$ on $\RR^{n-1}$.
Therefore, extending
$b$ across $\RR^{n-1}$ and using Liouville's theorem shows that $a=b>0$. Arguing as
in \cite{KMS} this leads to a contradiction with proposition \ref{sign_restriction}.
\end{proof}

\begin{prop}
  Let $\de$, $R$, $u$, $C(\de,R)$, and $\{x_1,\dots,x_N\}$ be as in proposition
\ref{no_accumulaion_1}. If $\de$ is sufficiently small and $R$ sufficiently
large, then there exists a constant
$\bar{C}(\de,R)>0$ such that if $\max_M u \geq C$ then $d_g(x_j,x_l) \geq \bar{C}$
for all $1\leq j \neq l \leq N$.
\end{prop}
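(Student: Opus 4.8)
The plan is to argue by contradiction: if the separation fails, then after a rescaling one produces a configuration carrying (at least) two blow-up profiles, and one closes the argument exactly as in Lemma \ref{isolated_is_isolated_simple} via the sign restriction.

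\smallskip
\noindent\emph{Setup and rescaling.} Suppose the assertion is false. Then there is a sequence $\{u_m\}$ of positive solutions of (\ref{bvp}) with $\max_M u_m\to\infty$ whose decomposition $\{x_1^{(m)},\dots,x_{N_m}^{(m)}\}$ from Proposition \ref{no_accumulaion_1} contains two centers whose mutual distance tends to zero. Set $\sigma_m=\min_{j\ne l}d_g(x_j^{(m)},x_l^{(m)})$ and, relabeling, assume $\sigma_m=d_g(x_1^{(m)},x_2^{(m)})\to 0$ with $u_m(x_1^{(m)})\ge u_m(x_2^{(m)})$. Since the balls $B_{r_j^{(m)}}(x_j^{(m)})$, $r_j^{(m)}=R\,u_m(x_j^{(m)})^{-(p_m-1)/2}$, are disjoint, $r_j^{(m)}<\sigma_m$, hence $u_m(x_j^{(m)})>(R/\sigma_m)^{2/(p_m-1)}\to\infty$ for $j=1,2$. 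Take (boundary conformal) normal coordinates at $x_1^{(m)}$ and put $\bar u_m(y)=\sigma_m^{2/(p_m-1)}u_m(\sigma_m y)$, $(\bar g_m)_{kl}(y)=(g_m)_{kl}(\sigma_m y)$, so that $\bar u_m$ solves a problem of the form (\ref{bvp}) with $\bar g_m$ converging to the Euclidean metric; if the rescaled region meets $\partial M$ then by the argument of Lemma \ref{limit_hyperplane} the boundary flattens to a hyperplane and the boundary condition becomes Neumann (otherwise one is in the interior setting of \cite{KMS}). The rescaled centers $\bar x_j^{(m)}=\sigma_m^{-1}x_j^{(m)}$ satisfy $\bar x_1^{(m)}=0$, $d_{\bar g_m}(\bar x_j^{(m)},\bar x_l^{(m)})\ge 1$, and after passing to a subsequence $\bar x_2^{(m)}\to q$ with $|q|=1$, the full rescaled set converging to a finite $\Sigma\ni 0,q$ with pairwise distances $\ge 1$. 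Properties (1)--(3) of Proposition \ref{no_accumulaion_1} are preserved by this conformally natural rescaling; in particular $\bar u_m(y)\le C\,d(y,\Sigma_m)^{-2/(p_m-1)}$ uniformly, $\bar u_m(\bar x_2^{(m)})\ge C^{-1}$, and near each $\bar x_j^{(m)}$ the rescaling of $\bar u_m$ by $(L_j^{(m)})^{-1}$, $L_j^{(m)}:=\sigma_m\,u_m(x_j^{(m)})^{(p_m-1)/2}>R$, is within $C^2(B_R)$-distance $\delta$ of $U$.

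\smallskip
\noindent\emph{Producing an isolated simple blow-up point.} If every $L_k^{(m)}$ attached to a rescaled center staying in a bounded region were bounded, then $\bar u_m$ would be locally bounded and would converge in $C^2_{loc}$ to a positive solution $w$ of $\Delta w+n(n-2)w^{\frac{n+2}{n-2}}=0$ on $\RR^n$ (resp.\ $\RR^n_+$ with zero Neumann data) carrying two distinct bubble profiles of definite size (at $0$ and at $q$), contradicting the classification of Caffarelli--Gidas--Spruck \cite{CGS} (resp.\ Proposition \ref{Euclidean_prop_bubble}), which forces $w$ to be a single standard bubble. Hence some such $L_k^{(m)}\to\infty$; re-centering the rescaling at that point, we may assume $\bar u_m(0)=(L_1^{(m)})^{2/(p_m-1)}\to\infty$. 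On the fixed half-ball $B_{1/2}^+(0)$, whose only center is $0$, the decay estimate reads $\bar u_m(y)\le C|y|^{-2/(p_m-1)}$; therefore $0$ is an isolated blow-up point for $\{\bar u_m\}$, and by Lemma \ref{isolated_is_isolated_simple} it is isolated simple.

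\smallskip
\noindent\emph{Conclusion.} By Proposition \ref{u_conv_Green_fc} applied to $\bar u_m$, after a further subsequence $\bar u_m(0)\,\bar u_m\to h$ in $C^2_{loc}$ away from $0$, with $h(y)=a|y|^{2-n}+b(y)$ near $0$, $a>0$, $b$ harmonic. Since $\bar u_m(0)\to\infty$ while $\bar u_m\ge C^{-1}$ near $q$, the limit $h$ blows up at $q$; reflecting across the boundary hyperplane via the Neumann condition (or directly, in the interior case), $h$ is nonnegative and harmonic off the isolated set $\Sigma\setminus\{0\}$, so by Bôcher's theorem it has a nonnegative $|\cdot-q_k|^{2-n}$ singularity at each $q_k\in\Sigma\setminus\{0\}$. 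Thus the regular part $b$ of $h$ at $0$ is a nonnegative harmonic function that does not vanish identically (it records the positive mass at $q$), whence $b(0)>0$ by the strong minimum principle. A direct computation of the Pohozaev quantity, as in the proof of Lemma \ref{boundess_T} and in \cite{KMS}, using $h=a|y|^{2-n}+b(0)+O(|y|)$ near $0$, then gives
\begin{gather}
\liminf_{r\to 0}P'(r,h)=-c_n\,a\,b(0)<0\quad\text{for some }c_n>0, \nonumber
\end{gather}
contradicting Proposition \ref{sign_restriction}. This proves the proposition.

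\smallskip
\noindent The main obstacle is the middle step: ruling out a persistent two-bubble limit and extracting from the rescaled sequence a point that genuinely blows up (so that Lemma \ref{isolated_is_isolated_simple} applies), together with the bookkeeping needed to see that the regular part of the resulting Green-type limit is strictly positive at the singular point. Once an isolated simple blow-up point of $\{\bar u_m\}$ is available, the sign restriction closes the argument as in Lemma \ref{isolated_is_isolated_simple}.
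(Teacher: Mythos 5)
Your argument is correct and is essentially the argument the paper is pointing to: the paper's ``proof'' of this proposition is a one-line reference to \cite{KMS} (with modifications as in Lemma \ref{isolated_is_isolated_simple}), and your write-up is precisely that standard rescaling-by-$\sigma_m$ argument, with the dichotomy (bounded limit vs.\ a re-centered isolated simple blow-up), the Green's-function limit with strictly positive regular part, and the Pohozaev/sign-restriction contradiction, carried out with the appropriate boundary adaptations (flattening the boundary via Lemma \ref{limit_hyperplane}, reflecting across the Neumann hyperplane, working on half-balls). The one place you gloss slightly — that the regular part $b$ at the pole is nonnegative, rather than merely that it is harmonic with a strictly positive singularity elsewhere, so that the minimum principle yields $b(0)>0$ — is also treated by citation in the paper (see the proof of Lemma \ref{boundess_T}), so this does not represent a gap relative to the paper's own level of detail.
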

\begin{proof}
Again we argue as in \cite{KMS}, making the necessary modifications along the lines
of \cite{HL} as in
lemma \ref{isolated_is_isolated_simple}.
\label{no_accumulaion_2}
\end{proof}

The following is an immediate consequence.

\begin{coro}
 Let $\{ u_i \}$ be a sequence of solutions of (\ref{bvp}) with $\max_M u_i \rar
\infty$. Then $p_i \rar \frac{n+2}{n-2}$
and the set of blow-up points is finite and consists only of isolated simple blow-up
points.
\label{no_accumulaion_3}
\end{coro}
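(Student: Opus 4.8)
The plan is to read the statement off Propositions \ref{no_accumulaion_1} and \ref{no_accumulaion_2} together with Lemma \ref{isolated_is_isolated_simple}, the only real work being a covering argument on the compact manifold $M$ and the extraction of a subsequence. First I would fix $\delta>0$ small and $R>0$ large as required by Propositions \ref{no_accumulaion_1} and \ref{no_accumulaion_2}. Since $\max_M u_i\to\infty$, for all $i$ large we have $\max_M u_i>C(\delta,R)$, so Proposition \ref{no_accumulaion_1} yields a non-empty finite collection of local maxima $x_1^i,\dots,x_{N_i}^i$ of $u_i$ satisfying 1)--3), together with the bound $\frac{n+2}{n-2}-p_i<\delta$. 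Because $\delta$ was arbitrary this already forces $p_i\to\frac{n+2}{n-2}$; alternatively, once a single isolated blow-up point has been produced, $p_i\to\frac{n+2}{n-2}$ follows from Proposition \ref{conv_C2_small_balls}.

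To make the collection of concentration points uniform in $i$, I would invoke Proposition \ref{no_accumulaion_2}: for $\delta$ small, $R$ large and $i$ large it gives $d_g(x_j^i,x_l^i)\geq\bar C$ whenever $j\neq l$. Covering the compact manifold $M$ by finitely many geodesic balls of radius $\bar C/2$, each such ball can contain at most one $x_j^i$, so $N_i\leq N_0$ with $N_0$ independent of $i$. Passing to a subsequence I may therefore assume $N_i\equiv N$ is constant and $x_j^i\to\bar x_j$ for every $1\leq j\leq N$; the points $\bar x_1,\dots,\bar x_N$ are pairwise distinct, since $d_g(\bar x_j,\bar x_l)\geq\bar C$.

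It remains to identify the blow-up set with a subset of $\{\bar x_1,\dots,\bar x_N\}$ and to check the two adjectives. If $z_i\to\bar z$ with $u_i(z_i)\to\infty$, then 3) forces $d_g(z_i,\{x_1^i,\dots,x_N^i\})\to 0$, and after a further subsequence the nearest of these points is always $x_j^i$ for one fixed $j$, so $\bar z=\bar x_j$; hence the blow-up set is contained in the finite set $\{\bar x_1,\dots,\bar x_N\}$ and is non-empty since $\max_M u_i\to\infty$. Fix now a blow-up point $\bar x_j$ and choose $\sigma<\bar C/2$. For $x\in B_\sigma(x_j^i)$ one has $d_g(x,\{x_1^i,\dots,x_N^i\})=d_g(x,x_j^i)$, so 3) together with $g_i\to g$ in $C^k$ gives $u_i(x)\leq C\,d_{g_i}(x,x_j^i)^{-\frac{2}{p_i-1}}$, which is precisely condition 2) of Definition \ref{defi_isolated}; thus $\bar x_j$ is an isolated blow-up point. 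Finally, Lemma \ref{isolated_is_isolated_simple} upgrades each $\bar x_j$ to an isolated simple blow-up point, which completes the proof.

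The step I expect to require the most care is not any single estimate but the bookkeeping just described: passing from the $i$-dependent bubble configurations of Proposition \ref{no_accumulaion_1} to a genuine, $i$-independent finite blow-up set consisting of isolated simple points, and, along the way, making sure Lemma \ref{isolated_is_isolated_simple} is applicable at each $\bar x_j$ --- for those $\bar x_j$ lying in $\mathring{M}$ rather than on $\partial M$ one invokes the interior results of \cite{KMS} in place of the boundary version of Proposition \ref{sign_restriction} used in that lemma.
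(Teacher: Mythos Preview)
Your proposal is correct and is precisely the standard expansion of what the paper records as ``an immediate consequence'' of Propositions~\ref{no_accumulaion_1}, \ref{no_accumulaion_2} and Lemma~\ref{isolated_is_isolated_simple}. The only point you leave implicit is that for a blow-up limit $\bar x_j$ one also has $u_i(x_j^i)\to\infty$ (needed for condition~1) of Definition~\ref{defi_isolated}); this follows in one line from property~3) of Proposition~\ref{no_accumulaion_1}, since if $z_i\to\bar x_j$ with $u_i(z_i)\to\infty$ then $u_i(z_i)\leq C\,d_g(z_i,x_j^i)^{-2/(p_i-1)}\leq C R^{-2/(p_i-1)}u_i(x_j^i)$ whenever $z_i\notin B_{r_j^i}(x_j^i)$, and $u_i(z_i)\leq(1+\delta)u_i(x_j^i)$ otherwise by property~2).
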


\section{Compactness\label{proof_compactness}}

Now that we have the Weyl vanishing theorem and sign restriction, the remaining arguments
for the proof of theorems \ref{compactness_theorem} and \ref{Weyl_vanishing_theorem}
are similar to those of the boundaryless case. In fact, the results of this section
will be an adaptation of
\cite{KMS,BC,Es}, and therefore as in section \ref{blow-up_set},
we will go through the proofs very briefly.

\emph{Proof of theorem \ref{compactness_theorem}}: From the results of
section \ref{blow-up_set} $p_i \rar \frac{n+2}{n-2}$, and
there exists a finite number $N>0$ of isolated simple blow-up points
$x_i^{(1)} \rar \bar{x}^{(1)},\dots,x_i^{(N)} \rar \bar{x}^{(N)}$. If none of the
$\bar{x}_\ell$ belong
to the boundary then the compactness result follows from \cite{KMS}, so
assume that at least one of
them belongs to $\partial M$. It may also be assumed without loss of generality that
$\bar{x}_\ell \in \partial M$, $\ell=1,\dots,N-k$ and $\bar{x}_\ell \notin \partial
M$, $\ell= N-k+1,\dots,N$,
for some $k \leq N-1$. Furthermore let
\begin{gather}
 u_i(x_i^{(1)}) = \min \{ u_i(x_i^{(1)}),\dots,u_i(x_i^{(N-k)}) \} \nonumber
\end{gather}
for all $i$.

Set $w_i = u_i(x_i^{(1)})u_i$. A standard estimate gives that away from the blow-up
points
$w_i \rar \sum_{j=1}^N a_j G_{\bar{x}^{(j)}}$, where $a_j \geq 0$, $a_1 > 0$ and
$G_{\bar{x}^{(j)}}$ is the Green's function for the conformal Laplacian with
singularity at $\bar{x}^{(j)}$.
Now argue as in \cite{SY1} (see \cite{BC,Es} as well) to obtain the asymptotic
expansion
\begin{gather}
 G(x,\bar{x}^{(1)}) = |x|^{2-n}\Big (1+\sum_{k=d+1}^{n-2} \psi_k \Big ) + A +
O(|x|\log|x|),
\label{asym_exp_Green}
\end{gather}
where $G=G_{\bar{x}^{(1)}}$, $\psi_k$ are homogeneous polynomials of degree $k$ and
$A$ is a constant. The sum between parenthesis
starts at $k=d+1$ because $h_{ij,\al}(\bar{x}) = 0$ at a blow-up point $\bar{x} \in
\partial M$, by the Weyl
vanishing theorem. We remark that
when the boundary is not umbilic an extra singular term appears in this expansion
(see e.g. \cite{L}).
Also notice that standard properties of conformal normal coordinates, theorem
\ref{big_estimate}, and the umbilicity of the boundary, imply that $\int_{S^{n-1}_+}
\partial_{ij}H_{ij} = 0$, from which
it follows that
\begin{gather}
 \int_{S^{n-1}_+} \psi_k = 0 = \int_{S^{n-1}_+} x_i \psi_k.
\label{int_hom_pol_Green}
\end{gather}
Now put $\widehat{g} = G^{\frac{4}{n-2}} g$. Then
$(M\backslash\{\bar{x}^{(1)}\},\widehat{g})$ is scalar
flat and its boundary is totally geodesic. If we introduce the asymptotic
coordinates $y=|x|^{-2}x$, then
the expansion (\ref{asym_exp_Green}) and the Weyl vanishing theorem give
$\widehat{g}_{ij} = \de_{ij} + O(|y|^{-d-1})$.
Therefore the doubling of $(M\backslash\{\bar{x}^{(1)}\},\widehat{g})$ is
asymptotically flat and has a well defined ADM
mass (\cite{B}, compare also with \cite{BC}).

The rest of the argument now is standard. The positive mass theorem (see remark below) along with
(\ref{int_hom_pol_Green}) and
the Weyl vanishing give that $A>0$ (as in \cite{KMS}, using the hypothesis that
the manifold is not conformally equivalent to the round hemisphere we can rule out the $A=0$ case).
This contradicts the sign restriction
of theorem \ref{sign_restriction}, finishing the proof.

\begin{rema}$~$ \\ 1) Strictly speaking, we did not show how to prove a positive
mass theorem (PMT) for manifolds with boundary, as
the mass of such manifolds was never defined. What is referred to as the PMT for manifolds with
boundary is actually the statement
that the constant term in the asymptotic expansion of the Green's function is
non-negative, which in turn is implied by the positivity
of the mass of the doubled manifold (see \cite{M}). \\2) The PMT is known to
hold up to dimension $7$ \cite{S2,SY1,SY2} and in arbitrary dimensions if the
manifold is spin \cite{W,LP}.
Therefore, our result for $n \geq 8$ in the case of non-spin manifolds is true provided
that the PMT holds
under such hypotheses.
\end{rema}

\emph{Proof of theorem \ref{Weyl_vanishing_theorem}:} This follows from lemma
\ref{isolated_is_isolated_simple} and
theorem \ref{weyl}.

\section{Blow-up of solutions for $n\geq25$}

In this section we prove theorem \ref{non_compactness_theorem}. We assume $n\geq 25$ throughout.
As we mention in the introduction,
the proof relies heavily on the constructions of Brendle \cite{Br} and Brendle and Marques \cite{BM},
and we refer the reader to them on several occasions.

We start collecting facts from \cite{Br,BM} that will be of direct use in our proof.
Their main results is
\begin{theorem} (Brendle and Marques, \cite{Br,BM}) Assume that $n \geq 25$. Then there exists a
metric $g$ on $S^n$ (of class $C^\infty$) and a sequence of positive functions $u_i \in C^\infty(S^n)$
with the following properties:

\hskip 0.5cm  (a) $g$ is a small perturbation of the round metric $g_0$ which is not conformally flat,
and $g=g_0$ near and beyond the equator,

\hskip 0.5cm  (b) for each $i$, $u_i$ is a solution of the Yamabe equation
\begin{align}
L_{g}u_i + K u_i^{\frac{n+2}{n-2}} = 0,
\nonumber
\end{align}
where $K=n(n-2)$ is a positive constant,

\hskip 0.5cm  (c) $E_{g}(u_i) < Y(S^n)$ for all $i \in \NN$, and $E_{g}(u_i) \rar Y(S^n)$ as
$i \rar \infty$, where $E_{g}(u_i)$ is the Yamabe energy of $u_i$ and $Y(S^n)$ is the Yamabe invariant
of the round sphere,

\hskip 0.5cm  (d) $\sup_{S^n} u_i \rar \infty$ as $i\rar \infty$.
\label{Brendle_Marques_blow_sphere}
\end{theorem}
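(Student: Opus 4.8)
We only outline the architecture; full details are in \cite{Br} for $n\geq 52$ and in \cite{BM} for $25\leq n\leq 51$. The plan is to perturb the round sphere by a localized, multi-scale, non-conformally-flat metric, and then to glue standard bubbles at a sequence of shrinking scales in order to produce genuine solutions that blow up.

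\textbf{Step 1: the perturbed metric.} Fix $\bar x\in S^n$ (say the north pole) together with conformal normal coordinates centered there. One constructs a smooth trace-free symmetric $2$-tensor $h$, supported in a small geodesic ball about $\bar x$ --- in particular $g=g_0$ near and beyond the equator --- whose behavior near each dyadic scale $|x|\sim\rho_\nu$, $\rho_\nu\rar 0$, is modeled on a fixed homogeneous polynomial profile with carefully tuned amplitude, and sets $g=\exp(h)$ (so that $g_{ij}=\delta_{ij}+h_{ij}+\dots$ in these coordinates). The profile is chosen so that the Weyl tensor of $g$ is nonzero at the relevant scales; hence $g$ is not conformally flat, which is (a), while $\|h\|_{C^\infty}$ is as small as we wish.

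\textbf{Step 2: approximate solutions and Lyapunov--Schmidt reduction.} For each $\nu$ put $\ve=\ve_\nu\sim\rho_\nu$ and let $W_{\ve,\xi}$ be the standard bubble of concentration $\ve$ centered at $\xi$ near $\bar x$, transplanted to $S^n$. Since $W_{\ve,\xi}$ solves the model Yamabe equation exactly on the round sphere, the error $L_g W_{\ve,\xi}+KW_{\ve,\xi}^{(n+2)/(n-2)}$ is governed, to leading order, by the curvature of $g$ at scale $\ve$, i.e.\ by the Taylor coefficients of $h$ there. The linearization $\Delta+n(n+2)W_{\ve,\xi}^{4/(n-2)}$ is, modulo controlled errors, Fredholm with $(n+1)$-dimensional kernel spanned by $\partial_\ve W_{\ve,\xi}$ and $\partial_{\xi^j}W_{\ve,\xi}$, $j=1,\dots,n$ (the infinitesimal dilations and translations; cf.\ lemma \ref{class_sol}). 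Projecting off this kernel and solving the remaining equation by a fixed-point argument produces, for each admissible $(\ve,\xi)$, a unique small correction $\phi_{\ve,\xi}$ so that $u_{\ve,\xi}=W_{\ve,\xi}+\phi_{\ve,\xi}$ solves $L_g u+Ku^{(n+2)/(n-2)}=0$ \emph{precisely} when $(\ve,\xi)$ is a critical point of the reduced functional $F(\ve,\xi)=E_g(u_{\ve,\xi})$.

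\textbf{Step 3: the reduced functional and the dimension restriction.} One expands
\[
F(\ve,\xi)=Y(S^n)+\big(\text{a quadratic form in the Taylor coefficients of }h\text{ about }\xi\text{ at scale }\ve\big)+(\text{higher order}),
\]
the leading correction being controlled, in conformal normal coordinates, by the Weyl tensor (recall $\Delta R_g(0)=-\tfrac16|W_g|^2(0)$). The decisive point --- and the main obstacle --- is to choose the polynomial profile of $h$ so that this quadratic form is \emph{negative} and dominates the remaining terms, so that $F(\,\cdot\,,\xi)$ has a strict local minimum at some scale $\ve=\ve_\nu(\xi)$; a critical point of $F$ restricted to $\{\ve=\ve_\nu(\xi)\}$ then yields a genuine solution $u_\nu$ concentrating at scale $\ve_\nu\rar 0$. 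Establishing the required sign is exactly the positivity question analyzed in the appendix of \cite{KMS}: the relevant quadratic form fails to be positive-definite only when $n\geq 25$, which is why the construction works in this range and no lower (it was carried out for $n\geq 52$ in \cite{Br} with a quadratic profile, and for $25\leq n\leq 51$ in \cite{BM} with a higher-degree one). Granting this, (b) holds by construction; (c) follows since the negativity of the quadratic form gives $E_g(u_\nu)<Y(S^n)$ while $E_g(u_\nu)\rar Y(S^n)$ because $\ve_\nu\rar 0$; and (d) follows since $u_\nu$ is asymptotic to $W_{\ve_\nu,\xi_\nu}$, so that $\sup_{S^n}u_\nu\sim\ve_\nu^{(2-n)/2}\rar\infty$.
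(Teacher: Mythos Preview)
The paper does not prove this theorem at all: it is stated as a result of Brendle and Marques, with attribution to \cite{Br,BM}, and is then used as a black box in the subsequent construction of the blow-up sequence on $S^n_-$. So there is no paper-internal proof to compare your sketch against.

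That said, your outline is a faithful summary of the architecture in \cite{Br,BM}: the localized trace-free perturbation $h$ supported near a pole, the Lyapunov--Schmidt reduction off the $(n+1)$-dimensional kernel of the linearized operator, and the reduced-energy expansion whose leading correction is a quadratic form in the Taylor coefficients of $h$, with the sign condition failing precisely for $n\geq 25$. One small clarification: the claim that $g=g_0$ ``near and beyond the equator'' is an additional feature the present paper extracts from the Brendle--Marques construction (since $h$ is supported near the pole) rather than something emphasized in \cite{Br,BM} themselves; you correctly note this follows from the localization of the support of $h$.
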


The scalar curvature of the metric $g$ satisfies
\begin{equation}
R_{g}\geq c>0 .
\label{positivity_R}
\end{equation}
for some constant c, since $g$ is a small perturbation of the
round metric. In particular
this guarantees the coercivity of $L_{g}$, which allows us to use the
the $C^0$-blow-up theory developed by Druet, Hebey and Robert \cite{DHR}. From their results and
estimate $(c)$ of theorem \ref{Brendle_Marques_blow_sphere} it then follows
(theorem 5.2 of \cite{DHR}, see also discussion at the end of section 5.1) that $u_i$
has only one blow-up point, and it is apparent from \cite{Br,BM}
that this is the south pole (from the point of view of stereographic projection).
Moreover, up to a subsequence the following estimate holds (again theorem 5.2 of \cite{DHR})
\begin{gather}
 Q^{-1} u_{\ve_i,x_i} (x) \leq u_i(x) \leq Q u_{\ve_i,x_i}(x),
\label{C_0_estimate}
\end{gather}
for some constant $Q>1$ independent of $i$ and for all $x \in S^n$;
here $\ve_i = (\sup_{S^n} u_i)^{-\frac{2}{n-2}} = u_i(x_i)^{-\frac{2}{n-2}}$,
and $u_{\ve_i,x_i} = \ve_i^{\frac{n-2}{2}} ( \ve_i^2 + |x-x_i|^2 )^{\frac{2-n}{2}}$,
$|x-x_i| = \operatorname{dist}_{g}(x,x_i)$.

Consider now the south hemisphere $S^n_{-}$, which we
identify with the unit ball in $\RR^n$ via stereographic projection. Since $g = g_0$
on a neighborhood $\partial S^n_{-}$, we have
that $\partial S^n_{-}$ is totally geodesic, and in particular
$B_{g} = \partial_{\nu_g}$. Combining (\ref{C_0_estimate}) with the Harnack inequality
implies that away from the south pole, $\ve_i^\frac{2-n}{2} u_i$ converges in $C^2$ to
a positive Green's function for the conformal Laplacian (possibly after passing to a subsequence).
We claim that for large $i$
\begin{gather}
\frac{ \partial u_i}{\partial \nu_{g}} \leq 0 .
\label{normal_der_u_i_negative}
\end{gather}
To see this, denote by $\de$ the Euclidean metric so that
$g_0 = 4 U^\frac{4}{n-2} \de$. Let $G_{g_0}$ and $G_\de$ be the corresponding Green's functions
with singularity
at zero. Their relation is given by
$G_{g_0} = 4^{-\frac{n-2}{2}} U^{-1} G_\de$. Using (\ref{conf_inv_bc}) and the fact that
the mean curvature
of $\partial S^n_{-}$ vanishes , we have
\begin{gather}
\frac{ \partial G_{g_0} }{\partial \nu_{g_0} } = B_{g_0} G_{g_0} =
4^{-\frac{n-2}{2} } U^{-\frac{n}{n-2}} B_\de G_\de < 0
\nonumber
\end{gather}
on $\partial S^n_{-}$, where the inequality follows by direct calculation. Therefore
$\frac{ \partial G_{g} }{\partial \nu_{g}} < 0$
by theorem \ref{Brendle_Marques_blow_sphere}(a), so that
(\ref{normal_der_u_i_negative}) holds.

We conclude that
\begin{align}
\begin{cases}
L_{g}u_i + K u_i^{\frac{n+2}{n-2}} = 0 ~, & \text{ in } S^n_{-}, \displaybreak[0] \\
B_{g} u_i  \leq 0, & \text{ on } \partial S^n_{-}.
\end{cases}
\nonumber
\end{align}
That is, $u_i$ is a sub-solution of the boundary value problem
\begin{align}
\begin{cases}
L_{g} v + K v^{\frac{n+2}{n-2}} = 0 , & \text{ in } S^n_{-}, \displaybreak[0] \\
B_{g} v  = 0, & \text{ on } \partial S^n_{-}.
\end{cases}
\label{bvp_lower_hemisphere}
\end{align}

Actual solutions to (\ref{bvp_lower_hemisphere}) will be constructed by finding appropriate super-solutions.
The super-solutions will satisfy the equation with a different constant $K$,
and this will require a slight modification of the standard sub-super-solutions argument.

\begin{theorem}
 For all sufficiently large $i$ there exists a solution $v_i$ of
(\ref{bvp_lower_hemisphere}) satisfying $u_i \leq v_i$. In particular $\sup_{S^n_{-}} v_i \rar \infty$ as
$i \rar \infty$.
\end{theorem}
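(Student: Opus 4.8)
The plan is to solve (\ref{bvp_lower_hemisphere}) by the method of sub- and super-solutions, using the restricted Brendle--Marques function $u_i$ as a sub-solution and constructing a super-solution lying above it. By the discussion preceding the statement, on $S^n_{-}$ the function $u_i$ satisfies $L_g u_i + K u_i^{(n+2)/(n-2)}=0$ together with $B_g u_i\le 0$ on $\partial S^n_{-}$, so it is a sub-solution of (\ref{bvp_lower_hemisphere}); the coercivity of $L_g$ guaranteed by (\ref{positivity_R}) furnishes the maximum principle needed to run the iteration. Moreover, by (\ref{C_0_estimate}) one has $Q^{-1}u_{\ve_i,x_i}\le u_i\le Q\,u_{\ve_i,x_i}$ on $S^n_{-}$, which reduces the problem to producing a super-solution $\bar v_i$ dominating the explicit standard bubble $Q\,u_{\ve_i,x_i}$, whose concentration point $x_i$ is an interior point of $S^n_{-}$ by theorem \ref{Brendle_Marques_blow_sphere}(d).

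The second, and central, step is the construction of $\bar v_i$. The idea is to start from a suitably rescaled bubble concentrated at $x_i$ and correct it in two ways. Near $\partial S^n_{-}$ one has $g=g_0$ (since $g=g_0$ near and beyond the equator) and the boundary is totally geodesic, so the Neumann defect of the bubble on $\partial S^n_{-}$ is only of size $O(\ve_i^{(n-2)/2})$, as one checks from the explicit Green's-function computation already carried out in this section; this defect is removed by adding a small multiple of the positive solution $\psi$ of the linear problem $L_g\psi=0$ in $S^n_{-}$, $B_g\psi=1$ on $\partial S^n_{-}$ (positivity of $\psi$ following from the Hopf lemma and the coercivity (\ref{positivity_R})). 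In the interior, where $g\neq g_0$, the rescaled bubble only \emph{almost} solves the Yamabe equation for $g$, the error being the metric perturbation $(L_g-L_{g_0})$ applied to it; since $g$ is $C^k$-close to $g_0$ this error is absorbed at the cost of replacing $K$ by a nearby constant $K'$, with a favorable sign. This is the precise sense in which, as announced, the super-solution satisfies the equation with a different constant; a final constant rescaling (permissible since solving with $K'$ is, up to a multiple, equivalent to solving with $K$) then arranges $\bar v_i\ge Q\,u_{\ve_i,x_i}\ge u_i$ while preserving $B_g\bar v_i\ge 0$ and $L_g\bar v_i+K\bar v_i^{(n+2)/(n-2)}\le 0$.

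With $u_i\le\bar v_i$ in hand, the third step is the monotone iteration. Set $v_i^{(0)}=u_i$ and inductively let $v_i^{(k+1)}$ solve $L_g v_i^{(k+1)}-\Lambda_0 v_i^{(k+1)}=-K\,(v_i^{(k)})^{(n+2)/(n-2)}-\Lambda_0 v_i^{(k)}$ in $S^n_{-}$ with $B_g v_i^{(k+1)}=0$ on $\partial S^n_{-}$, where $\Lambda_0$ is a fixed large constant; since $L_g-\Lambda_0$ is coercive with a maximum principle, the sub/super-solution bounds give $u_i=v_i^{(0)}\le v_i^{(1)}\le\cdots\le\bar v_i$, and $v_i:=\lim_k v_i^{(k)}$ solves (\ref{bvp_lower_hemisphere}) with $u_i\le v_i\le\bar v_i$. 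The only departure from the textbook scheme is the bookkeeping caused by $\bar v_i$ solving the equation exactly only for the shifted constant $K'$, so one carries the inequality $L_g\bar v_i+K\bar v_i^{(n+2)/(n-2)}\le 0$ through the comparison arguments rather than an equality. Finally $\sup_{S^n_{-}}v_i\ge\sup_{S^n_{-}}u_i\to\infty$ by theorem \ref{Brendle_Marques_blow_sphere}(d), because the south pole is interior to $S^n_{-}$ and $\sup_{S^n}u_i\to\infty$ is attained in a neighborhood of it.

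The main obstacle is precisely the super-solution construction. A super-solution of the superlinear equation cannot be large unless it itself concentrates, while a concentrated bubble satisfies the Neumann condition only approximately and solves the Yamabe equation for the round metric $g_0$, which differs from $g$ exactly where the bubble is large; simultaneously arranging that $\bar v_i$ dominates the blowing-up $u_i$, has nonnegative normal derivative on the totally geodesic boundary, and is a genuine super-solution for $K$ is the delicate point, and it is what forces both the nearby-constant device and the small boundary correction described above.
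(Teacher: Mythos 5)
Your sub-solution observation ($u_i$ with $B_g u_i \leq 0$) matches the paper, but the super-solution construction is where your argument diverges and where it has gaps. The paper's super-solution is simply a large constant: since $R_g \geq c > 0$ by (\ref{positivity_R}), a small constant $\delta$ satisfies $L_g\delta + K\delta^{(n+2)/(n-2)} = -c(n)R_g\delta + K\delta^{(n+2)/(n-2)} \leq 0$, and $w_i = A_i\delta$ with $A_i$ large enough to dominate $u_i$ is then a super-solution for the shifted constant $\widetilde{K}_i = A_i^{-4/(n-2)}K$. The announced ``slight modification'' of the sub-super-solution method is to run a two-sided monotone iteration, with $u_i$ iterated up using $K$ and $w_i$ iterated down using $\widetilde{K}_i$, verifying via the maximum principle and the choice (\ref{choice_A_i_2}) that the two sequences stay nested. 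No bubble, no Neumann correction of the bubble, and no absorption of a metric-perturbation error are needed; the intuition in your last paragraph, that a dominating super-solution must itself concentrate, is exactly what the paper avoids by accepting a constant shift on the super-solution side only.

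Your bubble-based super-solution has two genuine gaps. First, the ``final constant rescaling'' goes the wrong way: if $\bar v_i$ is a super-solution for $K'$ and you multiply it by $\lambda > 1$ to dominate $u_i$, the result is a super-solution for $\lambda^{-4/(n-2)}K' < K'$, so the rescaling pushes the effective constant further from $K$, and you can only recover $K$ if $K' \geq \lambda^{4/(n-2)}K > K$ to begin with --- something the construction does not give, since the sign of the interior error $(L_g - L_{g_0})u_{\ve_i,x_i}$ is not controlled. Second, the absorption of that error into a nearby constant $K'$ fails uniformly on $S^n_{-}$: the error is of second order in the bubble, and the ratio of a second derivative of $u_{\ve_i,x_i}$ to $u_{\ve_i,x_i}^{(n+2)/(n-2)}$ grows like $|x-x_i|^2/\ve_i^2$ away from $x_i$, so no fixed shift of the nonlinear coefficient dominates it throughout the (fixed-size) region where $g \neq g_0$ once $\ve_i$ is small. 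Consequently the inequality $L_g\bar v_i + K\bar v_i^{(n+2)/(n-2)} \leq 0$ is not established, and the one-sided iteration in your third step, which needs $\bar v_i$ to be a genuine super-solution for $K$ in order to cap the increasing sequence, does not close; the paper's two-sided iteration with differing constants is precisely the device that makes this go through.
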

\begin{proof}
Because of (\ref{positivity_R}), we can choose $\de > 0$ so small that
\begin{gather}
 L_{g} \de + K \de^\frac{n+2}{n-2} = -c(n)R_{g} \de + K \de^\frac{n+2}{n-2} \leq 0.
\nonumber
\end{gather}
Put $w_i = A_i \de$,
where $A_i > 1$ is a constant chosen so large that
\begin{gather}
 u_i \leq A_i \de,
\label{choice_A_i_1}
\end{gather}
and
\begin{gather}
u_i^\frac{n+2}{n-2} - A_i \de^\frac{n+2}{n-2} \leq 0
\label{choice_A_i_2}
\end{gather}
By the choice of $\de$
\begin{align}
\begin{cases}
L_{g} w_i + \widetilde{K}_i w_i^{\frac{n+2}{n-2}}  \leq 0 , & \text{ in } S^n_{-}, \displaybreak[0] \\
B_{g} w_i = 0, & \text{ on } \partial S^n_{-},
\end{cases}
\end{align}
where $\widetilde{K}_i =  A_i^{-\frac{4}{n-2}}  K$. So
$w_i$ is a super-solution of the problem with constant $\widetilde{K}_i$.
As pointed out before, $(L_{g},B_{g})$
is invertible and therefore the operators $T$ and $P_i$ given by
\begin{align}
T z= t \Leftrightarrow
\begin{cases}
L_{g} t  = - K z^{\frac{n+2}{n-2}}, & \text{ in } S^n_{-}, \displaybreak[0] \\
B_{g} t = 0, & \text{ on } \partial S^n_{-},
\end{cases}
\end{align}
and
\begin{align}
P_i w = p_i \Leftrightarrow
\begin{cases}
L_{g} p_i  = - \widetilde{K}_i w^{\frac{n+2}{n-2}}, & \text{ in } S^n_{-}, \displaybreak[0] \\
B_{g} p_i = 0, & \text{ on } \partial S^n_{-},
\end{cases}
\end{align}
are well defined. By the maximum principle $T$ and $P_i$ are monotone in the sense
that $z_0 \leq  z_1 \Rightarrow T z_0 \leq T z_1$, and analogously for $P_i$.

Now we put $u^0_i = u_i$, $w^0_i = w_i$ and define inductively
$u^{\ell+1}_i = T u^{\ell}_i$ and $w_i^{\ell+1} = P_i w^{\ell}_i$.
Since $u_i$ is a sub-solution we obtain $u^0_i \leq u^1_i$ and inductively
$u^\ell_i \leq u^{\ell+1}_i$. Analogously $w^{\ell}_i \geq w^{\ell+1}_i$
since $w_i$ is a super-solution.

We have $u^0_i \leq w^0_i$ by (\ref{choice_A_i_1}),
and claim that
$u^\ell_i \leq w^\ell_i$ for every $\ell$ (the difference from the
standard sub-super-solutions argument is that the equations involved in the definition of $T$ and $P_i$ are
not exactly the same due to the different constants $K$ and $\widetilde{K}_i$). The difference
$u^{\ell+1}_i- w^{\ell+1}_i$ satisfies
\begin{align}
\begin{cases}
L_{g} (u^{\ell+1}_i- w^{\ell+1}_i)
= - ( K (u^{\ell}_i)^{\frac{n+2}{n-2}} - \widetilde{K}_i (w^{\ell}_i)^{\frac{n+2}{n-2}} ), & \text{ in } S^n_{-}, \displaybreak[0] \\
B_{g} (u^{\ell+1}_i- w^{\ell+1}_i) = 0, & \text{ on } \partial S^n_{-}.
\end{cases}
\label{monotonicity_P_T}
\end{align}
In order to apply the maximum principle we need the right hand side of
(\ref{monotonicity_P_T}) to be non-negative. To show this, recall the definition of
$w_i$ and $\widetilde{K}_i$, use the monotonicity of the sequences $u^\ell_i$ and $w^\ell_i$,
as well as (\ref{choice_A_i_2}) to find
\begin{align}
 K (u^{\ell}_i)^{\frac{n+2}{n-2}} - \widetilde{K}_i (w^{\ell}_i)^{\frac{n+2}{n-2}}
\geq  K (u^{0}_i)^{\frac{n+2}{n-2}} - \widetilde{K}_i (w^{0}_i)^{\frac{n+2}{n-2}}
=  K u_i^{\frac{n+2}{n-2}} - K A_i  \de^{\frac{n+2}{n-2}} \leq 0 ,
\nonumber
\end{align}
It follows that $u^\ell_i \leq w^\ell_i$.

Now a standard argument produces the desired solution $u^\infty_i$ of (\ref{bvp_lower_hemisphere})
such that $u_i \leq u^\infty_i$. The proof also yields a $w^\infty_i$ solving (\ref{bvp_lower_hemisphere})
with $\widetilde{K}_i$ in place of $K$, and
such that and $w^\infty_i \leq w_i$, but this is not the solution we are looking for due to
the different $i$-dependent constant $\widetilde{K}_i$.
\end{proof}

\section{Leray-Schauder degree of solutions \label{Leray}}

Here we discuss some consequences of theorem \ref{compactness_theorem}. Throughout
this section we assume $3 \leq n \leq 24$. The results here are very
similar to the cases of manifolds without boundary and
locally conformally flat with boundary, so we refer the reader to \cite{KMS} and
\cite{HL} for details.

As we pointed out in the introduction, one obvious consequence of
theorem \ref{compactness_theorem} is to give an alternative
proof of the solution to the Yamabe problem. This follows from the fact
that standard variational methods can be used to give solutions to the
subcritical problem
\begin{align}
\begin{cases}
L_{g}u + K u^p = 0, & \text{ in } M, \displaybreak[0] \\
B_g u = 0, & \text{ on } \partial M,
\end{cases}
\label{Yamabe_eq_bry_p}
\end{align}
with $1 < p < \frac{n+2}{n-2}$.
More generally, the compactness theorem allows us to compute
the total Leray-Schauder degree of all solutions to equation (\ref{Yamabe_eq_bry_p}), and
to obtain more refined existence theorems which we now discuss.

Without loss of generality we can assume that $R_g > 0$ and $\kappa_g = 0$.
Then we can write (\ref{Yamabe_eq_bry_p}) as
\begin{align}
\begin{cases}
L_{g}u + E(u)u^p = 0, & \text{ in } M, \displaybreak[0] \\
\frac{ \partial u}{\partial \nu_g} = 0, & \text{ on } \partial M,
\end{cases}
\label{Yamabe_eq_bry_p_E}
\end{align}
where
\begin{gather}
 E(u) = \int_M (|\nabla_g u|^2 + c(n) R_g u^2 ) dV_g
\nonumber
\end{gather}
is the energy of $u$ (there is no boundary term since $\kappa_g = 0$). Notice that the Neumann problem for the conformal Laplacian
is invertible in that $R_g > 0$. Defining
\begin{gather}
\Om_\La = \{ u \in C^{2,\al}(M) ~ | ~ \parallel u \parallel_{C^{2,\al}}(M) < \La,~ u > \La^{-1} \}
\nonumber
\end{gather}
we obtain a map
$F_p: \overline{\Om}_\La \rar C^{2,\al}(M)$ given by $F_p(u) = u + L_g^{-1}(E(u)u^p)$.

From elliptic theory, we know that the map $u \mapsto L_g^{-1}(E(u)u^p)$ is a
compact map from $\overline{\Om}_\La$ into $C^{2,\al}(M)$. Thus $F_p$ is of the form $I+$compact,
and we may define the Leray-Schauder degree (see \cite{Ni}) of $F_p$ in the
region $\Om_\La$ with respect to $0 \in C^{2,\al}(M)$, denoted by $\operatorname{deg}(F_p,\Om_\La, 0)$,
provided that $0 \notin F_p(\partial \Om_\La)$. The degree is an integer which counts
with multiplicity the number of times that the value $0$ is taken on by
the map $F_p$. Notice that $F_p(u) = 0$ if and only if $u$ is a solution of
(\ref{Yamabe_eq_bry_p_E}). Furthermore, the homotopy invariance of the degree tells us that
$\operatorname{deg}(F_p, \Om_\La, 0)$ is constant for all $p \in [1, \frac{n+2}{n-2} ] $
provided that $0 \notin F_p(\partial \Om_\La)$ for all $p \in [1, \frac{n+2}{n-2} ] $.
Moreover, in the linear case when $p = 1$, it is not
difficult to calculate, by an argument similar to what is done in \cite{S0}, that
$\operatorname{deg}(F_1,\Om_\La, 0) = -1$ for all
$\La$ sufficiently large. Therefore, theorem \ref{compactness_theorem} allows us to calculate the
degree for all $p \in [1, \frac{n+2}{n-2} ] $. Since it follows
from the a priori estimates we derived that $0$ does not belong to $F_p(\partial \Om_\La)$, we obtain

\begin{theorem}
Let ($M^n, g)$ satisfy the assumptions of theorem \ref{compactness_theorem}.
Then for all $\La$ sufficiently large and all $p \in [1, \frac{n+2}{n-2}]$, we have
$\operatorname{deg}(F_p,\Om_\La, 0) = -1$.
\end{theorem}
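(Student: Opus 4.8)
The plan is to combine the a priori estimates of Theorem \ref{compactness_theorem} with the homotopy invariance of the Leray--Schauder degree, thereby reducing the computation to the linear endpoint $p=1$, where the degree can be read off from the linearization at the first eigenfunction of the conformal Laplacian.

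First I would check that $\operatorname{deg}(F_p,\Om_\La,0)$ is defined for every $p\in[1,\frac{n+2}{n-2}]$: on $\overline{\Om}_\La$ one has $u\geq\La^{-1}>0$, so $E(u)u^p$ depends smoothly on $u\in C^{2,\al}(M)$, and since $(L_g,\partial_{\nu_g})$ is invertible (because $R_g>0$) elliptic theory shows $u\mapsto L_g^{-1}(E(u)u^p)$ is compact from $\overline{\Om}_\La$ into $C^{2,\al}(M)$; hence $F_p=I+\textrm{compact}$. Next I would establish $0\notin F_p(\partial\Om_\La)$ for all $p$ at once, for $\La$ large. If $F_p(u)=0$ then $u$ solves (\ref{Yamabe_eq_bry_p_E}); testing with $u$ and using $\partial_{\nu_g}u=0$ gives $E(u)\big(\int_M u^{p+1}\,dV_g-1\big)=0$, and $R_g>0$ forces $E(u)>0$, so every solution obeys the normalization $\int_M u^{p+1}\,dV_g=1$. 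For $p\in[1+\ve,\frac{n+2}{n-2}]$ the rescaled function $\tilde{u}=(E(u)/K)^{1/(p-1)}u$ lies in $\Phi_p$, so Theorem \ref{compactness_theorem} bounds $\tilde{u}$ above and below in $C^{2,\al}$; together with $\int_M u^{p+1}=1$ this confines $E(u)$ to a compact subset of $(0,\infty)$ and yields $C^{-1}\leq u\leq C$ and $\parallel u\parallel_{C^{2,\al}(M)}\leq C$ with $C$ independent of $p$. For $p\in[1,1+\ve]$ the exponent is well below critical, and the same bounds follow from standard subcritical elliptic estimates (or a brief blow-up argument via the Gidas--Spruck Liouville theorem, on $\RR^n$ or the half-space), again using $\int_M u^{p+1}=1$ and the Harnack inequality for the lower bound. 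Choosing $\La$ larger than this $C$ makes $0\notin F_p(\partial\Om_\La)$ for all $p\in[1,\frac{n+2}{n-2}]$, so homotopy invariance gives $\operatorname{deg}(F_p,\Om_\La,0)=\operatorname{deg}(F_1,\Om_\La,0)$.

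It then remains to compute $\operatorname{deg}(F_1,\Om_\La,0)$. At $p=1$ a positive solution of (\ref{Yamabe_eq_bry_p_E}) satisfies $-\Delta_g u+c(n)R_g u=E(u)u$ with $\partial_{\nu_g}u=0$, hence is a positive Neumann eigenfunction of $-\Delta_g+c(n)R_g$; therefore $E(u)=\lambda_1$ is the first eigenvalue and $u=t\phi_1$ for the positive first eigenfunction $\phi_1$, normalized so that $\int_M\phi_1^2\,dV_g=1$ (equivalently $E(\phi_1)=\lambda_1$), and the constraint $\int_M u^2=1$ forces $t=1$. Thus $\phi_1$ is the unique zero of $F_1$ in $\Om_\La$, and an interior point for $\La$ large, so the degree equals the Leray--Schauder index of $F_1$ at $\phi_1$. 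Writing $F_1=I-T$ with $T(u)=-L_g^{-1}(E(u)u)$ and expanding in the $L^2$-eigenbasis $\{\phi_j\}_{j\geq1}$ of $L_g$, with $L_g\phi_j=-\lambda_j\phi_j$ and $0<\lambda_1<\lambda_2\leq\cdots$, a short computation using $DE(\phi_1)[v]=2\lambda_1\int_M\phi_1 v\,dV_g$ gives that $DT(\phi_1)$ has eigenvalue $3$ on $\phi_1$ and eigenvalue $\lambda_1/\lambda_j\in(0,1)$ on $\phi_j$ for $j\geq2$. Hence $I-DT(\phi_1)$ is invertible and $DT(\phi_1)$ has exactly one eigenvalue in $(1,\infty)$ counted with multiplicity, so the index of $\phi_1$ is $(-1)^1=-1$; therefore $\operatorname{deg}(F_1,\Om_\La,0)=-1$ and $\operatorname{deg}(F_p,\Om_\La,0)=-1$ for all $p\in[1,\frac{n+2}{n-2}]$ and all $\La$ sufficiently large.

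The part that demands the most care is the uniformity of the a priori bounds all the way down to $p=1$, since Theorem \ref{compactness_theorem} is stated only for $p\geq1+\ve$ and the rescaling $\tilde{u}=(E(u)/K)^{1/(p-1)}u$ degenerates as $p\to1$; this gap is bridged by the elementary subcritical estimates near $p=1$ together with the normalization $\int_M u^{p+1}=1$ that the energy formulation automatically enforces. The remaining items (compactness of the resolvent map, simplicity of $\lambda_1$, invertibility and spectral count of $I-DT(\phi_1)$) are routine.
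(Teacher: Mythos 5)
Your proposal is correct and follows the same route as the paper: establish that $F_p$ is of the form identity plus compact, verify $0\notin F_p(\partial\Om_\La)$ for $\La$ large using the a priori estimates of Theorem~\ref{compactness_theorem} together with the normalization $\int_M u^{p+1}\,dV_g=1$ extracted from the equation, invoke homotopy invariance to reduce to $p=1$, and compute the degree there. The paper itself only outlines this and cites Schoen \cite{S0} for the $p=1$ computation; your explicit spectral analysis of $DT(\phi_1)$ (eigenvalue $3$ on $\phi_1$, eigenvalues $\lambda_1/\lambda_j\in(0,1)$ on the higher modes, hence index $(-1)^1=-1$) supplies exactly the argument the paper invokes implicitly, and your verification that $F_1$ has $\phi_1$ as its unique zero is the right step. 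One place to be more careful than your sketch suggests is the range $p\in[1,1+\ve]$: Theorem~\ref{compactness_theorem} only covers $p\geq 1+\ve$, the rescaling $\tilde u=(E(u)/K)^{1/(p-1)}u$ degenerates as $p\to 1$, and a blow-up argument via Gidas--Spruck does not apply at or too near $p=1$ since the limiting equation becomes linear; the reliable route there (which is what \cite{S0} does) is to combine uniqueness of the zero of $F_1$ with the invertibility of $DF_1(\phi_1)$ and a continuity/compactness argument for $p$ near $1$. You flag this correctly as the delicate point, and the paper glosses over it too, so this is a matter of writing it out rather than a flaw in the approach.
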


In the case that all solutions of the Yamabe problem are nondegenerate,
our previous results assert that there will be a finite number
of solutions of the variational problem. Moreover, the strong Morse inequalities
will hold for the Yamabe problem since these inequalities hold
for subcritical equations, and theorem \ref{compactness_theorem} shows that all
critical points converge as $p \rar \frac{n+2}{n-2}$. It follows that
\begin{gather}
 (-1)^\la \leq \sum_{\mu=0}^\la (-1)^{\la-\mu} C_\mu,~\la = 0,1,2,\dots
\nonumber
\end{gather}
where $C_\mu$ denotes the number of solutions of Morse index $\mu$. Since there
is a finite number of solutions, we then obtain:
\begin{theorem}
Let $(M^n, g)$ satisfy the assumptions of theorem \ref{compactness_theorem},
and suppose that all critical points in $[g]$ are nondegenerate. Then there
is a finite number of critical points $g_1, \dots, g_k$, and we have
\begin{gather}
1=\sum_{j=1}^k (-1)^{I(g_j) }
\nonumber
\end{gather}
where $I(g_j)$ denotes the Morse index of the variational problem with
volume constraint.
\end{theorem}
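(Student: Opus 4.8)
The plan is to read the identity off from the degree computation of the preceding theorem, combined with compactness, nondegeneracy, and a Morse-theoretic evaluation of the local Leray--Schauder index at each solution.

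\emph{Step 1 (finiteness).} At the critical exponent $p=\frac{n+2}{n-2}$, Theorem~\ref{compactness_theorem} gives that the solution set of (\ref{Yamabe_eq_bry_p_E}) is bounded in $C^{2,\al}(M)$ and bounded away from $0$; the nondegeneracy hypothesis makes every solution isolated, so by compactness there are only finitely many, say $u^{(1)},\dots,u^{(k)}$, with associated metrics $g_1,\dots,g_k$. Choose $\La$ so large that all of them lie in $\Om_\La$ and $0\notin F_p(\partial\Om_\La)$ for every $p\in[1,\frac{n+2}{n-2}]$, which is legitimate by the a priori estimates.

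\emph{Step 2 (localizing the degree).} Since each $u^{(j)}$ is a nondegenerate zero of $F_p$, i.e. $DF_p(u^{(j)})$ is invertible (with $p=\frac{n+2}{n-2}$), excision and additivity of the Leray--Schauder degree together with the degree theorem above yield
\begin{gather}
-1=\deg(F_p,\Om_\La,0)=\sum_{j=1}^k \operatorname{ind}\big(F_p,u^{(j)}\big)=\sum_{j=1}^k(-1)^{m_j},\nonumber
\end{gather}
where $m_j$ is the number of negative eigenvalues, counted with multiplicity, of $DF_p(u^{(j)})$ (equivalently, the total multiplicity of the eigenvalues of the compact part lying below $-1$). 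Since eigenfunctions are smooth by elliptic regularity, this count is the same whether taken in $C^{2,\al}(M)$ or in $H^1(M)$, where $DF_p(u^{(j)})$ is self-adjoint for the Dirichlet-energy inner product.

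\emph{Step 3 ($m_j=I(g_j)+1$).} Write $DF_p(u^{(j)})=A+R$, where $A$ is the linearization with the energy factor $E$ held fixed and $R$ is the rank-one term obtained by differentiating $E(u)$. Using the equation $L_g u^{(j)}+E(u^{(j)})(u^{(j)})^p=0$, one checks that $R$ is symmetric, negative semidefinite, supported on the line spanned by $u^{(j)}$, and that $A+R$ is block diagonal with respect to $\operatorname{span}(u^{(j)})\oplus(u^{(j)})^{\perp}$: on $(u^{(j)})^{\perp}$ it agrees with the second variation of the volume-constrained Yamabe functional at $g_j$, whose index is $I(g_j)$, while on $\operatorname{span}(u^{(j)})$ it is a single negative direction, because $p>1$ forces $\langle A u^{(j)},u^{(j)}\rangle=(1-p)E(u^{(j)})\int(u^{(j)})^{p+1}<0$. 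Hence $m_j=I(g_j)+1$, and substituting into Step~2 gives $-1=\sum_j(-1)^{I(g_j)+1}=-\sum_j(-1)^{I(g_j)}$, i.e. the asserted identity. The main obstacle is precisely Step~3: one must carefully match the Hilbert-space Hessian that governs the Leray--Schauder index --- including the nonlocal rank-one contribution and the homothetic direction transverse to the fixed-volume constraint, with all sign conventions --- to the second variation of the constrained Yamabe functional; this bookkeeping is standard and proceeds as in \cite{KMS,HL}, after which the conclusion is immediate from the degree value $-1$.
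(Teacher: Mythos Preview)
Your argument is correct, but it takes a different route from the paper's. The paper does not localize the degree and compute the index at each solution; instead it invokes the \emph{strong Morse inequalities}
\[
(-1)^{\lambda}\ \le\ \sum_{\mu=0}^{\lambda}(-1)^{\lambda-\mu}C_{\mu},\qquad \lambda=0,1,2,\ldots,
\]
which hold for the subcritical equations and pass to the critical exponent by compactness. Since nondegeneracy and compactness give $C_{\mu}=0$ for all large $\mu$, one may take $\lambda$ large of each parity: for $\lambda$ even this reads $1\le\sum_{\mu}(-1)^{\mu}C_{\mu}$, and for $\lambda$ odd it reads $\sum_{\mu}(-1)^{\mu}C_{\mu}\le 1$, whence $\sum_{\mu}(-1)^{\mu}C_{\mu}=1$, which is the asserted identity.

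Your approach is the one in Schoen's survey \cite{S0}: it is more direct once the degree theorem $\deg(F_p,\Omega_{\Lambda},0)=-1$ is in hand, and your Step~3 computation $m_j=I(g_j)+1$ (the block decomposition of $DF_p(u^{(j)})$ with the single extra negative direction along $u^{(j)}$ itself) is exactly what is needed to convert the Leray--Schauder local index into the constrained Morse index. The paper's route, by contrast, yields the full family of Morse inequalities as a byproduct, not just the Euler--Poincar\'e identity. Either argument is acceptable here; they are two sides of the same coin.
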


\appendix

\section{Auxiliary results\label{auxiliary}}
In this section we state several auxiliary results that are either well known
or slight modifications of standard
results. Therefore proofs, when provided, will be rather short.

The following proposition is analogous to a well known theorem of
Caffarelli, Gidas, and Spruck (\cite{CGS}):

\begin{prop} Let $T \geq 0$ and $\RR_{-T}^n  = \{ y \in \RR^n~ |~ y^n > -T \}$.
Consider the problem
\begin{gather}
\begin{cases}
\Delta u + n(n-2) u^{p} = 0,~u > 0 & \text{ in } \RR^n_{-T}, \displaybreak[0] \\
\frac{\partial u}{\partial x_n} = 0 & \text{ on } \partial \RR^n_{-T},
\displaybreak[0] \\
u(0) = 1,~~0 \text{ is a local maximum of } u,
\end{cases} \nonumber
\label{Euclidean_prop_bubble}
\end{gather}
where $p \in (1,\frac{n+2}{n-2}]$. If $p < \frac{n+2}{n-2}$ then this problem has no
solution. If $p=\frac{n+2}{n-2}$
then
\begin{gather}
u(x^\prime, x_n) = \Big ( \frac{1}{1 + |(x^\prime,x_n) | ^2} \Big )^\frac{n-2}{2} =
U(x) \nonumber
\end{gather}
in which case $T=0$ necessarily.
\end{prop}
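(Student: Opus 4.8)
The plan is to eliminate the boundary by an even reflection and then reduce everything to the classification theorem of Caffarelli, Gidas and Spruck on all of $\RR^n$. First I would observe that, by elliptic regularity for the Neumann problem, any solution $u$ is smooth up to $\partial\RR^n_{-T}$, so that the even reflection
\begin{gather}
\bar{u}(y^\prime,y^n) =
\begin{cases}
u(y^\prime,y^n), & y^n \geq -T, \\
u(y^\prime,-2T-y^n), & y^n < -T,
\end{cases}
\nonumber
\end{gather}
is of class $C^1(\RR^n)$. Because $\partial_n u = 0$ on $\{y^n=-T\}$, testing the equation against functions supported across the hyperplane shows that $\bar{u}$ is a weak — hence, again by elliptic regularity, a classical — positive solution of $\Delta \bar{u} + K\bar{u}^p = 0$ on $\RR^n$, and $0$ remains a local maximum of $\bar{u}$ (a neighborhood of $0$ is unaffected when $T>0$, and when $T=0$ the reflection only identifies points already controlled by the local maximality of $u$ at the origin).

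If $p < \frac{n+2}{n-2}$, the subcritical Liouville theorem of Gidas and Spruck (\cite{CGS}) forces $\bar{u}\equiv 0$ on $\RR^n$, contradicting $\bar{u}(0)=u(0)=1$; hence no solution exists. If $p=\frac{n+2}{n-2}$, then since $K=n(n-2)$ the full classification theorem of \cite{CGS} (no decay hypothesis needed) gives $\bar{u}(x) = \big(\lambda/(\lambda^2+|x-x_0|^2)\big)^{\frac{n-2}{2}}$ for some $\lambda>0$ and $x_0\in\RR^n$. I would then read off the conclusion from three elementary facts about this explicit family: (i) $\bar{u}$ is invariant under reflection in $\{y^n=-T\}$ and the level sets of a bubble are the spheres centered at $x_0$, so $x_0^n=-T$; (ii) a bubble attains its maximum only at its center, and $0$ is a local maximum of $\bar{u}$, so $x_0=0$, whence $T=0$; (iii) $\bar{u}(0)=\lambda^{-\frac{n-2}{2}}=1$, so $\lambda=1$ and therefore $u=\bar{u}=U$ with $T=0$.

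I expect the only point requiring genuine care to be the reflection step — verifying that the even extension across $\{y^n=-T\}$ truly solves the equation in the distributional sense — but this is standard: the vanishing Neumann data is precisely what makes the first normal derivatives match and removes the distributional obstruction along the hyperplane, after which interior elliptic regularity upgrades $\bar{u}$ to a classical solution. Everything else is a direct appeal to \cite{CGS} together with the elementary analysis of the bubble family above.
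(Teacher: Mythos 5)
Your proposal is correct, and it gives a genuinely self-contained argument where the paper simply defers to references (Li--Zhu, Felli--Ahmedou, Han--Li, p.~498). The reflection step is sound: since $\partial_n u = 0$ on $\{y^n = -T\}$, the even extension $\bar{u}$ has matching first (and in fact second) normal derivatives across the interface, so $\bar{u}$ is a weak and then by bootstrapping a classical positive solution on all of $\RR^n$. Your three-point bubble analysis correctly pins down $x_0^n = -T$ from the built-in reflection symmetry of $\bar{u}$, then $x_0 = 0$ since the bubble's only critical point is its center and $0$ is a critical point, whence $T=0$; and $\lambda = 1$ from the normalization $u(0)=1$. The only caveats are minor attributional ones: the subcritical Liouville nonexistence on $\RR^n$ is due to Gidas and Spruck (1981) rather than the cited Caffarelli--Gidas--Spruck paper, which treats the critical exponent, and your conclusion implicitly uses $K = n(n-2)$ (the paper's standing normalization from Section~\ref{setting_and_notation}) so that the family of bubbles is precisely $\big(\lambda/(\lambda^2+|x-x_0|^2)\big)^{\frac{n-2}{2}}$. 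The references cited in the paper reach the same conclusion, typically by the method of moving planes/spheres applied directly in the half-space; your even-reflection reduction is a cleaner route available here precisely because the boundary is the flat hyperplane $\{y^n=-T\}$ with a pure Neumann condition, and in that respect your approach is more elementary and transparent than a direct moving-spheres argument.
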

\begin{proof} \cite{LZ} (see also the proof of proposition 2.4 in \cite{FA}, and
\cite{HL} p. 498).
\end{proof}

Now we recall some transformation laws.
\begin{prop} Let $(M,g)$ be a Riemannian manifold with boundary and $\phi > 0$ a
smooth function.
Let $\tilde{g} = \phi^{\frac{4}{n-2}} g$, then
\label{transformation_laws}
\begin{align}
L_{\tilde{g}}(\phi^{-1} u ) & = \phi^{-\frac{n+2}{n-2}} L_g u \label{conf_inv_eq}
\displaybreak[0] \\
R_{\tilde{g}} & = -c(n)^{-1} \phi^{-\frac{n+2}{n-2}}L_g \phi  \label{conf_inv_scalar}
\displaybreak[0] \\
B_{\tilde{g}} (\phi^{-1} u) & = \phi^{-\frac{n}{n-2}} B_g u \label{conf_inv_bc}
\displaybreak[0] \\
\tilde{\kappa}_{ij} & = \phi^{\frac{2}{n-2}} \kappa_{ij} +
\frac{2}{n-2}\phi^{\frac{4-n}{n-2}}\frac{\partial \phi}{\partial \nu_g} g_{ij}
\label{conf_sec_fundamental_form} \displaybreak[0] \\
\tilde{\kappa} & =  \frac{2}{n-2}  \phi^{-\frac{n}{n-2}} B_g \phi
\label{trans_law_mean_curvature}
\end{align}
where quantities with $\tilde{~}$ refer to the metric $\tilde{g}$, $\kappa_{ij}$ and
$\kappa$ are the second fundamental form and
the mean curvature, respectively.
\end{prop}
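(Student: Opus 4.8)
The plan is to establish the five identities in order, deriving the later ones from the earlier, with the only real work being bookkeeping of powers of $\phi$. Write $\tilde g = \phi^{\frac{4}{n-2}} g = e^{2f}g$ with $f = \frac{2}{n-2}\log\phi$, and observe first that if $\nu_g$ is the $g$-unit outer normal along $\partial M$, then $\tilde\nu_g = \phi^{-\frac{2}{n-2}}\nu_g = e^{-f}\nu_g$ is the $\tilde g$-unit outer normal, so $\partial_{\tilde\nu_g} = \phi^{-\frac{2}{n-2}}\partial_{\nu_g}$. For (\ref{conf_inv_eq}) I would invoke the classical conformal covariance of the conformal Laplacian, proved by the standard computation of how $\Delta_g$ and $R_g$ transform under a conformal change (see \cite{LP,Es}). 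The identity (\ref{conf_inv_scalar}) then follows at once: applying (\ref{conf_inv_eq}) with $u=\phi$ gives $L_{\tilde g}(1) = \phi^{-\frac{n+2}{n-2}} L_g\phi$, and since $\Delta_{\tilde g}1 = 0$ the left side equals $-c(n)R_{\tilde g}$, so $R_{\tilde g} = -c(n)^{-1}\phi^{-\frac{n+2}{n-2}}L_g\phi$.

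For (\ref{conf_sec_fundamental_form}) I would use the standard conformal change of the second fundamental form: under $\tilde g = e^{2f}g$ one has $\tilde\kappa_{ij} = e^{f}(\kappa_{ij} + (\partial_{\nu_g} f)\,g_{ij})$, obtained by differentiating $\tilde\nu_g = e^{-f}\nu_g$ and expressing $\tilde\nabla$ in terms of $\nabla$ and $df$ via the Koszul formula. Substituting $e^{f}=\phi^{\frac{2}{n-2}}$ and $\partial_{\nu_g}f = \frac{2}{n-2}\phi^{-1}\partial_{\nu_g}\phi$ and collecting exponents gives exactly (\ref{conf_sec_fundamental_form}). Taking the $\tilde g$-trace along $\partial M$, using $\tilde g^{ij} = \phi^{-\frac{4}{n-2}}g^{ij}$ and $g^{ij}g_{ij} = n-1$, produces $\tilde\kappa = \phi^{-\frac{2}{n-2}}\kappa_g + \frac{2}{n-2}\phi^{-\frac{n}{n-2}}\partial_{\nu_g}\phi$; recognizing the right-hand side as $\frac{2}{n-2}\phi^{-\frac{n}{n-2}}\bigl(\partial_{\nu_g}\phi + \frac{n-2}{2}\kappa_g\phi\bigr) = \frac{2}{n-2}\phi^{-\frac{n}{n-2}}B_g\phi$ gives (\ref{trans_law_mean_curvature}).

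Finally, (\ref{conf_inv_bc}) is a direct computation: $B_{\tilde g}(\phi^{-1}u) = \partial_{\tilde\nu_g}(\phi^{-1}u) + \frac{n-2}{2}\tilde\kappa\,\phi^{-1}u$. I would expand $\partial_{\tilde\nu_g} = \phi^{-\frac{2}{n-2}}\partial_{\nu_g}$, apply the product rule to $\phi^{-1}u$, and substitute (\ref{trans_law_mean_curvature}) for $\tilde\kappa$. The two terms proportional to $(\partial_{\nu_g}\phi)\,u$ then carry the same power of $\phi$ with opposite signs and cancel, leaving $\phi^{-\frac{n}{n-2}}\bigl(\partial_{\nu_g}u + \frac{n-2}{2}\kappa_g u\bigr) = \phi^{-\frac{n}{n-2}}B_g u$.

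There is no genuine obstacle here; every step is a routine conformal computation, and the only care needed is with the exponents of $\phi$. The single ingredient I would rather cite than reprove is the conformal covariance (\ref{conf_inv_eq}) of $L_g$, since a fully self-contained derivation would be disproportionately long for an appendix whose purpose is to record standard facts.
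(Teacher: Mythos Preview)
Your proposal is correct and is precisely the ``direct calculation'' the paper alludes to; the paper's own proof consists of a single line citing \cite{KMS,Es,M} without writing anything out. You have supplied what the paper omits, and your bookkeeping of the exponents (in particular the cancellation in (\ref{conf_inv_bc}) and the trace yielding (\ref{trans_law_mean_curvature})) checks out.
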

\begin{proof} Direct calculation (see \cite{KMS,Es,M} for example).
\end{proof}

\begin{prop} Up to a conformal change we can assume that in small balls the scalar
curvature
is positive and that the mean curvature of $\partial M$ vanishes.
\label{conf_change_negative_R}
\end{prop}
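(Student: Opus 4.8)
The plan is to achieve the two normalizations one at a time. \emph{Step 1 (vanishing mean curvature).} Let $\phi_1>0$ be a first eigenfunction of the conformal Laplacian with the conformally natural boundary operator, i.e.
\begin{gather*}
 L_g\phi_1=-\lambda_1\phi_1 \ \text{ in }M,\qquad B_g\phi_1=0 \ \text{ on }\partial M ;
\end{gather*}
existence, simplicity and positivity of $\phi_1$ follow from the standard Krein--Rutman/maximum principle argument for the mixed (Robin-type) problem $(L_g,B_g)$. Replacing $g$ by $g_1=\phi_1^{\frac{4}{n-2}}g$ and using the transformation laws (\ref{conf_inv_scalar}) and (\ref{trans_law_mean_curvature}) of Proposition \ref{transformation_laws} gives $\kappa_{g_1}\equiv 0$ and $R_{g_1}=c(n)^{-1}\lambda_1\phi_1^{-\frac{4}{n-2}}$, which has a fixed sign on $M$. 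If $\lambda_1>0$ we are finished, with the conclusion holding globally; so from now on assume $R_{g_1}\le 0$, and we only aim at the local statement on a small ball $B_\si(x_0)$ about a prescribed point $x_0$ --- which is all that is needed later (there $x_0$ will be a blow-up point, or the nearest boundary point to one).

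\emph{Step 2 (positive scalar curvature on a small ball).} I would now make a further local conformal change by a concave bump. Since $\partial M$ is umbilic and $\kappa_{g_1}=0$, it is totally geodesic; working in $g_1$-Fermi coordinates centered at $x_0\in\partial M$ we have $(g_1)_{nn}\equiv 1$, $(g_1)_{ni}\equiv 0$ and $\partial_{\nu_{g_1}}=-\partial_n$ along $\partial M$, and the geodesic from $x_0$ to a nearby boundary point stays in $\partial M$, so $\partial_n\!\big(\operatorname{dist}_{g_1}(x_0,\cdot)^2\big)=0$ on $\partial M$ near $x_0$. Fix $t>0$ and set $\phi=1-t\,\operatorname{dist}_{g_1}(x_0,\cdot)^2$ on a neighborhood of $x_0$, extended to a positive smooth function on all of $M$. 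Then $B_{g_1}\phi=\partial_{\nu_{g_1}}\phi=0$ on $\partial M$ near $x_0$, so by (\ref{trans_law_mean_curvature}) the metric $g_2=\phi^{\frac{4}{n-2}}g_1$ still satisfies $\kappa_{g_2}\equiv 0$ there. On the other hand, by (\ref{conf_inv_scalar}) and the standard expansion $\Delta_{g_1}\!\big(\operatorname{dist}_{g_1}(x_0,\cdot)^2\big)=2n+O(r^2)$ with $r=\operatorname{dist}_{g_1}(x_0,\cdot)$,
\begin{gather*}
 R_{g_2}=\phi^{-\frac{n+2}{n-2}}\Big(R_{g_1}\phi-c(n)^{-1}\Delta_{g_1}\phi\Big)
 =\phi^{-\frac{n+2}{n-2}}\Big(2n\,c(n)^{-1}t+R_{g_1}\phi+O(tr^2)\Big) .
\end{gather*}
Choosing $t$ with $2n\,c(n)^{-1}t>2\sup_M|R_{g_1}|$, and then $\si>0$ small enough that $\phi\ge\tfrac12$ on $B_\si(x_0)$ and the $O(tr^2)$ term is $\le n\,c(n)^{-1}t$ there, we obtain $R_{g_2}>0$ on $B_\si(x_0)$, while $\kappa_{g_2}=0$ near $x_0$ on $\partial M$, as claimed.

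The only genuine point is the compatibility of the two demands on the bump in Step~2: $t$ must be taken large (to dominate a possibly negative $R_{g_1}$), yet $\phi=1-tr^2$ must remain positive on $B_\si$; this is not a real obstruction because $\si$ may be shrunk freely once $t$ has been fixed. The remaining care is purely bookkeeping --- choosing $\phi$ to depend on the Fermi coordinates so that $\partial_n\phi$ vanishes \emph{identically} on $\{x^n=0\}$ near $x_0$, so that Step~1's normalization $\kappa=0$ is preserved --- and everything else reduces to the transformation formulas of Proposition \ref{transformation_laws} together with $\Delta_{g_1}(\operatorname{dist}^2)=2n+O(r^2)$.
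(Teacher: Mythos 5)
Your proof is correct, and Step~1 coincides with the paper's argument verbatim. In Step~2, however, you take a genuinely different route: the paper produces the local conformal factor by taking the first eigenfunction $\psi_1$ of $\Delta_{g_1}$ on a small ball with Dirichlet data on $\partial^+$ and Neumann data on $\partial^\prime$, and uses that the first eigenvalue $\mu_1\to\infty$ as the radius shrinks to dominate $R_{g_1}$; you instead use the explicit radial bump $\phi = 1-t\operatorname{dist}_{g_1}(x_0,\cdot)^2$ and the expansion $\Delta_{g_1}(\operatorname{dist}^2)=2n+O(r^2)$. Your approach is more elementary and avoids the spectral step entirely, but it pays for this by invoking umbilicity of $\partial M$: you need the boundary to be totally geodesic so that the geodesics from $x_0$ to nearby boundary points stay in $\partial M$, forcing $\partial_{\nu_{g_1}}\phi=0$ on $\partial M$ near $x_0$ and hence preserving the zero-mean-curvature normalization. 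The paper's eigenfunction automatically satisfies the Neumann condition $B_{g_1}\psi_1=0$ on $\partial^\prime$ by construction, so its argument does not require the boundary to be totally geodesic (the eigenvalue monotonicity $\mu_1\to\infty$ also holds without that). Since umbilicity is a standing hypothesis throughout the paper, this extra dependence is harmless in context, but it does make your version of Step~2 slightly less general than the paper's.
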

\begin{proof}
The idea of the proof is to perform two conformal changes on the metric, one to
produce a metric
with zero mean curvature and a further one to achieve positive scalar curvature.
Denote by $\phi_1 > 0$, the first eigenfunction of the conformal Laplacian
with boundary
condition $B_g \phi_1 = 0$, i.e.,
\begin{gather}
\begin{cases}
L_g \phi_1 + \la_1 \phi_1 = 0, & \text{ in } M, \displaybreak[0] \\
B_g \phi_1 = 0, &\text{ on } \partial M.
\end{cases} \nonumber
\end{gather}
See \cite{Es} for the existence of $\phi_1$; the fact that $\phi_1 > 0$ follows from
a standard calculus of variation argument. By transformation law
(\ref{trans_law_mean_curvature}),
the metric $g_1 = \phi_1^{\frac{4}{n-2}} g$ has zero mean curvature.

Now let $x_0 \in \partial M$ and consider a small ball $B_{2\delta}(x_0)$ near the
boundary.
Denote by $\psi_1 > 0$, the first eigenfunction
of the Laplacian $\Delta_{g_1}$ with the boundary condition as below:
\begin{gather}
\begin{cases}
\Delta_{g_1} \psi_1 + \mu_1 \psi_1 = 0,  & \text{ in } B_{2\delta}(x_0),
\displaybreak[0] \\
\psi_1 = 0, & \text{ on } \partial^+ B_{2\delta}(x_0), \displaybreak[0] \\
B_{g_1} \psi_1 = \frac{\partial \psi_1}{\partial \nu_{g_1}} = 0, & \text{ on }
\partial^\prime B_{2\delta}(x_0).
\end{cases} \nonumber
\end{gather}
The existence and positivity of $\psi_1$ again follows from a standard calculus of
variations argument.
Consider the metric $\tilde{g} = \psi_1^{\frac{4}{n-2}} g_1$ on $B_{2\delta}(x_0)$.
Then from (\ref{conf_inv_scalar}),
\begin{gather}
R_{\tilde{g}} = -c(n)^{-1} \psi^{\frac{n+2}{n-2}} L_{g_1} \psi_1 =
-c(n)^{-1} \psi_1^{\frac{n+2}{n-2}} (\Delta_{g_1}\psi_1 - R_{g_1} \psi_1). \nonumber
\end{gather}
Since $\mu_1 \rar \infty$ as $\delta \rar 0$ we can choose $\de > 0$ so small that
\begin{gather}
 \Delta_{g_1}\psi_1 - R_{g_1} \psi_1  = -\mu_1 \psi_1 - R_{g_1} \psi_1 < 0, \nonumber
\end{gather}
and therefore $R_{\tilde{g}} > 0$ on $B_{\delta}(x_0)$. Notice that shrinking
$B_{2\de}(x_0)$
does not affect $R_{g_1}$ as $\phi_1$ is defined on the whole of $M$. Finally, the
mean curvature for $\tilde{g}$ is $\tilde{\kappa} =  \frac{2}{n-2}
\psi^{-\frac{n}{n-2}} B_{g_1} \psi = 0$.
\end{proof}

The next result immediately follows.

\begin{coro} Up to a conformal change the maximum principle holds for the conformal
Laplacian in small balls. More precisely, if
$L_g u \geq 0$ in $B_\sigma(x_0)$, $u > 0$, then there exists a constant $C > 0$,
independent of $u$, such that
$\sup_{B_\si(x_0)} u \leq C \sup_{\partial B_\si(x_0)} u$, provided $\si$ is small
enough.
\label{coro_max_principle}
\end{coro}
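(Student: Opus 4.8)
The plan is to read this off proposition \ref{conf_change_negative_R}, which furnishes, near any point $x_0$, a conformal factor $\Phi>0$ such that the metric $\tilde g=\Phi^{\frac{4}{n-2}}g$ has $R_{\tilde g}>0$ on a fixed small ball $B_\de(x_0)$ and $\kappa_{\tilde g}=0$ on $\partial M$. So first I would restrict attention to $\si\le\de$ and replace $u$ by $\tilde u:=\Phi^{-1}u>0$; by the transformation law (\ref{conf_inv_eq}) of proposition \ref{transformation_laws}, the hypothesis $L_gu\ge0$ on $B_\si(x_0)$ becomes $L_{\tilde g}\tilde u=\Phi^{-\frac{n+2}{n-2}}L_gu\ge0$ on $B_\si(x_0)$. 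This reduces the corollary to a maximum principle for the conformal Laplacian of a metric with strictly positive scalar curvature.

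The second step is the observation that, when $R_{\tilde g}>0$, a positive subsolution of $L_{\tilde g}$ is automatically $\Delta_{\tilde g}$-subharmonic: since $L_{\tilde g}=\Delta_{\tilde g}-c(n)R_{\tilde g}$ and $\tilde u>0$, from $L_{\tilde g}\tilde u\ge0$ we get $\Delta_{\tilde g}\tilde u\ge c(n)R_{\tilde g}\tilde u>0$ throughout $B_\si(x_0)$. Hence by the weak maximum principle for subharmonic functions, $\tilde u$ attains its supremum over $\overline{B_\si(x_0)}$ on the topological boundary $\partial B_\si(x_0)$ --- which, when $x_0\in\partial M$, includes the portion of $\partial M$ lying inside the ball --- so that $\sup_{B_\si(x_0)}\tilde u=\sup_{\partial B_\si(x_0)}\tilde u$. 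In the applications of the corollary $u$ moreover satisfies the Neumann condition $\partial_{\nu_{\tilde g}}u=0$ on $\partial M$; since umbilicity together with $\kappa_{\tilde g}=0$ makes $\partial M$ totally geodesic, one may then reflect $\tilde u$ and $\tilde g$ evenly across $\partial M$ in Fermi coordinates exactly as in the proof of proposition \ref{simple_symmetry}, or invoke Hopf's lemma, to conclude that the supremum is in fact attained on the spherical cap $\partial^+B_\si(x_0)$ alone.

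The final step is to return to the metric $g$. Because $\Phi$ is a fixed smooth positive function, there are constants $0<c_1\le c_2<\infty$ with $c_1\le\Phi\le c_2$ on $\overline{B_\de(x_0)}$, so
\[
\sup_{B_\si(x_0)}u\ \le\ c_2\sup_{B_\si(x_0)}\tilde u\ =\ c_2\sup_{\partial B_\si(x_0)}\tilde u\ \le\ \frac{c_2}{c_1}\sup_{\partial B_\si(x_0)}u ,
\]
which is the assertion with $C=c_2/c_1$, independent of $u$. Almost everything here is routine; the one genuine subtlety --- and the step I would expect to demand the most care --- is the treatment of the part of $\partial B_\si(x_0)$ lying on $\partial M$. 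The plain weak maximum principle already yields the stated inequality with that piece counted as part of $\partial B_\si(x_0)$; the sharper version used later in the paper, where the supremum is taken only over the spherical cap, is what forces the totally geodesic reduction and the even reflection described above. One must also keep $\si\le\de$ so that $R_{\tilde g}>0$ on the relevant ball.
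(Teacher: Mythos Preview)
Your proof is correct and follows exactly the route the paper intends: the paper simply says ``The next result immediately follows'' from proposition \ref{conf_change_negative_R}, and you have spelled out precisely that argument --- conformal change to positive scalar curvature, subharmonicity of $\tilde u$, weak maximum principle, then undoing the conformal factor. Your careful discussion of the $\partial M$ portion of the boundary (via the Neumann condition and reflection or Hopf's lemma) is a genuine detail the paper suppresses but which is needed for the application in lemma \ref{boundness_v_i}, where the comparison is against $\{|y|=r\}$ rather than the full topological boundary.
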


\begin{lemma} Let $\psi$ be a solution of
\begin{gather}
\begin{cases}
\Delta \psi + n(n+2) U^{\frac{4}{n-2}} \psi = 0,  & \text{ in  }\RR_{+}^n, \nonumber
\displaybreak[0] \\
\frac{\partial \psi}{\partial y^n} = 0, & \text{ on } \partial \RR^{n-1}, \nonumber
\displaybreak[0] \\
\lim_{|y| \rar \infty} \psi(y) = 0. \nonumber
\end{cases} \nonumber
\end{gather}
Then it takes the form
\begin{gather}
\psi(y) = c_0 \Big( \frac{n-2}{2} U + y \cdot \nabla U \Big ) + \sum_{j=1}^{n-1} c_j
\partial_j U, \nonumber
\end{gather}
for some constants $c_0,\dots,c_{n-1}$.
\label{class_sol}
\end{lemma}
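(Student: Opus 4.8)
The plan is to reduce the lemma to the known classification of decaying solutions of the linearized Yamabe equation on all of $\RR^n$. First I would reflect $\psi$ evenly across the boundary hyperplane, setting $\overline{\psi}(y^\prime,y^n) = \psi(y^\prime,|y^n|)$ for $y=(y^\prime,y^n)\in\RR^n$. Since $U$ is radially symmetric, the coefficient $U^{4/(n-2)}$ is invariant under $y^n\mapsto -y^n$, and the Neumann condition $\partial\psi/\partial y^n=0$ on $\RR^{n-1}$ guarantees that $\overline{\psi}$ is a weak solution of $\Delta\overline{\psi}+n(n+2)U^{4/(n-2)}\overline{\psi}=0$ on all of $\RR^n$; by elliptic regularity $\overline{\psi}$ is smooth, and it inherits the decay $\lim_{|y|\to\infty}\overline{\psi}(y)=0$.

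Next I would invoke the non-degeneracy of the standard bubble: the space of solutions $v$ of $\Delta v + n(n+2)U^{4/(n-2)}v = 0$ on $\RR^n$ with $v\to 0$ at infinity is exactly $(n+1)$-dimensional, spanned by $V_0 = \tfrac{n-2}{2}U + y\cdot\nabla U$ (coming from the dilation family $\lambda^{(n-2)/2}U(\lambda y)$) and by $V_j = \partial_j U$, $j=1,\dots,n$ (coming from translation invariance); see \cite{KMS} and the references therein. Thus $\overline{\psi} = c_0 V_0 + \sum_{j=1}^n c_j V_j$ for constants $c_0,\dots,c_n$. Alternatively one can argue directly: expand $\overline{\psi} = \sum_{k\geq 0}\psi_k(r)Y_k(\theta)$ in spherical harmonics on $S^{n-1}$, so that each mode solves
\[
\psi_k'' + \tfrac{n-1}{r}\psi_k' + \Big(n(n+2)U^{4/(n-2)} - \tfrac{k(k+n-2)}{r^2}\Big)\psi_k = 0 ;
\]
for $k=0$ and $k=1$ there is, up to scaling, exactly one solution regular at the origin and decaying at infinity, while for $k\geq 2$ the only such solution is trivial.

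Finally I would use parity to cut down the span. By construction $\overline{\psi}$ is even under $y^n\mapsto -y^n$; among the generators, $V_0$ and $V_1,\dots,V_{n-1}$ are even in $y^n$ while $V_n = \partial_n U$ is odd, so matching parities forces $c_n = 0$. Restricting back to $\RR^n_+$ gives $\psi = c_0\big(\tfrac{n-2}{2}U + y\cdot\nabla U\big) + \sum_{j=1}^{n-1}c_j\partial_j U$, as claimed. The one substantive ingredient is the Euclidean classification in the second paragraph; the reflection step and the parity bookkeeping are routine. If a self-contained proof is preferred, the main obstacle becomes the ODE analysis of the spherical-harmonic modes — in particular ruling out nontrivial decaying solutions for all $k\geq 2$.
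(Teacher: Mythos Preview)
Your proposal is correct and follows essentially the same route as the paper: reflect evenly across $\RR^{n-1}$ using the Neumann condition, invoke the known classification of decaying solutions to the linearized equation on $\RR^n$ (the paper cites \cite{CC} rather than \cite{KMS}), and then use even parity in $y^n$ to kill the $\partial_n U$ component. Your write-up is in fact more explicit than the paper's two-line proof, which simply says ``make a $C^2$ reflection \ldots\ and then the result follows from \cite{CC}.''
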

\begin{proof}
 Since $\partial_n \psi = 0$ on $\RR^{n-1}$, we can make a $C^2$ reflection across
$\RR^{n-1}$ and then the result
follows from \cite{CC}.
\end{proof}

The following is a Harnack-type inequality.

\begin{lemma} Let $x_i \rar \bar{x}$ be an isolated blow-up point and assume that
$\bar{r}$ is sufficiently small.
Then for all $r$ such that $0<r<\bar{r}$ we have
\begin{gather}
\sup_{B_{r}(x_i) \backslash B_{r/2}(x_i)} u_i \leq C \inf_{B_{r}(x_i) \backslash
B_{r/2}(x_i)} u_i, \nonumber
\end{gather}
for some constant $C$ independent of $i$ and $r$.
\label{Harnack}
\end{lemma}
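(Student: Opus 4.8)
The plan is to deduce the estimate from the classical Harnack inequality by the usual rescaling argument, the only additional feature being the possible presence of the boundary inside the annulus. Fix $r\in(0,\bar r)$ (with $\bar r$ small enough that $B_{\bar r}(x_i)$ lies in the coordinate domain and in the region where the $g_i$ converge), work in normal coordinates centered at $x_i$ (or Fermi coordinates along $\partial M$ when $x_i$ is near the boundary), and set
\begin{gather}
 w_i(z)=r^{\frac{2}{p_i-1}}u_i(rz),\qquad (\hat g_i)_{kl}(z)=(g_i)_{kl}(rz),\qquad \hat f_i(z)=f_i(rz).\nonumber
\end{gather}
Exactly as in the passage from (\ref{bvp}) to (\ref{bvpy}), $w_i$ is a positive solution of $L_{\hat g_i}w_i+K\hat f_i^{-\de_i}w_i^{p_i}=0$ on $A$ together with $B_{\hat g_i}w_i=0$ on $\partial M\cap A$, where $A$ is the spherical annulus $\{\frac12\le|z|\le1\}$ when $B_r(x_i)\backslash B_{r/2}(x_i)$ does not meet $\partial M$, and otherwise the intersection of that annulus with the region lying above the rescaled boundary graph $z^n=r^{-1}F(rz')$. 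Since $r<\bar r$ and $g_i\rar g$ in $C^k$, the metrics $\hat g_i$ are uniformly $C^k$-close to the Euclidean metric on $A$ — in particular uniformly elliptic with ellipticity constants bounded away from $0$ and $\infty$, and with $|R_{\hat g_i}|\le Cr^2$ — the rescaled boundary is uniformly $C^k$-close to a hyperplane, and $\hat f_i$ is bounded above and below by fixed positive constants.

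The key observation, which explains why the statement concerns the annulus $B_r\backslash B_{r/2}$ rather than a ball, is that property 2) of Definition \ref{defi_isolated} gives $u_i(x)\le C\,\operatorname{dist}_{g_i}(x,x_i)^{-2/(p_i-1)}$ on $B_\si(x_i)$, hence $w_i\le C$ on $A$ with $C$ independent of $i$ and $r$; since $p_i$ ranges over the compact interval $(1,\frac{n+2}{n-2}]$, this forces $w_i^{p_i-1}\le C$ on $A$. Therefore $w_i$ is a positive solution of the divergence-form equation $\Delta_{\hat g_i}w_i+V_iw_i=0$ with $V_i:=-c(n)R_{\hat g_i}+K\hat f_i^{-\de_i}w_i^{p_i-1}$ uniformly bounded on $A$, subject to the homogeneous oblique-derivative condition $\partial_{\nu_{\hat g_i}}w_i+\frac{n-2}{2}\kappa_{\hat g_i}w_i=0$ with $\kappa_{\hat g_i}$ uniformly bounded (and equal to zero after the preliminary conformal normalization of Proposition \ref{conf_change_negative_R}, which alters $u_i$ only by a factor bounded above and below uniformly in $i$). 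Covering $\{\frac12\le|z|\le1\}$ by a fixed finite number of balls, one applies on the interior balls the De~Giorgi--Nash--Moser Harnack inequality (\cite{GT}) and on the half-balls a boundary Harnack inequality for the Neumann (or oblique-derivative) problem — obtained, e.g., by even reflection across the nearly totally geodesic boundary in Fermi coordinates, as in the reflection used in the proof of Proposition \ref{simple_symmetry} and in \cite{HL}, followed by the interior Harnack — and chains these estimates along the connected set $A$ to get $\sup_Aw_i\le C\inf_Aw_i$ with $C=C(n)$. Undoing the rescaling, the factor $r^{2/(p_i-1)}$ cancels and the asserted inequality follows.

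The point demanding care is the uniformity of the Harnack constant near $\partial M$: one must check that the boundary Harnack constant for the half-ball problems does not degenerate as $\hat g_i\rar\de$ and the rescaled boundary flattens, and does not depend on $r$. This is ensured by the $C^k$-convergence $g_i\rar g$ and the smallness of $r$, which keep all the (reflected) coefficients in a fixed bounded set and the ellipticity constants bounded away from $0$; a compactness argument on the rescaled data then produces a single constant valid for all large $i$ and all $r\in(0,\bar r)$. Alternatively, one may avoid the reduction to the Neumann case and quote the classical boundary Harnack inequality for oblique-derivative problems directly, its only structural hypothesis — boundedness of the Robin coefficient $\frac{n-2}{2}\kappa_{g_i}$ — being satisfied uniformly here.
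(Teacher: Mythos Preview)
Your proposal is correct and follows essentially the same approach as the paper, which simply records that the lemma ``follows from a combination of lemma A.1 of \cite{HL}, the Harnack inequality, and the definition of isolated blow-up points.'' You have spelled out precisely that argument: rescale by $r$ so that property 2) of Definition \ref{defi_isolated} bounds $w_i$ on the unit annulus, linearize the equation with a uniformly bounded potential, and apply interior Harnack together with a boundary Harnack (via reflection after the conformal normalization of Proposition \ref{conf_change_negative_R}, or via the oblique-derivative Harnack directly) on a fixed cover of the annulus.
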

\begin{proof}
It follows from a combination of lemma A.1 of \cite{HL}, the Harnack inequality, and
the definition of isolated
blow-up points.
\end{proof}

\begin{prop} (Pohozaev identity) Let $u>0$ be a solution of
$L_{g}u + K f_i^{-\de}u^{p} = 0$ on
$B_\rho^+ = \{ x \in B_\rho(0) ~ \big | ~ x^n \geq 0 \}$. Then
{ \allowdisplaybreaks
\begin{eqnarray}
  & & \int_{\partial B_\rho^+} \Big ( (\frac{n-2}{2} u + x^k \partial_k u
)\frac{\partial u}{\partial \nu_0}
-\frac{1}{2} x^k \nu_0^k |\nabla_0 u|^2 + \frac{1}{p+1} K(x) x^k\nu_0^k u^{p+1} \Big
) d\si
\label{Pohozaev}
 \\
& & = - \int_{B_\rho^+} (\frac{n-2}{2} u + x^k \partial_k u )\big( (g^{ij} -
\de^{ij})\partial_{ij}u + \partial_jg^{ij}\partial_i u \big ) dx \nonumber
 \\
& & + \int_{B_\rho^+} c(n) (\frac{n-2}{2} u + x^k \partial_k u )Ru dx +
\frac{1}{p+1} \int_{B_\rho^+} x^k\partial_k K(x) u^{p+1} dx \nonumber
\\
& & + \Big (\frac{n}{p+1} - \frac{n-2}{2} \Big )
\int_{B_\rho^+} K(x) u^{p+1} dx,
\nonumber
 \end{eqnarray} }
where quantities with$~_0$ refer to the Euclidean metric and $K(x) = Kf^{-\de}(x)$.
\label{Pohozaev_prop}
\end{prop}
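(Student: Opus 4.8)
\emph{Proof sketch.}
The plan is to establish this by the standard Pohozaev (Rellich) multiplier argument: test the equation against the conformal vector field multiplier $\mathcal{P}u := \tfrac{n-2}{2}u + x^{k}\partial_{k}u$ and integrate over the half-ball $B_{\rho}^{+}$. First I would rewrite the equation $L_{g}u + Kf_{i}^{-\delta}u^{p}=0$ in the form
\[
-\Delta_{0}u \;=\; \big[(g^{ij}-\delta^{ij})\partial_{ij}u + \partial_{j}g^{ij}\,\partial_{i}u\big] \;-\; c(n)R_{g}u \;+\; K(x)u^{p},
\]
where $\Delta_{0}$ is the Euclidean Laplacian and $K(x)=Kf^{-\delta}(x)$; here I use the property $\det g = 1 + O(r^{N})$ of (conformal) normal coordinates to replace $\Delta_{g}u = \tfrac{1}{\sqrt{g}}\partial_{i}(\sqrt{g}\,g^{ij}\partial_{j}u)$ by $\partial_{i}(g^{ij}\partial_{j}u) = g^{ij}\partial_{ij}u + \partial_{i}g^{ij}\partial_{j}u$ up to an $O(r^{N})$ contribution, which is dropped as throughout the paper. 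Note that no boundary condition on $\partial^{\prime}B_{\rho}^{+}$ is used: the identity will hold for any solution of the interior equation, and all boundary integrals are over the full topological boundary $\partial B_{\rho}^{+}=\partial^{+}B_{\rho}^{+}\cup\partial^{\prime}B_{\rho}^{+}$.

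Next I would carry out the two flat integrations by parts. Using $\nabla_{j}(x^{k}\partial_{k}u)=\partial_{j}u + \tfrac12 x^{k}\partial_{k}(|\nabla_{0}u|^{2})$ (after contracting) together with $\int_{B_{\rho}^{+}}\tfrac12 x^{k}\partial_{k}(|\nabla_{0}u|^{2}) = -\tfrac{n}{2}\int_{B_{\rho}^{+}}|\nabla_{0}u|^{2} + \tfrac12\int_{\partial B_{\rho}^{+}}x^{k}\nu_{0}^{k}|\nabla_{0}u|^{2}$, one obtains the classical identity
\[
\int_{B_{\rho}^{+}}(\mathcal{P}u)\,\Delta_{0}u\,dx \;=\; \int_{\partial B_{\rho}^{+}}\Big(\tfrac{n-2}{2}u\,\partial_{\nu_{0}}u + x^{k}\partial_{k}u\,\partial_{\nu_{0}}u - \tfrac12 x^{k}\nu_{0}^{k}|\nabla_{0}u|^{2}\Big)\,d\sigma ,
\]
the interior $|\nabla_{0}u|^{2}$ terms cancelling. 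For the nonlinear term, writing $x^{k}\partial_{k}u\cdot K(x)u^{p} = \tfrac{1}{p+1}K(x)x^{k}\partial_{k}(u^{p+1})$ and integrating by parts in $x^{k}$ (using $\partial_{k}x^{k}=n$) gives
\[
\int_{B_{\rho}^{+}}(\mathcal{P}u)K(x)u^{p}\,dx \;=\; \Big(\tfrac{n-2}{2}-\tfrac{n}{p+1}\Big)\!\int_{B_{\rho}^{+}}\!K u^{p+1}\,dx \;-\; \tfrac{1}{p+1}\!\int_{B_{\rho}^{+}}\! x^{k}\partial_{k}K\,u^{p+1}\,dx \;+\; \tfrac{1}{p+1}\!\int_{\partial B_{\rho}^{+}}\! x^{k}\nu_{0}^{k}K u^{p+1}\,d\sigma .
\]

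Finally I would substitute $-\Delta_{0}u$ from the rewritten equation into $-\int_{B_{\rho}^{+}}(\mathcal{P}u)\Delta_{0}u = \int_{B_{\rho}^{+}}(\mathcal{P}u)\big((g^{ij}-\delta^{ij})\partial_{ij}u + \partial_{j}g^{ij}\partial_{i}u - c(n)R_{g}u + K(x)u^{p}\big)$, insert the two identities above, move the metric-error and curvature integrals to the right-hand side, carry the boundary nonlinearity term $\tfrac{1}{p+1}\int_{\partial B_{\rho}^{+}}x^{k}\nu_{0}^{k}Ku^{p+1}$ to the left-hand side, and collect the remaining boundary integrals into the combination displayed in \eqref{Pohozaev}. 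The computation is entirely routine; the only points requiring care are the sign bookkeeping in the double integration by parts and the (standard) suppression of the $O(r^{N})$ terms arising from $\sqrt{g}$, so I do not expect any genuine obstacle. \qed
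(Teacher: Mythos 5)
Your proof sketch is correct and follows exactly the ``standard integration by parts argument'' the paper invokes: rewrite the equation to isolate $\Delta_0 u$ (absorbing the $O(r^N)$ volume-element correction from $\det g = 1 + O(r^N)$ as the paper does throughout), multiply by $\mathcal{P}u=\tfrac{n-2}{2}u+x^k\partial_k u$, and perform the two flat integrations by parts over the half-ball. The sign bookkeeping in your two displayed identities is right (the interior $|\nabla_0 u|^2$ terms cancel since $-\tfrac{n-2}{2}-1+\tfrac{n}{2}=0$), and you are correct that no boundary condition on $\partial' B_\rho^+$ is used at this stage — all boundary terms remain on the full topological boundary, which is exactly what makes the identity applicable to both pieces of $\partial B_\rho^+$ later in sections~\ref{weyl_vanishing_section} and~\ref{sec_sign_restriction}.
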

\begin{proof}
Standard integration by parts argument.
\end{proof}

\end{document}